\newtheorem{theorem}{Theorem}[section]
\newtheorem{definition}{Definition}[section]
\newtheorem{lemma}{Lemma}[section]
\newtheorem{remark}{Remark}[section]
\newtheorem{proposition}{Proposition}[section]
\numberwithin{equation}{section}
\newcommand{\supp}{{\mathrm {supp}}}
\begin{document}
\title[Global well-posedness of the compressible  Navier-Stokes Equations]{Global well-posedness of  regular solutions   to the three-dimensional  isentropic  compressible  Navier-Stokes Equations  with degenerate viscosities and vacuum}

\author{Zhouping Xin}
\address[Z.P. Xin]{The Institute of Mathematical Sciences, The Chinese University of Hong Kong, Shatin, N.T., Hong Kong.}
\email{\tt zpxin@ims.cuhk.edu.hk}

\author{Shengguo Zhu}
\address[S. G.  Zhu]{Mathematical Institute, University of Oxford,  Oxford OX2 6GG, UK; School of Mathematical Sciences, Monash University, Clayton, 3800, Australia; The Institute of Mathematical Sciences, The Chinese University of Hong Kong, Shatin, N.T., Hong Kong.}
\email{\tt zhushengguo@alumni.sjtu.edu.cn}

\begin{abstract}
In this paper,  the Cauchy problem for  the three-dimensional (3-D) isentropic  compressible  Navier-Stokes equations with degenerate viscosities is considered. By  introducing  some new variables and making use of the   `` quasi-symmetric hyperbolic"--``degenerate elliptic" coupled structure to control the behavior of the fluid velocity,  we prove the global-in-time well-posedness of regular solutions with vacuum for a class of smooth initial data that are of small density but possibly large velocities. Here the initial  mass density is required to  decay to zero in the far field, and the spectrum of the Jacobi matrix of the initial velocity are all positive.  The result here applies to a class of degenerate density-dependent viscosity coefficients, is  independent of the  BD-entropy, and seems to be    the first on the global existence of smooth solutions which have large velocities and contain vacuum state for such degenerate system in three space dimensions.

\end{abstract}

\date{Jun. 6th, 2018}
\subjclass[2010]{Primary: 35Q30, 35A01, 35A09; Secondary: 35B40, 35J70}  \keywords{ Compressible Navier-Stokes equations, three-dimensions,  regular solutions,  global well-posedness, vacuum,  degenerate viscosity.
}

\maketitle

\section{Introduction}\ \\

The time evolution of the mass  density $\rho\geq 0$ and the velocity $u=\left(u^{(1)},u^{(2)},u^{(3)}\right)^\top$ of a general viscous isentropic
compressible   fluid occupying a spatial domain $\Omega\subset \mathbb{R}^3$ is governed by the following isentropic  compressible
Navier-Stokes equations (\textbf{ICNS}):
\begin{equation}
\label{eq:1.1}
\begin{cases}
\rho_t+\text{div}(\rho u)=0,\\[4pt]
(\rho u)_t+\text{div}(\rho u\otimes u)
  +\nabla
   P =\text{div} \mathbb{T}.
\end{cases}
\end{equation}
Here, $x=(x_1,x_2,x_3)\in \Omega$, $t\geq 0$ are the space and time variables, respectively. In considering the polytropic gases, the constitutive relation, which is also called the equations of state, is given by
\begin{equation}
\label{eq:1.2}
P=A\rho^{\gamma}, \quad \gamma> 1,
\end{equation}
where $A>0$ is  an entropy  constant and  $\gamma$ is the adiabatic exponent. $\mathbb{T}$ denotes the viscous stress tensor with the  form
\begin{equation}
\label{eq:1.3}
\begin{split}
&\mathbb{T}=\mu(\rho)\Big(\nabla u+(\nabla u)^\top\Big)+\lambda(\rho) \text{div}u\,\mathbb{I}_3,\\
\end{split}
\end{equation}
 where $\mathbb{I}_3$ is the $3\times 3$ identity matrix,

\begin{equation}
\label{fandan}
\mu(\rho)=\alpha  \rho^\delta,\quad \lambda(\rho)=\beta  \rho^\delta,
\end{equation}
for some  constant $\delta\geq 0$,
 $\mu(\rho)$ is the shear viscosity coefficient, $\lambda(\rho)+\frac{2}{3}\mu(\rho)$ is the bulk viscosity coefficient,  $\alpha$ and $\beta$ are both constants satisfying
 \begin{equation}\label{10000}\alpha>0,\quad \text{and} \quad   2\alpha+3\beta\geq 0.
 \end{equation}


Let $\Omega=\mathbb{R}^3$. We look for smooth solutions,
$(\rho(t,x),u(t,x))$ to the Cauchy problem for (\ref{eq:1.1})-(\ref{10000}) with the  initial data and far field behavior:
\begin{equation} \label{initial}
(\rho,u)|_{t=0}=(\rho_0(x)\geq 0,\ u_0(x)) \quad \text{for} \quad  x\in \mathbb{R}^3,
\end{equation}
\begin{equation} \label{far}
\rho(t,x)\rightarrow 0,\quad  \text{as}\quad  |x|\rightarrow \infty \quad \ \  \ \text{for} \quad \   t\geq 0.
\end{equation}

In the theory of gas dynamics,  the compressible Navier-Stokes equations can be derived from the Boltzmann equations through the Chapman-Enskog expansion, cf. Chapman-Cowling \cite{chap} and Li-Qin \cite{tlt}. Under some proper physical assumptions,  the viscosity coefficients and the heat conductivity coefficient $\kappa$ are not constants but functions of the absolute temperature $\theta$ such as:
\begin{equation}
\label{eq:1.5g}
\begin{split}
\mu(\theta)=&a_1 \theta^{\frac{1}{2}}F(\theta),\quad  \lambda(\theta)=a_2 \theta^{\frac{1}{2}}F(\theta), \quad \kappa(\theta)=a_3 \theta^{\frac{1}{2}}F(\theta)
\end{split}
\end{equation}
for some   constants $a_i$ $(i=1,2,3)$.  Actually in  \cite{chap} for the cut-off inverse power force model, if the intermolecular potential varies as $r^{-a}$,
 where $ r$ is intermolecular distance,  then in (\ref{eq:1.5g}): $F(\theta)=\theta^{b}$ with $ b=\frac{2}{a} \in [0,+\infty)$.
  In particular, for Maxwellian molecules,  $a = 4$ and $b=\frac{1}{2}$;
while for elastic spheres,  $a=\infty $ and $b=0$. As a typical model whose $F$ is not a power function of $\theta$, the Sutherland's model is well known where
\begin{equation}\label{sutherland}
F(\theta)=\frac{\theta}{\theta+s_0},\quad (s_0>0: \ \text{Sutherland's constant}).
\end{equation}
According to Liu-Xin-Yang \cite{taiping}, if  we restrict the gas flow to be isentropic, such dependence is inherited through the laws of Boyle and Gay-Lussac:
$$
P=R\rho \theta=A\rho^\gamma,  \quad \text{for \ \ constant} \quad R>0,
$$
i.e., $\theta=AR^{-1}\rho^{\gamma-1}$,  and one finds that the viscosity coefficients are functions of the density. In this paper, we will focus on the  cut-off inverse power force model whose viscosities have the  forms shown in (\ref{fandan}).  The corresponding conclusion for the  isentropic  flow of the Sutherland's model will be shown in our forthcoming paper Xin-Zhu \cite{xz2}.

Throughout this paper, we adopt the following simplified notations, most of them are for the standard homogeneous and inhomogeneous Sobolev spaces:
\begin{equation*}\begin{split}
 & \|f\|_s=\|f\|_{H^s(\mathbb{R}^3)},\quad |f|_p=\|f\|_{L^p(\mathbb{R}^3)},\quad \|f\|_{m,p}=\|f\|_{W^{m,p}(\mathbb{R}^3)},\\[4pt]
& |f|_{C^k}=\|f\|_{C^k(\mathbb{R}^3)},\quad \|f\|_{L^pL^q_t}=\| f\|_{L^p([0,t]; L^q(\mathbb{R}^3))},\quad  \|f\|_{X_1 \cap X_2}=\|f\|_{X_1}+\|f\|_{X_2},\\[4pt]
& D^{k,r}=\{f\in L^1_{loc}(\mathbb{R}^3): |f|_{D^{k,r}}=|\nabla^kf|_{r}<+\infty\},\quad D^k=D^{k,2},  \quad  \int_{\mathbb{R}^3}  f \text{d}x  =\int f,\\[4pt]
& \Xi=\{f\in L^1_{loc}(\mathbb{R}^3): |\nabla f|_\infty+\|\nabla^2 f\|_2<+\infty\},\quad \|f\|_{\Xi}=|\nabla f|_\infty+\|\nabla^2 f\|_2, \\[4pt]
&  \|f(t,x)\|_{T,\Xi}=\|\nabla f(t,x)\|_{L^\infty([0,T]\times\mathbb{R}^3)}+\|\nabla^2 f(t,x)\|_{L^\infty([0,T]; H^2(\mathbb{R}^3))}.
\end{split}
\end{equation*}
 A detailed study of homogeneous Sobolev spaces  can be found in \cite{gandi}.

There is a lot of literature on the  well-posedness  of solutions to the problem \eqref{eq:1.1}--\eqref{initial} in multi-dimensional space. For 3-D constant viscous flow ($\delta=0$ in (\ref{fandan})) with $\inf_x {\rho_0(x)}>0$, it is well-known that the local existence of classical solutions has been obtained by a standard Banach fixed point argument by Nash \cite{nash}, which has been extended to be a global one by Matsumura-Nishida \cite{mat} for initial data close to a nonvacuum equilibrium in some Sobolev space $H^s$ ($s>\frac{5}{2}$).  Hoff \cite{HD} studied the global weak solutions with strictly positive initial density and temperature for discontinuous initial data.
However, these approaches do not work when $\inf_x {\rho_0(x)}=0$, which occurs when some physical requirements are imposed, such as finite total initial mass  and energy in the whole space. The main breakthrough for the well-posedness of  solutions  with generic data and vacuum  is due to Lions \cite{lions},  where he established the global existence of weak solutions with finite energy to the isentropic compressible flow provided that $\gamma >\frac{9}{5}$ (see also Feireisl-Novotn\'{y}-Petzeltov\'{a} \cite{fu1} for the case $\gamma>\frac{3}{2}$). However, the
uniqueness problem of these weak solutions  is widely open due to their fairly low regularities. Recently,  Xin-Yan \cite{zwy} proved that any classical solutions of viscous non-isentropic compressible fluids without heat conduction will blow up in finite time, if the initial data has an isolated mass group.

For density-dependent viscosities ($\delta>0$ in (\ref{fandan})) which degenerate at vacuum,  system \eqref{eq:1.1} has received extensive attentions in recent years. In this case, the strong degeneracy of the momentum equations in  \eqref{eq:1.1}  near vacuum creats serious difficulties for well-posedness of both  strong and weak solutions. A   mathematical entropy function  was proposed by Bresch-Desjardins \cite{bd3} for
 $\lambda(\rho)$ and $\mu(\rho)$ satisfying the relation
\begin{equation}\label{bds}\lambda(\rho)=2(\mu'(\rho)\rho-\mu(\rho)),
\end{equation}
which offers an estimate
$
\mu'(\rho)\nabla \sqrt{\rho}\in L^\infty ([0,T];L^2(\mathbb{R}^d))
$
provided that 
$ \mu'(\rho_0)\nabla \sqrt{\rho_0}\in L^2(\mathbb{R}^d)$ for any $d\geq 1$.
This observation plays an important role in the development of the global existence of weak solutions with vacuum for system (\ref{eq:1.1}) and some related models, see Bresh-Dejardins \cite{bd6}, Li-Xin \cite{lx}, Mellet-Vassuer \cite{vassu} and some other interesting results, c.f. \cite{hailiang,  tyc2}. However, the regularities and uniquness of such weak solutions remain open especially in multi-dimensional cases.

In this paper, we study the global   well-posedness  of regular solutions to the system $(\ref{eq:1.1})$ with density-dependent viscosities given in  (\ref{fandan}) and initial data such that  $\inf_x {\rho_0(x)}=0$. Then the analysis of the   degeneracies  in momentum equations $(\ref{eq:1.1})_2$ requires some special attentions. The major concerns are:
\begin{enumerate}
\item The degeneracy of  time evolution in momentum equations $(\ref{eq:1.1})_2$. Note that   the leading coefficient of  $u_{t}$  in momentum equations vanishes at vacuum, and this leads to infinitely many ways to define velocity (if it exits) when
vacuum appears.  Mathematically, this degeneracy leads to a difficulty that  it is hard to find a reasonable way to extend the definition of velocity into vacuum region. For constant viscosities, a remedy was suggested  by Cho-Choe-Kim  (for example \cite{CK3}), where they imposed  initially a {\it compatibility condition}
\begin{equation*}
-\text{div} \mathbb{T}_{0}+\nabla P(\rho_0)=\sqrt{\rho_{0}} g,\quad \text{for\  \ some} \ \ g\in L^2(\mathbb{R}^3),
\end{equation*}
which,  is roughly  equivalent to the $L^2$-integrability of $\sqrt{\rho}u_t (t = 0)$, and  plays a key role in deducing that $(\sqrt{\rho}u_t, \nabla u_t)\in L^\infty([0,T_*]; L^2(\mathbb{R}^3))$  for a  short  time $T_{*}>0$. Then  they established  successfully   the local well-posedness of smooth solutions with non-negative  density  in $\mathbb{R}^3$, which,  recently,  has been shown to be   a global one with small energy but large oscillations by Huang-Li-Xin \cite{HX1} in  $\mathbb{R}^3$ and Li-Xin \cite{lx2d} in $\mathbb{R}^2$.

\item The strong degeneracy of the elliptic operator $\text{div}\mathbb{T}$ caused by vacuum for $\delta>0$.  In  \cite{CK3, HX1} for $\delta=0$, the uniform ellipticity of the Lam\'e operator $L$ defined by
\begin{equation*}Lu =-\alpha\triangle u-(\alpha+\beta)\nabla \mathtt{div}u
\end{equation*}
plays an essential role in the high order regularity estimates on $u$. One can use standard elliptic theory to estimate $|u|_{D^{k+2}}$  by the $D^k$-norm of all other terms in momentum equations.  However, for $\delta>0$,  viscosity coefficients vanish as density function connects to vacuum continuously.  This degeneracy makes it difficult to adapt the approach in the constant viscosity case in \cite{CK3,HX1} to the current case.

\item The strong nonlinearity for the variable coefficients of  the viscous term due to  $\delta>0$.
 It should be pointed out here that unlike the case of  constant viscosities, despite the weak regularizing effect on solutions, the elliptic part  $\text{div} \mathbb{T}$ will also cause some troubles in the high order   regularity estimates.  For example, to establish some uniform a priori estimates independent of the lower bound of density   in $H^3$ space, the key is to handle the extra nonlinear terms such as
$$\text{div}\big( \nabla^k\rho^\delta \mathbb{S}(u)\big) \ \ \text{for} \ \  \mathbb{S}(u)=\alpha(\nabla u+(\nabla u)^\top)+\beta\mathtt{div}u\mathbb{I}_3,$$
where $k=0,1, 2, 3$. Therefore,  much attentions need to be  paid in order to control these strong nonlinearities, especially for establishing the global well-posedenss of classical solutions.

\end{enumerate}

Recently, there have some interesting works  to overcome these  difficulties mentioned above.
In Li-Pan-Zhu \cite{sz3} for the case $\delta=1$, the degeneracies of the time evolution and the viscosity can be transferred to the possible singularity of the special source term  $\Big(\frac{\nabla \rho}{\rho}\Big) \cdot  \mathbb{S}(u)$.  Based on this observations, via establishing a uniform a priori estimates in $L^6\cap D^1\cap D^2$ space for the  quantity $\frac{\nabla \rho}{\rho}$, the existence of the unique  local classical solution in   2-D space (see also Zhu \cite{sz33} for 3-D case)   to  (\ref{eq:1.1}) has been obtained under the assumptions
$$
\rho_0(x)\rightarrow 0,\quad  \text{as}\quad  |x|\rightarrow \infty,
$$
which also applies to the 2-D shallow water equations.

However, this result only allows vacuum at the far field, and the corresponding problem with vacuum appearing in some open sets, or even at a single point is still unsolved.
Later, via introducing a proper class of solutions and establishing the uniform weighted  a priori estimates for the higher order term $\rho^{\frac{\delta-1}{2}}\nabla^4u$,
the same authors \cite{sz333,sz34} gave the existence of 3-D local classical solutions to system (\ref{eq:1.1}) for the cases $$1<\delta\leq \min\{3, (\gamma+1)/2\}.$$ Some other interesting results can also be seen in Ding-Zhu \cite{ding} and  Li-Pan-Zhu \cite{hyp}.

\subsection{Symmetric formulation} In order to deal with the issues mentioned above, we first need to analyze the  structure of the momentum equations $(\ref{eq:1.1})_2$ carefully, which can be decomposed into hyperbolic, elliptic and source  parts  as follows:
\begin{equation}
\label{eq:1.1mo}
\begin{split}
\displaystyle
\underbrace{ \rho \big(u_t+ u\cdot  \nabla u\big)+\nabla P }_{\text{Hyperbolic}}&=\underbrace{- \rho^\delta Lu}_{\text{Elliptic} }+\underbrace{ \nabla \rho^\delta \cdot  \mathbb{S}(u)}_{\text{Source}}.
\end{split}
\end{equation}

For smooth solutions $(\rho,u)$ away from vacuum,  these equations  could be written into
\begin{equation}\label{c-2}
\begin{split}
&\underbrace{ u_t+u\cdot\nabla u +\frac{A\gamma}{\gamma-1}\nabla \rho^{\gamma-1}-\frac{\delta}{\delta-1} \nabla \rho^{\delta-1} \cdot \mathbb{S}(u)}_{\text{Principal \ order}}=\underbrace{ -\rho^{\delta-1}Lu}_{\text{Higher order}}.
\end{split}
\end{equation}
Note that if  $\rho$ is smooth enough, one could  pass to the limit as   $\rho\rightarrow 0$ on both sides of (\ref{c-2}) and  formally have
\begin{equation}\label{zhenkong1}
 u_t+ u\cdot \nabla u=0\quad  \text{as } \quad  \rho(t,x)=0.
\end{equation}
So (\ref{c-2})-(\ref{zhenkong1}) indicate that  the velocity $u$ could be governed by a nonlinear degenerate  parabolic system  when vacuum appears in some open sets or at the far field.

Recently, in Geng-Li-Zhu \cite{zhu}, via introducing two new quantities:
$$
\varphi=\rho^{\frac{\delta-1}{2}},\quad \text{and} \quad \phi=\sqrt{\frac{4A\gamma}{(\gamma-1)^2}}\rho^{\frac{\gamma-1}{2}},
$$
 system \eqref{eq:1.1} can be rewritten into a system that consists of a transport equation for $\varphi$, and
a  ``quasi-symmetric hyperbolic"--``degenerate elliptic" coupling system  for $U=(\phi, u)^\top$:
\begin{equation}\label{li46}
\begin{cases}
\displaystyle
\underbrace{\varphi_t+u\cdot\nabla\varphi+\frac{\delta-1}{2}\varphi\text{div} u=0}_{\text{Transport equations}},\\[8pt]
\displaystyle
\underbrace{U_t+\sum_{j=1}^3A_j(U) \partial_j U}_{\text{Symmetric hyperbolic}}=\underbrace{- \varphi^{2}\mathbb{{L}}(u)}_{\text{Degenerate elliptic}}+\underbrace{ \mathbb{{H}}(\varphi)  \cdot \mathbb{{Q}}(u)}_{\text{First order source}},
 \end{cases}
\end{equation}
where  $\partial_j U=\partial{x_j}U$,
\begin{equation} \label{sseq:5.2qq}
\begin{split}
\displaystyle
A_j(U)=&\left(\begin{array}{cc}
u^{(j)}&\frac{\gamma-1}{2}\phi e_j\\[8pt]
\frac{\gamma-1}{2}\phi e_j^\top &u^{(j)}\mathbb{I}_3
\end{array}
\right),\quad j=1,2,3,\quad
\mathbb{{L}}(u)=\left(\begin{array}{c}0\\
Lu\\
\end{array}\right),\\[6pt]
 \mathbb{{H}}(\varphi)=&\left(\begin{array}{c}0\\
\nabla \varphi^{2}\\
\end{array}\right), \quad
 \mathbb{{Q}}(u)=\left(\begin{array}{cc}
0 & 0\\
0 &Q(u)
\end{array}\right), \quad
 Q(u)=\frac{\delta}{\delta-1}\mathbb{S}(u),
\end{split}
\end{equation}
and  $e_j=(\delta_{1j},\delta_{2j},\delta_{3j})$ $(j=1,2,3)$ is the Kronecker symbol satisfying $\delta_{ij}=1$, when $ i=j$ and $\delta_{ij}=0$, otherwise.
Based on this  reformulation,    a local well-posedness for arbitrary $\delta>1$ in some non-homogenous Sobolev spaces has been obtained  for the compressible degenerate viscous flow in  \cite{zhu}, which could be shown as follows:
\begin{theorem}\cite{zhu}\label{th1} Let $\gamma>1$ and $\delta>1$.  If  initial data $( \rho_0, u_0)$ satisfy
\begin{equation}\label{th78}
\rho_0\geq 0,\quad \Big(\rho^{\frac{\gamma-1}{2}}_0, \rho^{\frac{\delta-1}{2}}_0, u_0\Big)\in H^3,
\end{equation}
then there exists a time $T_*>0$ independent of the viscosity coefficients and a unique regular solution $(\rho, u)$ in $[0,T_*]\times \mathbb{R}^3$ to the  Cauchy problem  (\ref{eq:1.1})-(\ref{far}), where the regular solution $(\rho,u)$  satisfies this problem in the sense of distribution and:
\begin{equation*}\begin{split}
&(\textrm{A})\quad  \rho\geq 0, \  \rho^{\frac{\delta-1}{2}}\in C([0,T_*]; H^3), \ \  \rho^{\frac{\gamma-1}{2}}\in C([0,T_*]; H^3); \\
& (\textrm{B})\quad u\in C([0,T_*]; H^{s'})\cap L^\infty([0,T_*]; H^{3}),\quad  \rho^{\frac{\delta-1}{2}}\nabla^4 u \in L^2([0,T_*]; L^2),\\
&(\textrm{C})\quad u_t+u\cdot\nabla u =0\quad  \text{as } \quad  \rho(t,x)=0,
\end{split}
\end{equation*}
where $s'\in[2,3)$ is an arbitrary constant.
\end{theorem}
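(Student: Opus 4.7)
The plan is to establish local existence by iteration on the reformulated system (\ref{li46}), then prove uniqueness via a low-norm energy estimate. First I would set $\varphi_0 = \rho_0^{(\delta-1)/2}$ and $\phi_0 = \sqrt{4A\gamma/(\gamma-1)^2}\,\rho_0^{(\gamma-1)/2}$, which lie in $H^3$ by (\ref{th78}). Given the $k$-th iterate $(\varphi^{k-1}, \phi^{k-1}, u^{k-1})$, I would construct $(\varphi^k,\phi^k,u^k)$ by freezing coefficients: a linear transport equation defines $\varphi^k$ along $u^{k-1}$, while $(\phi^k,u^k)$ solves the linear symmetric hyperbolic system with matrices evaluated at $(u^{k-1},\phi^{k-1})$, forced by $-(\varphi^{k-1})^2 \mathbb{L}(u^k) + \mathbb{H}(\varphi^{k-1}) \cdot \mathbb{Q}(u^{k-1})$. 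To bypass the vacuum degeneracy, I would regularize by adding $\epsilon L u^k$ with $\epsilon > 0$; for fixed $\epsilon$ the linearized system is uniformly parabolic in $u$, and standard theory produces the iterates in $H^3$.

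The core step is obtaining uniform-in-$(\epsilon,k)$ $H^3$ bounds on a short interval $[0,T_*]$. I would apply $\partial^\alpha$ with $|\alpha|\le 3$ to the symmetric hyperbolic block and pair with $\partial^\alpha U^k$; the symmetry of $A_j(U)$ in (\ref{sseq:5.2qq}) cancels the principal transport contribution, leaving commutators controlled by Moser/Kato–Ponce tame estimates in terms of $\|U^{k-1}\|_3, \|\varphi^{k-1}\|_3$. The ellipticity hypothesis (\ref{10000}) turns the degenerate term into a nonnegative production $\int(\varphi^{k-1})^2\,|\nabla\partial^\alpha u^k|^2\,dx$ (plus the $\epsilon$-gain), which is used only as a sign, not as a coercive gradient control. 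The first-order source $\mathbb{H}(\varphi^{k-1})\cdot\mathbb{Q}(u^{k-1})$ is estimated directly as an $H^3\cdot H^3 \hookrightarrow H^2$ product. The transport equation for $\varphi^k$ and the hyperbolic block for $\phi^k$ yield their own $H^3$ bounds via characteristics plus commutator estimates. A Gronwall argument on the sum of $\|U^k\|_3^2 + \|\varphi^k\|_3^2$ produces a uniform bound on a time interval independent of $\epsilon$.

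For the weighted higher-order estimate $\rho^{(\delta-1)/2}\nabla^4 u\in L^2([0,T_*];L^2)$ I would differentiate the momentum equation three times, pair with $\nabla^4 u$, and isolate the degenerate elliptic contribution $\int \varphi^2 |\nabla^4 u|^2\,dx$, bounding the remaining tame products by $\|U\|_3, \|\varphi\|_3$. Then I would send $\epsilon\to 0$ via compactness and show the sequence $(\varphi^k,\phi^k,u^k)$ is Cauchy in a low norm (e.g.\ $L^2\cap D^1$ for $U^k$, $L^2$ for $\varphi^k$) by subtracting consecutive iterates, so that interpolation with the uniform $H^3$ bound yields $u\in L^\infty([0,T_*];H^3)\cap C([0,T_*];H^{s'})$ for $s'<3$ and likewise for $\varphi,\phi$. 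The vacuum identity (C) would follow by evaluating the rearranged equation (\ref{c-2}) at points where $\rho=0$, using that $\varphi^2\,Lu$ is continuous (since $\varphi^2\in H^3$ and $\nabla^2 u\in H^1$) and vanishes there. Uniqueness would be established by a standard energy estimate for the difference in $L^2\cap D^1$.

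The main obstacle is closing the uniform $H^3$ estimate without parabolic regularization of $u$: the ``gain'' from the elliptic term, being weighted by $\varphi^2$, vanishes at vacuum and therefore cannot absorb generic top-order remainders. The decisive structural point, which the formulation (\ref{li46}) is designed to exploit, is that the symmetric-hyperbolic block \emph{already} delivers an $H^3$ estimate for $U$ via energy methods without any parabolic help, provided the source $\mathbb{H}(\varphi)\cdot\mathbb{Q}(u)$ is treated as a first-order nonlinearity controlled by $\|\varphi\|_3\|u\|_3$; the degenerate elliptic term then need only have a good sign for closing the loop, and its weighted gradient is used solely to produce the extra regularity $\varphi\,\nabla^4 u\in L^2L^2$ stated in (B).
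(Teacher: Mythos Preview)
Your overall architecture (parabolic regularization, iteration, symmetric-hyperbolic energy estimates, Cauchy-in-low-norm convergence) matches the approach the paper uses in Section~4 for its own variant of this local result. But the closing argument in your final paragraph contains a genuine gap.

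You assert that the symmetric-hyperbolic block ``already delivers an $H^3$ estimate for $U$ via energy methods without any parabolic help,'' that the degenerate elliptic term ``need only have a good sign,'' and that the weighted dissipation is used ``solely to produce the extra regularity $\varphi\nabla^4 u\in L^2L^2$.'' This is not correct: the dissipation $\alpha\int\varphi^2|\nabla^4 u|^2$ is \emph{essential} to close the $H^3$ estimate, not a bonus. Two families of top-order terms force this. First, the commutator $[\nabla^3,\varphi^2 L]u$ contains, for instance, $\nabla\varphi^2\cdot\nabla^2 Lu\sim \varphi\nabla\varphi\cdot\nabla^4 u$, whose pairing with $\nabla^3 u$ is bounded by $|\nabla\varphi|_\infty|\varphi\nabla^4 u|_2|\nabla^3 u|_2$; likewise the pieces $\nabla^2\varphi^2\cdot\nabla^3 u$ and $\nabla^3\varphi^2\cdot\nabla^2 u$, when estimated via $|\varphi\nabla^3 u|_6\lesssim|\varphi\nabla^4 u|_2+|\nabla\varphi|_\infty|\nabla^3 u|_2$, again produce $|\varphi\nabla^4 u|_2$. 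Second, the source $\nabla\varphi^2\cdot Q(u)$: applying $\nabla^3$ generates $\nabla^4\varphi^2\cdot\nabla u$, and the piece $\varphi\nabla^4\varphi$ is not in $L^2$ when $\varphi\in H^3$. One must integrate by parts to shift a derivative onto the test function $\nabla^3 u$, which yields a term of size $|\nabla^3\varphi|_2|\nabla u|_\infty|\varphi\nabla^4 u|_2$. In the paper these contributions appear as $\eta|\varphi\nabla^4 w|_2^2$ with small $\eta$ and are absorbed into the good term $\alpha|\varphi\nabla^4 w|_2^2$ (see the estimates of $I_4,I_5,I_6$ in Lemma~\ref{4} and the $L_k,Q_k$ estimates in Section~5). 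Your remark that $\mathbb{H}(\varphi)\cdot\mathbb{Q}(u)$ is ``estimated directly as an $H^3\cdot H^3\hookrightarrow H^2$ product'' is a symptom of the same problem: an $H^2$ forcing is one derivative short for an $H^3$ energy estimate, and the missing derivative is supplied precisely by the weighted dissipation after integration by parts.

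A minor point: the weighted estimate $\varphi\nabla^4 u\in L^2_tL^2_x$ does not come from pairing $\nabla^3$(momentum) with $\nabla^4 u$; it comes from pairing with $\nabla^3 u$ and integrating the degenerate elliptic term by parts, which is the same computation that closes the $H^3$ bound.
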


\subsection{Singularity formation} We consider first whether the local regular solution in Theorem \ref{th1} can be extended globally in time. In contrast to the classical theory for the constant viscosity case, we show the following somewhat surprising phenomenon that such an extension is impossible if the velocity field decays to zero as $t\rightarrow +\infty$ and the initial total momentum is non-zero. More pricisely,  let
\begin{align*}
&\mathbb{P}(t)=\int \rho u \quad \textrm{(total momentum)}.
\end{align*}
\begin{theorem}\label{th2}
 Let $1< \min\{\gamma,  \delta\} \leq 2$.  Assume   $|\mathbb{P}(0)|>0$.
Then there is no global  regular  solution $(\rho,u)$ obtained in Theorem  \ref{th1}
satisfying the following decay
\begin{equation}\label{eq:2.15}
\limsup_{t\rightarrow +\infty} |u(t,x)|_{\infty}=0.
\end{equation}

\end {theorem}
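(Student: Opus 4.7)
The plan is to derive a contradiction from conservation of total momentum. For any regular solution in the class of Theorem~\ref{th1}, H\"older's inequality yields
\begin{equation*}
|\mathbb{P}(t)|\leq |u(t,\cdot)|_\infty\,|\rho(t,\cdot)|_1\qquad\text{for each }t\geq 0.
\end{equation*}
Thus if one can establish (a) the total mass $|\rho(t,\cdot)|_1$ is finite for every $t$, (b) mass is conserved in time, so $|\rho(t,\cdot)|_1\equiv |\rho_0|_1=:M_0$, and (c) the total momentum is conserved, so $\mathbb{P}(t)\equiv \mathbb{P}(0)$, then the assumed decay (\ref{eq:2.15}) forces $|\mathbb{P}(0)|\leq \limsup_{t\to\infty}|u(t,\cdot)|_\infty\cdot M_0=0$, contradicting $|\mathbb{P}(0)|>0$.

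The hypothesis $\min\{\gamma,\delta\}\leq 2$ enters in (a). Assume without loss of generality that $\gamma\in(1,2]$. The regularity $\rho_0^{(\gamma-1)/2}\in H^3\hookrightarrow L^2\cap L^\infty$ gives $|\rho_0|_\infty<\infty$ and $\int\rho_0^{\gamma-1}<\infty$. Split $\mathbb{R}^3=\{\rho_0\leq 1\}\cup\{\rho_0>1\}$: on the first set, $\gamma-1\leq 1$ implies $\rho_0\leq\rho_0^{\gamma-1}$; on the second, $\{\rho_0>1\}\subset\{\rho_0^{\gamma-1}\geq 1\}$ has finite Lebesgue measure by Chebyshev, while $\rho_0$ is bounded there. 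Hence $\rho_0\in L^1$, and the same splitting applied to $\rho(t)^{(\gamma-1)/2}\in H^3$ at each $t\geq 0$ delivers $\rho(t)\in L^1$. Steps (b) and (c) are then obtained by testing $(\ref{eq:1.1})_1$ and $(\ref{eq:1.1})_2$ against a spatial cutoff $\phi_R\in C_c^\infty(\mathbb{R}^3)$ with $\phi_R\equiv 1$ on $B_R$ and $|\nabla\phi_R|\leq C/R$, yielding
\begin{equation*}
\int\rho(t)\phi_R-\int\rho_0\phi_R=\int_0^t\!\!\int\rho u\cdot\nabla\phi_R\,dx\,ds,
\end{equation*}
\begin{equation*}
\int\rho u(t)\phi_R-\int\rho_0 u_0\phi_R=\int_0^t\!\!\int(\rho u\otimes u+P\,\mathbb{I}_3-\mathbb{T})\nabla\phi_R\,dx\,ds,
\end{equation*}
and then passing $R\to\infty$ provided each flux in the integrands lies in $L^1(\mathbb{R}^3)$ uniformly on $[0,t]$.

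The main technical obstacle is controlling the viscous flux $\mathbb{T}$ in $L^1$. The transport and pressure fluxes are immediate, since $\rho|u|^2\leq |u|_\infty^2\rho$ and $\rho^\gamma\leq|\rho|_\infty^{\gamma-1}\rho$ are both in $L^1$ via the finite-mass bound; but for $\mathbb{T}\sim\rho^\delta\nabla u$ one must mesh the degenerate coefficient with $\nabla u\in L^2$. The key observation is that $\rho(t)^{(\delta-1)/2}\in L^2$ from the regular class, together with $\rho(t)\in L^\infty$, gives $\rho^\delta=\rho\cdot\rho^{\delta-1}\in L^1\cap L^\infty\subset L^2$; then Cauchy--Schwarz yields $\rho^\delta\nabla u\in L^1$ and the cutoff boundary term vanishes as $R\to\infty$. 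Thus the hypothesis $\min\{\gamma,\delta\}\leq 2$ plays a single but crucial role: it upgrades the prescribed $L^2$-bound on $\rho^{(\gamma-1)/2}$ (or $\rho^{(\delta-1)/2}$) into genuine $L^1$-control of $\rho$ itself, which simultaneously drives the conservation of momentum and the final H\"older bound on $\mathbb{P}(t)$.
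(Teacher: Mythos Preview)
Your proposal is correct and follows essentially the same approach as the paper: finite mass from $\min\{\gamma,\delta\}\leq 2$, conservation of mass and momentum via the $L^1$-integrability of the fluxes (including $\rho^\delta\nabla u$), and then a H\"older bound forcing $|u|_\infty$ to be bounded below. The only cosmetic difference is that the paper routes through the kinetic energy, bounding $|\mathbb{P}(0)|\leq\sqrt{2m(0)}\,E_k(t)^{1/2}$ and then $E_k(t)\leq\tfrac12 m(0)|u(t)|_\infty^2$, whereas you apply $|\mathbb{P}(t)|\leq |u(t)|_\infty\,m(t)$ directly; your version is slightly more streamlined but the content is identical.
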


 Second, in the presence of vacuum region, there is another possibility of finite singularity defined as follows.
\begin{definition}\label{bugers}
The non-empty open set $V \subset \mathbb{R}^3$ is  called a hyperbolic singularity set of $(\rho_0,u_0)(x)$, if $V$ satisfies
\begin{equation} \label{eq:12131ss}
\begin{cases}
\displaystyle
\rho_0(x)=0, \ \forall \ x\in V;\\[8pt]
\displaystyle
 Sp(\nabla u_0) \cap \mathbb{R}^-\neq\  \emptyset,\   \forall  \ x \in V,
\end{cases}
\end{equation}
where $Sp(\nabla u_0(x))$ denotes the spectrum of the matrix $\nabla u_0(x)$.
\end{definition}
By exploring  the hyperbolic structure in (\ref{eq:1.1}) in vacuum region, one  can  confirm exactly that hyperbolic singularity set does generate singularities from local regular solutions in finite time.
\begin{theorem}\label{th3}\
Let $\gamma>1$ and $\delta>1$.  If the initial data $(\rho_0,u_0)(x)$ have a non-empty hyperbolic singularity set $V$, then
the  regular solution $(\rho, u)(t,x)$ on $\mathbb{R}^3\times[0, T_m]$ obtained in Theorem \ref{th1} with maximal existence time $T_m$ blows up in finite time, i.e.,
$
T_m<+\infty.
$
\end{theorem}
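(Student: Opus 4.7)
The plan is a contradiction argument based on the reduction of the momentum equation in the vacuum region to an inviscid Burgers-type equation whose Jacobian, transported along characteristics, satisfies a matrix Riccati ODE that must blow up in finite time as soon as the initial Jacobian has a negative real eigenvalue. Suppose for contradiction that the regular solution $(\rho,u)$ of Theorem \ref{th1} exists on $[0, T_m]$ with $T_m$ strictly larger than the Riccati blow-up time identified below. Fix $x_0 \in V$ and a real negative eigenvalue $\lambda$ of $\nabla u_0(x_0)$, whose existence is guaranteed by the hyperbolic singularity hypothesis. Let $X(t, x_0)$ be the flow of $u$, which is a well-defined diffeomorphism on $[0, T_m)$ because $u\in L^\infty([0,T_m); H^3)$ and $H^2(\mathbb{R}^3)\hookrightarrow L^\infty(\mathbb{R}^3)$ give Lipschitz control of $\nabla u$ in $x$. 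The continuity equation along the trajectory reads
\begin{equation*}
\frac{d}{dt}\rho(t, X(t, x_0)) = -\rho(t, X(t, x_0))\,\dd u(t, X(t, x_0)),
\end{equation*}
so vacuum propagates: $\rho(t, X(t, x_0)) = 0$ for all $t \in [0, T_m)$. Since $X(t,\cdot)$ is a diffeomorphism, the set $V_t := X(t, V)$ remains open and contained in $\{\rho = 0\}$. By property (C) of Theorem \ref{th1}, the equation $u_t + u\cdot\nabla u = 0$ holds classically on $V_t$, and each characteristic emanating from $V$ is in fact the straight line $X(t,x_0) = x_0 + t\,u_0(x_0)$.

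Differentiating $u_t + u\cdot\nabla u = 0$ in $x$ on the open set $V_t$ and writing $M := \nabla u$ as a matrix with $M_{ij} = \partial_j u^i$ gives
\begin{equation*}
\partial_t M + (u\cdot\nabla) M + M^2 = 0 \quad\text{on } V_t.
\end{equation*}
Restricted to a trajectory from $x_0$, the matrix $M(t) := \nabla u(t, X(t, x_0))$ thus solves the matrix Riccati equation $\dot M = -M^2$ with $M(0) = M_0 := \nabla u_0(x_0)$, whose unique solution, valid as long as $I + tM_0$ is invertible, is
\begin{equation*}
M(t) = M_0(I + tM_0)^{-1} = (I + tM_0)^{-1}M_0.
\end{equation*}
Because $\lambda$ is a real negative eigenvalue of $M_0$, the matrix $I + tM_0$ becomes singular at $t^* := -1/\lambda > 0$, and standard matrix analysis yields $|M(t)| \to +\infty$ as $t \nearrow t^*$.

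On the other hand, for a regular solution on $[0, T_m]$ one has $u\in L^\infty([0,T]; H^3)$ for every $T < T_m$, hence $\nabla u \in L^\infty([0,T]\times\mathbb{R}^3)$ by Sobolev embedding, so $|M(t)|$ is uniformly bounded on every compact subinterval of $[0, T_m)$. This is incompatible with the Riccati blow-up at $t^*$ unless $T_m \leq t^* = -1/\lambda < +\infty$, which is the desired conclusion.

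The main obstacle is the rigorous justification of the differentiation of the momentum equation pointwise in the vacuum region and the pointwise evaluation of $\nabla u$ along the vacuum characteristic; this in turn rests on two facts that I would check carefully: that $V_t$ remains an open set (a consequence of the diffeomorphism property of the flow, which holds on $[0, T_m)$ thanks to the a priori boundedness of $\nabla u$), and that $\nabla u$ and $\nabla^2 u$ are continuous in $(t,x)$, which follows from the $H^3$ regularity of $u$ in space and the compressible transport structure providing time-continuity. A minor bookkeeping point is that eigenvalues of $\nabla u_0(x_0)$ may be complex, but the hypothesis $\mathrm{Sp}(\nabla u_0)\cap\mathbb{R}^-\neq\emptyset$ isolates precisely a real negative eigenvalue, which is what is required to exhibit a positive real Riccati blow-up time.
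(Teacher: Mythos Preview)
Your argument is correct and follows essentially the same route as the paper: use property (C) to reduce to the inviscid Burgers equation in the vacuum region, write the explicit formula $\nabla u(t,X(t;x_0))=(I+t\nabla u_0(x_0))^{-1}\nabla u_0(x_0)$ along characteristics, and observe that a real negative eigenvalue of $\nabla u_0(x_0)$ forces $I+t\nabla u_0(x_0)$ to become singular at a finite positive time, contradicting the $L^\infty$ bound on $\nabla u$ coming from $u\in L^\infty([0,T];H^3)$. The paper's proof is a two-line sketch invoking exactly this formula; your version supplies the surrounding details (propagation of vacuum, well-posedness of the flow, the contradiction with Sobolev embedding) that the paper leaves implicit.
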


\subsection{Global-in-time well-posedness of smooth solutions} We now come to the major task  to construct  global smooth solutions for the system (\ref{eq:1.1}).  Due to  Theorems \ref{th2}-\ref{th3},  one needs to identify a class of initial data and a proper energy space to   avoid the  two singularity mechanisms shown above.

Let $ \widehat{u}=(\widehat{u}^{(1)},\widehat{u}^{(2)},\widehat{u}^{(3)})^\top$ be the solution in $ [0,T]\times \mathbb{R}^3$  of the following Cauchy problem:
\begin{equation} \label{eq:approximation}
\displaystyle
 \widehat{u}_t+ \widehat{u}\cdot \nabla  \widehat{u}=0, \quad
 \widehat{u}(t=0,x)=u_0(x)
\end{equation}
for $ x\in \mathbb{R}^3$. We give the definition of regular solutions considered in this paper:
\begin{definition}\label{d1}
 Let $T> 0$ be a finite constant. A solution $(\rho,u)$ to the  Cauchy problem  (\ref{eq:1.1})-(\ref{far}) is called a regular solution in $ [0,T]\times \mathbb{R}^3$ if $(\rho,u)$ satisfies this problem in the sense of distribution and:
\begin{equation*}\begin{split}
&(\textrm{A})\quad  \rho\geq 0, \  \Big(\rho^{\frac{\delta-1}{2}},\rho^{\frac{\gamma-1}{2}}\Big)\in C([0,T]; H^{s'}_{loc})\cap L^\infty([0,T]; H^3);\\
& (\textrm{B})\quad u- \widehat{u}\in C([0,T]; H^{s'}_{loc})\cap L^\infty([0,T]; H^{3}),\quad  \rho^{\frac{\delta-1}{2}}\nabla^4 u \in L^2([0,T]; L^2);\\
&(\textrm{C})\quad u_t+u\cdot\nabla u =0\quad  \text{as } \quad  \rho(t,x)=0,
\end{split}
\end{equation*}
where $s'\in[2,3)$ is an arbitrary constant.
\end{definition}

The main result on the global well-posedness of  regular solutions   to the three-dimensional  isentropic  compressible  Navier-Stokes equations  with degenerate viscosities and vacuum could be stated as follows.

\begin{theorem}\label{thglobal} Let parameters  $(\gamma,\delta, \alpha,\beta)$ satisfy
\begin{equation}\label{canshu}
\gamma>1,\quad \delta>1, \quad \alpha>0, \quad 2\alpha+3\beta\geq 0,
\end{equation}
and any one of the following  conditions $(P_1)$-$(P_3)$:
\begin{itemize}
\item[$(\rm P_1)$] $2\alpha+3\beta=0$;\quad $(\rm P_2)$ $\delta\geq 2\gamma-1$; \quad $(\rm P_3)$ $\delta=\gamma$.
\end{itemize}
If the   initial data $( \rho_0, u_0)$ satisfy
\begin{itemize}
\item[$(\rm A_1)$] $\rho_0\geq 0$\  and  \ $\Big\|\rho^{\frac{\gamma-1}{2}}_0\Big\|_3+ \Big\|\rho^{\frac{\delta-1}{2}}_0\Big\|_3\leq D_0(\gamma, \delta, \alpha,\beta,  A, \kappa, \|u_0\|_{\Xi})$,
\item[$(\rm A_2)$] $u_0\in \Xi$ and  there exists a constant  $\kappa>0$ such that ,
$$
\text{Dist}\big(\text{Sp}( \nabla u_0(x)), \mathbb{R}_{-} \big)\geq \kappa \quad \text{for\  all}\quad  x\in \mathbb{R}^3,
$$
\end{itemize}
where $D_0>0$ is some constant depending on $(\alpha,\beta, \delta,  A,\gamma, \kappa, \|u_0\|_{\Xi})$,
then for any $T>0$,  there exists a   unique  regular solution $(\rho, u)$ in $[0,T]\times \mathbb{R}^3$  to the  Cauchy problem  (\ref{eq:1.1})-(\ref{far}). Particularly,  when condition $(P_2)$ holds, the smallness assumption on $\rho^{\frac{\delta-1}{2}}_0$ could be removed.

Moreover, if $1<\min(\gamma,\delta)\leq 3$,  $(\rho, u)$  is a classical solution to  the   Cauchy problem  (\ref{eq:1.1})-(\ref{far}) in $[0,T]\times \mathbb{R}^3$.

\end{theorem}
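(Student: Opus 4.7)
The plan is to establish uniform-in-time a priori estimates in the regular-solution class and then extend the local solution given by Theorem \ref{th1} via a continuation argument. The structural device is the decomposition $u = \widehat{u} + w$, where $\widehat{u}$ solves the Burgers-type system \eqref{eq:approximation} and $w$ is a perturbation whose size is controlled by the smallness of the density. Under $(\rm A_2)$, the matrix $N := \nabla \widehat{u}$ satisfies the Riccati-type identity $(\partial_t + \widehat{u}\cdot\nabla) N + N^2 = 0$ along the flow of $\widehat{u}$, so that $N(t)^{-1} = N(0)^{-1} + t\,\mathbb{I}_3$ along each trajectory. Because $\text{Dist}(\text{Sp}(\nabla u_0), \mathbb{R}_{-}) \geq \kappa$, the spectrum of $N(t)$ stays away from $\mathbb{R}_{-}$ for all $t \geq 0$, yielding global smoothness of $\widehat{u}$ together with the decay $|\nabla \widehat{u}(t)|_\infty \lesssim (1+\kappa t)^{-1}$. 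This decay rules out the blow-up scenario of Theorem \ref{th3} and makes $|\nabla \widehat{u}|_\infty$ time-integrable up to a logarithmic factor, which is the mechanism that enables the global Gronwall closure.

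Next, I would work on the symmetric reformulation \eqref{li46} in terms of $\varphi = \rho^{(\delta-1)/2}$ and $\phi = \sqrt{4A\gamma/(\gamma-1)^2}\,\rho^{(\gamma-1)/2}$, rewriting the quasi-symmetric hyperbolic part in terms of $(\phi, w)$ with $\widehat{u}$-dependent transport and source coefficients. High-order energy estimates at the $H^3$ level are obtained by differentiating the reformulated system: symmetry of the $A_j$ together with the uniform ellipticity of $L$ from \eqref{10000} handle the principal terms, while cross terms between $\widehat{u}$ and $w$ are absorbed via Gronwall against the integrable-in-time quantity $|\nabla \widehat{u}|_\infty$. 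Weighted estimates on $\rho^{(\delta-1)/2}\nabla^4 u$ in $L^2_t L^2_x$ follow from the associated weighted energy identity of the coupled system, in the spirit of \cite{sz333,sz34}. The three parameter regimes enter at the stage of closing the pressure-viscosity interaction: $(\rm P_1)$ uses the cancellation $2\alpha + 3\beta = 0$ in the bulk-viscosity coupling; $(\rm P_2)$ uses $\delta \geq 2\gamma - 1$ so that the pressure gradient is dominated by the weighted viscous dissipation; $(\rm P_3)$ uses $\delta = \gamma$ to produce a direct algebraic match between $\nabla P$ and the source term $\nabla \varphi^2 \cdot \mathbb{S}(u)$.

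The hardest step, I expect, is the top-order estimate of the source $\nabla \varphi^{2} \cdot \mathbb{S}(u)$: the factor $\nabla \varphi^2$ controls only density-weighted norms, while $\mathbb{S}(u)$ involves full derivatives of $u = \widehat{u} + w$. Splitting $\mathbb{S}(u) = \mathbb{S}(\widehat{u}) + \mathbb{S}(w)$, the $\widehat{u}$-piece benefits from the $(1+t)^{-1}$ decay of $\nabla \widehat{u}$, while the $\mathbb{S}(w)$-piece is absorbed by the smallness of $D_0$ in $(\rm A_1)$, so that the initial smallness of $\|\varphi_0\|_3 + \|\phi_0\|_3$ propagates by a bootstrap argument. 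In case $(\rm P_2)$ the smallness assumption on $\varphi_0$ can be dropped because $\delta \geq 2\gamma - 1$ gives an independent control of the pressure term. Once uniform-in-time $H^3$ bounds for $(\varphi, \phi, w)$ together with the weighted bound on $\rho^{(\delta-1)/2}\nabla^4 u$ are established, a standard continuation criterion applied to Theorem \ref{th1} yields the global regular solution. Finally, for $1 < \min(\gamma,\delta) \leq 3$, the Sobolev regularity of $\rho^{(\delta-1)/2}$ and $\rho^{(\gamma-1)/2}$ together with the momentum equation imply that $\rho$ itself is $C^1$, so the regular solution is in fact classical.
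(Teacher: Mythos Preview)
Your overall architecture is right---decompose $u=\widehat{u}+w$, exploit the Burgers decay of $\nabla\widehat{u}$, and run weighted $H^3$ estimates on the reformulated system---but two key steps in your plan would not go through as stated.

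First, you cannot invoke Theorem~\ref{th1} for the local solution or for continuation. That theorem assumes $u_0\in H^3$, whereas here $u_0\in\Xi$ only: the examples in Remark~\ref{r1} include $u_0=Ax+\textbf{b}$, which is neither in $L^2$ nor bounded. The paper therefore has to redo the local well-posedness from scratch for the perturbation variable $w=u-\widehat{u}$ (with $w_0=0$), first for compactly supported $(\varphi_0,\phi_0)$ via a linearization/iteration scheme, and later remove the compact-support assumption by approximation. Your ``standard continuation criterion applied to Theorem~\ref{th1}'' is not available.

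Second, and more seriously, the phrase ``$|\nabla\widehat{u}|_\infty$ time-integrable up to a logarithmic factor, which is the mechanism that enables the global Gronwall closure'' is where the argument would fail. Since $|\nabla\widehat{u}|_\infty\sim(1+t)^{-1}$, naive Gronwall gives only $\exp\big(C\log(1+t)\big)=(1+t)^C$, i.e.\ polynomial growth, which does not close a bootstrap. The paper's mechanism is different and sharper: using the exact expansion $\nabla\widehat{u}=\frac{1}{1+t}\mathbb{I}_3+\frac{K}{(1+t)^2}$ from Proposition~\ref{p1}, the leading $(1+t)^{-1}$ part of $\text{div}\,\widehat{u}$ in the transport and stretching terms produces a \emph{damping} $\frac{b}{1+t}Z$ on the left-hand side of the energy inequality (Lemmas~\ref{l1} and \ref{Azongjiephi}), not merely an integrable error on the right. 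Coupled with time weights $(1+t)^{\gamma_k}$, $(1+t)^{\delta_k}$, this yields a Bernoulli-type ODE inequality $\frac{dZ}{dt}+\frac{b}{1+t}Z\le C(1+t)^{D_1}Z^a+C_0(1+t)^{D_2}Z$ with $D_1-(a-1)b<-1$ and $D_2<-1$, which has global small-data solutions by Proposition~\ref{ode1}. The parameter conditions $(P_0)$--$(P_3)$ are exactly what one needs to get these exponent inequalities to hold after handling the worst top-order cross term $\int\nabla\varphi^2\cdot Q(\nabla^3\widehat{u})\cdot\nabla^3 w$; in particular $(P_1)$ kills the $(2\alpha+3\beta)$ coefficient in that term, while $(P_2)$/$(P_3)$ replace $\varphi$ by a power of $\phi$ so that its estimate is subsumed in the $W$-energy. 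Your descriptions of the roles of $(P_1)$--$(P_3)$ are in the right spirit but do not identify this specific obstruction.
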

\begin{remark}\label{r1} The conditions $(A_1)$-$(A_2)$ identify a class of admissible initial data that
provides unique solvability to the  Cauchy problem  (\ref{eq:1.1})-(\ref{far}).
Such initial data contain  the following examples:
$$
\rho_0(x)=\frac{\epsilon_1}{(1+|x|)^{2\sigma_1}}, \quad \epsilon_1 g^{2\sigma_2}(x), \quad \epsilon_1 \exp\{-x^2\}, \quad \frac{\epsilon_1 |x|}{(1+|x|)^{2\sigma_3}},
$$
where $\epsilon_1>0$ is a sufficiently small constant,  $0\leq g(x)\in C^3_c(\mathbb{R}^3)$,   and
$$\sigma_1>\frac{3}{2}\max\Big\{\frac{1}{\delta-1}, \frac{1}{\gamma-1}\Big\}, \  \sigma_2>3\max\Big\{\frac{1}{\delta-1}, \frac{1}{\gamma-1}\Big\},\  \sigma_3>\frac{3}{2}\max\Big\{\frac{1}{\delta-1}, \frac{1}{\gamma-1}\Big\}+\frac{1}{2};
$$
$$
u_0=Ax+\textbf{b}+\epsilon_2 f(x),
$$
where $A$ is a $3\times 3$ constant matrix whose  eigenvalues are all greater than $2\kappa$,  $\epsilon_2>0$ is a sufficiently small constant,  $f\in \Xi$ and $\textbf{b}\in \mathbb{R}^3$ is a constant vector.
\end{remark}

\begin{remark}\label{P0}
It is worth pointing out that, for any $\gamma>1$, even $2\alpha+3\beta \neq 0$, that is to say $(P_1)$ fails, we can still deal with the corresponding well-posedness problem for a relatively wide class of $(\alpha,\beta,\delta)$ under the following condition:
 \begin{itemize}
\item[$(\rm P_0)$] $0<M_1=\frac{2\alpha+3\beta}{2\alpha+\beta}<\frac{3}{2}-\frac{1}{\delta}$ and \\[4pt]
$
M_2=-3\delta+1+\frac{1}{2}\left(\frac{(\delta-1)^2}{4(2\alpha+\beta)}+\frac{4\delta^2(2\alpha+\beta)}{(\delta-1)^2}M^2_1+2M_1\delta\right)<-1
$,
\end{itemize}
which can be seen in \S 5.

Next we indicate that the set of parameters   $(\alpha,\beta,\delta)$   satisfying  $(P_0)$ must be non-empty. First, for fixed $\delta>1$,  let
$$
\alpha= a_1\eta, \quad \beta=a_2 \eta,
$$
where $\eta>0$, $a_1>0$ and $2a_1+3a_2\geq 0$ are all constants.
Thus
$M_1=\frac{2a_1+3a_2}{2a_1+a_2}
$.
Then one can  adjust the values of $a_1$ and $a_2$ to ensure that $M_1<\frac{3}{2}-\frac{1}{\delta}$ holds.

Second, let $
x=(\delta-1)^2/(4(2a_1+a_2)\eta)
$,
and  consider the following function:
$$
F(x)=x+\frac{1}{x}\delta^2M^2_1+2M_1\delta-6\delta+4,
$$
whose minimum value is
$$F(\delta M_1)=4M_1\delta-6\delta+4<0$$ due to $M_1<\frac{3}{2}-\frac{1}{\delta}$. Thus, one needs only  to choose $\eta$  to ensure that $x$ belongs to a small neighbor of $\delta M_1$ and $F<0$, which is equivalent to $M_2<-1$.
\end{remark}

\begin{remark}\label{gllt}
Without generality, we can assume that $u_0(0)=0$, which can be achieved by  the following Galilean transformation:
\begin{equation*}
t'=t,\quad x'=x+u_0(0)t,\quad 
   \rho'(t',x')=\rho(t,x),\quad 
    \widehat{u}'(t',x')= \widehat{u}(t,x)-u_0(0).
\end{equation*}
\end{remark}

The above well-posedness theory is still available for some other models such as the ones shown in the following two theorems.
\begin{theorem}\label{thglobal2}
Let the viscous stress tensor $\mathbb{T}$ in  (\ref{eq:1.1}) be  given by
$$
\mathbb{T}=\rho^\delta(2\alpha  \nabla u+\beta  \text{div}u\mathbb{I}_3).
$$
Then the same well-posedness theory  as in Theorem \ref{thglobal} holds in this case.

\end{theorem}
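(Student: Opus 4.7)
The approach is to mirror the proof of Theorem \ref{thglobal}, observing that the modified viscous stress tensor preserves the essential structural properties that drive the global existence argument. Expanding $\text{div}\,\mathbb{T}$ for the new tensor yields
\begin{equation*}
\text{div}\,\mathbb{T} = -\rho^{\delta}\widetilde{L}u + \nabla\rho^{\delta}\cdot\widetilde{\mathbb{S}}(u),
\end{equation*}
with $\widetilde{L}u = -2\alpha\Delta u - \beta\nabla\text{div}\,u$ and $\widetilde{\mathbb{S}}(u) = 2\alpha\nabla u + \beta\,\text{div}\,u\,\mathbb{I}_{3}$. This has exactly the same form as the decomposition in (\ref{eq:1.1mo}), just with different constants in the elliptic and source pieces.

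First I would verify that $\widetilde{L}$ remains coercive under $\alpha>0$ and $2\alpha+3\beta\geq 0$: the pointwise identity $(2\alpha\nabla u + \beta\,\text{div}\,u\,\mathbb{I}_{3}):\nabla u = 2\alpha|\nabla u|^{2}+\beta|\text{div}\,u|^{2}$, combined with $|\text{div}\,u|^{2}\leq 3|\nabla u|^{2}$, gives non-negativity of the quadratic form. With this in hand, the ``quasi-symmetric hyperbolic''--``degenerate elliptic'' reformulation (\ref{li46}) continues to hold after substituting $\widetilde{\mathbb{L}}=(0,\widetilde{L}u)^{\top}$ and $\widetilde{Q}(u)=\tfrac{\delta}{\delta-1}\widetilde{\mathbb{S}}(u)$; crucially, the hyperbolic matrices $A_{j}(U)$ and the transport equation for $\varphi=\rho^{(\delta-1)/2}$ are unaffected since they depend only on the equation of state.

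Because the reformulation has the same structural form, the local well-posedness of Theorem \ref{th1}, the singularity-formation results of Theorems \ref{th2}--\ref{th3}, and the entire bootstrap argument behind Theorem \ref{thglobal} transfer line by line. In particular, conditions $(P_{1})$--$(P_{3})$ retain their meaning, as they concern only $\alpha,\beta,\gamma,\delta$ through the trace/principal-order balance that survives the modification, and the explicit constants $M_{1},M_{2}$ appearing in Remark \ref{P0} are updated simply by replacing the original elliptic coefficients $(\alpha,\alpha+\beta)$ with $(2\alpha,\beta)$.

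The main obstacle will be careful bookkeeping in the higher-order energy estimates, especially in handling nonlinear source terms of the type $\text{div}(\nabla^{k}\rho^{\delta}\widetilde{\mathbb{S}}(u))$. Every identity that was integrated by parts using the specific algebraic form of the original $\mathbb{S}(u)$ must be recomputed with $\widetilde{\mathbb{S}}(u)$; because $\widetilde{\mathbb{S}}(u)$ is a polynomial of the same total degree in $\nabla u$ and the elliptic coercivity is preserved, each such estimate goes through with possibly modified but explicitly computable constants, confirming that the entire well-posedness theory of Theorem \ref{thglobal} is preserved in this setting.
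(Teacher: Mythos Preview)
Your proposal is correct and matches the paper's approach: the paper states Theorem~\ref{thglobal2} without a separate proof, relying implicitly on the observation that the reformulated system (\ref{li46}) retains exactly the same structure with $\widetilde L$ and $\widetilde Q$ in place of $L$ and $Q$. One small sharpening worth noting: the replacement $(\alpha,\alpha+\beta)\mapsto(2\alpha,\beta)$ actually leaves the two decisive combinations $2\alpha+\beta$ and $2\alpha+3\beta$ \emph{unchanged}, so the constants $M_1,M_2$ in Remark~\ref{P0}, the condition $(P_1)$, and the quadratic-form analysis of \S5.3 carry over verbatim rather than merely with modified constants.
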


\begin{theorem}\label{thglobal3}
Let (\ref{canshu}) hold with $\beta=0$. If the viscous term $\text{div}\mathbb{T}$ is given  by
$$
\alpha \rho^\delta\triangle u,
$$
then under  the  initial conditions shown in Theorem \ref{thglobal},  there exists a   unique regular solution $(\rho, u)$ in $[0,T]\times \mathbb{R}^3$  to  the Cauchy problem  (\ref{eq:1.1})-(\ref{eq:1.2}) with (\ref{initial})-(\ref{far}).

Moreover, if $1<\min(\gamma,\delta)\leq 3$, the  solution $(\rho, u)$  solves the   Cauchy problem  (\ref{eq:1.1})-(\ref{eq:1.2}) with (\ref{initial})-(\ref{far})  in $[0,T]\times \mathbb{R}^3$ classically.
\end{theorem}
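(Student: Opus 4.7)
\textbf{Proof plan for Theorem \ref{thglobal3}.} The plan is to show that the machinery developed for Theorem \ref{thglobal} applies \emph{a fortiori} to this scalar-Laplacian model, and in fact closes without any analogue of the structural hypotheses $(P_1)$--$(P_3)$. Starting from the momentum equation with viscous term $\alpha\rho^\delta\triangle u$ and dividing by $\rho$ on $\{\rho>0\}$ yields
\begin{equation*}
u_t + u\cdot\nabla u + \frac{A\gamma}{\gamma-1}\nabla\rho^{\gamma-1} = \alpha\rho^{\delta-1}\triangle u.
\end{equation*}
The reformulated system analogous to (\ref{li46}) therefore keeps the transport equation for $\varphi=\rho^{\frac{\delta-1}{2}}$ and the symmetric hyperbolic block for $U=(\phi,u)^\top$ unchanged, but the first-order source $\mathbb{H}(\varphi)\cdot\mathbb{Q}(u)$ is completely absent, and the degenerate elliptic contribution $\varphi^{2}\mathbb{L}(u)$ collapses to $-\alpha\varphi^{2}(0,\triangle u)^\top$. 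This removal of the $\nabla\rho^\delta\cdot\mathbb{S}(u)$ coupling is exactly why the hypotheses $(P_1)$--$(P_3)$, designed in Theorem \ref{thglobal} to neutralize that cross interaction, are not required here.

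I would first reproduce the local well-posedness of Theorem \ref{th1} via the iteration scheme of \cite{zhu}, starting from an approximation in which $\rho$ is bounded below and passing to the limit. The weighted bound $\rho^{\frac{\delta-1}{2}}\nabla^4 u\in L^2([0,T];L^2)$ is obtained by testing the velocity equation against $\triangle^3 u$, using the standard $L^2$ elliptic theory of $-\triangle$, and estimating the commutators $[\nabla^k,\rho^{\delta-1}]\triangle u$ by interpolation against $\|\rho^{\frac{\delta-1}{2}}\|_3$. The absence of any $\nabla\text{div}\,u$ or $(\nabla u)^\top$ terms notably shortens these estimates compared with the full $Lu$ case.

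For the global-in-time extension, I would adopt the splitting $u=\widehat{u}+w$ with $\widehat{u}$ solving (\ref{eq:approximation}). The spectral positivity assumption $\text{Dist}(\text{Sp}(\nabla u_0),\mathbb{R}_-)\geq\kappa$ forces $\nabla\widehat{u}$ to behave like $(1+t)^{-1}$ along the characteristics of the Burgers system, which drives $\rho(t,\cdot)$ toward vacuum at an algebraic rate and yields a time-decaying smallness of $\|\rho^{\frac{\gamma-1}{2}}(t)\|_3+\|\rho^{\frac{\delta-1}{2}}(t)\|_3$. Energy estimates on $w$ up to the $H^3$ level then produce the coercive dissipation $\alpha\int \rho^{\delta-1}|\triangle w|^2$, while the nonlinear error terms, being multiplied either by negative powers of $(1+t)$ or by the small constant $D_0$ from $(A_1)$, are closed via a Gronwall loop.

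The main obstacle I anticipate is that, although the troublesome first-order source is gone, the coefficient $\rho^{\delta-1}$ in front of $\triangle u$ still degenerates near the vacuum boundary, so uniform elliptic estimates are unavailable and the control of $\nabla^k u$ for $k=2,3$ must again be extracted from the weighted bound $\rho^{\frac{\delta-1}{2}}\nabla^{k+1}u\in L^2$ combined with Hardy-type inequalities adapted to the evolving vacuum region; tracking the constant $D_0$ through this limited elliptic feedback is the delicate point. Once the global $H^3$ bound is in place, uniqueness and the classical regularity upgrade for $1<\min(\gamma,\delta)\leq 3$ follow from the same Sobolev embeddings as in Theorem \ref{thglobal}.
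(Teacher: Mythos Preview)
Your proposal is correct and follows essentially the same route as the paper's Section 9: the decisive observation is that with $\text{div}\,\mathbb{T}=\alpha\rho^\delta\triangle u$ the first-order source $\mathbb{H}(\varphi)\cdot\mathbb{Q}(u)$ drops out of the reformulated system, so the weighted energy machinery of Section 5 closes with $Q_k\equiv 0$ and the ODE comparison of Proposition \ref{ode1} yields global bounds without invoking any of $(P_1)$--$(P_3)$. Two small corrections: the dissipation produced by integrating $-\varphi^2\triangle(\nabla^k w)$ against $\nabla^k w$ is $\alpha|\varphi\nabla^{k+1}w|_2^2$, not $\alpha\int\rho^{\delta-1}|\triangle w|^2$; and no Hardy-type inequalities near the vacuum boundary are needed---the degeneracy is absorbed entirely into the time-weighted norm $Z(t)$ of (\ref{fg}) and the resulting scalar differential inequality (Lemma \ref{Cl2} in the paper, with two choices of the weight exponent $m$ covering all $\gamma>1$).
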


The rest of the paper is organized as follows: In \S $2$,  we show some decay estimates for the classical solutions of the multi-dimensional Burgers equations and the global well-posedenss of an ordinary differential equation, which will be used later. In \S $3$,  we prove the finite time singularity formation  in  Theorems \ref{th2}-\ref{th3}.   \S $4$-\S$8$ are devoted to establishing the global well-posedness of regular solutions stated in Theorem \ref{thglobal}.  We start with the reformulation of the original problem (\ref{eq:1.1})-(\ref{far})  as (\ref{li47-1}) in terms of the new variables, and establish the local-in-time well-posedness of smooth solutions to (\ref{li47-1}) in the case that the initial density is compactly supported in \S 4.  Note that one cannot apply the result in  \cite{zhu} directly here,  the initial velocity  here does not have a uniform upper bound in the whole space and  stays in a homogenous sobolev space.  It is also worth pointing out  that recently, Li-Wang-Xin \cite{ins} prove that  classical solutions with finite energy to the Cauchy problem of the compressible Navier-Stokes systems with constant viscosities do not exist in general    inhomogeneous Sobolev space for any short time, which indicates in particular that the homogeneous Sobolev space is crucial as studying the well-posedness (even locally in time) for the Cauchy problem of the compressible Navier-Stokes systems in the presence of vacuum. This local smooth solution to (\ref{li47-1}) is shown to be global in time in 
\S 5-\S 6 by deriving  a uniform (in time) a priori estimates independent of the size of the initial density's support through energy methods based on suitable choice of time weights. Here,  we have employed some  arguments due to  Grassin \cite{MG} and  Serre \cite{danni}   to deal with the nonlinear convection term $u\cdot \nabla u$. Next, the assumption that the initial density has compact support is removed in \S 7. Finally, in \S 8, the proof of Theorem \ref{thglobal} is completed by making use of the results obtained in \S 7.  Furthermore, we  give an appendix to list some lemmas that are  used in our proof, and outline   some proofs of properties shown in \S 2 and some inequalities  used frequently in this paper.

\section{Preliminary}

This section will be devoted to show some decay estimates for the classical solutions of the multi-dimensional Burgers equations and the global well-posedenss of an ordinary differential equation, which will be used frequently in our proof.

First,
let $\widehat{u}$ be the solution to the   problem (\ref{eq:approximation}) in    d-dimensional space. Then,  along the particle path $X(t;x_0)$ defined as
\begin{equation}\label{eq:1.5}
\frac{d}{d t}X(t;x_0)= \widehat{u}(t, X(t;x_0)), \quad x(0;  x_0)=x_0,
\end{equation}
$\widehat{u}$ is a constant in t: $\widehat{u}(t,X(t;x_0))=u_0(x_0)$ and
$$
\nabla  \widehat{u}(t,X(t;x_0))=\big(\mathbb{I}_d+t\nabla u_0(x_0)\big)^{-1}\nabla u_0(x_0).
$$

Based on this observation, one can have the following  decay estimates of $ \widehat{u}$, which play important roles in  establishing the global existence of the smooth solution to the  problem considered in this paper, and their proof  could be found in \cite{MG} or  the  appendix.
\begin{proposition}\cite{MG}\label{p1} Let $m>1+\frac{d}{2}$. Assume that
$$\nabla u_0\in L^\infty( \mathbb{R}^d),\quad \nabla^2 u_0\in H^{m-1}( \mathbb{R}^d),$$
 and  there exists a constant  $\kappa>0$ such that for all $x\in \mathbb{R}^d$,
$$
\text{Dist}\big(\text{Sp}( \nabla u_0(x)), \mathbb{R}_{-} \big)\geq \kappa,
$$
then there exists a unique global  classical solution $ \widehat{u}$ to the  problem \eqref{eq:approximation}, which satisfies
\vspace{0.1cm}
\begin{itemize}
\item[(1)]$\nabla \widehat{u}(t, x)= \frac{1}{1+t}\mathbb{I}_d+\frac{1}{(1+t)^2}K(t,x),\quad \text{for \ all} \quad x\in \mathbb{R}^d,\quad t\geq 0$;\\[1pt]
\item[(2)]$
\|\nabla^l \widehat{u}(\cdot, t)\|_{L^2(\mathbb{R}^d)}\leq C_{0,l} (1+ t)^{\frac{d}{2}-(l+1)},\quad \text{for} \quad  2\leq l\leq m+1$;\\[1pt]
\item[(3)]$
\|\nabla^2 \widehat{u}(\cdot, t)\|_{L^\infty(\mathbb{R}^d)}\leq C_0(1+ t)^{-3}\|\nabla^2 u_0\|_{L^\infty(\mathbb{R}^d)}$,
\end{itemize}
where the matrix $K(t,x)=K_{ij}:  \mathbb{R}^+\times \mathbb{R}^d  \rightarrow M_d(\mathbb{R}^d)$  satisfies
$$\|K\|_{L^{\infty}(\mathbb{R}^+\times\mathbb{R}^d)}\leq C_0\big(1+\kappa^{-d}\|\nabla u_0\|^{d-1}_{L^\infty(\mathbb{R}^d)}\big).$$ Here, $C_{0}$ is a constant depending only on $m$, $d$, $\kappa$ and $u_0$, and $C_{0,l}$ are all constants depending on $C_0$ and $l$.
Moreover, if  $u_0(0)=0$,  then it holds  that
\begin{equation}\label{linear relation}
| \widehat{u}(t,x)|\leq |\nabla \widehat{u}|_\infty |x|,\quad \text{for\ \ any } \quad t\geq 0.
\end{equation}
\end{proposition}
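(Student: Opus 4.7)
The plan is to use the method of characteristics to obtain an explicit formula for $\nabla \widehat u$ along the flow, and then deduce (1)--(3) by pointwise manipulation plus a change of variables. First, I would define the characteristic flow $X(t;x_0)$ by $\dot X = \widehat u(t,X)$, $X(0)=x_0$; since $\widehat u$ is constant along characteristics, $\widehat u(t,X(t;x_0)) = u_0(x_0)$ and hence the flow is affine in $t$, namely $X(t;x_0) = x_0 + t\,u_0(x_0)$. Differentiating the Burgers equation once in $x$, the matrix $A(t;x_0) := \nabla \widehat u(t,X(t;x_0))$ satisfies the Riccati ODE $A' + A^2 = 0$ with $A(0)=A_0 := \nabla u_0(x_0)$, which integrates explicitly to
\begin{equation*}
A(t;x_0) = A_0(I_d + tA_0)^{-1} = (I_d + tA_0)^{-1} A_0.
\end{equation*}

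Next, the spectrum condition $\mathrm{Dist}(\mathrm{Sp}(A_0),\mathbb{R}_-)\geq \kappa$ guarantees that $-1/t \notin \mathrm{Sp}(A_0)$ for any $t>0$, so $I_d + tA_0$ is invertible for every $t\geq 0$. A resolvent estimate (combining Cayley--Hamilton with the pointwise distance hypothesis) produces the uniform bound $\|(1+t)(I_d + tA_0)^{-1}\| \lesssim 1 + \kappa^{-d} |\nabla u_0|_\infty^{d-1}$. Rewriting
\begin{equation*}
(1+t)^2 A(t;x_0) - (1+t) I_d = (1+t)(I_d + tA_0)^{-1}(A_0 - I_d)
\end{equation*}
immediately gives the decomposition (1) with the asserted $L^\infty$ bound on $K$. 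Furthermore, $\det(I_d + tA_0) \geq c(1+t)^d$ for some $c=c(\kappa,|\nabla u_0|_\infty)>0$, so $x_0 \mapsto X(t;x_0)$ is a global $C^1$ diffeomorphism of $\mathbb{R}^d$ for every $t\geq 0$; this in turn gives global existence and uniqueness of the classical solution $\widehat u$ by inverting the map.

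Higher derivatives are controlled inductively. Differentiating Burgers' equation $l$ times produces, along the flow, an ODE of the schematic form $\frac{d}{dt}(\nabla^l \widehat u \circ X) = -c_l\, A \cdot (\nabla^l \widehat u \circ X) + R_l$, where the principal linear term involves $A$ and the remainder $R_l$ is polynomial in $A, \nabla^2 \widehat u \circ X, \dots, \nabla^{l-1} \widehat u \circ X$. Since $A \sim (1+t)^{-1} I_d$ by (1), the homogeneous part integrates to give $(1+t)^{-c_l}$ decay; in particular for $l=2$ the forcing $-A\cdot A$ is quadratic in $A$, yielding the exact $(1+t)^{-3}$ rate of item (3), and by induction one obtains the pointwise bound $|\nabla^l \widehat u(t,X(t;x_0))| \lesssim (1+t)^{-(l+1)}$ (times data evaluated at $x_0$). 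Reverting to the $x$ variable through the diffeomorphism introduces a Jacobian of size $(1+t)^{-d}$, so the $L^2$ norm picks up a factor of $(1+t)^{d/2}$, giving item (2). Finally, (\ref{linear relation}) is immediate: under $u_0(0)=0$ the origin is a fixed point of the flow, so $\widehat u(t,0) = 0$, and the fundamental theorem of calculus applied to $\tau \mapsto \widehat u(t,\tau x)$ gives $|\widehat u(t,x)| \leq |\nabla \widehat u|_\infty |x|$. The main obstacle is the uniform resolvent bound on $(1+t)(I_d + tA_0)^{-1}$ in dimension $d$ for possibly non-diagonalisable $A_0$: the scalar case is trivial, but the vector case requires using the distance condition simultaneously on every eigenvalue and produces the polynomial dependence on $\kappa^{-1}$ and $|\nabla u_0|_\infty$ seen in the final constants.
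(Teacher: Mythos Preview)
Your proposal is correct and follows essentially the same route as the paper's appendix proof: characteristics give $X(t;x_0)=x_0+tu_0(x_0)$ and the explicit Riccati solution $\nabla\widehat u\circ X=(I_d+tG_0)^{-1}G_0$, the spectrum condition yields the adjugate/resolvent bound producing (1) and the estimate on $K$, higher derivatives are handled inductively with the change of variables supplying the $(1+t)^{d/2}$ factor for (2), the explicit second-derivative formula gives (3), and the mean-value argument gives \eqref{linear relation}. The only cosmetic differences are that you bound $K$ directly via $(1+t)(I_d+tA_0)^{-1}(A_0-I_d)$ whereas the paper uses a large-$t$ Neumann expansion in $t^{-1}G_0^{-1}$, and you run the higher-order induction through the differentiated ODE along the flow while the paper differentiates the closed formula for $H(t,x_0)$ in $x_0$ and converts via the chain rule; both lead to the same product structure, which the paper closes in $L^2$ using Gagliardo--Nirenberg on $\prod_j(\nabla^j G_0)^{\beta_j}$ with $\sum j\beta_j=k$---you should make that step explicit when passing from the pointwise to the $L^2$ bound.
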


Secondly, we  give a global well-posedness  to the Cauchy problem of an ordinary differential  equation:
\begin{proposition}\label{ode1}
For the constants $b$, $C_i$ and $D_i$ $(i=1,2)$ satisfying
\begin{equation}\label{zhibiao1}
a>1, \quad D_1-(a-1)b<-1,\quad D_2<-1, \quad C_i\geq 0,  \quad \text{for} \quad i=1,2,
\end{equation}
there exists a constant $\Lambda$ such that there exists a global smooth solution to the  following  Cauchy problem
\begin{equation}\label{Beq:2.16A1}
\begin{cases}
\begin{split}
&\frac{\text{d} Z}{\text{d} t}(t)+\frac{b }{1+t}Z(t)=C_1(1+t)^{D_1}Z^a(t)+C_2(1+t)^{D_2}Z,\\[6pt]
&Z(x, 0)=Z_{0}<\Lambda.
\end{split}
\end{cases}
\end{equation}
\end{proposition}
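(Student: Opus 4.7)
The plan is to reduce (\ref{Beq:2.16A1}) to a linear ODE via two successive changes of unknown, and then exploit the integrability hypotheses $D_1-(a-1)b<-1$ and $D_2<-1$ to force the solution to remain bounded for all time, provided the initial datum is sufficiently small.

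First, I would remove the linear damping by the substitution
\[
W(t)=(1+t)^{b}Z(t).
\]
A direct computation, using $Z=(1+t)^{-b}W$, converts (\ref{Beq:2.16A1}) into
\[
\frac{\mathrm{d}W}{\mathrm{d}t}=C_1(1+t)^{\widetilde{D}_1}W^{a}+C_2(1+t)^{\widetilde{D}_2}W,\qquad W(0)=Z_0,
\]
where $\widetilde{D}_1:=D_1-(a-1)b<-1$ and $\widetilde{D}_2:=D_2<-1$ by the hypothesis (\ref{zhibiao1}). Next, since $a>1$ and we expect $W>0$ along the flow, I would apply the Bernoulli change of unknown
\[
V(t)=W(t)^{1-a}.
\]
A short calculation (valid as long as $W>0$) turns the Bernoulli equation for $W$ into the \emph{linear} ODE
\[
\frac{\mathrm{d}V}{\mathrm{d}t}+(a-1)C_2(1+t)^{\widetilde{D}_2}V=-(a-1)C_1(1+t)^{\widetilde{D}_1},\qquad V(0)=Z_0^{\,1-a}.
\]

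The third step is to solve this linear ODE by the integrating factor
\[
\mu(t)=\exp\!\left((a-1)C_2\int_0^{t}(1+s)^{\widetilde{D}_2}\,\mathrm{d}s\right).
\]
Because $\widetilde{D}_2<-1$, the integral is finite as $t\to+\infty$, so $1\le \mu(t)\le M_\mu$ for a finite constant $M_\mu=M_\mu(a,C_2,\widetilde{D}_2)$. Thus
\[
\mu(t)V(t)=Z_0^{\,1-a}-(a-1)C_1\int_0^{t}\mu(s)(1+s)^{\widetilde{D}_1}\,\mathrm{d}s,
\]
and, using $\widetilde{D}_1<-1$, the last integral is bounded by a constant $K=K(a,C_1,C_2,\widetilde{D}_1,\widetilde{D}_2)$ uniformly in $t\ge 0$. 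Hence
\[
\mu(t)V(t)\ge Z_0^{\,1-a}-K\qquad\text{for all }t\ge 0.
\]

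Finally, I would choose the threshold
\[
\Lambda:=(2K)^{-1/(a-1)},
\]
so that $Z_0<\Lambda$ gives $Z_0^{\,1-a}>2K$ and therefore $\mu(t)V(t)\ge K>0$ for all $t\ge 0$. Consequently $V(t)$ stays strictly positive and bounded away from zero, so $W(t)=V(t)^{1/(1-a)}$ is well defined, positive, smooth on $[0,\infty)$, and $Z(t)=(1+t)^{-b}W(t)$ is the desired global smooth solution of (\ref{Beq:2.16A1}). Uniqueness on $[0,\infty)$ follows from standard Picard-Lindel\"of arguments applied locally. The only delicate point is the bookkeeping that ensures $W$ never hits zero (so that the Bernoulli substitution is legitimate on the whole half-line), but this is guaranteed by the lower bound $\mu(t)V(t)\ge K$ just derived; everything else is routine manipulation of linear ODEs.
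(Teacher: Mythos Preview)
Your argument is correct and is essentially the same as the paper's: both solve the Bernoulli equation explicitly via the substitution $V=Z^{1-a}$ (you split this into two steps, first removing the $b/(1+t)$ damping and then applying Bernoulli, while the paper does it in one pass), and then use the integrability conditions $D_1-(a-1)b<-1$ and $D_2<-1$ to show that the denominator in the resulting formula stays positive for small $Z_0$. One small wording slip: the lower bound $\mu(t)V(t)\ge K>0$ you derive guarantees that $W$ does not \emph{blow up} (since $V\to 0^+$ corresponds to $W\to+\infty$), not that $W$ stays away from zero; the latter follows instead from $\mu(t)V(t)\le Z_0^{1-a}$ together with $\mu\ge 1$, or simply from $W'\ge 0$.
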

Its proof can be found in the  appendix.

For simplicity,  we use the following notations.  For  matrices $A_1, A_2, A_3, B=(b_{ij})=(\textbf{b}_1,\textbf{b}_2,\textbf{b}_3)$, a vector $N=(n_1,n_2,n_3)^\top$,   set $A=(A_1,A_2, A_3)$,
\begin{equation}\begin{cases}\label{E:1.34}\displaystyle
\text{div}A=\sum_{j=1}^3 \partial_jA_j,\quad W\cdot B W=\sum_{i,j=1}^3 b_{ij}w_i w_j,\\[8pt]
\displaystyle
 |B|_2^2=B: B=\sum_{i,j=1}^3 b^2_{ij},\quad
N \cdot B=n_1 \textbf{b}_1+n_2  \textbf{b}_2+n_3\textbf{b}_3.
\end{cases}
\end{equation}

\section{Singularity formation}
In order to prove  Theorem \ref{th2},   we define:
\begin{align*}
m(t)=&\int \rho \quad \textrm{(total mass)},\quad
E_k(t)=\frac{1}{2}\int \rho|u|^{2} \quad \textrm{ (total kinetic energy)}.
\end{align*}

Then the regular solution 
$ (\rho,u)(t,x)$ in $[0,T]\times \mathbb{R}^3$ defined in Theorem \ref{th1} has finite mass $m(t)$,  momentum $\mathbb{P}(t)$ and  kinetic energy $E_k(t)$.
Indeed, due to $1<\gamma \leq 2$,  
$$
m(t)=\int \rho  \leq C\int \phi^{\frac{2}{\gamma-1}}  \leq C|\phi|^2_2<+\infty,
$$
which, together with the regularity shown in Theorem \ref{th1}, implies that
\begin{equation}\label{finite}
\begin{split}
\mathbb{P}(t)=&\int \rho u  \leq  |\rho|_2|u|_2  <+\infty,\\
E_k(t)=&\int \frac{1}{2}\rho|u|^2  \leq C|\rho|_\infty|u|^2_2<+\infty.
\end{split}
\end{equation}
The case for $1<\delta \leq 2$ can be verified similarly.

Next it is shown that the total mass and momentum are conserved.
\begin{lemma}
\label{lemmak} Let $1< \min\{\gamma,  \delta\} \leq 2$,  and  $(\rho,u)$ be the regular solution obtained in Theorem \ref{th1} with	
$|\mathbb{P}(0)|>0$, then
$$\mathbb{P}(t)=\mathbb{P}(0), \quad  m(t)= m(0), \quad \text{for} \quad t\in [0,T]. $$
\end{lemma}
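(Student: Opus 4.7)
The plan is to obtain both conservation laws by testing the continuity and momentum equations against a spatial cut-off $\varphi_R\in C_c^\infty(\mathbb{R}^3)$ with $\varphi_R\equiv 1$ on $B_R$, $\varphi_R\equiv 0$ outside $B_{2R}$, and $|\nabla\varphi_R|\le C/R$, and then letting $R\to\infty$. Integrating $(\ref{eq:1.1})_1$ against $\varphi_R$ in space and time gives
\begin{equation*}
\int \rho(t)\varphi_R-\int\rho(0)\varphi_R=\int_0^t\!\!\int \rho u\cdot\nabla\varphi_R\,\dif s,
\end{equation*}
while a similar test of $(\ref{eq:1.1})_2$ yields
\begin{equation*}
\int (\rho u)(t)\varphi_R-\int(\rho u)(0)\varphi_R=\int_0^t\!\!\int\Bigl[(\rho u\otimes u)+P\,\mathbb{I}_3-\mathbb{T}\Bigr]\nabla\varphi_R\,\dif s.
\end{equation*}
Thus the whole task reduces to showing that each flux integrand on the right lies in $L^1(\mathbb{R}^3)$, uniformly in $t\in[0,T]$, so that the right-hand sides vanish in the $R\to\infty$ limit.

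The key point is that the hypothesis $1<\min\{\gamma,\delta\}\le 2$ makes every needed flux integrable. From Theorem \ref{th1}, $\rho^{(\gamma-1)/2},\rho^{(\delta-1)/2}\in H^3\hookrightarrow L^2\cap L^\infty$ and $u\in H^3\hookrightarrow L^2\cap L^\infty$. Exactly as done for $m(t)$ in \eqref{finite}, the bound $\gamma\le 2$ gives $\rho\in L^1\cap L^\infty$, hence $P=A\rho^\gamma\in L^1$ and $\rho u\otimes u\in L^1$ by H\"older. For the viscous stress, $\delta\le 2$ together with $\rho^{(\delta-1)/2}\in L^2\cap L^\infty$ yields $\rho^\delta\in L^2$, so $\mathbb{T}\in L^1$ via $|\mathbb{T}|_1\le C|\rho^\delta|_2|\nabla u|_2$. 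Similarly $\rho u\in L^1$ follows from $|\rho u|_1\le|\rho|_2|u|_2$.

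With these integrability facts in hand, each flux term is controlled by
\begin{equation*}
\left|\int_{B_{2R}\setminus B_R} F\cdot\nabla\varphi_R\right|\le\frac{C}{R}\,|F|_{L^1(B_{2R}\setminus B_R)}\longrightarrow 0\quad(R\to\infty),
\end{equation*}
using the tail-vanishing of any $L^1$ function, applied to $F=\rho u$, $\rho u\otimes u$, $P$ and $\mathbb{T}$. The left-hand sides converge to $m(t)-m(0)$ and $\mathbb{P}(t)-\mathbb{P}(0)$ respectively by dominated convergence, since $|\rho\varphi_R|\le\rho\in L^1$ and $|\rho u\varphi_R|\le|\rho u|\in L^1$. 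This yields $m(t)=m(0)$ and $\mathbb{P}(t)=\mathbb{P}(0)$ for all $t\in[0,T]$.

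The only subtlety is the uniformity in $t$ of the above $L^1$ bounds; I expect the main (mild) obstacle to be a clean time-continuity justification for the mapping $t\mapsto\int\rho(t)\varphi_R$, but this follows from $\rho_t=-\dd(\rho u)$ and the regularity class $\rho\in C([0,T];H^{s'}_{loc})$ from Theorem \ref{th1}, which makes $t\mapsto\int\rho(t)\varphi_R$ absolutely continuous with derivative equal to the flux term above. No genuinely new estimate is required beyond the regularity already furnished by Theorem \ref{th1} and the range restriction on $\gamma,\delta$.
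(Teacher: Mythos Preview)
Your argument is correct and follows essentially the same idea as the paper's: both rest on the integrability of the flux terms $\rho u$, $\rho u\otimes u$, $P$, and $\mathbb{T}$, which comes from the $H^3$ regularity together with the exponent restriction. The paper phrases this as membership in $W^{1,1}(\mathbb{R}^3)$ so that the divergence terms integrate to zero directly; your cut-off argument is simply the explicit version of the same step.

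One small point to tighten: the hypothesis $1<\min\{\gamma,\delta\}\le 2$ guarantees only that \emph{one} of $\gamma,\delta$ is at most $2$, not both, whereas your write-up invokes $\gamma\le 2$ to get $\rho\in L^1$ and separately $\delta\le 2$ to get $\rho^\delta\in L^2$. The fix is immediate: whichever exponent is $\le 2$ already yields $\rho\in L^1\cap L^\infty$ (via $\phi$ if $\gamma\le 2$, via $\varphi$ if $\delta\le 2$), and once $\rho\in L^1\cap L^\infty$ you have $\rho^p\in L^1$ for every $p\ge 1$; in particular $\rho^\gamma\in L^1$ and $\rho^{2\delta}\in L^1$, hence $\rho^\delta\in L^2$, with no further condition on the other exponent.
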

\begin{proof}
The momentum equations imply that
\begin{equation}\label{deng1}
\mathbb{P}_t=-\int \text{div}(\rho u \otimes u)-\int \nabla P+\int \text{div}\mathbb{T}=0,
\end{equation}
where one has used the fact that
$$
\rho u^{(i)}u^{(j)},\quad \rho^\gamma \quad \text{and} \quad \rho^\delta \nabla u \in W^{1,1}(\mathbb{R}^3),\quad \text{for} \quad i,\ j=1,\ 2,\ 3,
$$
due to the regularities of the  solutions.

Similarly, one can also get the conservation of the total mass.
\end{proof}

Now we are ready to prove Theorem \ref{th2}.
It follows from  Lemma \ref{lemmak}  that 
\begin{equation}\label{rty}
\begin{split}
|\mathbb{P}(0)|\leq\int  \rho(t,x) |u|(t,x)
  \leq \sqrt{2}m^{\frac{1}{2}}(t)E^{\frac{1}{2}}_k(t)=\sqrt{2}m^{\frac{1}{2}}(0)E^{\frac{1}{2}}_k(t),
\end{split}
\end{equation}
which yields that there exists a unique positive lower bound for $E_k(t)$,
\begin{equation}\label{rty1}
E_k(t)\geq \frac{|\mathbb{P}(0)|^2}{2m(0)}> 0  \quad \text{for} \quad t\in [0,T].
\end{equation}
Thus one gets that 
$$
C_0\leq E_k(t)\leq \frac{1}{2} m(0)|u(t)|^2_\infty \quad \text{for} \quad t\in [0,T].
$$
Obviously, that there exists a positive constant $C_u$ such that
$$
|u(t)|_\infty\geq C_u  \quad \text{for} \quad t\in [0,T].
$$
Then Theorem \ref{th2} follows.

Finally, we prove Theorem \ref{th3}.
It follows from the definition of regular solutions given in Theorem \ref{th2}  that in the vacuum domain,  the velocity satisfies
$u_t+u\cdot\nabla u =0$,
which, along with the formula
$$\nabla u(t,X(t;x_0))=\big(\mathbb{I}_d+t\nabla u_0(x_0)\big)^{-1}\nabla u_0(x_0)$$
and  (\ref{eq:12131ss}), yields the desired conclusion.

\section{Local-in-time well-posedness with compactly supported density}
The rest of this paper is devoted to proving Theorem \ref{thglobal}. In this section, we first reformulate the original Cauchy problem (\ref{eq:1.1})-(\ref{far})  as (\ref{li47-1})  below in terms of some variables, and then  establish the local well-posedness of the  smooth solutions  to (\ref{li47-1})  in the case that the initial density has  compact support.  

Let $ \widehat{u}$ be the unique  classical solution to  (\ref{eq:approximation}) obtained in Proposition \ref{p1}. In terms of  the new  variables
$$
(\varphi, W=(\phi, w=u- \widehat{u}))=\left(\rho^{\frac{\delta-1}{2}},\sqrt{\frac{4A\gamma}{(\gamma-1)^2}}\rho^{\frac{\gamma-1}{2}}, u- \widehat{u}\right)
$$
with $w=(w^{(1)},w^{(2)},w^{(3)})^\top$,
the Cauchy problem  (\ref{eq:1.1})-(\ref{far}) can be reformulated  into
\begin{equation}\label{li47-1}
\begin{cases}
\displaystyle
\varphi_t+(w+ \widehat{u} )\cdot\nabla\varphi+\frac{\delta-1}{2}\varphi\text{div} (w+ \widehat{u} )=0,\\[10pt]
\displaystyle
W_t+\sum_{j=1}^3A^*_j(W,  \widehat{u}) \partial_j W+\varphi^2\mathbb{{L}}(w)=\mathbb{{H}}(\varphi)  \cdot \mathbb{{Q}}(w+ \widehat{u})+G(W, \varphi,  \widehat{u}),\\[10pt]
(\varphi,W)|_{t=0}=(\varphi_0,W_0)=(\varphi_0,\phi_0,0),\quad x\in \mathbb{R}^3,\\[10pt]
(\varphi,W)=(\varphi, \phi, w)\rightarrow (0,0,0) \quad\quad   \text{as}\quad \quad  |x|\rightarrow \infty \quad \text{for} \quad  t\geq 0,
 \end{cases}
\end{equation}
where
\begin{equation} \label{li47-2}
\begin{split}
\displaystyle
A^*_j(W,  \widehat{u}) =&\left(\begin{array}{cc}
w^{(j)}+ \widehat{u}^{(j)}&\frac{\gamma-1}{2}\phi e_j\\[10pt]
\frac{\gamma-1}{2}\phi e_j^\top &(w^{(j)}+ \widehat{u}^{(j)})\mathbb{I}_3
\end{array}
\right),\quad j=1,2,3,\\[10pt]
G(W, \varphi,  \widehat{u})=&-B(\nabla  \widehat{u},W)-D(\varphi^2,\nabla^2 \widehat{u}),\\[10pt]
 B(\nabla \widehat{u},W)=&\left(\begin{array}{c}
\frac{\gamma-1}{2}\phi \text{div} \widehat{u} \\[10pt]
(w\cdot\nabla)  \widehat{u}
\end{array}
\right),\quad
 D(\varphi^2,\nabla^2  \widehat{u})=\left(\begin{array}{c}0\\[10pt]
 \varphi^2 L  \widehat{u}\\
\end{array}\right),
\end{split}
\end{equation}
and $\mathbb{L}$, $\mathbb{H}$ and $\mathbb{Q}$ are given in (\ref{sseq:5.2qq}).

The main result in this section can be stated as follows:
\begin{theorem}\label{newjie} Let (\ref{canshu}) hold.
 If  initial data $( \varphi_0, \phi_0,u_0)$ satisfy
\begin{itemize}
\item[$(\rm A_1)$] $ \varphi_0\geq 0$,  $\phi_0\geq 0$ and $\big(\varphi_0, \phi_0\big)\in H^3$;
\item[$(\rm A_2)$] $u_0\in \Xi$ and  there exists a constant  $\kappa>0$ such that for all $x\in \mathbb{R}^3$:
$$
\text{Dist}\big(\text{Sp}( \nabla u_0(x)), \mathbb{R}_{-} \big)\geq \kappa;
$$
\item[$(\rm A_3)$] $\varphi_0$ and $\phi_0$ are both compactly supported: $\supp_x \varphi_0=\supp_x \phi_0 \subset B_{R} $;
\end{itemize}
where $B_{R}$ is  the ball centered at the origin with radius $R>0$, then there exist a time $T_*=T_*(\alpha,\beta,  A,\gamma,\delta, \varphi_0,W_0)>0$ independent of $R$ and a unique classical solution $(\varphi, \phi,w)$ in $[0,T_*]\times \mathbb{R}^3$ to   (\ref{li47-1}) satisfying
\begin{equation*}\begin{split}
 (\varphi,\phi) \in C([0,T_*]; H^3), \  w\in C([0,T_*]; H^{s'}) \cap L^\infty([0,T_*]; H^3),\  \varphi\nabla^4w \in L^2([0,T_*]; L^2),
\end{split}
\end{equation*}
for any constant $s'\in[2,3)$.
Moreover, for $t \in [0,T_*]$,
\begin{equation}\label{zhijijieguoA}
\begin{split}
\varphi(t,z(t;\xi_0))=\phi(t,z(t;\xi_0))=0, \quad \text{and} \quad  w(t,z(t;\xi_0))=0 \quad \text{for} \quad \xi_0 \in \mathbb{R}^3/ \supp \varphi_0,
\end{split}
\end{equation}
where  the curve $z(t;\xi_0)$ is given via
\begin{equation}\label{eq:quxianA}
\frac{d}{d t}z(t;\xi_0)=(w+ \widehat{u})(t, z(t;\xi_0)), \quad z(0;  \xi_0)=\xi_0.
\end{equation}

\end{theorem}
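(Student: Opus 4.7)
The plan is to construct the solution by a linearized iteration scheme with artificial viscosity, following the general framework of \cite{zhu} but adapted to accommodate the fact that $u_0$ itself need not be in $L^2$ (only $u_0 \in \Xi$). Concretely, I would freeze a known velocity $v$ (taken from the previous Picard iterate) in the transport nonlinearity and in the principal symbol $A^*_j$, add an artificial viscosity $\epsilon^2 Lw$ to the momentum equation to restore uniform ellipticity, and solve the resulting linear problem: a linear transport equation for $(\varphi,\phi)$ along the characteristics of $v+\widehat u$, and a linear parabolic system for $w$ with source $\mathbb H(\varphi)\cdot\mathbb Q(v+\widehat u)+G$. Standard linear theory for transport and linear parabolic systems then yields solvability at each iterate, and the decay bounds on $\widehat u$ provided by Proposition \ref{p1} ensure that the (non-integrable) background flow enters as a forcing whose $H^3$-norm is controllable in terms of $\|\nabla^2 u_0\|_{H^1}$ and $\|\nabla u_0\|_\infty$.

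The heart of the argument is a uniform a priori estimate for $(\varphi,\phi,w)$ in $L^\infty_t H^3$ together with $\varphi\nabla^4 w\in L^2_tL^2$, uniform in $\epsilon$, in the iteration index, and, crucially, in the support radius $R$. For this I would apply $\partial^k$ ($k\le 3$) to the reformulated system, pair the hyperbolic part with $\partial^k W$ using the quasi-symmetric structure of $A^*_j$ (which gives symmetrizable energy identities with no top-order loss), and absorb the viscous term $\varphi^2\mathbb L(w)$ as a nonnegative dissipation producing the weighted higher-order control $\varphi\nabla^4 w\in L^2_tL^2$. The transport equation is handled by commuting derivatives with $(v+\widehat u)\cdot\nabla$; the commutator estimates are the standard Moser/Kato--Ponce type but must be applied using the decomposition $v+\widehat u$ since only $v$ sits in an inhomogeneous Sobolev space. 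The source terms involving $\widehat u$ are estimated using items (1)--(3) of Proposition \ref{p1}; the apparent linear growth of $\widehat u$ is neutralized because every derivative of $\widehat u$ that appears is at least first order, hence bounded in the appropriate norm. This yields an estimate of the form $\mathcal E(t)\le\mathcal E(0)+C\int_0^t P(\mathcal E(s))\,ds$, giving a local existence time $T_*$ depending only on $\|(\varphi_0,\phi_0)\|_3$, $\|u_0\|_\Xi$, $\kappa$, and the parameters, but not on $R$.

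Next, I would pass to the limit $\epsilon\to 0$ and then close the iteration by a standard contraction argument in a lower-order norm, say $L^\infty_t L^2$ for $(\varphi,\phi,w)$, which gives convergence of the iterates to a strong solution, as well as uniqueness. The continuity in time $(\varphi,\phi)\in C([0,T_*];H^3)$ follows from the Bona--Smith trick applied to the transport equation, and $w\in C([0,T_*];H^{s'})$ for $s'<3$ follows from interpolation between $L^\infty_tH^3$ and the continuity in a weak norm, together with the equation itself. For the finite-propagation property \eqref{zhijijieguoA}, I would integrate the transport equation for $\varphi$ (and similarly for $\phi$) along the flow of $w+\widehat u$: outside $z(t;\supp\varphi_0)$ the characteristic originates in the vacuum region and thus $\varphi\equiv\phi\equiv 0$; then inside the vacuum region the momentum system reduces formally to $u_t+u\cdot\nabla u=0$, and since $\widehat u$ satisfies the same equation with $\widehat u|_{t=0}=u_0$, the uniqueness of the resulting ODE system along characteristics forces $w(t,z(t;\xi_0))=0$ for every $\xi_0\notin\supp\varphi_0$. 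The rigorous version of this uses the regularity of $w$ combined with $w|_{t=0}=0$ on the vacuum set.

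The main obstacle I anticipate is obtaining the $R$-independent a priori bound while simultaneously respecting the degenerate elliptic structure: the naive estimate picks up terms like $\int\nabla^3(\varphi^2 Lw)\cdot\nabla^3 w$ which do not have a clean sign and involve derivatives of $\varphi$ hitting $\nabla^2 w$. Controlling these requires careful integration by parts to rewrite the top-order viscous contribution as $\int|\varphi\nabla^4 w|^2$ plus manageable lower-order commutators, together with the Hardy-type inequality $|\nabla(\varphi^2)/\varphi|\lesssim|\nabla\varphi|$, so that $\nabla\varphi^2$ acting as a source in the $w$-equation is absorbed into the viscous dissipation without generating any constant depending on $R$ or on the vanishing rate of $\varphi_0$ at $\partial\supp\varphi_0$. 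Once this structural cancellation is identified, the remaining terms are standard.
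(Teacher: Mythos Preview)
Your outline is essentially the paper's own strategy: linearize, add artificial viscosity, derive $R$-independent $H^3$ bounds exploiting the quasi-symmetric structure of $A^*_j$ and the sign of the degenerate dissipation, pass to the limit, and close by contraction in $L^2$. The propagation-of-vacuum argument via characteristics is also the same.

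There is one technical device you omit that the paper needs and that your sketch glosses over. You write that ``standard linear theory for transport and linear parabolic systems then yields solvability at each iterate'', but the advection velocity $v+\widehat u$ is \emph{not} bounded: $\widehat u$ grows linearly at infinity (Proposition~\ref{p1} only bounds $\nabla\widehat u$). With the artificial viscosity present the $w$-equation is genuinely parabolic, so the solution does not inherit compact support from the data, and the standard references for linear parabolic systems in $H^s$ (the paper cites \cite{CK3,fu3}) require bounded coefficients. The paper handles this by introducing a second cutoff parameter $N$ and replacing $\widehat u$ by $\widehat u^N=\widehat u\,F(|x|/N)$ in the principal symbol $A^*_j$, solving the doubly regularized linear problem, then sending $\eta\to 0$ first and $N\to\infty$ second; only after both limits does the degenerate structure force $w$ to vanish along characteristics emanating from the vacuum, which in turn gives compact support and hence the upgrade from $H^k_{loc}$ time-derivative bounds to global ones. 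Your a priori estimates are unaffected (as you correctly note, only $\nabla\widehat u$ appears there), but the existence step for the linearized problem needs either this truncation or an explicit argument for linear parabolic systems with linearly growing drift.
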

The next three subsections will be devoted to prove  Theorem \ref{newjie}.

\subsection{Uniform a priori estimates for the linear problem}

In order to show the local well-posedenss for (\ref{li47-1}), we will consider the following  linearized approximate problem:
\begin{equation}\label{li4}
\begin{cases}
\displaystyle
\varphi_t+(v+ \widehat{u}^N )\cdot\nabla\varphi+\frac{\delta-1}{2}h\text{div} (v+ \widehat{u} )=0,\\[10pt]
\displaystyle
W_t+\sum_{j=1}^3A^{*}_j(V,  \widehat{u}^N) \partial_j W+(\varphi^2+\eta^2)\mathbb{{L}}(w)=\mathbb{{H}}(\varphi)  \cdot \mathbb{{Q}}(v+ \widehat{u})+G(W, \varphi,  \widehat{u}),\\[10pt]
\displaystyle
(\varphi,W)|_{t=0}=(\varphi_0,W_0)=(\varphi_0,\phi_0,0),\quad x\in \mathbb{R}^3,\\[10pt]
(\varphi,W)=(\varphi, \phi, w)\rightarrow (0,0,0) \quad\quad   \text{as}\quad \quad  |x|\rightarrow \infty \quad \text{for} \quad  t\geq 0,
 \end{cases}
\end{equation}
where
\begin{equation} \label{li47-2B}
\begin{split}
\displaystyle
A^{*}_j(V,  \widehat{u}^N) =&\left(\begin{array}{cc}
v^{(j)}+ \big(\widehat{u}^N\big)^{(j)}&\frac{\gamma-1}{2}\phi e_j\\[10pt]
\frac{\gamma-1}{2}\phi e_j^\top &\big(v^{(j)}+\big(\widehat{u}^N\big)^{(j)}\big)\mathbb{I}_3
\end{array}
\right),\quad j=1,2,3,
\end{split}
\end{equation}
with the vector $\widehat{u}^N=\widehat{u}F(|x|/N)$. $F(x)\in C^\infty_c(\mathbb{R}^3)$ is a  truncation function  satisfying
 \begin{equation}\label{eq:2.6-77A}
0\leq F(x) \leq 1, \quad \text{and} \quad F(x)=
 \begin{cases}
1 \;\qquad  \text{if} \ \ |x|\leq 1,\\[8pt]
0   \ \ \ \ \ \ \    \text{if} \ \   |x|\geq 2,
 \end{cases}
 \end{equation}
and   $N\geq 1$ is a sufficiently large constant.   $\eta>0$ is a constant,  and   $ V=\left(\psi, v\right)^\top$.
$(h, \psi)$ are both known functions and $v=(v^{(1)},v^{(2)},v^{(3)})^\top\in \mathbb{R}^3$ is a known vector satisfying:
\begin{equation}\label{vg}
\begin{split}
&(h, \psi, v)(0,x)=(\varphi_0,\phi_0, 0),\quad  h \in C([0,T]; H^3), \quad  \psi \in C([0,T]; H^3), \\
&  v\in C([0,T] ; H^{s'})\cap L^\infty([0,T]; H^3),\quad  h \nabla^4 v\in L^2([0,T] ; L^2),
\end{split}
\end{equation}
for any constant $s'\in[2,3)$.   
We also assume that for $t \in [0,T_*]$,
\begin{equation}\label{zhiji}
\begin{split}
\psi(t,X(t;\xi_0))=h(t,X(t;\xi_0))=0 \ \  \text{and} \quad v(t,X(t;\xi_0))=0 \ \  \text{for} \ \  \xi_0 \in \mathbb{R}^3/ \supp \varphi_0.
\end{split}
\end{equation}

Now  the following global well-posedness in $[0,T]\times \mathbb{R}^3$  of a classical solution $(\varphi^{N\eta}, W^{N\eta})=(\varphi^{N\eta},\phi^{N\eta}, w^{N\eta})$ to  (\ref{li4})  can be obtained by the standard theory \cite{CK3,fu3} at least when $0<\eta<+\infty$ and $1\leq N <+\infty$.

 \begin{lemma}\label{lem1}
Let $\eta>0$ and \text{(A1)}-\text{(A3)}  in Theorem \ref{newjie} hold.
Then there exists  a unique classical solution $(\varphi^{N\eta},W^{N\eta})$ in $[0,T]\times \mathbb{R}^3$ to  (\ref{li4})  satisfying
\begin{equation}\label{reggh}\begin{split}
&( \varphi^{N\eta},\phi^{N\eta}) \in C([0,T]; H^3), \  \    w^{N\eta}\in C([0,T]; H^{3}) \cap L^2([0,T]; H^4).
\end{split}
\end{equation}
\end{lemma}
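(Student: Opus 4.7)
The plan is to exploit the fact that once $(h,\psi,v)$ are treated as prescribed coefficients, the system (\ref{li4}) is \emph{linear} in the unknowns $(\varphi^{N\eta},W^{N\eta})=(\varphi,\phi,w)$, and it decouples sequentially: the first equation is a pure linear transport equation for $\varphi$, while the second is a linear symmetric-hyperbolic equation for $\phi$ coupled to a linear \emph{uniformly} parabolic equation for $w$. Two features of the regularization make the analysis routine: (i) the strict positivity $\varphi^2+\eta^2\geq\eta^2>0$ turns the degenerate viscous operator into a uniformly elliptic one in $w$, and (ii) the truncated convection field $\widehat{u}^N=\widehat{u}\,F(|x|/N)$ is compactly supported in $B_{2N}$, hence lies in every Sobolev class and circumvents the linear growth of $\widehat{u}$ at infinity.

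\emph{Step 1 — Transport for $\varphi$.} Under the standing assumptions on $(h,v)$, the transport field $v+\widehat{u}^N$ lies in $L^\infty([0,T];H^3)$, and the source $-\tfrac{\delta-1}{2}h\,\text{div}(v+\widehat{u})$ lies in $L^\infty([0,T];H^3)$ as well (using $h\in C([0,T];H^3)$ and that $\nabla\widehat{u}$ is globally bounded on the support of $h$, the second factor being harmless on the support of the first because of (\ref{zhiji}) — concretely by Proposition \ref{p1} near-field estimates). Standard linear transport theory, via characteristics together with $H^k$ energy estimates obtained by differentiating the equation $k$ times and invoking Moser-type commutator and product estimates ($k=0,1,2,3$), then yields a unique $\varphi\in C([0,T];H^3)$.

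\emph{Step 2 — Linear hyperbolic-parabolic system for $W$.} With $\varphi$ fixed, the equation for $W=(\phi,w)^\top$ is a linear symmetric hyperbolic system perturbed by the viscous term $(\varphi^2+\eta^2)\mathbb{L}(w)=(\varphi^2+\eta^2)(0,Lw)^\top$. Because the Lam\'e operator $L=-\alpha\Delta-(\alpha+\beta)\nabla\text{div}$ is strongly elliptic under (\ref{canshu}) (one checks by Fourier symbol that $\alpha>0$ together with $2\alpha+3\beta\geq 0$ gives $2\alpha+\beta\geq 4\alpha/3>0$), and $\varphi^2+\eta^2\geq\eta^2$, the principal part in $w$ is a uniformly parabolic linear operator at each time. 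Existence, uniqueness, and $H^3$ regularity then follow from the classical linear theory for symmetric hyperbolic--parabolic coupled systems already used in the constant-viscosity setting (\cite{CK3,fu3}). Operationally one may construct the solution by a two-step Picard iteration: given $w^{(k)}$, solve the linear transport equation for $\phi^{(k+1)}$; then, with $\phi^{(k+1)}$ in hand, solve the linear uniformly-parabolic equation for $w^{(k+1)}$ by Galerkin approximation. Uniform $H^3$ bounds on the iterates follow from energy estimates, and contraction in $L^2$ produces the limit $(\phi,w)$ with the regularity asserted in (\ref{reggh}).

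\emph{Main obstacle.} The principal technical point is closing the $H^3$ estimate for $w$ in the presence of the variable coefficient $\varphi^2+\eta^2$ in front of $Lw$. Commuting $\nabla^k$ ($k\leq 3$) with this coefficient generates terms $[\nabla^k,\varphi^2+\eta^2]Lw$ which must be absorbed via Moser-type commutator estimates, trading one derivative on $\varphi$ against the $H^3$ bound obtained in Step 1, the remainder being controlled using the $H^4$ gain on $w$ coming from stationary elliptic regularity applied to $(\varphi^2+\eta^2)L$ at each fixed $t$ (crucially, $\eta>0$ keeps the leading coefficient bounded away from zero uniformly in $(t,x)$). The $L^2([0,T];H^4)$ bound on $w$ then follows by reinterpreting the parabolic equation as an elliptic problem with right-hand side $w_t+\ldots$ in $L^2_tH^2_x$. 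Uniqueness is standard via $L^2$ energy estimates on the difference of two solutions.
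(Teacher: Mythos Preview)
Your proposal is correct and matches the paper's approach: the paper itself gives no proof of this lemma, merely stating that the result ``can be obtained by the standard theory \cite{CK3,fu3} at least when $0<\eta<+\infty$ and $1\leq N<+\infty$,'' and your sketch is precisely an unpacking of what that standard theory provides. Your identification of the two key structural features --- that $\eta>0$ renders the viscous operator uniformly parabolic and that the truncation $\widehat{u}^N$ places the transport field in $H^3$ --- is exactly the point, and the sequential decoupling (solve the linear transport equation for $\varphi$ first, then the linear hyperbolic--parabolic system for $W$ with $\varphi$ as a known coefficient) is a clean way to organize the argument that the cited references implicitly rely on.
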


Next we give some a priori estimates for  solutions $(\varphi^{N\eta},\phi^{N\eta}, w^{N\eta})$  in $H^3$  in the following Lemmas \ref{f2}-\ref{4}, which are  independent of  $(R, N, \eta)$. For simplicity, we denote $(\varphi^{N\eta},\phi^{N\eta}, w^{N\eta})$ as $(\varphi,\phi, w)$, and $W^{N\eta}$ as $W$ in the rest of Subsection 4.1. For this purpose, we fix  a  positive constant $c_0$ large enough such  that

\begin{equation}\label{houmian}\begin{split}
2+\|\varphi_0\|_{3}+\|\phi_0\|_{3}+
\|u_0\|_{\Xi}\leq c_0,
\end{split}
\end{equation}
and
\begin{equation}\label{jizhu1}
\begin{split}
\displaystyle
\sup_{0\leq t \leq T^*}\big(\| h(t)\|^2_{3} +\| \psi(t)\|^2_{3}+\| v(t)\|^2_{2})+\text{ess}\sup_{0\leq t \leq T^*}|v(t)|^2_{D^3}+\int_0^{T^*}
|h \nabla^4 v|_2^2\text{d}t  & \leq c^2_1,
\end{split}
\end{equation}
for some constant 
$$c_1\geq c_0>1$$
 and  time $T^*\in (0,T)$, which will be determined later  (see (\ref{dingyi})) and depend only on $c_0$ and the fixed constants  $(\alpha, \beta, \gamma, A, \delta, T)$.

In the rest of this section,   $C\geq 1$ will  denote  a generic positive constant depending only on fixed constants $(\alpha, \beta, \gamma, A, \delta, T)$,  but independent of $(R, N, \eta)$, which may be different from line to line. It follows from the proof of   Proposition \ref{p1}  that
\begin{equation}\label{gujishuyun}\begin{split}
\|\widehat{u}\|_{T,\Xi}\leq C c^4_0.
\end{split}
\end{equation}
Based on this fact, one  can  establish  the following estimates for $\varphi$.

\begin{lemma}\label{f2} Let $(\varphi,W)$ be the unique classical solution to (\ref{li4}) in $[0,T] \times \mathbb{R}^3$. Then
\begin{equation*}\begin{split}
1+
\|\varphi(t)\|^2_3\leq & Cc^2_0\quad  \text{for} \quad  0\leq t \leq T_1=\min (T^*, c^{-5}_1).
\end{split}
\end{equation*}
\end{lemma}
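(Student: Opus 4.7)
\textbf{Proof plan for Lemma \ref{f2}.} The first equation of (\ref{li4}) is a linear inhomogeneous transport equation
$$\varphi_t+(v+\widehat u^N)\cdot\nabla\varphi=-\tfrac{\delta-1}{2}h\,\text{div}(v+\widehat u),$$
so the natural strategy is a standard $H^3$ energy estimate with commutator inequalities, followed by Gronwall. The threshold $T_1=c_1^{-5}$ will emerge from balancing the size of the transport coefficient against the Gronwall exponential. Concretely, I would apply $\partial^\alpha$ for each multi-index $|\alpha|\leq 3$, pair with $\partial^\alpha\varphi$ in $L^2$, sum over $\alpha$, and integrate by parts on the convective term. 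Moser-type commutator bounds then produce an inequality of the shape
$$\tfrac{d}{dt}\|\varphi\|_3^2\leq C\|\nabla(v+\widehat u^N)\|_\infty\|\varphi\|_3^2+C\bigl(\|\nabla^3(v+\widehat u^N)\|_2\|\nabla\varphi\|_\infty+\|h\,\text{div}(v+\widehat u)\|_3\bigr)\|\varphi\|_3.$$

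\textbf{Coefficient bounds.} From (\ref{jizhu1}) and Sobolev embedding, $\|\nabla v\|_\infty+\|v\|_3\leq Cc_1$. From (\ref{gujishuyun}) and Proposition \ref{p1}, $\|\widehat u\|_{T,\Xi}\leq Cc_0^4$ and $\|\nabla^l\widehat u\|_2\leq C_{0,l}(1+t)^{1/2-l}$ for $l\geq 2$. The truncated field $\widehat u^N=\widehat u F(|x|/N)$ inherits these bounds uniformly in $N$: on $\{|x|\sim N\}$ the derivatives of $F(|x|/N)$ contribute factors $O(N^{-k})$, which combine with the linear-growth bound $|\widehat u(t,x)|\leq|\nabla\widehat u|_\infty|x|$ from Proposition \ref{p1} to make the truncation-error pieces $O(N^{-1/2})$ in $L^2$, hence harmless. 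Consequently $\|\nabla(v+\widehat u^N)\|_\infty\leq C(c_1+c_0^4)\leq Cc_1^4$ uniformly in $N$, and similarly for $\|\nabla^3(v+\widehat u^N)\|_2$.

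\textbf{Source bound.} This is the delicate point, since $v$ is assumed only in $L^\infty_tH^3$, not $H^4$. Differentiating $h\,\text{div}\,v$ three times yields at the top order the contribution $h\nabla^4 v$, which is controlled only in the integrated sense by the weighted assumption $\|h\nabla^4 v\|_{L^2_tL^2_x}\leq c_1$ built into (\ref{jizhu1}); all other mixed products are handled by standard Moser-type $H^3$ product estimates. For the $\widehat u$ part I would exploit the decomposition
$$\nabla\widehat u(t,x)=\tfrac{1}{1+t}\mathbb I_3+\tfrac{1}{(1+t)^2}K(t,x)$$
from Proposition \ref{p1}(1): the isotropic trace piece contributes $\tfrac{3h}{1+t}$, with $H^3$ norm $\leq 3c_1$, while the $K$-piece is absorbed using its $L^\infty$-bound together with the decaying $H^2$-estimates on $\nabla^2\widehat u$. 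Collecting, $\|h\,\text{div}(v+\widehat u)\|_3\leq Cc_1^2+C\|h\nabla^4 v\|_2$.

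\textbf{Closing and main obstacle.} Inserting the above into the energy inequality and applying Gronwall,
$$\|\varphi(t)\|_3\leq e^{Cc_1^4 t/2}\Bigl(\|\varphi_0\|_3+Cc_1^2 t+C\sqrt{t}\,\|h\nabla^4 v\|_{L^2_tL^2_x}\Bigr)\leq e^{Cc_1^4 t/2}\bigl(Cc_0+Cc_1^2 t+Cc_1\sqrt{t}\bigr).$$
Restricting to $t\leq c_1^{-5}$ makes the exponential $\leq e^{C/c_1}\leq e$ and each bracketed term $\leq Cc_0$, yielding $1+\|\varphi(t)\|_3^2\leq Cc_0^2$ as claimed. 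The main obstacle throughout is the highest-order source piece $\nabla^4 v$, which is available only via the degenerate-weighted $L^2_tL^2_x$ bound; the trick is to arrange the inequality so that $\|h\nabla^4 v\|_2$ multiplies $\|\varphi\|_3$ only to the first power, so that a Cauchy--Schwarz $\sqrt{t}$ factor absorbs it on the short window $[0,c_1^{-5}]$.
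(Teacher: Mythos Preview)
Your proof is correct and follows essentially the same route as the paper: a standard $H^3$ energy estimate with Moser commutator bounds on the transport term, the observation that the truncation $\widehat{u}^N$ satisfies $N$-uniform bounds because $|\widehat{u}|\lesssim N|\nabla\widehat{u}|_\infty$ on $\operatorname{supp}\nabla F$, isolation of the top-order source piece $h\nabla^4 v$ controlled only in $L^2_tL^2_x$, and Gronwall on the short window $[0,c_1^{-5}]$. The only minor discrepancies are cosmetic---the paper does not invoke the $\nabla\widehat{u}=\tfrac{1}{1+t}\mathbb{I}_3+\tfrac{K}{(1+t)^2}$ decomposition (it simply uses $\|\widehat{u}\|_{T,\Xi}\leq Cc_0^4$), and consequently its integrated source constant is $c_1^5$ rather than your $c_1^2$; this does not affect the conclusion since $c_1^5\cdot c_1^{-5}=1\leq Cc_0$.
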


\begin{proof}
Applying $\nabla^k$  $(0\leq k\leq 3)$ to $(\ref{li4})_1$,  multiplying both sides by $\nabla^k {\varphi}$, and integrating over $\mathbb{R}^3$, one gets
\begin{equation}\label{guji2}
\frac{1}{2}\frac{d}{dt}|\nabla^k {\varphi}|^2_2\leq C|\text{div}(v+ \widehat{u}^N)|_\infty |\nabla^k {\varphi}|^2_2+C\Lambda^k_1 |\nabla^k {\varphi}|_2+C\Lambda^k_2 |\nabla^k {\varphi}|_2,
\end{equation}
where
\begin{equation*}
\begin{split}
\Lambda^k_1=&|\nabla^k ((v+ \widehat{u}^N)\cdot \nabla {\varphi})-(v+ \widehat{u}^N)\cdot  \nabla^{k+1} {\varphi}|_2,\quad  \Lambda^k_2=|\nabla^k (h \text{div}(v+ \widehat{u}))|_2.
\end{split}
\end{equation*}
It follows from  Lemma \ref{lem2as} and  H\"older's inequality that
\begin{equation}\label{com1}
\begin{split}
|\Lambda^k_1|_2\leq&  C\big(\| \widehat{u}\|_{T, \Xi}+\|v\|_3\big) \|\varphi(t)\|_3,\\[4pt]
|\Lambda^k_2|_2\leq&  C\big(\| \widehat{u}\|_{T, \Xi}+\|v\|_3\big) \|h(t)\|_3+C|h \nabla^4 v|_2,
\end{split}
\end{equation}
where one has used  Proposition \ref{p1} and the fact that for  $0\leq t \leq T^*$,
\begin{equation}\label{jieduan}
\begin{split}
|\widehat{u}(t,x)|\leq  2N|\nabla \widehat{u}|_\infty\quad \text{and} \quad |\nabla F(|x|/N)| \leq CN^{-1} \quad \text{for} \quad N\leq |x|\leq 2N.
\end{split}
\end{equation}

Then, it follows from (\ref{guji2})-(\ref{com1}), Gronwall's inequality and (\ref{jizhu1}) that 
\begin{equation*}\begin{split}
\|{\varphi}(t)\|_3
\leq& \Big(\|\varphi_0\|_{3}+c^5_1t+c_1t^{\frac{1}{2}}\Big)\exp (Cc^4_1t)\leq Cc_0 \quad \text{for} \quad    0\leq t\leq T_1=\min\{T^*,c_1^{-5}\}.
\end{split}
\end{equation*}
\end{proof}

 \begin{lemma}\label{4}Let $(\varphi, W)$ be the unique classical solution to (\ref{li4}) in $[0,T] \times \mathbb{R}^3$. Then
\begin{equation*}
\begin{split}
\|W(t)\|^2_3+\int_0^t |\varphi \nabla^4 u|^2_2\text{d}s \leq& Cc^2_0\quad \text{for} \quad 0\leq t\leq T_2=\min\{T_1,c^{-10}_1\}.
\end{split}
\end{equation*}
 \end{lemma}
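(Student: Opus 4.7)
My plan is to derive $L^2$ energy estimates at each level $k \in \{0,1,2,3\}$ for $\nabla^k W$ from $(\ref{li4})_2$ and sum them. Three structural features drive the argument: the matrices $A^*_j(V,\widehat{u}^N)$ are symmetric, so the transport part is essentially an $L^2$-isometry modulo lower-order corrections; the degenerate elliptic term $(\varphi^2+\eta^2)\mathbb{L}(w)$ produces a coercive contribution of the form $c\int(\varphi^2+\eta^2)|\nabla^{k+1}w|^2$ after integration by parts, using $\alpha > 0$ and $2\alpha+3\beta\ge 0$; and all remaining source terms are of at most first order in $w$, so they can be controlled by the bounds on $(h,\psi,v)$ in (\ref{jizhu1}), on $\widehat{u}^N$ via (\ref{gujishuyun})--(\ref{jieduan}), and on $\varphi$ via Lemma \ref{f2}.

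For each $k$ I apply $\nabla^k$ to $(\ref{li4})_2$ and test against $\nabla^k W$. Symmetry of $A^*_j$ turns the top-order convection into $-\tfrac12 \int (\partial_j A^*_j)|\nabla^k W|^2$, which is $\lesssim (|\nabla v|_\infty + |\nabla \widehat{u}|_\infty + |\nabla\psi|_\infty)\|W\|_3^2$; the commutators $[\nabla^k, A^*_j]\partial_j W$ are bounded by the Moser-type Lemma \ref{lem2as}. For the dissipative term, since $L$ has constant coefficients I split $\nabla^k((\varphi^2+\eta^2)Lw)$ as $(\varphi^2+\eta^2)L\nabla^k w + [\nabla^k, \varphi^2]Lw$; integrating by parts in the first piece yields
$$\int \nabla^k w \cdot (\varphi^2+\eta^2)L\nabla^k w = \int(\varphi^2+\eta^2)\bigl(\alpha|\nabla^{k+1}w|^2+(\alpha+\beta)|\nabla^k\text{div}\,w|^2\bigr) + \int \nabla(\varphi^2)\cdot \nabla^k w \cdot \nabla^{k+1}w,$$
where, under (\ref{canshu}), the bracket dominates a constant multiple of $\int(\varphi^2+\eta^2)|\nabla^{k+1}w|^2$ by the standard Lam\'e inequality, and the last integral, via $\nabla\varphi^2 = 2\varphi\nabla\varphi$ and Cauchy--Schwarz, is absorbed as $\varepsilon\int\varphi^2|\nabla^{k+1}w|^2 + C_\varepsilon|\nabla\varphi|_\infty^2\|W\|_3^2$. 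The commutator remainder $\int \nabla^k w\cdot[\nabla^k,\varphi^2]Lw$ is a sum of terms $\int \nabla^j\varphi^2\cdot \nabla^{k-j+2}w \cdot \nabla^k w$ with $1\le j\le k$; in each such term, the factors of $\nabla^{k+1}w$ (when present) are paired with a single power of $\varphi$ extracted from $\nabla^j\varphi^2$ and absorbed into the weighted dissipation by Young, while higher-order $\varphi$-derivatives are moved onto $w$ by further integration by parts. The surviving non-dissipative pieces are bounded polynomially by $\|\varphi\|_3^{\le 4}\|W\|_3^2$ using H\"older and the embeddings $H^3 \hookrightarrow C^1\cap W^{2,6}$.

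The first-order source $\mathbb{H}(\varphi)\cdot\mathbb{Q}(v+\widehat{u})$ contributes at most $C\|\varphi\|_3^2(\|v\|_3+\|\widehat{u}^N\|_{T,\Xi})\|W\|_3$ in $H^3$, and $G(W,\varphi,\widehat{u})$ contributes $C\|\widehat{u}\|_{T,\Xi}(\|W\|_3+\|\varphi\|_3^2)\|W\|_3$. Collecting everything and using Lemma \ref{f2} to fix $\|\varphi(t)\|_3\le Cc_0$ on $[0,T_1]$, I obtain a differential inequality
$$\frac{d}{dt}\|W\|_3^2 + c\int(\varphi^2+\eta^2)|\nabla^4 w|^2 \le \mathcal{Q}(c_0,c_1)\bigl(1+\|W\|_3^2\bigr)$$
for some polynomial $\mathcal{Q}$. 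Integrating on $[0,t]$ with $t\le T_2 := \min\{T_1, c_1^{-10}\}$ and applying Gronwall (noting $W_0=(\phi_0,0)$ satisfies $\|W_0\|_3\le c_0$) produces the asserted bound $\|W(t)\|_3^2+\int_0^t|\varphi\nabla^4 w|_2^2\,ds \le Cc_0^2$. The hard part is the commutator bookkeeping at $k=3$: because $\varphi$ has no positive lower bound, every term carrying a $\nabla^4 w$ must be paired against at least one power of $\varphi$ so as to be absorbable into the weighted dissipation --- this is precisely why the weighted integral $\int_0^t|\varphi\nabla^4 w|_2^2\,ds$ naturally appears on the left of the claim rather than as an error term on the right.
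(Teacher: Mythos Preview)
Your approach is essentially the paper's, and the structural outline is correct. However, there is a genuine gap in your treatment of the source terms at $k=3$.

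You claim that $\mathbb{H}(\varphi)\cdot\mathbb{Q}(v+\widehat{u})$ contributes at most $C\|\varphi\|_3^2(\|v\|_3+\|\widehat{u}^N\|_{T,\Xi})\|W\|_3$ in $H^3$, and similarly that $G(W,\varphi,\widehat{u})$ is bounded by $C\|\widehat{u}\|_{T,\Xi}(\|W\|_3+\|\varphi\|_3^2)\|W\|_3$. Neither bound is achievable directly. After applying $\nabla^3$, the source $\nabla\varphi^2\cdot Q(v+\widehat{u})$ produces the terms $\varphi\nabla^4\varphi\cdot Q(v)$ and $\varphi\nabla\varphi\cdot\nabla^4 v$, neither of which is in $L^2$: you only have $\varphi\in H^3$ and $v\in H^3$ (the weighted bound $|h\nabla^4 v|_2$ involves $h$, not $\varphi$). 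Likewise, $\nabla^3(\varphi^2 L\widehat{u})$ contains $\varphi^2\nabla^5\widehat{u}$, and Proposition~\ref{p1} only gives $\nabla^l\widehat{u}\in L^2$ for $2\le l\le 4$ under $u_0\in\Xi$.

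The paper resolves all three by the same integration-by-parts trick you already invoke for the elliptic commutator: move one derivative off the offending factor and onto $\nabla^3 w$, producing $\varphi\nabla^4 w$ which is absorbed into the weighted dissipation. Concretely (see the paper's $I_5$, $I_6^*$, $I_8^*$), for instance
\[
\int \varphi\nabla\varphi\cdot Q(\nabla^3 v)\cdot\nabla^3 w
\ \longrightarrow\
-\int \nabla\bigl(\varphi\nabla\varphi\cdot\nabla^3 w\bigr)\cdot\nabla^3 v,
\]
which yields terms controlled by $|\nabla^3 v|_2\bigl(|\nabla\varphi|_\infty|\varphi\nabla^4 w|_2+\cdots\bigr)$. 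So your argument closes once you apply to the source terms the same device you correctly identified as essential for $[\nabla^3,\varphi^2]Lw$. You should also track the resulting powers of $c_1$ (the paper gets $Cc_1^6\|W\|_3^2 + Cc_1^{10}$), since otherwise the specific choice $T_2=c_1^{-10}$ is unmotivated.
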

\begin{proof}
 Applying  $\nabla^k$ to $(\ref{li4})_2$,  multiplying both sides   by $\nabla^k W$, and integrating  over $\mathbb{R}^3$ by parts,  we have

\begin{equation}\label{zhu101}
\begin{split}
&\frac{1}{2} \frac{d}{dt}\int | \nabla^k W |^2+\alpha  | \sqrt{\varphi^2+\eta^2} \nabla^{k+1} w |^2_2
 +(\alpha+\beta)| \sqrt{\varphi^2+\eta^2} \text{div} \nabla^k w |^2_2\\[6pt]
\displaystyle
=&\int(\nabla^kW)^\top \text{div}A^{*}_j(V,  \widehat{u}^N)\nabla^kW-\int \nabla \varphi^2\cdot \nabla^k\mathbb{S}(w)\cdot \nabla^k w\\
&-\sum_{j=1}^3\int \Big(\nabla^k(A^{*}_j(V,  \widehat{u}^N)\partial_j W\big)- A^{*}_j(V,  \widehat{u}^N) \partial_j \nabla^kW\Big) \cdot \nabla^k W\\
&+ \int \Big(-\nabla^k ((\varphi^2+\eta^2) Lw)+(\varphi^2+\eta^2) L\nabla^k w+\nabla \varphi^2 \cdot Q(\nabla^k v)\Big) \cdot \nabla^k w\\
&+\int  \Big( \nabla^k(\nabla \varphi^2  \cdot Q(v+ \widehat{u}))-\nabla \varphi^2  \cdot  Q(\nabla^k(v+ \widehat{u}))\Big)\cdot \nabla^k w\\
&+ \int  \Big(\nabla \varphi^2  \cdot Q(\nabla^k  \widehat{u})\Big) \cdot \nabla^k w+\int \nabla^kG(W, \varphi,  \widehat{u})\cdot \nabla^k W
\equiv \sum_{i=1}^{8} I_i.
\end{split}
\end{equation}
Now consider the terms on the right-hand side of (\ref{zhu101}). It follows from Lemmas \ref{lem2as} and  \ref{f2}, Proposition \ref{p1}, H\"older's and  Young's  inequalities, (\ref{gujishuyun}) and (\ref{jieduan})  that 
\begin{equation}\label{term1}
\begin{split}
I_1=&\int(\nabla^k W)^\top\text{div}A^{*}_j(V,  \widehat{u}^N)\nabla^k W
\leq   C(|\nabla V|_\infty+|\nabla  \widehat{u}|_\infty) |\nabla^k W|^2_2
\leq Cc^4_1|\nabla^k W|^2_2,\\
I_2=&- \int  \Big(\nabla \varphi^2  \cdot \mathbb{S}(\nabla^k w)\Big) \cdot \nabla^k w\\
\leq&  C|\nabla \varphi|_\infty|\varphi  \nabla^{k+1} w|_2|\nabla^k w |_2
\leq  \frac{\alpha}{20}|\varphi  \nabla^{k+1} w|^2_2+Cc^2_0|\nabla^k w |^2_2,\\
I_{3}=&-\sum_{j=1}^3\int\Big(\nabla^k (A^{*}_j(V,  \widehat{u}^N)\partial_j W)-A^{*}_j(V,  \widehat{u}^N)\partial_j\nabla^k W\Big)\nabla^k W\\
  \leq&  C(|\nabla V|_\infty+|\nabla \widehat{u}|_\infty)|\nabla W|^2_2\leq Cc^4_1|\nabla W|^2_2 \quad \text{when} \quad k=1,\\
I_3\leq & C\big((|\nabla V|_\infty+|\nabla \widehat{u}|_\infty)\|\nabla W\|_1+(|\nabla^2V|_3+|\nabla^2\widehat{u}|_3)|\nabla W|_6\big)|\nabla^2 W|_2\\
\leq &Cc^4_1\|\nabla W\|^2_1 \quad \text{when} \ k=2,\\
I_3\leq  & C\big((|\nabla V|_\infty+\|\nabla \widehat{u}\|_{W^{1,\infty}})\|\nabla W\|_2+(|\nabla^3 V|_2+|\nabla^3 \widehat{u}|_2)|\nabla W|_\infty\big)|\nabla^3 W|_2\\
&+C (|\nabla^2V|_3+|\nabla^2 \widehat{u} |_3)|\nabla^2 W|_6 |\nabla^3 W|_2
\leq  Cc^4_1\|\nabla W\|^2_2 \quad \text{when} \ k=3,\\
I_{4}=&-\int \Big( \nabla^k ((\varphi^2+\eta^2) Lw)-(\varphi^2+\eta^2) L\nabla^k w \Big)\cdot \nabla^k w \\
 \leq &  C |\varphi\nabla \varphi|_{\infty}|\nabla^2 w|_2|\nabla w|_2\leq C c^2_0\|\nabla w\|^2_1  \quad  \text{when} \quad  k=1,\\
I_4\leq & C\big(|\varphi\nabla \varphi|_{\infty}|\nabla^3 w|_2+\big(|\nabla \varphi \cdot \nabla \varphi|_{3}+|\varphi \nabla^2 \varphi|_3\big)|\nabla^2 w|_6\big)|\nabla^2 w|_2\\
 \leq & C c^2_0\|\nabla^2 w\|^2_1 \quad   \text{when} \quad  k=2,\\ 
I_4\leq & C \big(|\nabla^3 \varphi|_2|\varphi \nabla^3 w|_6|\nabla^2 w|_3+|\nabla \varphi|_\infty|\nabla^2 \varphi|_3|\nabla^2 w|_6|\nabla^3 w|_2\big)\\
&+C\big(|\nabla^2 \varphi|_3|\varphi \nabla^3 w|_6+|\nabla \varphi|^2_\infty|\nabla^3 w|_2+
|\nabla \varphi|_{\infty}|\varphi\nabla^4 w|_2\big)|\nabla^3 w|_2\\
 \leq &\frac{ \alpha}{20}|\varphi\nabla^4 w|^2_2+Cc^2_0\|\nabla^2 w\|^2_1+Cc^2_0 \quad  \text{when} \ k=3,\\
 \end{split}
\end{equation}
and
\begin{equation}\label{term1^*}
\begin{split}
I_5=&\int  \Big(\nabla \varphi^2  \cdot Q(\nabla^k v)\Big) \cdot \nabla^k w\\
\leq&  C| \varphi|_\infty|\nabla \varphi|_\infty| \nabla^{k+1} v|_2|\nabla^k w |_2
\leq Cc^2_0c_1|\nabla^k w |_2, \quad  \text{when} \ k\leq 2,\\
I_5=&\int  \Big(\nabla \varphi^2  \cdot Q(\nabla^k v)\Big) \cdot \nabla^k w\\
\leq & C\big(|\nabla \varphi|^2_\infty|\nabla^3 w |_2+|\nabla^2 \varphi|_3|\varphi\nabla^3 w |_6+|\nabla \varphi|_\infty|\varphi\nabla^4 w |_2\big)| \nabla^{3} v|_2\\
\leq & Cc^2_0c_1|\nabla^3 w|_2+Cc^2_0c^2_1+\frac{\alpha}{20}|\varphi\nabla^4 w|^2_2 \quad  \text{when} \ k=3,\\
I_{6}=&\int  \Big(\nabla^k(\nabla \varphi^2  \cdot Q(v+ \widehat{u}))-\nabla \varphi^2  \cdot  Q(\nabla^k(v+ \widehat{u}))\Big)\cdot \nabla^k w\\
\leq& C\big(\| \widehat{u}\|_{T, \Xi}+\|v\|_3\big)  \|\varphi\|^2_3\|w\|_2\leq Cc^6_1 \| w\|_2 \quad  \text{when}\quad  k\leq 2,\\
I_{6}\leq & C\big(\| \widehat{u}\|_{T, \Xi}+\|v\|_3\big)  \|\varphi\|_3\big(\|\varphi\|_3|\nabla^3 w|_2+|\varphi \nabla^3 w|_6\big)+I^*_6\\
\leq&  \frac{ \alpha}{20}|\varphi \nabla^4  w |^2_2+C\big(c^6_1 \| w\|_3+c^{10}_1\big)+I^*_6\quad  \text{when}\quad k= 3,\\
I_7=& \int  \Big(\nabla \varphi^2  \cdot Q(\nabla^k  \widehat{u})\Big) \cdot \nabla^k w
\leq  C|\nabla \varphi|_\infty |\varphi|_\infty | \nabla^{k+1}   \widehat{u}|_2|\nabla^k w |_2
\leq  Cc^6_1 |\nabla^k w |_2,\\
I_8
=& -\frac{\gamma-1}{2}\int \nabla^k (\phi \text{div}  \widehat{u}) \nabla^k \phi -\int  \nabla^k (w \cdot \nabla  \widehat{u}) \nabla^k w-\int  \nabla^k (\varphi^2 L  \widehat{u}) \nabla^k w \\
\leq&  C\| \widehat{u}\|_{T, \Xi}\big(\|W\|^2_3+\|W\|_3\|\varphi\|^2_3\big)+I^*_8 \delta_{3,k},
\end{split}
\end{equation}
where  integrations by parts have been used for the terms $I_5$-$I_6$ and $I_8$ when $k=3$.
%
%
And the terms $I^*_6$ and $I^*_8 \delta_{3,k}$   can be estimated similarly by integration by parts as
\begin{equation*}
\begin{split}
 I^*_6=&\int \varphi \nabla^4 \varphi   \cdot Q(v+ \widehat{u})) \cdot \nabla^3 w
\leq \frac{ \alpha}{20}|\varphi \nabla^4  w |^2_2+ Cc^6_1 \| w\|_3+Cc^{10}_1,\\
I^*_8 \delta_{3,k}=&\int \varphi^2 L\nabla^3  \widehat{u} \cdot \nabla^3 w
\leq   \frac{ \alpha}{20}|\varphi \nabla^4  w |^2_2+Cc^6_0 \| w\|_3+Cc^{10}_0.
\end{split}
\end{equation*}

Due to (\ref{zhu101})-(\ref{term1^*}), one gets that
\begin{equation}\label{zhu101qw}
\begin{split}
&\frac{1}{2} \frac{d}{dt}\int |\nabla^k W|^2+\frac{1}{2}\alpha  | \sqrt{\varphi^2+\eta^2}\nabla^4 w |^2_2
\leq Cc^6_1\|W\|^2_{3}+Cc^{10}_1,
\end{split}
\end{equation}
which, along with Gronwall's inequality, implies that for $0\leq t \leq T_2=\min\{T_1,c^{-10}_1\}$,

\begin{equation}\label{zhu10222}
\begin{split}
\| W(t)\|^2_{3}+\int_0^t (\varphi^2+\eta^2)| \nabla^4 w|^2_2\text{d}s  \leq  \big(\| W_0\|^2_{3}+Cc^{10}_1 t\big)\exp (Cc^{6}_1t)\leq Cc^{2}_0.
\end{split}
\end{equation}

\end{proof}

Combining the estimates obtained in Lemmas \ref{f2}-\ref{4} shows that
\begin{equation}\label{jkkll}
\begin{split}
1+
\|\varphi(t)\|^2_3+\|W(t)\|^2_3+\sum_{k=0}^3\int_0^{t} (\varphi^2+\eta^2)
|\nabla^{k+1} w|_2^2\text{d}s \leq& Cc^2_0,
\end{split}
\end{equation}
for $0 \leq t \leq \min\{T^*, c^{-10}_1\}$.
Therefore, defining  the constant $c_1$ and time $T^*$ by

\begin{equation}\label{dingyi}
\begin{split}
&c_1=C^{\frac{1}{2}}c_0,  \quad T^*=\min\{T, c^{-10}_1\},
\end{split}
\end{equation}
we then deduce that  for $0\leq t \leq T^*$,
\begin{equation}\label{jkk}
\begin{split}
\| \varphi(t)\|^2_{3} +\| \phi(t)\|^2_{3}+\| w(t)\|^2_{3}+\sum_{k=0}^3\int_0^{t} (\varphi^2+\eta^2)
|\nabla^{k+1} w|_2^2\text{d}s & \leq c^2_1.
\end{split}
\end{equation}
In other words, given fixed $c_0$ and $T$, there exist positive
constant $c_1$ and time $T^*$, depending solely on $c_0$, $T$ and the
generic constant $C$  such that if
\eqref{jizhu1} holds for $h$ and  $V$, then   \eqref{jkk} holds for the
solution to \eqref{li4} in $[0, T^*]\times \mathbb{R}^3$. Here, it should be noted that the definitions of $(c_1, T^*)$ are all independent of the parameters $(R, N, \eta)$.

\subsection{Passing to the  limits as  $\eta\rightarrow 0$ and $N\rightarrow +\infty$} Due to  the a priori estimate (\ref{jkk}), one can solve the following problem:
\begin{equation}\label{li4*}
\begin{cases}
\displaystyle
\varphi_t+(v+\widehat{u})\cdot\nabla\varphi+\frac{\delta-1}{2}h\text{div} (v+\widehat{u})=0,\\[10pt]
\displaystyle
W_t+\sum_{j=1}^3A^*_j(V,\widehat{u}) \partial_j W+ \varphi^2 \mathbb{{L}}(w)= \mathbb{{H}}(\varphi)  \cdot \mathbb{{Q}}(v+\widehat{u})+G(W, \varphi, \widehat{u}),\\[10pt]
\displaystyle
(\varphi,W)|_{t=0}=(\varphi_0,W_0)=(\varphi_0,\phi_0,0),\quad x\in \mathbb{R}^3,\\[10pt]
(\varphi,W)=(\varphi, \phi, w)\rightarrow (0,0,0) \quad\quad   \text{as}\quad \quad  |x|\rightarrow \infty \quad \text{for} \quad  t\geq 0.
 \end{cases}
\end{equation}
\begin{lemma}\label{lem1q}
 Assume that  the initial data $(\varphi_0, \phi_0,u_0)$ satisfy conditions \text{(A1)}-\text{(A3)} shown in Theorem \ref{newjie}.
Then there exist a time $T^*=T^*(\alpha,\beta,  A,\gamma,\delta, \varphi_0,W_0)>0$ independent of $R$ and a  unique classical solution $(\varphi, W)$ in $[0,T^*]\times \mathbb{R}^3$ to   (\ref{li4*}) such that
\begin{equation*}\begin{split}
& (\varphi,\phi) \in C([0,T^*]; H^3),\quad     w\in C([0,T^*]; H^{s'})\cap L^\infty([0,T^*]; H^3), \quad    \varphi \nabla^4 w \in L^2([0,T^*] ; L^2),
\end{split}
\end{equation*}
for any constant $s'\in[2,3)$. Moreover,  $(\varphi, W)$  satisfies  (\ref{jkk}), and for $t \in [0,T^*]$,
\begin{equation}\label{zhijijieguo}
\varphi(t,y(t;\xi_0))=\phi(t,y(t;\xi_0))=0 \quad \text{and} \quad
w(t,y(t;\xi_0))=0 \quad \text{for} \quad   \xi_0 \in \mathbb{R}^3/ \supp \varphi_0.
\end{equation}
\end{lemma}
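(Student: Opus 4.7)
The plan is to pass to the limits $\eta\downarrow 0$ and $N\uparrow\infty$ in the regularized linear problem \eqref{li4}. Lemma \ref{lem1} supplies a global classical solution $(\varphi^{N\eta},W^{N\eta})$ for each such pair, and the a priori bound \eqref{jkk} is uniform in $(R,N,\eta)$. Reading $\partial_t(\varphi^{N\eta},W^{N\eta})$ off \eqref{li4}, one sees it is bounded in $L^2([0,T^*];L^2_{loc})$, so the Aubin--Lions lemma yields, up to a subsequence, strong convergence in $C([0,T^*];H^{s'}_{loc})$ for every $s'\in[2,3)$, weak-$*$ convergence in $L^\infty_t H^3$, and weak convergence of $\varphi\nabla^4 w$ in $L^2_tL^2$. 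This is enough to pass to the limit in the quadratic transport and source terms of \eqref{li4}; the degenerate elliptic piece $(\varphi^2+\eta^2)Lw$ becomes $\varphi^2 Lw$ because $\eta^2 Lw\to 0$ in $L^2_tL^2$; and $\widehat{u}^N$ together with its derivatives converges to $\widehat{u}$ on every compact set by the definition of the cut-off and Proposition \ref{p1}. Lower semi-continuity of norms then transfers \eqref{jkk} to the limit and gives the announced regularity, time-continuity at the $H^3$ level being upgraded from weak to strong by the standard argument combining the equations with the boundedness.

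To upgrade this to convergence of the full family and to obtain uniqueness, I would derive a low-order Cauchy estimate for the difference $(\bar\varphi,\bar W)$ of two (possibly regularized) solutions with the same $(h,V)$. The principal part $\sum_j A^*_j(V,\widehat{u})\partial_j\bar W$ is symmetric, so $L^2$-testing, combined with integration by parts on the degenerate elliptic term, produces
\begin{equation*}
\frac{d}{dt}\bigl(|\bar\varphi|_2^2+|\bar W|_2^2\bigr)+\alpha|\varphi_1\nabla\bar w|_2^2 \leq C(c_1)\bigl(|\bar\varphi|_2^2+|\bar W|_2^2\bigr)+\mathrm{E},
\end{equation*}
where $\mathrm{E}$ collects the regularization mismatches $(\eta_1^2-\eta_2^2)Lw_2$ and $(\widehat{u}^{N_1}-\widehat{u}^{N_2})\cdot\nabla(\varphi_2,W_2)$, the latter being tested against quantities that are effectively supported in a bounded set (which is the reason to send $\eta\downarrow 0$ first with $N$ fixed and then $N\uparrow\infty$). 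Gronwall then yields strong convergence of the full family in $C([0,T^*];L^2)$ and, applied to two honest solutions of \eqref{li4*} with the same $(h,V)$, uniqueness.

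The vanishing property \eqref{zhijijieguo} is finally argued pointwise along the characteristic $y(t;\xi_0)$ of $v+\widehat{u}$. By hypothesis \eqref{zhiji}, $h=\psi=v=0$ at such a point, so the ODE obtained from the transport equation in \eqref{li4*} forces $\varphi(t,y(t;\xi_0))\equiv 0$; continuity of $\nabla\varphi$ then makes $\nabla\varphi^2=2\varphi\nabla\varphi$ and $\varphi^2Lw$ vanish at $y(t;\xi_0)$. Along this curve, the $W$-equation therefore reduces to the linear system of ODEs
\begin{equation*}
\tfrac{d}{dt}\phi=-\tfrac{\gamma-1}{2}\phi\,\mathrm{div}\,\widehat{u},\qquad \tfrac{d}{dt}w=-(w\cdot\nabla)\widehat{u},
\end{equation*}
with zero initial data (recall $W_0=(\phi_0,0)$ and $\phi_0$ vanishes off $\supp\varphi_0$), so $\phi=w=0$ along $y(t;\xi_0)$. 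The main obstacle I expect is the simultaneous degeneration of the elliptic operator as $\eta\downarrow 0$ and the loss of uniform drift bounds as $N\uparrow\infty$, given that $\widehat{u}$ grows linearly at infinity: this forces the Cauchy estimate to live only at the $L^2$ level and makes the weighted weak $L^2_tL^2$ bound on $\varphi\nabla^4 w$ supplied by \eqref{jkk}, rather than any uniform parabolicity, the crucial input for passing to the limit in the degenerate elliptic term.
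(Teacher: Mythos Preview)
Your overall plan matches the paper's: let $\eta\downarrow 0$ then $N\uparrow\infty$ via compactness, read off the vanishing property along characteristics, and close with an $L^2$ energy estimate for uniqueness. The ingredients are all present, but there is a genuine ordering issue that you have glossed over with the phrase ``standard argument'', and it is not merely cosmetic.

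The unbounded transport field $\widehat u$ (which grows linearly by Proposition~\ref{p1}) creates two concrete problems once the cut-off $\widehat u^N$ is removed. First, in the $L^2$ Cauchy/uniqueness estimate you write down, the term $\int (v+\widehat u)\cdot\nabla\bar\varphi\,\bar\varphi$ cannot be integrated by parts without justification, since $(v+\widehat u)|\bar\varphi|^2$ is not a priori in $L^1$. Second, and more seriously, the upgrade of $(\varphi,\phi)$ from $C_t(\text{weak-}H^3)$ to $C_tH^3$ via the $\limsup$ trick requires redoing the $H^3$ energy estimate (Lemma~\ref{f2}) \emph{for the limit}, and the same integration-by-parts obstruction arises there at the top order. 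The paper resolves both issues by establishing the vanishing property \eqref{zhijijieguo} \emph{first}: since $(h,\psi,v)$ vanish along $X(t;\xi_0)$ by hypothesis \eqref{zhiji}, the limit $(\varphi,\phi,w)$ inherits compact support whose radius is controlled by $R$ and $T^*$ via Gronwall on the characteristic ODE. Only then are the global time-derivative bounds $(\varphi_t,\phi_t)\in L^\infty_tH^2$ available, the $H^3$ energy identity for the transport equations is legitimate, and the $\limsup$ argument combined with time-reversibility yields $C_tH^3$.

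So your characteristic argument in the last paragraph is not a coda but the linchpin; it must precede both the strong $H^3$ time-continuity claim and the uniqueness estimate. Your remark that the $(\widehat u^{N_1}-\widehat u^{N_2})$ mismatch is ``tested against quantities that are effectively supported in a bounded set'' is exactly right in spirit, but you should make explicit that this compact support is itself a consequence of the characteristic argument applied to the intermediate problem \eqref{li4**}, not something given for free.
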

\begin{proof}
\textbf{Step 1:} Passing to the limit as $\eta \rightarrow 0$.
 Let $N\geq 1$ be a  fixed constant. Due to Lemma \ref{lem1}, for every $\eta>0$,  there exists a unique classical solution $(\varphi^{N\eta}, W^{N\eta})$  to the linear Cauchy  problem (\ref{li4}) satisfying  (\ref{jkk})  in $[0,T^*]\times \mathbb{R}^3$,  where  these estimates and the life span $T^*>0$ are all independent of $(N,R,\eta)$.

Due to the estimate (\ref{jkk}) and the  equations in (\ref{li4}),
it holds that 
\begin{equation}\label{uniformshijianA}
\begin{split}
\|\varphi^{N\eta}_t\|_2+\|\phi^{N\eta}_t\|_2+\|w^{N\eta}_t\|_1+\int_0^t \| w^{N\eta}_t\|^2_2\text{d}s\leq C_0(N),\quad \text{for}\quad 0\leq t \leq T^*,
\end{split}
\end{equation}
where the constant $C_0(N)$ depends only on the generic constant $C$,  $(\varphi_0,\phi_0,u_0)$  and $N$,  and is independent of $(R,\eta)$.

By virtue of the  uniform estimate (\ref{jkk}) and (\ref{uniformshijianA}) independent of  $(R,\eta)$, there exists a subsequence of solutions (still denoted by) $(\varphi^{N\eta}, W^{N\eta})$, which    converges to a limit $(\varphi^N,W^N)=(\varphi^N, \phi^N, w^N)$  in   weak  or  weak* sense as $\eta\rightarrow 0$:
\begin{equation}\label{ruojixiana}
\begin{split}
(\varphi^{N\eta}, W^{N\eta})\rightharpoonup  (\varphi^N,W^N) \quad &\text{weakly* \ in } \ L^\infty([0,T^*];H^3(\mathbb{R}^3)),\\
(\varphi^{N\eta}_t, \phi^{N\eta}_t)\rightharpoonup (\varphi^N_t,  \phi^N_t) \quad &\text{weakly* \ in } \ L^\infty([0,T^*];H^2(\mathbb{R}^3)),\\
w^{N\eta}_t \rightharpoonup w^N_t  \quad & \text{weakly* \ in } \ L^\infty([0,T^*];H^1(\mathbb{R}^3)), \\
 w^{N\eta}_t \rightharpoonup w^N_t  \quad &\text{weakly \ \ in } \ L^2([0,T^*];H^2(\mathbb{R}^3)).
\end{split}
\end{equation}
The lower semi-continuity of weak convergence implies  that $(\varphi^N, W^N)$ also satisfies the corresponding  estimates (\ref{jkk}) and (\ref{uniformshijianA}) except that of $\varphi^N \nabla^4 w^N$.

Again by the uniform estimate (\ref{jkk}) and (\ref{uniformshijianA}) independent of  $\eta$,  and  the compactness in Lemma \ref{aubin} (see \cite{jm}), it holds that for any $R_0> 0$,  there exists a subsequence of solutions (still denoted by) $(\varphi^{N\eta}, W^{N\eta})$, which  converges  to  the same  limit $(\varphi^N,W^N)$ as above in the following  strong sense:
\begin{equation}\label{ert}\begin{split}
&(\varphi^{N\eta}, W^{N\eta}) \rightarrow (\varphi^N, W^N) \quad \text{ in } \ C([0,T^*];H^2(B_{R_0})),\quad \text{as}\quad  \eta\rightarrow 0.
\end{split}
\end{equation}

Let $ \beta_i=1,2,3$ for $i=1,2,3,4$.
Because $\|\varphi^{N\eta} \nabla^4 w^{N\eta}\|_{L^2L^2_{T^*}}$ is uniformly bounded with respect to $N$ and $\eta$,  there exists a vector $g=(g_1,g_2,g_3)\in L^2([0,T^*]; L^2(\mathbb{R}^3))$ such that, 
\begin{equation}\label{bridge}
\begin{split}
\int_{\mathbb{R}^3}\int_0^{T^*} \varphi^{N\eta} \partial_{\beta_1\beta_2\beta_3\beta_4}  w^{N\eta} f \text{d}x\text{d}t \rightarrow \int_{\mathbb{R}^3}\int_0^{T^*}  g f \text{d}x\text{d}t \quad \text{as}\quad  \eta\rightarrow 0
\end{split}
\end{equation}
 for any $f\in L^2([0,T^*]; L^2(\mathbb{R}^3))$.

For any $f^* \in C^\infty_c([0,T^*]\times \mathbb{R}^3)$, it follows from  integration by parts, (\ref{ruojixiana}) and (\ref{ert}) that
\begin{equation}\label{bridge1}
\begin{split}
&\lim_{\eta\rightarrow0}\int_{\mathbb{R}^3}\int_0^{T^*} \varphi^{N\eta} \partial_{\beta_1\beta_2\beta_3\beta_4}  w^{N\eta} f^* \text{d}x\text{d}t\\
=&\lim_{\eta\rightarrow0}\Big(-\int_{\mathbb{R}^3}\int_0^{T^*} \Big(\partial_{\beta_1}\varphi^{N\eta} \partial_{\beta_2\beta_3\beta_4}  w^{N\eta} f^* + \varphi^{N\eta}\partial_{\beta_2\beta_3\beta_4}  w^{N\eta} \partial_{\beta_1} f^*\Big) \text{d}x\text{d}t\Big)\\
=&-\int_{\mathbb{R}^3}\int_0^{T^*} \Big(\partial_{\beta_1}\varphi^{N} \partial_{\beta_2\beta_3\beta_4}  w^{N} f^* + \varphi^{N} \partial_{\beta_2\beta_3\beta_4}  w^{N} \partial_{\beta_1} f^* \Big)\text{d}x\text{d}t\\
=&-\int_{\mathbb{R}^3}\int_0^{T^*}  \partial_{\beta_2\beta_3\beta_4}  w^{N} \partial_{\beta_1} (\varphi^{N}f^*) \text{d}x\text{d}t=\int_{\mathbb{R}^3}\int_0^{T^*} \varphi^{N} \partial_{\beta_1\beta_2\beta_3\beta_4} w^{N} f^* \text{d}x\text{d}t.
\end{split}
\end{equation}
Because $C^\infty_c([0,T^*]\times \mathbb{R}^3)$ is dense in $L^2([0,T^*]; L^2(\mathbb{R}^3))$, one can get
\begin{equation}\label{ruojixian1A}
\begin{split}
\varphi^{N\eta} \nabla^4 w^{N\eta} \rightharpoonup  \varphi^N\nabla^4 w^N \quad &\text{weakly \ in } \ L^2( [0,T^*]  \times \mathbb{R}^3 ),\quad \text{as}\quad  \eta\rightarrow 0.
\end{split}
\end{equation}
Thus, $(\varphi^N, W^N)$  satisfies   (\ref{jkk}).

Now, it is easy to show  that $(\varphi^N, W^N) $ is a weak solution in the sense of distribution  to the  following problem:

\begin{equation}\label{li4**}
\begin{cases}
\displaystyle
\varphi^N_t+(v+\widehat{u}^N)\cdot\nabla\varphi^N+\frac{\delta-1}{2}h\text{div} (v+\widehat{u})=0,\\[8pt]
\displaystyle
W^N_t+\sum_{j=1}^3A^{*}_j(V,  \widehat{u}^N) \partial_j W^N+ (\varphi^N)^2 \mathbb{{L}}(w^N)= \mathbb{{H}}(\varphi^N)  \cdot \mathbb{{Q}}(v+\widehat{u})+G(W^N, \varphi^N, \widehat{u}),\\[8pt]
\displaystyle
(\varphi^N,W^N)|_{t=0}=(\varphi_0,W_0)=(\varphi_0,\phi_0,0),\quad x\in \mathbb{R}^3,\\[8pt]
(\varphi^N,W^N)=(\varphi^N, \phi^N, w^N)\rightarrow (0,0,0) \quad\quad   \text{as}\quad \quad  |x|\rightarrow \infty \quad \text{for} \quad  t\geq 0,
 \end{cases}
\end{equation}
and has the following  regularities
\begin{equation}\label{zheng}
\begin{split}
& (\varphi^N, \phi^N,  w^N)\in L^\infty([0,T^*]; H^3),\quad   (\varphi^N_t,  \phi^N_t) \in L^\infty([0,T^*]; H^2),   \\
& \varphi^N \nabla^4 w^N \in L^2([0,T^*] ; L^2),\quad   \ w^N_t\in L^\infty([0,T^*]; H^1)\cap L^2([0,T^*]; H^2).
\end{split}
\end{equation}

\textbf{Step 2:} Passing to the limit as $N \rightarrow +\infty$. One can prove the existence, uniqueness and time continuity  of the classical solutions to   (\ref{li4*}) as follows.

\textbf{Step 2.1:} Existence.  According to Step 1,  for every $N\geq 1$, there exists a solution $(\varphi^N, W^N) $ to    (\ref{li4**}) with the  a priori estimate (\ref{jkk}) independent of $(R,N)$.

Due to (\ref{jkk}) and (\ref{li4**}),
 for any $R_0>0$, it holds that 
\begin{equation}\label{uniformshijianAS}
\begin{split}
\|\varphi^{N}_t\|_{H^2(B_{R_0})}+\|\phi^{N}_t\|_{H^2(B_{R_0})}+\|w^{N}_t\|_{H^1(B_{R_0})}+\int_0^t \|\nabla^2 w^{N}_t\|^2_{L^2(B_{R_0})}\text{d}s\leq C_0(R_0),
\end{split}
\end{equation}
where the constant $C_0(R_0)$  is independent of $(R, N)$.

It follows from  (\ref{jkk}) and (\ref{uniformshijianAS}) that  there exists a subsequence of solutions (still denoted by) $(\varphi^{N}, W^{N})$, which    converges to a  limit $(\varphi,W)$   in   weak  or  weak* sense as $\eta\rightarrow 0$:
\begin{equation}\label{ruojixianaB}
\begin{split}
(\varphi^N, W^{N})\rightharpoonup  (\varphi,W) \quad &\text{weakly* \ in } \ L^\infty([0,T^*];H^3(\mathbb{R}^3)),\\
(\varphi^{N}_t, \phi^{N}_t)\rightharpoonup (\varphi_t,  \phi_t) \quad &\text{weakly* \ in } \ L^\infty([0,T^*];H^2(B_{R_0})),\\
w^{N}_t \rightharpoonup w_t  \quad & \text{weakly* \ in } \ L^\infty([0,T^*];H^1(B_{R_0})), \\
 w^{N}_t \rightharpoonup w_t  \quad &\text{weakly \ \ in } \ L^2([0,T^*];H^2(B_{R_0})),
\end{split}
\end{equation}
for any $R_0> 0$.
The lower semi-continuity of weak convergence implies  that $(\varphi, W)$ also satisfies the   estimate (\ref{jkk}) except that for  $\varphi \nabla^4 w$.

Again by virtue of the uniform estimates (\ref{jkk}) and (\ref{uniformshijianAS}) independent of  $N$ and  the compactness in Lemma \ref{aubin} (see \cite{jm}), for any $R_0> 0$,  there exists a subsequence of solutions (still denoted by) $(\varphi^{N}, W^{N})$, which  converges  to  the same  limit $(\varphi,W)=(\varphi, \phi, w)$ in the following  strong sense:
\begin{equation}\label{ertB}\begin{split}
&(\varphi^{N}, W^{N}) \rightarrow (\varphi, W) \quad \text{ in } \ C([0,T^*];H^2(B_{R_0})),\quad \text{as}\quad  N\rightarrow +\infty.
\end{split}
\end{equation}
Then,   (\ref{ruojixianaB})-(\ref{ertB}) and a similar proof as for (\ref{ruojixian1A}) show  that
\begin{equation}\label{ruojixianB}
\begin{split}
\varphi^{N} \nabla^4 w^{N} \rightharpoonup  \varphi\nabla^4 w \quad &\text{weakly \ in } \ L^2( [0,T^*]  \times \mathbb{R}^3 ),\quad \text{as}\quad  N\rightarrow +\infty.
\end{split}
\end{equation}
Thus, $(\varphi, W)$  satisfies   (\ref{jkk}).

It is easy to show  that $(\varphi, W) $ is a weak solution in the sense of distribution  to  (\ref{li4*}) with the properties that 
\begin{equation}\label{zhengV}
\begin{split}
& (\varphi,\phi, w) \in L^\infty([0,T^*]; H^3),\quad ( \varphi_t, \phi_t ) \in L^\infty([0,T^*]; H^2_{loc}),   \\
&  \varphi \nabla^4 w \in L^2([0,T^*] ; L^2),\quad w_t\in L^\infty([0,T^*]; H^1_{loc})\cap L^2([0,T^*]; H^2_{loc}).
\end{split}
\end{equation}

\textbf{Step 2.2:} Time-continuity.   (\ref{li4*}) and (\ref{zhiji}) imply that 
\begin{equation*}
\begin{split}
&\frac{d}{dt}\varphi(t,X(t;\xi_0))=0,\quad \frac{d}{dt}\phi(t,X(t;\xi_0))=0,\quad \frac{d}{dt}w(t,X(t;\xi_0))=-w\cdot \nabla \widehat{u},
\end{split}
\end{equation*}
for $t \in [0,T^*]$ and $ \xi_0 \in \mathbb{R}^3/ \supp \varphi_0$, which yields that
\begin{equation*}\begin{split}
\varphi(t,X(t;\xi_0))=\phi(t,X(t;\xi_0))=0, \quad w(t,X(t;\xi_0))=0,
\end{split}
\end{equation*}
for $t \in [0,T^*]$ and $ \xi_0 \in \mathbb{R}^3/ \supp \varphi_0$. Thus for any positive time $t\in [0,T^*]$, the solution $(\varphi, \phi, w)$ is  compactly supported with respect to the space variable $x$.

Let $ \xi_0 \in \partial \supp \varphi_0$ and  $|\xi_0|\leq R$.  It follows form   Proposition \ref{p1}  that 
\begin{equation}\label{zhongyaoguancha}
\begin{split}
|X(t;\xi_0)|\leq R+\|\nabla \widehat{u}\|_{L^\infty L^\infty_{T^*}}\int_0^t |X(\tau;\xi_0)|\text{d}\tau.
\end{split}
\end{equation}
Then the  Gronwall's inequality implies that  for $0\leq t\leq T^*$
\begin{equation}\label{zhengVC}
|X(t;\xi_0)| \leq R\exp \Big(\|\nabla \widehat{u}\|_{L^\infty L^\infty_{T^*}}T^*\Big)\leq C_0(R,T^*),
\end{equation}
 where $C_0(R,T^*)$ depends  on the generic constant $C$, $T^*$,  $(\varphi_0,\phi_0,u_0)$  and $R$.
Then for any $0<R<+\infty$,  it follows from  (\ref{li4*})  that 
\begin{equation}\label{zhengVD}
\begin{split}
& (\varphi_t,\phi_t) \in L^\infty([0,T^*]; H^2),\quad   \ w_t\in L^\infty([0,T^*]; H^1)\cap L^2([0,T^*]; H^2).
\end{split}
\end{equation}

 (\ref{zhengV}) and  (\ref{zhengVD}) imply that  for any $s'\in [0,3)$,
\begin{equation}\label{zhengW}
\begin{split}
&(\varphi,\phi) \in C([0,T^*]; H^{s'}\cap \text{weak-}H^3),\  w\in C([0,T^*]; H^{s'}\cap \text{weak-}H^3).
\end{split}
\end{equation}

Using the same arguments as  in the proof of Lemma \ref{f2}-\ref{4}, one has
\begin{equation}\label{liu03}
\displaystyle \lim_{t\rightarrow 0}\sup \|\varphi(t)\|_3 \leq \|\varphi_0\|_3,\quad \displaystyle \lim_{t\rightarrow 0}\sup \|\phi(t)\|_3 \leq \|\phi_0\|_3,
\end{equation}
which, together with  Lemma \ref{zheng5} and  (\ref{zhengW}), implies  that  $(\varphi,\phi)$ is   right continuous at $t=0$ in $H^3$ space. The time reversibility of equations in $(\ref{li4*})$ for $(\phi,\varphi)$
yields that
\begin{equation}\label{xian}
(\varphi,\phi) \in C([0,T^*]; H^3).
\end{equation}

\textbf{Step 2.3:} Uniqueness. Let $(\varphi_i,W_i)=(\varphi_i,\phi_i,w_i)$, $i=1,2$,  be two solutions obtained in  Step 2.1, and
\begin{equation*}\begin{split}
&\overline{\varphi}=\varphi_1-\varphi_2, \  \overline{\phi}=\phi_1-\phi_2, \\
 & \overline{w}=w_1-w_2,\  \overline{W}=W_1-W_2.
\end{split}
\end{equation*}
Then  ${\varphi}_1=\varphi_2$ follows from
$\overline{\varphi}_t+v\cdot \nabla \overline{\varphi}=0
$, and  $W_1=W_2$ follows from
\begin{equation}\label{li4*cha}
\displaystyle
\overline{W}_t+\sum_{j=1}^3A^*_j(V,\widehat{u}) \partial_j \overline{W}+ \varphi^2_1 \mathbb{{L}}(\overline{w})=G(\overline{W}, \varphi_1, \widehat{u}).
\end{equation}

\end{proof}

\subsection{Proof of Theorem \ref{newjie}}
The proof  is based on the classical iteration scheme and the existence results for the linearized problem obtained  in  \S 4.2. As in \S 4.2, we define constants $c_{0}$ and  $c_1$, and assume that
\begin{equation*}\begin{split}
&1+\|\varphi_0\|_{3}+ +\|\phi_0\|_{3}+\|u_0\|_{\Xi}\leq c_0.
\end{split}
\end{equation*}
Let $(\varphi^0, W^0=(\phi^0,w^0))$ with the regularities
\begin{equation*}
\begin{split}
&(\varphi^0,\phi^0) \in C([0,T^*];H^{3}),\  w^0\in C([0,T^*];H^{s'})\cap  L^\infty([0,T^*];H^3), \ \varphi^0 \nabla^4 w^0 \in L^2([0,T^*];L^2)
\end{split}
\end{equation*}
for any $ s' \in [2,3)$  be the solution to the  following problem:
\begin{equation}\label{zheng6}
\begin{cases}
X_t+\widehat{u} \cdot\nabla X=0 \quad \text{in} \quad (0,+\infty)\times \mathbb{R}^3,\\[10pt]
Y_t+\widehat{u}  \cdot \nabla Y=0 \quad \text{in} \quad (0,+\infty)\times \mathbb{R}^3,\\[10pt]
\Phi_t- X^2\triangle \Phi=0 \quad \ \text{in} \quad  (0,+\infty)\times \mathbb{R}^3 ,\\[10pt]
(X,Y,\Phi)|_{t=0}=(\varphi_0,\phi_0,0) \quad \text{in} \quad \mathbb{R}^3,\\[10pt]
(X,Y,\Phi)\rightarrow (0,0,0) \quad \text{as } \quad |x|\rightarrow +\infty,\quad t> 0.
\end{cases}
\end{equation}
Choose a  time $T^{**}\in (0,T^*]$ small enough such that for $0\leq t \leq T^{**}$,
\begin{equation}\label{jizhu}
\begin{split}
|| \varphi^0(t)||^2_{3} +|| \phi^0(t)||^2_{3}+||w^0(t)||^2_{3}+\int_0^{t} |\varphi^0\nabla^4w^0|\text{d}s\leq c_1^2.
\end{split}
\end{equation}

Now the existence, uniqueness and time continuity can be proved as follows.

\textbf{Step 1:} Existence. Let $(h,\psi, v)=(\varphi^0,\phi^0,w^0)$ and  $(\varphi^1, W^1)$ be a classical solution to (\ref{li4*}). Then we construct approximate solutions $(\varphi^{k+1}, W^{k+1})=(\varphi^{k+1}, \phi^{k+1},w^{k+1})$
 inductively as follows: Assume that  $(\varphi^{k}, W^{k})$ has been defined for $k\geq 1$, and  let $(\varphi^{k+1},W^{k+1})$  be the unique solution to (\ref{li4*}) with $(h,V)$ replaced by $ (\varphi^k, W^k)$ as:
\begin{equation}\label{li6}
\begin{cases}
\displaystyle
 \varphi^{k+1}_t+(w^{k}+ \widehat{u})\cdot \nabla \varphi^{k+1}+\frac{\delta-1}{2}\varphi^{k}\text{div} (w^{k}+ \widehat{u})=0,\\[10pt]
\displaystyle
 W^{k+1}_t+\sum_{j=1}^3A^*_j({W^k}, \widehat{u})\partial_j W^{k+1}+(\varphi^{k+1})^{2}\mathbb{L}(w^{k+1})\\[10pt]
=\mathbb{H}(\varphi^{k+1}) \cdot \mathbb{Q}(w^{k}+ \widehat{u})+G(W^{k+1},\varphi^{k+1}, \widehat{u}),\\[10pt]
  (\varphi^{k+1}, W^{k+1})|_{t=0}=(\varphi_0, W_0),\quad x\in \mathbb{R}^3,\\[10pt]
(\varphi^{k+1}, W^{k+1})\rightarrow (0,0) \quad \text{as } \quad |x|\rightarrow +\infty,\quad t> 0.
 \end{cases}
\end{equation}
Here  the sequence $(\varphi^k, W^{k})$ satisfies the  uniform a priori estimate (\ref{jkk})
for $0 \leq t \leq T^{**}$, and is also compactly supported as long as it exists.

Now we  prove the  convergence of the whole sequence  $(\varphi^k, W^k)$  to a limit $(\varphi, W)$  in some strong sense.
Set
\begin{equation*}\begin{split}
&\overline{\varphi}^{k+1}=\varphi^{k+1}-\varphi^{k},\
\overline{W}^{k+1}=(\overline{\phi}^{k+1},\overline{w}^{k+1} )^\top,\\
 & \overline{\phi}^{k+1}=\phi^{k+1}-\phi^k,\ \overline{w}^{k+1} =w^{k+1}-w^k,
\end{split}
\end{equation*}
then it follows  from (\ref{li6}) that
 \begin{equation}
\label{eq:1.2w}
\begin{cases}
\displaystyle
\ \  \overline{\varphi}^{k+1}_t+(w^{k}+ \widehat{u})\cdot \nabla\overline{\varphi}^{k+1} +\overline{w}^k\cdot\nabla\varphi ^{k}\\[6pt]
\displaystyle
=-\frac{\delta-1}{2}(\overline{\varphi}^{k} \text{div}(w^{k-1}+ \widehat{u}) +\varphi ^{k}\text{div}\overline{w}^k),\\[6pt]
\displaystyle
\ \ \overline{W}^{k+1}_t+ \sum\limits_{j=1}^3A^*_j(W^k, \widehat{u})\partial_{j}\overline{W}^{k+1}+(\varphi^{k+1})^{2}\mathbb{L}(\overline{w}^{k+1})\\[6pt]
\displaystyle
=-\sum\limits_{j=1}^3A_j(\overline{W}^k)\partial_{j}{W}^{k}- \overline{\varphi}^{k+1}(\varphi^{k+1}+\varphi^k)\mathbb{L}(w^k)\\[6pt]
\displaystyle
\ \  +\big(\mathbb{H}({\varphi}^{k+1})-\mathbb{H}({\varphi}^{k})\big)\cdot \mathbb{Q}(w^{k}+ \widehat{u})+\mathbb{H} (\varphi^{k})\cdot \mathbb{Q}(\overline{w}^{k})\\[6pt]
\displaystyle
\ \  -B(\nabla  \widehat{u},\overline{W}^{k+1})-D(\overline{\varphi}^{k+1}(\varphi^{k+1}+\varphi^k),\nabla^2  \widehat{u}).
\end{cases}
\end{equation}
First, multiplying $(\ref{eq:1.2w})_1$ by $2\overline{\varphi}^{k+1}$ and integrating  over $\mathbb{R}^3$ yield that 
\begin{equation}\label{varcha}
\begin{split}
\frac{d}{dt}|\overline{\varphi}^{k+1}|^2_2
\leq& C\big(|\nabla w^k|_\infty+ |\nabla\widehat{u}|_\infty\big)|\overline{\varphi}^{k+1}|^2_2+C |\overline{\varphi}^{k+1}|_2|\overline{w}^k|_2|\nabla \varphi^k|_\infty\\
&+C |\overline{\varphi}^{k+1}|_2\big(\big(|\nabla w^{k-1}|_\infty+ |\nabla\widehat{u}|_\infty\big)|\overline{\varphi}^{k}|_2+|\varphi^k \text{div}\overline{w}^k|_2 \big)\\
\leq& C\nu^{-1}|\overline{\varphi}^{k+1}(t)|^2_2+\nu\big( |\overline{w}^k|^2_2+|\overline{\varphi}^k|^2_2+|\varphi^k \text{div}\overline{w}^k|^2_2\big),
\end{split}
\end{equation}
where $0<\nu \leq \frac{1}{10}$ is a constant to be determined.

Second, multiplying $(\ref{eq:1.2w})_2$ by $2\overline{W}^{k+1}$ and integrating  over $\mathbb{R}^3$, one has
\begin{equation}\label{zheng8}
\begin{split}
&\frac{d}{dt}\int |\overline{W}^{k+1}|^2+2\alpha|\varphi^{k+1}\nabla\overline{w}^{k+1} |^2_2+2(\alpha+\beta)|\varphi^{k+1}\text{div}\overline{w}^{k+1} |^2_2\\
= & \int(\overline{W}^{k+1})^\top\text{div}A^*(W^k, \widehat{u})\overline{W}^{k+1}-2\int  \sum_{j=1}^3(\overline{W}^{k+1})^\top A_j(\overline{W}^{k}) \partial_{j}W^{k}\\
&-2 \frac{\delta-1}{\delta}\int\nabla( \varphi^{k+1})^2\cdot Q(\overline{w}^{k+1} )\cdot  \overline{w}^{k+1} -2\int  \overline{\varphi}^{k+1}(\varphi^{k+1}+\varphi^k)  Lw^k\cdot  \overline{w}^{k+1}\\
&+2\int \nabla (\overline{\varphi}^{k+1}(\varphi^{k+1}+\varphi^k))\cdot Q(w^k+ \widehat{u})\cdot  \overline{w}^{k+1}+2\int \nabla (\varphi^{k})^2\cdot Q(\overline{w}^{k}) \cdot  \overline{w}^{k+1} \\
&-2\int B(\nabla  \widehat{u},\overline{W}^{k+1})\cdot \overline{W}^{k+1}-2\int \overline{\varphi}^{k+1}(\varphi^{k+1}+\varphi^k)L( \widehat{u})\cdot \overline{w}^{k+1}  :=\sum_{i=1}^{8} J_i.
\end{split}
\end{equation}

The terms $J_1$-$J_8$ above can be estimated as follows
\begin{equation}\label{ya1}
\begin{split}
J_1=&\int(\overline{W}^{k+1})^\top\text{div}A^*({W}^k, \widehat{u})\overline{W}^{k+1}
\leq C|\nabla (w^k+ \widehat{u})|_\infty| \overline{W}^{k+1}|^2_2\leq C|\overline{W}^{k+1}|^2_2,\\
J_2=&-2\int  \sum_{j=1}^3A_j(\overline{W}^{k})\partial_{j}W^{k}\cdot \overline{W}^{k+1}\\
\leq & C|\nabla W^k|_\infty | \overline{W}^{k}|_2 | \overline{W}^{k+1}|_2
\leq C\nu^{-1}|\overline{W}^{k+1}|^2_2+\nu| \overline{W}^{k}|^2_2,\\
\end{split}
\end{equation}
\begin{equation}\label{ya1A}
\begin{split}
J_3=&-2 \frac{\delta-1}{\delta}   \int \nabla(\varphi^{k+1})^2\cdot Q( \overline{w}^{k+1} )\cdot \overline{w}^{k+1} \\
\leq & C|\nabla \varphi^{k+1}|_\infty |\varphi^{k+1}\nabla \overline{w}^{k+1} |_2|\overline{w}^{k+1} |_2
\leq  C |\overline{W}^{k+1}|^2_2+\frac{\alpha}{20} |\varphi^{k+1}\nabla \overline{w}^{k+1} |^2_2,\\
J_4=&-2\int  \overline{\varphi}^{k+1}(\varphi^{k+1}+\varphi^k)Lw^k \cdot  \overline{w}^{k+1} \\
\leq & C| \overline{\varphi}^{k+1}|_2| \varphi^{k+1}\overline{w}^{k+1} |_6|Lw^k|_3+C|\overline{\varphi}^{k+1}|_2|\varphi^kLw^k|_\infty| \overline{w}^{k+1} |_2\\
\leq & C|\overline{\varphi}^{k+1}|^2_2+\frac{\alpha}{20}  | \varphi^{k+1}\nabla \overline{w}^{k+1} |^2_2
+C(1+ |\varphi^k\nabla^4 w^k|^2_2)|\overline{W}^{k+1} |^2_2,\\
J_5=&2\int \nabla (\overline{\varphi}^{k+1}(\varphi^{k+1}+\varphi^k))\cdot Q(w^k+ \widehat{u}) \cdot  \overline{w}^{k+1} \\
\leq& C  (|\nabla^2 w^k|_6+|\nabla^2 \widehat{u}|_6) |\varphi^{k+1}\overline{w}^{k+1} |_3 |\overline{\varphi}^{k+1}|_2\\
&+C(|\varphi^{k}\nabla^2 w^k|_\infty+|\varphi^{k}\nabla^2  \widehat{u}|_\infty) |\overline{w}^{k+1} |_2|\overline{\varphi}^{k+1}|_2\\
&+ C(|\nabla w^k|_\infty+|\nabla \widehat{u}|_\infty)| \overline{\varphi}^{k+1}|_2 |\varphi^{k+1}\nabla\overline{w}^{k+1} |_2+J^*_5,\\
J_6
=& 4\int  \varphi^k \nabla \varphi^k \cdot Q(\overline{w}^{k}) \cdot  \overline{w}^{k+1} \\
\leq & C|\nabla \varphi^k|_\infty|\varphi^{k} \nabla\overline{w}^{k} |_2|\overline{w}^{k+1}|_2
\leq \nu |\varphi^{k} \nabla\overline{w}^{k} |^2_2+C\nu^{-1} |\overline{w}^{k+1}|^2_2,\\
J_7=&-2\int B(\nabla  \widehat{u},\overline{W}^{k+1})\cdot \overline{W}^{k+1}
\leq C |\nabla  \widehat{u}|_\infty|\overline{W}^{k+1}|^2_2,\\
J_8=&-2\int \overline{\varphi}^{k+1}(\varphi^{k+1}+\varphi^k)L\widehat{u}\cdot \overline{w}^{k+1}
\leq  C|\overline{\varphi}^{k+1}|_2|\varphi^{k+1}+\varphi^k|_\infty|L \widehat{u}|_\infty|\overline{w}^{k+1}|_2,
\end{split}
\end{equation}
where   one has  used the inequality (\ref{qianru1}) for the term $|\varphi^kLw^k|_\infty$, and
\begin{equation}\label{ya7q}
\begin{split}
J^*_5
=&-2\int  \sum_{i,j} \overline{\varphi}^{k+1}(\varphi^k-\varphi^{k+1}+\varphi^{k+1}) Q_{ij}(w^k+ \widehat{u}) \partial_i (\overline{w}^{k+1})^{(j)}\\
\leq &C(|\nabla w^k|_\infty+|\nabla \widehat{u}|_\infty)\Big( |\nabla\overline{w}^{k+1} |_\infty|\overline{\varphi}^{k+1}|^2_2+|\varphi^{k+1} \nabla\overline{w}^{k+1} |_2|\overline{\varphi}^{k+1}|_2\Big).
\end{split}
\end{equation}

Then it follows from (\ref{zheng8})-(\ref{ya7q}) and Young's inequality that 
\begin{equation}\label{gogo1}\begin{split}
&\frac{d}{dt}\int |\overline{W}^{k+1}|^2+\alpha |\varphi^{k+1}\nabla\overline{w}^{k+1} |^2_2\\
\leq&C(\nu^{-1}+ |\varphi^k\nabla^4 w^k|^2_2)|\overline{W}^{k+1}|^2_2
+C|\overline{\varphi}^{k+1}|^2_{2}+\nu (|\varphi^k \nabla \overline{w}^k|^2_2+|\overline{\varphi}^k|_2^2+|\overline{W}^k|_2^2).
\end{split}
\end{equation}

Finally, define $$
\Gamma^{k+1}(t)=\sup_{s\in [0,t]}|\overline{W}^{k+1}(s)|^2_{2}+\sup_{s\in [0,t]}|\overline{\varphi}^{k+1}(s)|^2_{2}.
$$
It follows from  (\ref{varcha}) and (\ref{gogo1}) that 
\begin{equation*}\begin{split}
&\frac{d}{dt}\int \Big( |\overline{W}^{k+1}|^2+|\overline{\varphi}^{k+1}(t)|^2_{2}\Big)+\alpha |\varphi^{k+1}\nabla\overline{w}^{k+1} |^2_2\\
\leq& E^k_\nu (|\overline{W}^{k+1}|^2_{2}+|\overline{\varphi}^{k+1}|^2_{2})+ \nu (|\varphi^k \nabla \overline{w}^k|^2_2
+|\overline{\varphi}^k|_2^2+| \overline{W}^{k}|^2_2),
\end{split}
\end{equation*}
for some $E^k_\nu$ such that  
$$\int_{0}^{t}E^k_\nu(s)\text{d}s\leq C+C_\nu t\quad \text{for} \quad   0\leq t\leq T^{**}.
$$

Then  Gronwall's inequality implies that 
\begin{equation*}\begin{split}
&\Gamma^{k+1}+\int_{0}^{t}\alpha|\varphi^{k+1}\nabla\overline{w}^{k+1} |^2_2\text{d}s\\
\leq & \Big( \nu\int_{0}^{t}   |\varphi^k \nabla \overline{w}^k|^2_2\text{d}s+t\nu \sup_{s\in [0,t]} \big(| \overline{W}^{k}|^2_2
+| \overline{\varphi}^{k}|^2_2\big)\Big)\exp{(C+C_\nu t)}.
\end{split}
\end{equation*}
Choose $\nu_0>0$ and $T_*\in (0,\min (1,T^{**}))$ small enough such that
\begin{equation*}\begin{split}
\nu_0 \exp{C}&=\frac{1}{8} \min \big\{1, \alpha\big \}, \quad \exp (C_{\nu_0} T_*) \leq 2,
\end{split}\end{equation*}
which implies that
\begin{equation*}\begin{split}
\sum_{k=1}^{\infty}\Big( \Gamma^{k+1}(T_*)+\int_{0}^{T_*} \alpha |\varphi^{k+1}\nabla\overline{w}^{k+1} |^2_2\text{d}t\Big)\leq C<+\infty.
\end{split}
\end{equation*}
Thus, by the above estimate for $\Gamma^{k+1}(T_*)$ and (\ref{jkk}),   the whole sequence $(\varphi^k,W^k)$ converges to a limit $(\varphi, W)=(\varphi,\phi,w)$ in the following strong sense:
\begin{equation}\label{str}
\begin{split}
&(\varphi^k,W^k)\rightarrow (\varphi,W)\ \text{in}\ L^\infty([0,T_*];H^2(\mathbb{R}^3)).
\end{split}
\end{equation}
Due to  (\ref{jkk}) and the lower-continuity of norm for weak convergence,  $(\varphi,  W)$ still satisfies  (\ref{jkk}).
Now (\ref{str}) implies that $(\varphi,  W)$ satisfies
problem (\ref{li47-1}) in  the sense of distribution.
So the existence of  a classical solution is proved.

\textbf{Step 2:} Uniqueness.  Similarly to the proof of Lemma \ref{lem1q}, for any $t\in [0,T_*]$, the solution $(\varphi, \phi, w)$ is still compactly supported with respect to the space variable $x$, and  the size of their supports  is uniformly bounded by the constant $C_0(R,T_*)$.

 Let $(\varphi_1,W_1)$ and $(\varphi_2,W_2)$ be two  solutions  to  (\ref{li47-1}) satisfying the uniform a priori estimate (\ref{jkk}). Set
\begin{equation*}\begin{split}
&\overline{\varphi}=\varphi_1-\varphi_2,\quad \overline{W}=(\overline{\phi},\overline{w})=(\phi_1-\phi_2,w_1-w_2).
\end{split}
\end{equation*}
Then according to (\ref{eq:1.2w}), $(\overline{\varphi},\overline{W})$ solves the following system
 \begin{equation}
\label{zhuzhu}
\begin{cases}
\displaystyle
\ \ \overline{\varphi}_t+(w_1+\widehat{u})\cdot \nabla\overline{\varphi} +\overline{w}\cdot\nabla\varphi_{2}+\frac{\delta-1}{2}(\overline{\varphi} \text{div}(w_1+\widehat{u})+\varphi_{2}\text{div}\overline{w})=0,\\[10pt]
\ \ \overline{W}_t+\sum\limits_{j=1}^3A^*_j(W_1,\widehat{u})\partial_{j} \overline{W}+\varphi^2_1\mathbb{L}(\overline{w})
=-\sum\limits_{j=1}^3A_j(\overline{W})\partial_{j} W_{2} \\[10pt]
\ \ \ \ -\overline{\varphi}(\varphi_1+\varphi_2)\mathbb{L}(w_2) +\big(\mathbb{H}(\varphi_1)-\mathbb{H}(\varphi_2)\big)\cdot \mathbb{Q}(W_2)\\[10pt]
\ \ \ \ +\mathbb{H} (\varphi_1)\cdot \mathbb{Q}(\overline{W})-B(\nabla \widehat{u},\overline{w})-D(\overline{\varphi}(\varphi_1+\varphi_2),\nabla^2 \widehat{u}).
\end{cases}
\end{equation}

Using the same arguments as in the derivation of (\ref{varcha})-(\ref{gogo1}), and letting
$$
\Lambda(t)=|\overline{W}(t)|^2_{2}+|\overline{\varphi}(t)|^2_{2},
$$
one can get that 
\begin{equation}\label{gonm}
\displaystyle
\frac{d}{dt}\Lambda(t)+C|\varphi_1\nabla \overline{w}(t)|^2_2\leq I(t)\Lambda(t),
\end{equation}
for some $I(t)$ such that 
$$
\int_{0}^{t}I(s)ds\leq C\quad \text{for} \quad   0\leq t\leq T_*.
$$
So the Gronwall's inequality yields $\overline{\varphi}=\overline{\phi}=\overline{w}=0$.
Then the uniqueness is obtained.

\textbf{Step 3}. The time-continuity  can be obtained via the  same arguments as in Lemma \ref{lem1q}.

\section{Global-in-time well-posedness with compactly supported initial density under $(P_0)$ or $(P_1)$}

In this section,   we will consider the global-in-time well-posedness of classical solutions to the Cauchy problem  (\ref{li47-1}) with compactly supported $(\varphi_0,\phi_0)$.
To this end, we first rewrite
  (\ref{li47-1})   as
\begin{equation}\label{li47-2}
\begin{cases}
\displaystyle
\varphi_t+w\cdot\nabla\varphi+\frac{\delta-1}{2}\varphi\text{div} w=- \widehat{u} \cdot\nabla\varphi-\frac{\delta-1}{2}\varphi\text{div}  \widehat{u},\\[10pt]
\displaystyle
W_t+\sum_{j=1}^3A_j(W) \partial_j W+\varphi^2\mathbb{{L}}(w)=\mathbb{{H}}(\varphi)  \cdot \mathbb{{Q}}(w+ \widehat{u})+G^*(W, \varphi,  \widehat{u}),\\[10pt]
(\varphi, \phi, w)(t=0,x)=(\varphi_0,W_0)=(\varphi_0, \phi_0, 0),\quad x\in \mathbb{R}^3,\\[10pt]
(\varphi, \phi, w)\rightarrow (0,0,0) \quad\quad   \text{as}\quad \quad  |x|\rightarrow \infty \quad \text{for} \quad  t\geq 0,
 \end{cases}
\end{equation}
where
\begin{equation} \label{li47-2A}
\begin{split}
G^*(W, \varphi,  \widehat{u})=&-B(\nabla  \widehat{u},W)-\sum_{j=1}^3  \widehat{u}^{(j)}\partial_j W-D(\varphi^2,\nabla^2 \widehat{u}).
\end{split}
\end{equation}

Then the main result in this section can be stated as:

%

\begin{theorem}\label{ths1} Let  (\ref{canshu})
and any one of conditions $(P_0)$-$(P_3)$ hold.
 If the initial data $( \varphi_0, \phi_0,u_0)$ satisfies $(A_1)$-$(A_3)$ in Theorem \ref{newjie} and 
$$
 \|\phi_0\|_3+ \|\varphi_0\|_3\leq D_0(\alpha,\beta, \delta,  A,\gamma, \kappa, \|u_0\|_{\Xi}),$$
where $D_0>0$ is some constant depending on $(\alpha,\beta, \delta,  A,\gamma, \kappa, \|u_0\|_{\Xi})$, then for any  $T>0$,  there exists a unique global classical solution $(\varphi,  \phi, u)$ in $[0,T] \times \mathbb{R}^3$  to (\ref{li47-1}) satisfying
\begin{equation}\label{regghNN}\begin{split}
& (\varphi,\phi) \in C([0,T]; H^3), \    w\in C([0,T]; H^{s'}) \cap L^\infty([0,T]; H^3),\  \varphi\nabla^4w \in L^2([0,T]; L^2),
\end{split}
\end{equation}
for  any constant $s'\in[2,3)$.   Moreover, when $(P_2)$ holds, the smallness assumption on $\varphi_0$ could be removed.
\end{theorem}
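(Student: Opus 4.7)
The plan is to start from the local solution supplied by Theorem \ref{newjie} and extend it globally by means of a time-weighted a priori estimate that feeds into Proposition \ref{ode1}. The key structural input is the sharp decay information on the reference Burgers flow from Proposition \ref{p1}: $\nabla\widehat{u}(t,x)=\frac{1}{1+t}\mathbb{I}_3+\frac{K(t,x)}{(1+t)^2}$ with $K$ uniformly bounded, $|\nabla^2\widehat{u}|_\infty=O((1+t)^{-3})$, and $|\widehat{u}|\leq |\nabla\widehat{u}|_\infty|x|$. In particular, the trace gives $\text{div}\,\widehat{u}=\frac{3}{1+t}+O((1+t)^{-2})$, which is the coercive engine driving decay in the transport equations for $\varphi$ and $\phi$. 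The assumption $(A_2)$ guarantees the eigenvalues of $\nabla\widehat{u}$ remain of order $(1+t)^{-1}$ with favorable sign, which will be essential to control the convection $(w\cdot\nabla)\widehat{u}$ in the spirit of Grassin and Serre.

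First I would perform $H^3$ energy estimates on $(\ref{li47-2})_1$, where the $\frac{\delta-1}{2}\varphi\,\text{div}\,\widehat{u}$ term produces a good $\frac{3(\delta-1)}{2(1+t)}\|\varphi\|_3^2$ on the left, with all remaining contributions (the $K$ correction, the nonlinear $w$-terms, and the top-order commutator) either decaying faster or quadratic in the energies. A parallel estimate for $\phi$, now coupled to $w$ through $(\ref{li47-2})_2$, produces an analogous decay of rate $\frac{3(\gamma-1)}{2(1+t)}$. This suggests introducing weighted quantities such as $(1+t)^{a_1}\|\varphi\|_3$ and $(1+t)^{a_2}\|\phi\|_3$ with $a_i$ slightly below the leading decay rates. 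Next, $H^3$ estimates on the symmetric-hyperbolic block $W=(\phi,w)$ exploit the symmetry of $A_j(W)$ to kill the top-order hyperbolic derivative, while the degenerate elliptic piece $\varphi^2\mathbb{L}(w)$, after integration by parts, delivers the weak damping $\alpha|\varphi\nabla^{k+1}w|_2^2+(\alpha+\beta)|\varphi\,\text{div}\nabla^k w|_2^2$. The dangerous convection $(w\cdot\nabla)\widehat{u}$ is tamed via the coercivity provided by $(A_2)$ and the decay $|\nabla\widehat{u}|_\infty=O((1+t)^{-1})$, while the coupling $D(\varphi^2,\nabla^2\widehat{u})$ decays very fast thanks to both $\varphi^2$ and $(1+t)^{-3}$.

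After summing and bookkeeping exponents, I expect to arrive at a differential inequality of the form
\begin{equation*}
\frac{d\mathcal{E}}{dt}+\frac{b}{1+t}\mathcal{E}\leq C_1(1+t)^{D_1}\mathcal{E}^{a}+C_2(1+t)^{D_2}\mathcal{E},
\end{equation*}
where $\mathcal{E}$ is a suitably weighted $H^3$ energy and $(a,b,D_1,D_2)$ depend on $(\alpha,\beta,\gamma,\delta)$. The algebraic content of the hypotheses $(P_0)$--$(P_3)$ is precisely that these exponents satisfy (\ref{zhibiao1}): $(P_1)$ eliminates the bulk term $(\alpha+\beta)|\text{div}\,w|^2$ which would otherwise spoil the balance between shear dissipation and the $\nabla\varphi^2\cdot\mathbb{S}(w)$ source; $(P_2)$ makes $\varphi^{2}$ absorbable by $\phi^{2(\delta-1)/(\gamma-1)}$ through a pointwise bound, which is why the smallness on $\varphi_0$ is unnecessary there; $(P_3)$ identifies $\varphi$ and $\phi$ up to a constant and collapses the estimates; and $(P_0)$, as foreshadowed by Remark \ref{P0}, reflects a Korn-type re-arrangement of the dissipation into shear and bulk parts with the constants $M_1,M_2$. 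Proposition \ref{ode1} then gives a global uniform bound on $\mathcal{E}$ once the initial datum lies below the threshold $\Lambda$, which is exactly what the smallness $\|\varphi_0\|_3+\|\phi_0\|_3\leq D_0$ in $(A_1)$ ensures.

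Finally, a standard continuation argument, together with the compactness of $\supp\varphi$, $\supp\phi$ propagated by the characteristic curves $z(t;\xi_0)$ of $w+\widehat{u}$ (as in Theorem \ref{newjie}), extends the local solution to all $[0,T]$. The hardest step, I expect, is the weighted $H^3$ bookkeeping for the term $\int\nabla^k(\nabla\varphi^2\cdot\mathbb{S}(w))\cdot\nabla^k w$ and the companion $\int\nabla\varphi^2\cdot\mathbb{S}(\widehat{u})\cdot w$, because here the gain from $\nabla\varphi$ must be spent either on recovering the dissipation $|\varphi\nabla^{k+1}w|_2$ or on balancing the $(1+t)^{-1}$ factor from $\nabla\widehat{u}$, and the choice is delicate; this is precisely where the four parameter regimes diverge technically, and where the exponent bookkeeping demanded by Proposition \ref{ode1} becomes most constrained.
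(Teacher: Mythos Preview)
Your plan is correct and mirrors the paper's proof closely: time-weighted $H^3$ estimates on $\varphi$ and on $W=(\phi,w)$, with the coercive contribution from $\text{div}\,\widehat u=\tfrac{3}{1+t}+O((1+t)^{-2})$ producing a damping $\tfrac{b}{1+t}$ term, the degenerate elliptic operator yielding the weak dissipation $\alpha|\varphi\nabla^{k+1}w|_2^2+(\alpha+\beta)|\varphi\,\text{div}\nabla^k w|_2^2$, and the whole feeding into Proposition~\ref{ode1}.

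Two refinements you will discover once you compute. First, the weights must depend on the derivative order: the paper takes $(1+t)^{\gamma_k}|\nabla^k W|_2$ with $\gamma_k=k-n$ and $(1+t)^{\delta_k}|\nabla^k\varphi|_2$ with $\delta_k=k-m$, because the commutator $[\nabla^k,\widehat u\cdot\nabla]$ contributes an extra $\tfrac{k}{1+t}$ at level $k$. A single weight on $\|\cdot\|_3$ would not close. Second, your description of the $(P_1)$ mechanism is off: $2\alpha+3\beta=0$ does \emph{not} kill the bulk dissipation $(\alpha+\beta)|\varphi\,\text{div}\nabla^k w|_2^2$ (indeed $\alpha+\beta=\alpha/3>0$); what it kills is the top-order cross term
\[
\frac{(4\alpha+6\beta)\delta}{(\delta-1)(1+t)}\,|\nabla^3\varphi|_2\,|\varphi\nabla^3\text{div}w|_2
\]
arising from $Q_3^3$ (the piece $\nabla^4\varphi^2\cdot Q(\widehat u)$ after integration by parts). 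Under $(P_0)$ this cross term survives and must instead be absorbed by completing the square against both the dissipation and the $\varphi$-damping, which is where the quadratic condition on $M_1,M_2$ enters and forces the specific choice $m=n+\tfrac12=3$. Your identification of $(P_2)$ and $(P_3)$ is accurate.
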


We will  prove Theorem \ref{ths1} in the following  five Subsections $5.1$-$5.5$ via establishing a uniform-in-time weighted estimates under the condition $(P_0)$ or $(P_1)$ (see Theorem \ref{thglobal} and Remark \ref{P0}). The proof for the cases $(P2)$-$(P3)$  will be given in Section 6. Introduce a time weighed norm $Z(t)$ for  classical solutions as
\begin{equation}\label{fg}
\begin{split}
\displaystyle Y_k(t)=&|\nabla^k W(t)|_2,\qquad
 \displaystyle Y^2(t)=\sum^3_{k=0}(1+t)^{2\gamma_k} Y^2_k(t),\\
\displaystyle U_k(t)=&|\nabla^k \varphi(t)|_2,\qquad
 \displaystyle U^2(t)=\sum^3_{k=0}(1+t)^{2\delta_k} U^2_k(t),\\
 Z^2(t)=&Y^2(t)+U^2(t), \quad \gamma_k=k-n,\quad \delta_k=k-m,
\end{split}
\end{equation}
with  $(n, m)$ to be determined in Subsections 5.3-5.4. Denote also
$$
Z(0)=Z_0, \quad Y(0)=Y_0,\quad U(0)=U_0.
$$

Hereinafter,  $C\geq 1$ will denote  a generic constant depending only on fixed constants $(\alpha, \beta,\gamma, A, \delta, \kappa)$,  but independent of $(\varphi_0,W_0)$, which may be different from line to line, and  $C_0>0$  denotes a generic constant depending on $(C, \varphi_0,W_0)$. Specially, $C(l)$ (or $C_0(l)$)  denotes a generic positive constant depending on $(C,l)$ (or $(C, \varphi_0,W_0,l)$).

It then  follows from  Lemma \ref{lem2-2} that:
\begin{lemma}\label{lemK}
\begin{equation*}
\begin{split}
|W(t)|_\infty\leq& C(1+t)^{\frac{2n-3}{2}}Y(t),\quad  \text{and} \quad  |\nabla W(t)|_\infty\leq C(1+t)^{\frac{2n-5}{2}}Y(t),\\[2pt]
|\varphi(t)|_\infty\leq& C(1+t)^{\frac{2m-3}{2}} U(t),\quad \text{and} \quad  |\nabla \varphi(t)|_\infty\leq C(1+t)^{\frac{2m-5}{2}}  U(t).
\end{split}
\end{equation*}
\end{lemma}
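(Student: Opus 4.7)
The plan is to obtain these $L^\infty$ bounds via a Gagliardo--Nirenberg interpolation in three space dimensions combined with the weighted decay built into the definitions of $Y(t)$ and $U(t)$. The key observation is that the definition of $Y^2(t)=\sum_{k=0}^3(1+t)^{2(k-n)}Y_k^2(t)$ immediately gives the pointwise-in-$t$ estimates
$$
Y_k(t)\le (1+t)^{n-k}Y(t),\qquad U_k(t)\le (1+t)^{m-k}U(t),\qquad k=0,1,2,3,
$$
which is the only information about the weights we need. Everything else is a standard Sobolev interpolation on $\mathbb{R}^3$.

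First I would invoke the Gagliardo--Nirenberg interpolation (which is exactly the content of the cited Lemma~\ref{lem2-2}) in the form
$$
|f|_\infty\le C\,|\nabla f|_2^{1/2}\,|\nabla^2 f|_2^{1/2}
$$
for functions on $\mathbb{R}^3$ decaying at infinity. Applying this to $f=W$ and using the displayed bounds on $Y_1$ and $Y_2$,
$$
|W(t)|_\infty\le C\,Y_1(t)^{1/2}Y_2(t)^{1/2}\le C(1+t)^{(n-1)/2}(1+t)^{(n-2)/2}Y(t)=C(1+t)^{(2n-3)/2}Y(t).
$$
Applying the same inequality to $f=\nabla W$, one has $|\nabla W|_\infty\le C\,Y_2^{1/2}Y_3^{1/2}$, and the weights now produce the exponent $(n-2)/2+(n-3)/2=(2n-5)/2$, which is precisely the claimed rate.

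The estimates for $\varphi$ are obtained in exactly the same way, replacing $Y_k$ by $U_k$ and $n$ by $m$: $|\varphi|_\infty\le C\,U_1^{1/2}U_2^{1/2}\le C(1+t)^{(2m-3)/2}U(t)$ and $|\nabla\varphi|_\infty\le C\,U_2^{1/2}U_3^{1/2}\le C(1+t)^{(2m-5)/2}U(t)$. No obstacle really arises here; the entire lemma is just an algebraic bookkeeping of the weight exponents $\gamma_k=k-n$ and $\delta_k=k-m$ after one invocation of Gagliardo--Nirenberg. The only point deserving a line of commentary is that the interpolation requires decay at spatial infinity, which is available because $W=(\phi,w)$ and $\varphi$ both belong to $H^3(\mathbb{R}^3)$ by the regularity class fixed in the previous section.
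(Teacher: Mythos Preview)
Your proof is correct and follows essentially the same approach as the paper, which simply cites Lemma~\ref{lem2-2} and reads off the exponents from the linear weights $\gamma_k=k-n$, $\delta_k=k-m$. One small remark: the interpolation you write, $|f|_\infty\le C|\nabla f|_2^{1/2}|\nabla^2 f|_2^{1/2}$, is not literally the form stated in Lemma~\ref{lem2-2} (which interpolates between $|f|_2$ and $|\nabla^p f|_2$, e.g.\ $|f|_\infty\le C|f|_2^{1/4}|\nabla^2 f|_2^{3/4}$ for $p=2$); both forms are valid Gagliardo--Nirenberg inequalities and, because the weights are linear in $k$, both produce the same exponent $(2n-3)/2$, etc.
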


\subsection{Energy estimates on $W$} First, applying  $\nabla^k$ to
$\eqref{li47-2}_2$, multiplying by $\nabla^k W $ and
integrating over $\mathbb{R}^3$, one can get
\begin{equation}\label{eq:2.3}
\begin{split}
&\frac{1}{2}\frac{d}{dt}|\nabla^k W|^2_2+\int \Big( \alpha \varphi^2 | \nabla^{k+1} w|^2+(\alpha+\beta) \varphi^2 |\text{div} \nabla^k w|^2\Big)\\
 =&R_k(W)+S_k(W,  \widehat{u})+ L_k(W,\varphi,  \widehat{u})+ Q_k(W,\varphi,  \widehat{u}),
\end{split}
\end{equation}
where
\begin{equation}\label{eq:2.7}
\begin{split}
R_k(W)=&-\int \nabla^k W\cdot \Big( \nabla^k\Big(\sum_{j=1}^3 A^j(W)\partial_j W\Big)-\sum_{j=1}^3 A^j(W)\partial_j \nabla^k W\Big)\\
&+\frac{1}{2}\int\sum_{j=1}^3\nabla^k W\cdot \partial_j A^j(W) \nabla^k W,\\
 S_k(W,  \widehat{u})=&-\int \nabla^k W\cdot  \nabla^kB(\nabla \widehat{u},W)+\frac{1}{2}\int \sum_{j=1}^3 \partial_j \widehat{u}^{(j)} \nabla^k W\cdot \nabla^k W\\
&-\int\nabla^k W\cdot \Big( \nabla^k\Big(\sum_{j=1}^3  \widehat{u}^{(j)}\partial_j W\Big)-\sum_{j=1}^3  \widehat{u}^{(j)}\partial_j \nabla^kW\Big),\\
L_k(W,\varphi,\widehat{u})=&-\int \Big(\nabla \varphi^2\cdot \mathbb{{S}}(\nabla^kw)- \big(\nabla^k(\varphi^2Lw)-\varphi^2L\nabla^kw\big)\Big)\cdot \nabla^k w\\
&+\int \nabla^k(\varphi^2L  \widehat{u})\cdot \nabla^k w \equiv: L^1_k+L^2_k+L^3_k,
\end{split}
\end{equation}
and
\begin{equation}\label{eq:2.7duan}
\begin{split}
Q_k(W, \varphi, \widehat{u})=&\int \Big( \nabla \varphi^2 \cdot Q(\nabla^kw)+\big(\nabla^k(\nabla \varphi^2\cdot Q(w))-\nabla \varphi^2\cdot Q(\nabla^kw)\big)\Big)\cdot \nabla^k w\\
&+\int \nabla^k\big(\nabla \varphi^2 \cdot Q( \widehat{u})\big)\cdot \nabla^k w
\equiv: Q^1_k+Q^2_k+Q^3_k.
\end{split}
\end{equation}

The right hand side of (\ref{eq:2.3}) will be estimated in the next lemmas.
\begin{lemma}[\textbf{Estimates on  $R_k$ and $S_k$}]\label{l1}
\begin{align}
\Big|R_k(W)(t, \cdot) \Big|\leq& C|\nabla W|_{\infty}Y^2_k, \label{eq:2.4}\\[8pt]
\frac{k+r}{1+t}Y^2_k+ S_k(W,  \widehat{u})(t, \cdot)\leq& C_0 Y_k Z (1+t)^{-\gamma_k-2},\label{eq:2.5}
\end{align}
for $k=0,1,2,3$, where the constant $r$ is given by:
 \begin{equation}\label{eq:2.6}
 r=-\frac{1}{2} \ \ \text{if} \ \ \gamma\geq \frac{5}{3},\ \
\text{or}\ \ \frac{3}{2}\gamma-3 \ \  \text{if} \ \  1< \gamma<\frac{5}{3}.
 \end{equation}
\end{lemma}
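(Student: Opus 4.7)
The plan is to treat \eqref{eq:2.4} and \eqref{eq:2.5} by rather different tools. The first is a purely internal Moser/Kato--Ponce commutator estimate, while the second rests on the structural decomposition of $\nabla\widehat{u}$ provided by Proposition \ref{p1}.

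For $R_k(W)$ I will exploit that each $A^j(W)$ is affine-linear in $W$, so $\partial_j A^j(W)$ equals (up to constants) a component of $\nabla W$ and the second term in the definition of $R_k$ is immediately bounded by $C|\nabla W|_\infty Y_k^2$. For the commutator piece, the classical Kato--Ponce inequality gives
\begin{equation*}
\bigl\|[\nabla^k,A^j(W)]\partial_j W\bigr\|_{L^2}\leq C\bigl(|\nabla A^j(W)|_\infty\|\nabla^{k-1}\partial_j W\|_{L^2}+\|\nabla^k A^j(W)\|_{L^2}|\partial_j W|_\infty\bigr),
\end{equation*}
which, by the linearity of $A^j$ in $W$, collapses to $C|\nabla W|_\infty Y_k$; pairing with $\nabla^k W$ in $L^2$ then yields \eqref{eq:2.4}.

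For \eqref{eq:2.5} the key idea is to substitute into $S_k$ the decomposition $\nabla\widehat{u}(t,x)=\frac{1}{1+t}\mathbb{I}_3+\frac{1}{(1+t)^2}K(t,x)$, $\|K\|_\infty\leq C_0$, from Proposition \ref{p1}, and compute the contribution of each of the three ingredients of $S_k$ separately at the leading order $(1+t)^{-1}$. The symmetrised transport term $\frac{1}{2}\int\partial_j\widehat{u}^{(j)}|\nabla^k W|^2$ contributes $+\frac{3}{2(1+t)}Y_k^2$; the commutator $-\int\nabla^k W\cdot[\nabla^k,\widehat{u}^{(j)}\partial_j]W$ contributes $-\frac{k}{1+t}Y_k^2$, arising from the Leibniz term with exactly one derivative landing on $\widehat{u}$; and $-\int\nabla^k W\cdot\nabla^k B(\nabla\widehat{u},W)$ contributes $-\frac{3(\gamma-1)}{2(1+t)}|\nabla^k\phi|_2^2-\frac{1}{1+t}|\nabla^k w|_2^2$ after putting all derivatives on $W$. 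Summing, the $\phi$-block of $S_k$ at leading order equals $-\frac{k+3\gamma/2-3}{1+t}|\nabla^k\phi|_2^2$ and the $w$-block equals $-\frac{k-1/2}{1+t}|\nabla^k w|_2^2$. The choice $r=\min\{-1/2,\,3\gamma/2-3\}$, which is precisely the dichotomy \eqref{eq:2.6}, then guarantees $\frac{k+r}{1+t}Y_k^2+(\text{leading part of }S_k)\leq 0$.

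It remains to bound the sub-leading remainder of $S_k$, which collects (i) the correction $(1+t)^{-2}K(t,x)$ to $\nabla\widehat{u}$ and (ii) the Leibniz terms in the commutator and in $\nabla^k B$ that place $l\geq 2$ derivatives on $\widehat{u}$. By Proposition \ref{p1} each such factor already carries at least the bound $\|\nabla^2\widehat{u}\|_{L^\infty}\leq C_0(1+t)^{-3}$ or $\|\nabla^l\widehat{u}\|_{L^2}\leq C_{0,l}(1+t)^{1/2-l}$, which produces the needed extra $(1+t)^{-1}$ of time decay beyond the leading order. Each remaining $W$-factor is then estimated by Sobolev embedding from $Y_j$, $j\leq k$, using $Y_j\leq (1+t)^{-\gamma_j}Z$, and a term-by-term accounting shows the accumulated time-decay exponents sum to exactly $-\gamma_k-2$, giving $C_0 Y_k Z(1+t)^{-\gamma_k-2}$. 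The main technical obstacle will be the bookkeeping for $k=3$, particularly in the commutator: the worst Leibniz term contains $\nabla^3\widehat{u}$, available only in $L^2$, paired with a $\nabla W$-factor that must be placed in $L^\infty$; here the full $H^3$ information encoded in $Z$ must be invoked through Sobolev embedding so that the remaining decay exponents still sum to $-\gamma_3-2$.
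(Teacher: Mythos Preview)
Your proposal is correct and follows essentially the same approach as the paper: the paper likewise splits $S_k$ into the part $S_k^1$ with exactly one derivative on $\widehat{u}$ (computed explicitly via the decomposition $\nabla\widehat{u}=\frac{1}{1+t}\mathbb{I}_3+\frac{1}{(1+t)^2}K$ to extract the coefficients $A_k=k-\tfrac12$ and $B_k=\tfrac{3\gamma}{2}-3+k$) and the remainder $S_k^2$ (bounded term-by-term using the decay of $\nabla^l\widehat{u}$, $l\geq 2$, from Proposition~\ref{p1}), exactly as you describe. The only cosmetic difference is that for $R_k$ the paper verifies the bound by writing out the trilinear terms $\nabla^k W\cdot\nabla^l W\cdot\nabla^{k+1-l}W$ and applying the Gagliardo--Nirenberg interpolation of Lemma~\ref{lem2-1} directly, whereas you invoke the packaged Kato--Ponce commutator estimate (Lemma~\ref{zhen1}); these are equivalent here since $A^j(W)$ is linear in $W$.
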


\begin{proof}\textbf{Step 1:} Estimates on  $R_k$.  Noticing that  $R_k$ is  a sum of terms as
$$\nabla^k W\cdot \nabla^{l}W\cdot \nabla^{k+1-l}W\quad \text{for}\quad 1\leq l\leq k,$$
then  (\ref{eq:2.4}) is obvious when $k=0$, or $1$.

For $k\neq 0$, $1$, one can  apply Lemma \ref{lem2-1} to $\nabla W$ to get  that
\begin{equation}\label{chafen}
|\nabla^jW|_{p_j}\leq C|\nabla  W|^{1-2/p_j}_\infty|\nabla^k W|^{2/p_j}_2,\quad \text{for} \quad p_j=2\frac{k-1}{j-1}.
\end{equation}
If $l\neq k$ and $l\neq 1$, since $1/p_l+1/p_{k-l+1}=\frac{1}{2}$, then H$\ddot{\text{o}}$lder's inequality implies
\begin{equation*}\begin{split}
\int |\nabla^k W \nabla^{l}W \nabla^{k+1-l}W|
&\leq  |\nabla^k W |_2|\nabla^{l}W |_{p_l}|\nabla^{k+1-l}W|_{p_{k-l+1}}
\leq C|\nabla W |_\infty|\nabla^{k}W |^2_2.
\end{split}
\end{equation*}
The other cases could be  handled similarly. Thus  (\ref{eq:2.4}) is proved.

\textbf{Step 2:} Estimates on  $S_k$. The intergrand  of $S_k(W,  \widehat{u})$ in $\eqref{eq:2.7}_2$ can be rewritten as
 \begin{equation}\label{eq:2.8}
 \begin{split}
 s_k(W,  \widehat{u})=&-\nabla^k W \cdot B(\nabla \widehat{u}, \nabla^k W)+\frac{1}{2}\sum_{j=1}^3 \partial_j  \widehat{u}^{(j)}  \nabla^k W \cdot \nabla^k W\\
 &-\nabla^k W\cdot \Big(\nabla^k(B(\nabla \widehat{u},W))-B(\nabla \widehat{u}, \nabla^k W)\Big)\\
&-\nabla^k W\cdot \Big( \nabla^k\Big(\sum_{j=1}^3  \widehat{u}^{(j)}\partial_j W\Big)-\sum_{j=1}^3  \widehat{u}^{(j)}\partial_j \nabla^kW\Big)\equiv: s^1_k+s^2_k,
 \end{split}
 \end{equation}
where $s^1_k$ is a sum of terms with a derivative of order one of $\widehat{u}$, and $s^2_k$ is a sum of terms with a derivative of order  at least two for $\widehat{u}$.

\textbf{Step 2.1:} Estimates on  $S^1_k=\int s^1_k$.
Let $\nabla^k=\partial_{\beta_1\beta_2...\beta_i...\beta_k}$ with $\beta_i=1,2,3$. Decompose $S^1_k$ as:
 \begin{equation}\label{a1}
 \begin{split}
 S^1_k=&\int \Big(-\nabla^k W \cdot B(\nabla \widehat{u}, \nabla^k W)+\frac{1}{2}\sum_{j=1}^3 \partial_j  \widehat{u}^{(j)}  \nabla^k W \cdot \nabla^k W\Big)\\
&-\int \partial_{\beta_1...\beta_k} W \cdot \sum_{i=1}^k \sum_{j=1}^3 \partial_{ \beta_i}\widehat{u}^{(j)} \partial_j\partial_{\beta_1...\beta_{i-1}\beta_{i+1}...\beta_k}W
=I_1+I_2+I_3,
 \end{split}
 \end{equation}
where,  from Proposition \ref{p1},  $I_1$-$I_3$ are given by
 \begin{equation}\label{ka1}
 \begin{split}
I_1=&-\frac{3(\gamma-1)}{2(1+t)}\int \nabla^k\phi \cdot \nabla^k \phi-\frac{1}{1+t}\int \nabla^k w\cdot \nabla^k w+G_1,\\
I_2=&\frac{3}{2}\frac{1}{1+t}Y^2_k+G_2,\quad
I_3= -\frac{k}{1+t}Y^2_k+G_3, \quad |G_j|\leq \frac{C_0}{(1+t)^2}Y^2_k,\quad j=1,2,3.
 \end{split}
 \end{equation}

 Therefore,
 \begin{equation}\label{eq:2.11}
 \begin{split}
S^1_k(W,  \widehat{u})(t, \cdot)
 \leq & \frac{C_0}{(1+t)^2}Y^2_k-\frac{A_k}{1+t}\int \nabla^k w\cdot \nabla^k w-\frac{B_k }{1+t}\int \nabla^k \phi\cdot \nabla^k \phi \\
\leq &C_0Y_k Z(1+t)^{-\gamma_k-2}-\frac{k+r}{1+t}Y^2_k,
 \end{split}
 \end{equation}
 where
 \begin{align}\label{ABK}
 A_k=k-\frac{1}{2},\qquad B_k=\frac{3}{2}\gamma-3+k, \quad r=\min\big(A_k,B_k\big)-k.
 \end{align}
 Then \eqref{eq:2.5} for $S^1_k$ follows from  \eqref{eq:2.11}-(\ref{ABK}).

\textbf{Step 2.2:} Estimates on  $S^2_k=\int s^2_k$.
$s^2_k$ is a sum of the terms  as 
\begin{equation*}\begin{split}
E_1(W)=\nabla^k W \cdot \nabla^l \widehat{u} \cdot \nabla^{k+1-l} W\quad \text{for}\quad  2\leq k\leq 3 \quad  \text{and}\quad  2\leq l\leq& k;\\
E_2(W)=\nabla^k W \cdot  \nabla^{l+1} \widehat{u}  \cdot \nabla^{k-l} W\quad \text{for}\quad  1\leq k\leq 3 \quad  \text{and}\quad  1\leq l\leq& k.
\end{split}
\end{equation*}
Hence,
\begin{equation}\label{zhon5r}
\begin{split}
 S^2_1(W)\leq & C| W|_\infty|\nabla^2\widehat{u}|_2|\nabla W|_2\leq C_0Y_1 Z (1+t)^{-2-\gamma_1},\\
S^2_2(W)\leq &C\big(|\nabla^2 \widehat{u}|_\infty |\nabla W|_2+|\nabla^3 \widehat{u}|_2|W|_\infty\big)|\nabla^2 W|_2
\leq C_0Y_2 Z (1+t)^{-2-\gamma_2},\\
S^2_3(W)\leq &C\big(|\nabla^2 \widehat{u}|_\infty|\nabla^{2}W|_2+|\nabla^3 \widehat{u}|_2|\nabla W|_\infty+|\nabla^4 \widehat{u}|_2|W|_\infty\big) |\nabla^3 W|_2\\
\leq & C_0Y_3 Z (1+t)^{-2-\gamma_3},
\end{split}
\end{equation}
which  imply that
$$
S^2_k(W)\leq
 C_0Y_k Z (1+t)^{-2-\gamma_k}, \quad \text{for} \quad  k=1,2,3.
$$

This, together with (\ref{eq:2.11}), yields \eqref{eq:2.5}-\eqref{eq:2.6}.
\end{proof}

 \begin{lemma}[\textbf{Estimates on  $L_k$}]\label{l2}
For any suitably small constant $\eta>0$, there are two constants  $C(\eta)$ and  $C_0(\eta)$ such that
\begin{equation}\label{dal}\begin{split}
 L_k(W) \leq& \eta |\varphi \nabla^{k+1} w|^2_2\delta_{3k}+C(\eta)(1+t)^{2m-5-\gamma_k}Z^{3}  Y_k\\
&+C_0(1+t)^{2m-n-4.5-\gamma_k}Z^{2}Y_k+C_0(\eta)(1+t)^{2m-10} Z^2.
\end{split}
\end{equation}
\end{lemma}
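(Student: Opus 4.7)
The plan is to split $L_k = L^1_k + L^2_k + L^3_k$ as defined in \eqref{eq:2.7}, and bound each piece using three ingredients: (i) the pointwise weighted decay of $\varphi$ and $\nabla\varphi$ from Lemma \ref{lemK}, (ii) the sharp time-decay of derivatives of $\widehat u$ from Proposition \ref{p1}, and (iii) H\"older, Sobolev, Gagliardo--Nirenberg and Young inequalities on the remaining $L^2$ pieces, matched carefully against the time weights $(1+t)^{\gamma_k}$ and $(1+t)^{\delta_k}$ hidden in $Y_k$, $U_k$, $Z$.

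For the ``pure $w$'' pieces $L^1_k$ and $L^2_k$, I would first use the identity $\nabla\varphi^2 = 2\varphi\nabla\varphi$ to bound
$$|L^1_k|\le C|\nabla\varphi|_\infty\, |\varphi|_\infty\, |\nabla^{k+1}w|_2\, |\nabla^k w|_2,$$
while expanding the commutator $\nabla^k(\varphi^2 Lw)-\varphi^2 L\nabla^k w$ by the Leibniz rule into a sum of terms $\nabla^j(\varphi^2)\cdot\nabla^{k-j}Lw$ for $1\le j\le k$, each having a structure analogous to $L^1_k$ after interpolating intermediate derivatives of $\varphi$ and $w$ via Gagliardo--Nirenberg (factoring $\varphi^2=\varphi\cdot\varphi$ to keep the $\varphi$-weight on the highest $w$-derivative). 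For $k\le 2$, bound $|\nabla^{k+1}w|_2 \le (1+t)^{-\gamma_{k+1}}Y_{k+1}\le (1+t)^{-\gamma_{k+1}}Z$ directly, which, combined with the $\varphi$-bounds from Lemma \ref{lemK}, yields the cubic $(1+t)^{2m-5-\gamma_k}Z^3 Y_k$ contribution. For $k=3$, $Y_4$ is not available, so I would retain $|\varphi\nabla^4 w|_2$ and apply Young's inequality,
$$|\nabla\varphi|_\infty\,|\varphi\nabla^4 w|_2\,|\nabla^3 w|_2 \le \eta |\varphi\nabla^4 w|_2^2 + C(\eta)|\nabla\varphi|_\infty^2\,|\nabla^3 w|_2^2,$$
producing the $\eta|\varphi\nabla^{k+1}w|_2^2\delta_{3k}$ term together with an extra cubic contribution of the required form.

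For the inhomogeneous piece $L^3_k = \int \nabla^k(\varphi^2 L\widehat u)\cdot\nabla^k w$, I would expand by Leibniz into $\binom{k}{j}\nabla^j(\varphi^2)\cdot\nabla^{k-j}L\widehat u$. Since $L\widehat u$ is a linear combination of second derivatives of $\widehat u$, Proposition \ref{p1} supplies $|\nabla^2\widehat u|_\infty\le C(1+t)^{-3}$ and $\|\nabla^\ell\widehat u\|_2\le C(1+t)^{1/2-\ell}$ for $\ell\ge 2$. Each Leibniz summand is then a product of a $\varphi^2$-factor controlled via Lemma \ref{lemK} and $\|\varphi\|_3$, and a $\widehat u$-factor with explicit decay. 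Placing the $\nabla^k w$ factor in $L^2$ (and shifting a derivative by integration by parts in the $j=k$, $k=3$ case to avoid $\nabla^5\widehat u$) gives the $C_0(1+t)^{2m-n-4.5-\gamma_k}Z^2 Y_k$ term from the cross piece interacting with $Y_k$, and the $C_0(\eta)(1+t)^{2m-10}Z^2$ term from the pure remainder in $\widehat u$.

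The main obstacle will be the careful bookkeeping of time-weight exponents across the many Leibniz terms of $L^2_k$ and $L^3_k$, and, most delicately, making sure that in the $k=3$ case the uncontrollable factor $|\varphi\nabla^4 w|_2$ appearing in both $L^1_3$ and the $j=1$ commutator term of $L^2_3$ is split by Young's inequality exactly once, so that the resulting $\eta|\varphi\nabla^4 w|_2^2$ can be absorbed into the dissipation $\alpha|\varphi\nabla^4 w|_2^2$ on the left-hand side of \eqref{eq:2.3} without destroying the global smallness structure.
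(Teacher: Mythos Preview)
Your proposal is correct and follows essentially the same route as the paper: split $L_k=L^1_k+L^2_k+L^3_k$, handle $L^1_k$ and $L^2_k$ via $\nabla\varphi^2=2\varphi\nabla\varphi$, Leibniz expansion, Gagliardo--Nirenberg interpolation and (for $k=3$) Young's inequality to peel off $\eta|\varphi\nabla^4 w|_2^2$, and treat $L^3_k$ by Leibniz plus the decay of $\nabla^\ell\widehat u$ from Proposition~\ref{p1}. Two small corrections to your bookkeeping remarks: (i) there is no need to apply Young ``exactly once''---the paper applies it separately in $L^1_3$, in \emph{each} of the three commutator pieces $L^2_3(1,2),L^2_3(2,1),L^2_3(3,0)$, and again in $L^3_3(0,3)$, then simply takes $\eta$ small enough that the total $C\eta|\varphi\nabla^4 w|_2^2$ is absorbable; (ii) the integration by parts in $L^3_3$ is needed on the $j=0$ term $\int\varphi^2\,\nabla^3 L\widehat u\cdot\nabla^3 w$ (not $j=k$), and this is precisely where the $C_0(\eta)(1+t)^{2m-10}Z^2$ contribution originates, since one more $|\varphi\nabla^4 w|_2$ appears and is split by Young against $|\varphi|_\infty^2|\nabla^4\widehat u|_2^2\le C_0(1+t)^{2m-10}Z^2$.
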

\begin{proof}
\textbf{Step 1:} Estimates on  $L^1_k$.  It is easy to check that,
\begin{equation}\label{Alk1}
\begin{split}
 L^1_k \leq &C |\varphi|_\infty|\nabla \varphi|_\infty  |\nabla^{k+1} w|_2  |\nabla^{k} w|_2 \leq C(1+t)^{2m-5-\gamma_k} Z^{3}Y_k, \ \text{for} \  k\leq 2,\\
 L^1_3 \leq & C |\varphi  \nabla^{4} w|_2 |\nabla \varphi|_\infty |\nabla^{3} w|_2 
\leq  \eta |\varphi  \nabla^{4} w|^2_2+C(\eta)(1+t)^{2m-5-\gamma_3} Z^{3}Y_3.
\end{split}
\end{equation}

\textbf{Step 2:} Estimates on  $L^3_k$. If $k=0$ or $1$, one has
\begin{equation}\label{lk1}
\begin{split}
L^3_0\leq & C|\varphi|_\infty|\nabla^2 \widehat{u}|_\infty|\varphi|_2|w|_2
\leq C_0(1+t)^{2m-n-4.5-\gamma_0}Z^{2}Y_0,\\
 L^3_1 \leq & C\big(  |\nabla\varphi|_2 |\nabla^2 \widehat{u}|_\infty+|\varphi|^{}_\infty  |\nabla^3 \widehat{u}|_2\big) |\varphi|^{}_\infty |\nabla w|_2\\
\leq&  C_0(1+t)^{2m-n-4.5-\gamma_1}Z^{2}Y_1.
\end{split}
\end{equation}
For the case  $k=2$,  decompose 
$
L^3_2\triangleq L^3_2(0,2)+L^3_2(1,1)+L^3_2(2,0)
$.
One can get
\begin{equation}\label{lk5}
\begin{split}
 L^3_2(0,2) \triangleq &\int \varphi^2  \nabla^2 L  \widehat{u}\cdot \nabla^2 w
\leq   C | \varphi|^2_\infty |\nabla^2 L\widehat{u}|_2|\nabla^2 w|_2\\
\leq &  C_0(1+t)^{2m-n-4.5-\gamma_2}Z^{2}  Y_2,\\
 L^3_2(1,1) \triangleq &\int \nabla\varphi^2 \cdot  \nabla L  \widehat{u}\cdot \nabla^2 w
\leq  C| \varphi|_{\infty}|\nabla \varphi|_\infty |\nabla L\widehat{u}|_2|\nabla^2 w|_2\\
\leq&  C_0(1+t)^{2m-n-4.5-\gamma_2}Z^{2}  Y_2,\\
 L^3_2(2,0) \triangleq &\int  \nabla^2\varphi^2 \cdot  L  \widehat{u}\cdot \nabla^2 w
\leq   C (|\nabla \varphi|^2_\infty|L\widehat{u}|_2+| \varphi|_{\infty} |\nabla^2 \varphi|_2|L\widehat{u}|_\infty) |\nabla^2 w|_2\\
\leq &  C_0(1+t)^{2m-n-4.5-\gamma_2}Z^{2}  Y_2.
\end{split}
\end{equation}

For the case $k=3$, decompose
$
L^3_3 \triangleq L^3_3(0,3)+L^3_3(1,2)+L^3_3(2,1)+L^3_3(3,0)
$.
Then,  by integration by parts, one can obtain

\begin{equation}\label{Alk4}
\begin{split}
 L^3_3(0,3)\triangleq &\int \varphi^2 \cdot  \nabla^3 L  \widehat{u}\cdot \nabla^3 w
\leq   C| \varphi|_{\infty} |\nabla^{2} L\widehat{u}|_2\big(|\varphi\nabla^{4} w|_2+|\nabla \varphi|_\infty |\nabla^{3} w|_2\big)\\
\leq  & \eta |\varphi  \nabla^{4} W|^2_2+C_0(1+t)^{2m-n-4.5-\gamma_3}Z^2  Y_3+C_0(\eta)(1+t)^{2m-10}Z^{2},\\
 L^3_3(1,2)\triangleq &\int  \nabla\varphi^2 \cdot  \nabla^{2}L  \widehat{u}\cdot \nabla^3 w
\leq   C| \varphi|_{\infty}|\nabla \varphi|_\infty |\nabla^{2} L\widehat{u}|_2|\nabla^{3} w|_2\\
\leq& C_0(1+t)^{2m-n-4.5-\gamma_3}Z^2  Y_3,\\
 L^3_3(2,1)\triangleq &\int \nabla^{2}\varphi^2 \cdot \nabla L  \widehat{u}\cdot \nabla^3 w
\leq   C\big(|\nabla\varphi|^{2}_\infty|\nabla^{3}\widehat{u}|_2+|\varphi|_\infty|\nabla^{2}\varphi|_{6}|\nabla^{3}\widehat{u}|_3\big)|\nabla^3 w|_2\\
\leq &  C_0(1+t)^{2m-n-4.5-\gamma_3}Z^{2}  Y_3,\\
 L^3_3(3,0)\triangleq &\int  \nabla^3\varphi^2 \cdot  L  \widehat{u}\cdot \nabla^3 w
\leq   C\big(| \varphi|^{}_{\infty} |\nabla^3 \varphi|_2+ |\nabla \varphi|_\infty|\nabla^2 \varphi|_2\big) |L\widehat{u}|_\infty |\nabla^3 w|_2\\
\leq &  C_0(1+t)^{2m-n-4.5-\gamma_3}Z^{2}  Y_3,
\end{split}
\end{equation}
with $\eta>0$ being any  sufficiently small constant. It follows from (\ref{lk1})-(\ref{Alk4}) that 
\begin{equation}\begin{split}\label{Azongjie1}
 L^3_k\leq& \eta |\varphi \nabla^{k+1} w|^2_2\delta_{3k}+C_0(1+t)^{2m-n-4.5-\gamma_k}Z^2 Y_k+C_0(\eta)(1+t)^{2m-10}Z^{2}.
\end{split}
\end{equation}

\textbf{Step 3:}
Estimates on $L^2_k$. If  $k= 1$, one gets
\begin{equation}\label{Alk12}
\begin{split}
 L^2_1 = &\int \nabla\varphi^2 \cdot Lw\cdot \nabla w
\leq  C|\varphi |_\infty|\nabla\varphi|_\infty|\nabla^2 w|_2| \nabla w|_2
\leq C(1+t)^{2m-5-\gamma_1} Z^3Y_1.
\end{split}
\end{equation}
Next  for $k= 2$, decompose $L^2_2\triangleq L^2_2(1,1)+L^2_2(2,0)$.    In a similar way for  $L^1_2$, one has
\begin{equation}\label{Alk13}
\begin{split}
L^2_2(1,1)\triangleq &\int  \nabla\varphi^2 \cdot \nabla Lw\cdot \nabla^2 w
\leq C(1+t)^{2m-5-\gamma_2}Z^3Y_2,\\
 L^2_2(2,0)\triangleq &\int  \nabla^2\varphi^2 \cdot Lw\cdot \nabla^2 w
\leq  C(|\nabla\varphi|^2_\infty|\nabla^2 w|_2+|\varphi|_6|\nabla^2 w|_6|\nabla^{2}\varphi|_6)|\nabla^{2} w|_2\\
\leq & C(1+t)^{2m-5-\gamma_2} Z^{3}Y_2.
\end{split}
\end{equation}
At last for $k=3$, decompose $L^2_3 \triangleq L^2_3(1,2)+L^2_3(2,1)+L^2_3(3,0)$. In a similar way for $L^1_3$, one can get
\begin{equation}\label{Alk14}
\begin{split}
 L^2_3(1,2)\triangleq &\int \nabla\varphi^2 \cdot \nabla^2 Lw\cdot \nabla^3 w
\leq  \eta |\varphi  \nabla^{4} w|^2_2+C(\eta)(1+t)^{2m-5-\gamma_3}Z^3Y_3,\\
L^2_3(2,1)\triangleq &\int  \nabla^2\varphi^2 \cdot \nabla Lw\cdot \nabla^3 w
\leq  C( |\nabla\varphi|^2_\infty | \nabla^3 w|_2+| \nabla^2\varphi|_3|\varphi \nabla Lw|_6) |\nabla^3 w|_2\\
\leq&  \eta |\varphi  \nabla^{4} w|^2_2+C(\eta)(1+t)^{2m-5-\gamma_3}Z^3 Y_3,\\
 L^2_3(3,0)\triangleq &\int  \nabla^3 \varphi^2 \cdot  Lw\cdot \nabla^{3} w\\
\leq & C|\nabla\varphi|_\infty|\nabla^2 \varphi|_3|\nabla^3 w|_2|\nabla^2 w|_6
+C|\varphi\nabla^3 w|_6|\nabla^{2}w|_3|\nabla^{3} \varphi|_2\\
\leq &\eta |\varphi \nabla^{4} w|^2_2+C(\eta)(1+t)^{2m-5-\gamma_3} Z^3Y_3.
\end{split}
\end{equation}

Then combining the estimates (\ref{Alk12})-(\ref{Alk14}) yields
\begin{equation}\begin{split}\label{zongjie2}
 L^2_k\leq& \eta |\varphi  \nabla^{k+1} w|^2_2\delta_{3k}+C(\eta)(1+t)^{2m-5-\gamma_k}Z^3Y_k.
\end{split}
\end{equation}

Thus \eqref{dal} follows from above three steps.

\end{proof}

\begin{lemma}[\textbf{Estimates on  $Q_k$}]\label{Al2-2}
For any suitably small constant $\eta>0$, there are two constants  $C(\eta)$ and  $C_0(\eta)$  such that
\begin{equation}\label{forq}\begin{split}
Q_k(W,  \widehat{u}) \leq& \eta |\varphi \nabla^{k+1} w|^2_2\delta_{3k}+C(\eta)(1+t)^{2m-5-\gamma_k}Z^{3}  Y_k\\
&+C_0(1+t)^{2m-n-3.5-\gamma_k}Z^{2}Y_k+C_0(\eta)(1+t)^{2m-10} Z^2\\
&+\frac{(4\alpha+6\beta)\delta}{(\delta-1)(1+t)}|\nabla^3\varphi|_2| \varphi\nabla^3 \text{div}w|_2.
\end{split}
\end{equation}
\end{lemma}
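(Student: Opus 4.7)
The strategy parallels Lemma \ref{l2} for $L_k$: decompose $Q_k = Q^1_k + Q^2_k + Q^3_k$ and estimate each piece for $k = 0, 1, 2, 3$ by the same toolbox, namely H\"older's inequality, Gagliardo--Nirenberg (Lemma \ref{lem2-1}), Lemma \ref{lemK} for the $L^\infty$ control of $W$ and $\varphi$, the decay estimates (2)--(3) of Proposition \ref{p1} for $\widehat{u}$, and integration by parts whenever a top-order derivative must be moved off $\varphi$ or $\widehat{u}$. The general principle is that $Q(v)$ contains one derivative of $v$, so $Q_k$ has exactly one fewer derivative acting on $w$ or $\widehat{u}$ than the parallel term $L_k$; this accounts for the shift from $(1+t)^{2m-n-4.5-\gamma_k}$ in Lemma \ref{l2} to $(1+t)^{2m-n-3.5-\gamma_k}$ here.

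For $Q^1_k$ I would write $\nabla\varphi^2 = 2\varphi\nabla\varphi$ and place the factor $\varphi$ onto $Q(\nabla^k w)$, which contains $\nabla^{k+1}w$; this gives $C|\nabla\varphi|_\infty|\varphi\nabla^{k+1}w|_2|\nabla^k w|_2$, producing the $C(1+t)^{2m-5-\gamma_k}Z^3 Y_k$ contribution for $k \leq 2$ and, at $k = 3$, a term absorbed into $\eta|\varphi\nabla^4 w|_2^2$ by Young's inequality. For $Q^2_k$ I would expand the commutator by Leibniz and distribute derivatives exactly as in the treatment of $L^1_k$ and $L^2_k$ (using $|\nabla^2\varphi|_3$ paired with $|\nabla^{k-1}w|_6$ at the worst split); again the $\eta|\varphi\nabla^4 w|_2^2$ absorption is only needed at $k=3$.

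The decisive piece is $Q^3_k$, and in particular its $k=3$ contribution. I would expand
\[
\nabla^3\bigl(\nabla\varphi^2\cdot Q(\widehat{u})\bigr) = \sum_{j=0}^{3} c_j\, \nabla^{j+1}(\varphi^2)\cdot \nabla^{3-j}Q(\widehat{u})
\]
by Leibniz. For $j \leq 2$ the routine bookkeeping with Proposition \ref{p1} gives the $C_0(1+t)^{2m-n-3.5-\gamma_k}Z^2 Y_k$ and $C_0(\eta)(1+t)^{2m-10}Z^2$ contributions. The dangerous piece is $j=3$, i.e.\ $\int \nabla^4(\varphi^2)\cdot Q(\widehat{u})\cdot\nabla^3 w$. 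Writing $\nabla^4(\varphi^2) = 2\varphi\nabla^4\varphi + \text{l.o.t.}$ and integrating by parts once to move a derivative off $\nabla^4\varphi$ (keeping the leading piece where the derivative lands on $\nabla^3 w$, since derivatives on $\widehat{u}$ or $\varphi$ decay faster), I obtain, at leading order,
\[
-2\int \varphi\,\nabla^3\varphi\cdot Q(\widehat{u})\cdot \nabla^4 w.
\]
Invoking $\nabla\widehat{u}(t,x) = \frac{1}{1+t}\mathbb{I}_3 + \frac{1}{(1+t)^2}K(t,x)$ from Proposition \ref{p1}, the leading part of $Q(\widehat{u})$ is the multiple of the identity $\frac{(2\alpha+3\beta)\delta}{(\delta-1)(1+t)}\mathbb{I}_3$, so the identity structure converts $\partial_i\nabla^3 w^{(j)}\delta_{ij}$ into $\nabla^3\,\mathrm{div}\,w$, and Cauchy--Schwarz yields precisely the stated term $\frac{(4\alpha+6\beta)\delta}{(\delta-1)(1+t)}|\nabla^3\varphi|_2|\varphi\nabla^3\mathrm{div}\,w|_2$. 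The remaining contributions from the $K(t,x)/(1+t)^2$ correction in $\nabla\widehat{u}$, from lower-order pieces of $\nabla^4(\varphi^2)$, and from the cases where the integration-by-parts derivative lands on $\varphi$ or $\widehat{u}$ all enjoy at least one extra power of $(1+t)^{-1}$ and are absorbed into the remaining three estimates in \eqref{forq} or into $\eta|\varphi\nabla^4 w|_2^2$.

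The main obstacle I anticipate is the precise bookkeeping at $k=3$: one must carefully separate the identity-matrix leading behaviour of $Q(\widehat{u})$ from the faster-decaying remainder, and track which integration-by-parts branch produces the $\mathrm{div}\,w$ structure versus full $\nabla w$. The emergence of $\mathrm{div}\,w$ (rather than arbitrary $\nabla w$) is crucial: it encodes the trace $2\alpha+3\beta$ and is exactly why the hypotheses $(P_0)$, $(P_1)$ in Theorem \ref{ths1} are effective for absorbing this term against the dissipation when the global estimate is closed in \S 5.
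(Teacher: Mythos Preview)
Your proposal is correct and follows essentially the same route as the paper: the decomposition $Q_k=Q^1_k+Q^2_k+Q^3_k$, the treatment of $Q^1_k$ and $Q^2_k$ by the H\"older/Gagliardo--Nirenberg bookkeeping, and---crucially---the extraction of the $\mathrm{div}\,w$ structure from the top-order piece of $Q^3_3$ by integrating by parts and substituting the leading identity part $\frac{1}{1+t}\mathbb{I}_3$ of $\nabla\widehat{u}$ from Proposition~\ref{p1}, with the $K/(1+t)^2$ remainder absorbed into $\eta|\varphi\nabla^4 w|_2^2+C_0(\eta)(1+t)^{2m-10}Z^2$. The only cosmetic difference is that the paper integrates by parts on $\int\nabla^4\varphi^2\cdot Q(\widehat{u})\cdot\nabla^3 w$ first and then expands $\nabla^3\varphi^2=2\varphi\nabla^3\varphi+\text{l.o.t.}$, whereas you expand $\nabla^4\varphi^2$ first; the resulting leading term $-2\int\varphi\nabla^3\varphi\cdot Q(\widehat{u})\cdot\nabla^4 w$ and the coefficient $\frac{(4\alpha+6\beta)\delta}{(\delta-1)(1+t)}$ are identical.
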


\begin{proof}
\textbf{Step 1:}
Estimates on  $Q^1_k$.  In a similar way for   $L^1_k$,   it is easy to get
\begin{equation*}
\begin{split}
 Q^1_k \leq&  \eta |\varphi \nabla^{k+1} w|^2_2\delta_{3,k}+ C(1+t)^{2m-5-\gamma_k} Z^{3}Y_k.
\end{split}
\end{equation*}

\textbf{Step 2:}  Estimates on  $Q^k_3$. If $k=0$, one has
\begin{equation}\label{lk1q}
\begin{split}
 Q^3_0\leq & C|\varphi|_\infty|\nabla \widehat{u}|_6|\nabla \varphi|_3|w|_2
\leq C_0(1+t)^{2m-n-4.5-\gamma_0}Z^{2}Y_0.
\end{split}
\end{equation}

For  $k=1$,  in a similar way for   $L^3_1$, one obtains
\begin{equation}\label{lk2q}
\begin{split}
 Q^3_1 \leq &C\Big(|\varphi|_\infty |\nabla^2\varphi|_3 |\nabla \widehat{u}|_6+ |\nabla\varphi|^2_6 |\nabla \widehat{u}|_6+ |\varphi|_\infty |\nabla\varphi|_2 |\nabla^2 \widehat{u}|_\infty\Big)|\nabla w|_2\\
\leq & C_0(1+t)^{2m-n-4.5-\gamma_1}Z^{2}Y_1.
\end{split}
\end{equation}

Next for  $k=2$, decompose 
$
Q^3_2\triangleq Q^3_2(0,2)+Q^3_2(1,1)+Q^3_2(2,0)
$.
Then   as  for  $L^3_2(1,1)$ and $L^3_2(2,0)$, one has
\begin{equation}\label{lk5q}
\begin{split}
Q^3_2(0,2)\triangleq &C\int \nabla \varphi^2 \cdot  \nabla^3   \widehat{u}\cdot \nabla^2 w
\leq  C_0(1+t)^{2m-n-4.5-\gamma_2}Z^{2}  Y_2,\\
Q^3_2(1,1)\triangleq&C\int \nabla^2\varphi^2  \cdot \nabla^2  \widehat{u}\cdot \nabla^2 w
\leq   C_0(1+t)^{2m-n-4.5-\gamma_2}Z^{2}  Y_2,\\
Q^3_2(2,0)\triangleq &C\int  \nabla^3\varphi^2 \cdot \nabla  \widehat{u}\cdot \nabla^2 w
\leq   C\big(  |\nabla^3 \varphi|_2  |\varphi|_\infty + |\nabla^2 \varphi|_6|\nabla \varphi|_3\big)|\nabla \widehat{u}|_\infty |\nabla^2 w|_2\\
\leq &
C_0(1+t)^{2m-n-3.5-\gamma_2}Z^{2}Y_2.
\end{split}
\end{equation}

For  $k=3$, let
$
Q^3_3\triangleq Q^3_3(0,3)+Q^3_3(1,2)+Q^3_3(2,1)+Q^3_4(3,0)
$.
Then as for   $L^3_3(1,2)$, $L^3_3(2,1)$ and  $L^3_3(3,0)$, one can get
\begin{equation}\label{lk4q}
\begin{split}
\sum_{i=0}^2Q^3_3(i,3-i)  \triangleq &C\int \nabla \varphi^2\cdot   \nabla^4   \widehat{u}\cdot \nabla^3 w
+C\int \nabla^{2}\varphi \cdot  \nabla^3  \widehat{u}\cdot \nabla^3 w
\\
&+C\int  \nabla^3\varphi^2  \cdot \nabla^2  \widehat{u}\cdot \nabla^3 w
\leq   C_0(1+t)^{2m-n-4.5-\gamma_3}Z^2  Y_3.
\end{split}
\end{equation}

Finally,  it remains to handle the term $Q^3_3(3,0)$ defined below, for which some additional information is needed. Using integration by parts and Proposition \ref{p1}, one can get

\begin{equation*}
\begin{split}
Q^3_3(3,0)\triangleq &\frac{\delta}{\delta-1}\sum_{i,j=1}^3\int \Big( \alpha \nabla^{3}\partial_j\varphi^2 ( \partial_i \widehat{u}^{(j)}+\partial_j \widehat{u}^{(i)})+\beta \nabla^3\partial_i\varphi^2\text{div}\widehat{u}\Big)\cdot \nabla^3 w^{(i)}\\
=& Q^3_3(A)-\frac{\delta}{\delta-1}\sum_{i,j=1}^3\int  \alpha \nabla^{3}\varphi^2 ( \partial_i \widehat{u}^{(j)}+\partial_j \widehat{u}^{(i)})\cdot\nabla^3 \partial_jw^{(i)}\\
&-\frac{\delta}{\delta-1}\int \Big( \beta \nabla^3\varphi^2\text{div}\widehat{u}\Big)\cdot \nabla^3 \text{div}w\\
=& Q^3_3(A)+Q^3_3(B)+Q^3_3(D)-\frac{\delta}{(\delta-1)(1+t)}\int (4\alpha+6\beta) \varphi \nabla^3\varphi\cdot \nabla^3 \text{div}w\\
\leq & Q^3_3(A)+Q^3_3(B)+Q^3_3(D)+\frac{(4\alpha+6\beta)\delta}{(\delta-1)(1+t)}|\nabla^3\varphi|_2| \varphi\nabla^3 \text{div}w|_2,
\end{split}
\end{equation*}
where
\begin{equation}\label{lk8eeq2}
\begin{split}
Q^3_3(A)=& C \int  \nabla^{3}\varphi^2\cdot   \nabla^2  \widehat{u}\cdot \nabla^3 w
= C\int \Big(\varphi \nabla^3 \varphi +\nabla \varphi \cdot \nabla^2 \varphi
\Big)\cdot   \nabla^2  \widehat{u}\cdot \nabla^3 w\\
\leq& C\big(| \varphi|_\infty|\nabla^3 \varphi|_2+|\nabla \varphi|_\infty|\nabla^2 \varphi|_2 \big)|\nabla^2 \widehat{u}|_\infty |\nabla^3 w|_2\\
\leq &C_0(1+t)^{2m-n-4.5-\gamma_3}Z^2  Y_3,\\
\end{split}
\end{equation}
and
\begin{equation}\label{lk8eeq1}
\begin{split}
Q^3_3(B)
=& C\int \nabla \varphi \cdot \nabla^2 \varphi \cdot \nabla  \widehat{u}\cdot \nabla^{4} w\\
=&C\int \Big( \big(\nabla \varphi \cdot \nabla^3\varphi + \nabla^2\varphi \cdot \nabla^2 \varphi  \big)\cdot \nabla  \widehat{u}+\nabla \varphi \cdot \nabla^2 \varphi\cdot \nabla^2  \widehat{u}\Big)\cdot \nabla^{3} w\\
\leq &C\big(|\nabla \varphi|_\infty |\nabla^3\varphi|_2  |\nabla  \widehat{u}|_\infty+ |\nabla^2\varphi|_6\big(  |\nabla^2 \varphi  |_6 |\nabla  \widehat{u}|_6+|\nabla \varphi|_3|\nabla^2 \widehat{u}|_\infty\big)\big)|\nabla^{3}w|_2\\
\leq & C_0(1+t)^{2m-n-3.5-\gamma_3}Z^2  Y_3,\\
Q^3_3(D)= &-\frac{\delta}{(\delta-1)(1+t)^2}\sum_{i,j=1}^3\int  2\alpha \varphi \nabla^{3}\varphi ( K_{ij}+K_{ji})\cdot\nabla^3 \partial_jw^{(i)}\\
&-\frac{\delta}{(\delta-1)(1+t)^2}\sum_{i=1}^3\int 2 \beta \varphi \nabla^3\varphi K_{ii}\cdot \nabla^3 \text{div}w\\
\leq & \eta |\varphi  \nabla^{4} w|^2_2+C_0(\eta)(1+t)^{2m-10} Z^{2}.
\end{split}
\end{equation}
Then combining the estimates (\ref{lk1q})-(\ref{lk8eeq1}) yields
\begin{equation}\begin{split}\label{zongjie3}
 Q^3_k\leq& \eta |\varphi \nabla^{k+1} w|^2_2\delta_{3k}+C_0(1+t)^{2m-n-3.5-\gamma_k}Z^{2}  Y_k\\
&+C_0(\eta)(1+t)^{2m-10} Z^{2}+\frac{(4\alpha+6\beta)\delta}{(\delta-1)(1+t)}|\nabla^3\varphi|_2| \varphi\nabla^3 \text{div}w|_2.
\end{split}
\end{equation}

\textbf{Step 3:}
Estimates on $Q^2_k$. For $k=1$,  direct estimates give
\begin{equation}\label{Alk15e}
\begin{split}
Q^2_1
=&C\int \big(\varphi \nabla^2 \varphi+\nabla \varphi \cdot \nabla \varphi \big)\cdot \nabla w\cdot \nabla w\\
\leq & C(|\varphi|_3|\nabla^2 \varphi|_6 | \nabla w|_\infty+|\nabla \varphi|^2_\infty |\nabla w|_2\big)|\nabla w|_2
\leq  C(1+t)^{2m-5-\gamma_1} Z^3Y_1.
\end{split}
\end{equation}
For $k=2$, in a similar way for   $L^2_2(2,0)$, one gets easily
\begin{equation}\label{lk15ee}
\begin{split}
 Q^2_2
=&C\int \big(\nabla^3 \varphi^2\cdot \nabla w+\nabla^2 \varphi^2\cdot \nabla^2 w\big)\cdot \nabla^2 w\\
\leq & C\big(|\varphi|_\infty|\nabla^3 \varphi|_2|\nabla w|_\infty+|\nabla w|_\infty |\nabla \varphi|_6 |\nabla^2 \varphi|_3 \big)|\nabla^2 w|_2\\
&+C(1+t)^{2m-5-\gamma_2} Z^{3}Y_2
\leq C(1+t)^{2m-5-\gamma_2} Z^{3}Y_2.
\end{split}
\end{equation}
For $k=3$, as for  $L^2_3(3,0)$ and $L^2_3(2,1)$, it follows from  integration by parts that 
\begin{equation}\label{lk15eee}
\begin{split}
Q^2_3
=&C\int \big(\nabla^4 \varphi^2\cdot \nabla w+\nabla^3 \varphi^2\cdot \nabla^2 w+\nabla^2 \varphi^2\cdot \nabla^3 w\big)\cdot \nabla^3 w\\
\leq & \eta |\varphi  \nabla^{4} w|^2_2+C(\eta)(1+t)^{2m-5-\gamma_3}Z^3 Y_3+Q^2_3(A),
\end{split}
\end{equation}
with
\begin{equation}\label{lk45}
\begin{split}
Q^2_3(A)\triangleq & C\int  \nabla^{3}\varphi^2\cdot   \nabla w\cdot \nabla^{4} w
=C \int \Big(\varphi \nabla^3 \varphi +\nabla \varphi \cdot  \nabla^2\varphi \Big)\cdot \nabla w\cdot \nabla^{4} w\\
\leq &C |\nabla^3 \varphi|_2|\nabla w|_\infty|\varphi \nabla^4w|_2+Q^2_3(B)\\
\leq & \eta |\varphi  \nabla^{4} w|^2_2+C(\eta)(1+t)^{2m-5-\gamma_3}Z^3 Y_3+Q^2_3(B),
\end{split}
\end{equation}
where the term $Q^2_3(B)$ can be estimated by   using  integration by parts again, 
\begin{equation}\label{Alk45ee}
\begin{split}
Q^2_3(B)
\triangleq & C\int \nabla \varphi \cdot \nabla^2\varphi \cdot \nabla w  \cdot \nabla^{4} w\\
=&C\int  \Big(\nabla^2 \varphi \cdot \nabla^2\varphi \cdot \nabla w+\nabla \varphi \cdot \nabla^3\varphi \cdot \nabla w+\nabla \varphi \cdot \nabla^2\varphi \cdot \nabla^2 w\Big)\cdot \nabla^{3} w\\
\leq &C\big(|\nabla^2 \varphi|^2_6 |\nabla w|_6 +|\nabla^3 \varphi|_2|\nabla \varphi|_\infty |\nabla w|_\infty+ |\nabla\varphi|_6 |\nabla^2 \varphi|_6 |\nabla^2 w|_6\big)|\nabla^3 w|_2\\
\leq &  C(1+t)^{2m-5-\gamma_3} Z^3Y_3.
\end{split}
\end{equation}
Then collecting the estimates (\ref{Alk15e})-(\ref{Alk45ee}) shows that 
\begin{equation}\begin{split}\label{zongjie4}
Q^2_k\leq& \eta |\varphi \nabla^{k+1} w|^2_2\delta_{3k}+C(\eta)(1+t)^{2m-5-\gamma_k}Z^{3}  Y_k.
\end{split}
\end{equation}

Then  \eqref{forq} follows directly from above estimates.

\end{proof}

It follows from (\ref{eq:2.3}) and  Lemmas \ref{l1}-\ref{Al2-2} that 
\begin{lemma}\label{Azongjie1}
\begin{equation}\label{Aeq:2.13}
\begin{split}
&\frac{1}{2}\frac{d}{dt} Y^2_k+\frac{k+r}{1+t} Y^2_k+\alpha\int   | \varphi\nabla^{k+1} w|^2+(\alpha+\beta) \int   | \varphi\nabla^{k}\text{div} w|^2\\
 \leq& C(\eta)(1+t)^{2m-5-\gamma_k}Z^{3}  Y_k+C_0\Big((1+t)^{2m-n-3.5-\gamma_k}+(1+t)^{n-2.5-\gamma_k}\Big)Z^{2}Y_k\\
&+C_0(\eta)\Big((1+t)^{2m-10} Z^2+(1+t)^{-\gamma_k-2}ZY_k\Big)\\
&+\eta |\varphi \nabla^{k+1} w|^2_2\delta_{3k}+\frac{(4\alpha+6\beta)\delta}{(\delta-1)(1+t)}|\nabla^3\varphi|_2| \varphi\nabla^k \text{div}w|_2\delta_{3k}.
\end{split}
\end{equation}

\end{lemma}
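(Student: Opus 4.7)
The proof is essentially a bookkeeping consolidation of the three preceding lemmas, so the plan is to start from the $L^2$-differential identity \eqref{eq:2.3} and substitute in each estimate, taking care only to rewrite the $R_k$ contribution in the time-weighted form appearing on the right-hand side.

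First I would recall the energy identity \eqref{eq:2.3}, which expresses $\tfrac{1}{2}\tfrac{d}{dt}Y_k^2$ plus the coercive viscous terms $\alpha|\varphi\nabla^{k+1}w|_2^2+(\alpha+\beta)|\varphi\nabla^k\mathrm{div}\,w|_2^2$ as the sum $R_k+S_k+L_k+Q_k$. Moving $\tfrac{k+r}{1+t}Y_k^2$ from the $S_k$ bound in Lemma~\ref{l1} to the left-hand side isolates the quantity that appears on the LHS of the target inequality and leaves the good $C_0(1+t)^{-\gamma_k-2}ZY_k$ residue on the right (which is already in the stated form up to absorbing into $C_0(\eta)$).

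Second, I would plug in the $L_k$ bound from Lemma~\ref{l2} and the $Q_k$ bound from Lemma~\ref{Al2-2}. These produce the $\eta|\varphi\nabla^{k+1}w|_2^2\delta_{3k}$ term, the $C(\eta)(1+t)^{2m-5-\gamma_k}Z^3Y_k$ term, the $C_0(\eta)(1+t)^{2m-10}Z^2$ term, and the boundary term $\tfrac{(4\alpha+6\beta)\delta}{(\delta-1)(1+t)}|\nabla^3\varphi|_2|\varphi\nabla^3\mathrm{div}\,w|_2\delta_{3k}$ directly. The two polynomial-in-$(1+t)$ factors coming from $L^3_k$ and $Q^3_k$ are $(1+t)^{2m-n-4.5-\gamma_k}$ and $(1+t)^{2m-n-3.5-\gamma_k}$; the latter dominates the former (larger exponent), so combining them yields the single $C_0(1+t)^{2m-n-3.5-\gamma_k}Z^2Y_k$ term of the lemma.

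Third, and this is really the only step requiring thought, I must recast the $R_k$ estimate $|R_k(W)|\leq C|\nabla W|_\infty Y_k^2$ in time-weighted form. For this I would invoke Lemma~\ref{lemK}, giving $|\nabla W|_\infty\leq C(1+t)^{n-5/2}Y\leq C(1+t)^{n-5/2}Z$, and the trivial bound $Y_k\leq (1+t)^{-\gamma_k}Y\leq (1+t)^{-\gamma_k}Z$. Multiplying these together with one leftover factor of $Y_k$ produces
\[
|R_k(W)|\leq C_0(1+t)^{n-5/2-\gamma_k}Z^2Y_k=C_0(1+t)^{n-2.5-\gamma_k}Z^2Y_k,
\]
which is exactly the second piece inside the parenthesis $((1+t)^{2m-n-3.5-\gamma_k}+(1+t)^{n-2.5-\gamma_k})Z^2Y_k$ on the RHS.

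Adding everything together yields \eqref{Aeq:2.13}. The only mildly delicate point is the $R_k$ rewriting just described; everything else is direct substitution, and since all intermediate lemmas have already been proven, I do not anticipate a genuine obstacle. If any subtlety arises, it will be in verifying that the various time-weight exponents indeed group into the compact form stated, particularly that the $L^3_k$ contribution is subsumed by the $Q^3_k$ contribution for every $k\in\{0,1,2,3\}$.
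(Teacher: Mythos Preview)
Your proposal is correct and matches the paper's own approach, which is simply to combine \eqref{eq:2.3} with Lemmas~\ref{l1}--\ref{Al2-2}; indeed the paper offers no proof beyond the one-line remark that the lemma follows from those ingredients. Your identification of the $R_k$ rewriting via Lemma~\ref{lemK} as the only nontrivial step is exactly right, and your verification that the $(1+t)^{2m-n-4.5-\gamma_k}$ contribution from $L^3_k$ is dominated by the $(1+t)^{2m-n-3.5-\gamma_k}$ contribution from $Q^3_k$ is the correct way to consolidate the exponents.
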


\subsection{ Estimates on $\varphi$}
$\varphi$ is estimated in the following lemma.

\begin{lemma}\label{Azongjiephi}
\begin{equation}\label{Aeq:2.14qq}
\begin{split}
&\frac{1}{2}\frac{d}{dt}|\nabla^k \varphi |^2_2+\frac{\frac{3}{2}\delta-3+k}{1+t}|\nabla^k \varphi |^2_2\\
\leq& C(1+t)^{n-2.5-\delta_k}Z^2U_k+C_0 (1+t)^{-2-\delta_k}ZU_k+\frac{\delta-1}{2}|\varphi \nabla^3 \text{div}w|_2 |\nabla^3 \varphi|_2\delta_{3k}.
\end{split}
\end{equation}
\end{lemma}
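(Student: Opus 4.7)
My plan is to apply $\nabla^k$ to the transport equation
$$\varphi_t+(w+\widehat{u})\cdot\nabla\varphi+\frac{\delta-1}{2}\varphi\,\text{div}(w+\widehat{u})=0,$$
test it against $\nabla^k\varphi$, and extract the sharp damping coefficient $\frac{\frac{3}{2}\delta-3+k}{1+t}$ by carefully isolating the principal part of $\widehat u$ supplied by Proposition \ref{p1}. After integrating by parts in the transport term and applying the Leibniz rule to $\nabla^k(\varphi\,\text{div}(w+\widehat u))$, the identity takes the form
\begin{equation*}
\begin{split}
\frac{1}{2}\frac{d}{dt}|\nabla^k\varphi|_2^2 =& \frac{2-\delta}{2}\int\text{div}(w+\widehat{u})|\nabla^k\varphi|^2-\int\nabla^k\varphi\cdot[\nabla^k,(w+\widehat{u})\cdot\nabla]\varphi \\
&-\frac{\delta-1}{2}\sum_{l=1}^{k}\binom{k}{l}\int\nabla^k\varphi\cdot\nabla^{k-l}\varphi\cdot\nabla^l\text{div}(w+\widehat{u}).
\end{split}
\end{equation*}

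To obtain the damping coefficient, I will substitute the asymptotic expansion $\nabla\widehat u=\frac{1}{1+t}\mathbb{I}_3+\frac{K(t,x)}{(1+t)^2}$ from Proposition \ref{p1}, so that the principal part of $\text{div}\widehat u$ is $\frac{3}{1+t}$ and the $l=1$ piece of the commutator $[\nabla^k,\widehat u\cdot\nabla]\varphi=\sum_{l\ge 1}\binom{k}{l}\nabla^l\widehat u\cdot\nabla^{k-l+1}\varphi$ reduces to $\frac{k}{1+t}\nabla^k\varphi$ modulo lower order. Summing the three principal contributions gives
$$\frac{3(2-\delta)}{2(1+t)}|\nabla^k\varphi|_2^2-\frac{k}{1+t}|\nabla^k\varphi|_2^2 = -\frac{\frac{3}{2}\delta-3+k}{1+t}|\nabla^k\varphi|_2^2,$$
which, transferred to the left-hand side, yields precisely the claimed damping term.

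The remaining pieces are estimated as error terms. The $w$-contributions (the $\text{div}w$ part of the first integral, the $w$-commutator, and the $l\geq 1$ product-rule pieces involving $\nabla^l w$) are bounded by Moser-type commutator estimates and Sobolev embeddings using $|\nabla w|_\infty\le C(1+t)^{(2n-5)/2}Y$ from Lemma \ref{lemK}; combined with $Y\leq Z$ and $(1+t)^{\delta_k}U_k\leq U\leq Z$, this gives the bound $C(1+t)^{n-5/2-\delta_k}Z^2U_k$. The lower-order corrections from the $K/(1+t)^2$ piece of $\nabla\widehat u$, together with all terms involving $\nabla^l\widehat u$ for $l\geq 2$ (controlled by $|\nabla^l\widehat u|_2\leq C_0(1+t)^{1/2-l}$ and $|\nabla^2\widehat u|_\infty\leq C_0(1+t)^{-3}$ via Proposition \ref{p1}), together produce $C_0(1+t)^{-2-\delta_k}ZU_k$.

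For $k=3$, the only piece that cannot be integrated by parts or folded into the dissipation is the $l=k=3$ term of the product rule, namely $-\frac{\delta-1}{2}\int \varphi\,\nabla^3\text{div}w\cdot\nabla^3\varphi$, which carries four derivatives of $w$ and is only meaningful because $\varphi\nabla^4 w\in L^2$. I will keep it as $\frac{\delta-1}{2}|\varphi\nabla^3\text{div}w|_2|\nabla^3\varphi|_2$, anticipating that it will be cancelled against the symmetric top-order term $\frac{(4\alpha+6\beta)\delta}{(\delta-1)(1+t)}|\nabla^3\varphi|_2|\varphi\nabla^3\text{div}w|_2$ from Lemma \ref{Azongjie1} once a suitably weighted combined energy for $(\varphi,W)$ is formed. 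The main obstacle I anticipate is the careful bookkeeping required to verify that the $\widehat u$ contributions collapse exactly to $\tfrac{3}{2}\delta-3+k$ and that every non-principal distributed-derivative piece decays fast enough to fit the stated error rates.
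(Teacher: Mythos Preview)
Your proposal is correct and follows essentially the same approach as the paper. The paper separates the $w$-- and $\widehat u$--contributions from the outset (calling the latter $S^*_k$ and splitting it into a first-order piece $S^{*1}_k$ yielding the damping and a higher-order piece $S^{*2}_k$), whereas you keep $(w+\widehat u)$ together in the Leibniz/commutator expansions and then extract the principal $\widehat u$-part; both routes produce the same three principal contributions summing to $-\tfrac{\frac{3}{2}\delta-3+k}{1+t}U_k^2$, the same Moser-type bounds on the $w$-remainders, and the same isolated $\tfrac{\delta-1}{2}|\varphi\nabla^3\mathrm{div}\,w|_2|\nabla^3\varphi|_2$ term at $k=3$.
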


\begin{proof}

\textbf{Step 1.}
Applying $\nabla^k$ to $\eqref{li47-2}_1$, multiplying by $\nabla^k \varphi $ and
integrating over $\mathbb{R}^3$, one gets
\begin{equation}\label{Aeq:var1}
\begin{split}
\frac{1}{2}\frac{d}{dt}|\nabla^k \varphi|^2_2=\int  \Big(w\cdot  \nabla^{k+1}  {\varphi}+\frac{\delta-1}{2}\nabla^k  \varphi \text{div}w\Big)\cdot \nabla^k  {\varphi}+S^*_k(\varphi,  \widehat{u})-\Lambda^k_1-\Lambda^k_2,
\end{split}
\end{equation}
where $S^*_k(\varphi,  \widehat{u})$ is given in Step 2 below, and
\begin{equation*}
\begin{split}
\Lambda^k_1=&\Big(\nabla^k (w\cdot \nabla {\varphi})-w\cdot  \nabla^{k+1}  {\varphi}\Big)\cdot \nabla^k \varphi,\quad \Lambda^k_2= \frac{\delta-1}{2}\Big(\nabla^k (\varphi \text{div}w)-\nabla^k  \varphi \text{div}w\Big)\cdot \nabla^k \varphi.
\end{split}
\end{equation*}

Integration by parts yields immediately that 
 \begin{equation}\label{eq:2.8fr}
 \begin{split}
\int  \Big(w\cdot  \nabla^{k+1}  {\varphi}+\frac{\delta-1}{2}\nabla^k  \varphi \text{div}w\Big)\cdot \nabla^k  {\varphi}\leq  C|\nabla w|_\infty|\nabla^k \varphi(t)|^2_2.
 \end{split}
 \end{equation}

\textbf{Step 2:} Estimates on  $S^*_k=\int s^*_k$ with the  integrand defined as 
 \begin{equation}\label{eq:2.8ss}
 \begin{split}
 s^*_k(\varphi,  \widehat{u})=&-\frac{\delta-1}{2}\nabla^k \varphi \cdot \nabla^k \varphi \text{div}\widehat{u}+\frac{1}{2}\text{div}\widehat{u}  \nabla^k \varphi  \cdot \nabla^k \varphi \\
 &-\frac{\delta-1}{2}\nabla^k \varphi \cdot \Big(\nabla^k(\varphi \text{div}\widehat{u})-\text{div}\widehat{u}\nabla^k \varphi \Big)\\
&-\nabla^k \varphi \cdot \Big( \nabla^k\big( \widehat{u} \cdot \nabla \varphi\big)- \widehat{u} \cdot \nabla^{k+1} \varphi \Big)\equiv: s^{*1}_k+s^{*2}_k,
 \end{split}
 \end{equation}
where $s^{*1}_k$ is a sum of terms with a derivative of order one of $\widehat{u}$, and $s^{*2}_k$ is a sum of terms with a derivative of order  at least two of $\widehat{u}$.

\textbf{Step 2.1}: Estimates on  $S^{*1}_k=\int s^{*1}_k$.
Let $\nabla^k=\partial_{\beta_1...\beta_k}$ with $0\leq \beta_i=1,2,3$. 
 \begin{equation}\label{a1ss}
 \begin{split}
S^{*1}_k=&\int \Big(-\frac{\delta-1}{2}\nabla^k \varphi \cdot \nabla^k \varphi \text{div}\widehat{u}+\frac{1}{2}\text{div}\widehat{u}  \nabla^k \varphi  \cdot \nabla^k \varphi\Big)\\
&-\int \partial_{\beta_1...\beta_k} \varphi \cdot \sum_{i=1}^k \sum_{j=1}^k \partial_{ \beta_i}\widehat{u}^{(j)} \partial_j\partial_{\beta_1...\beta_{i-1}\beta_{i+1}...\beta_k}\varphi =J_1+J_2+J_3,
 \end{split}
 \end{equation}
where, by Proposition \ref{p1}, $J_1$-$J_3$ are estimated  by
 \begin{equation}\label{ka1ss}
 \begin{split}
J_1=&-\frac{3(\delta-1)}{2(1+t)}U^2_k+N_1,\quad
J_2=\frac{3}{2}\frac{1}{1+t}U^2_k+N_2,\\
J_3=& -\frac{k}{1+t}U^2_k+N_3, \quad |N_j|\leq \frac{C_0}{(1+t)^2}U^2_k,\quad j=1,2,3.
 \end{split}
 \end{equation}

 Therefore, it holds that 
 \begin{equation}\label{eq:2.11ss}
 \begin{split}
 & S^{*1}_k(W,  \widehat{u})(t, \cdot)\text{d} x
 \leq  \frac{C_0}{(1+t)^2}U^2_k-\frac{\frac{3}{2}\delta-3+k}{1+t}U^2_k.
 \end{split}
 \end{equation}

\textbf{Step 2.2:} Estimates on  $S^{*2}_k=\int s^{*2}_k$.
$s^{*2}_k$ is a sum of the terms defined as
$$
E_1(\varphi)=\nabla^k \varphi \cdot \nabla^l \widehat{u} \cdot  \nabla^{k+1-l} \varphi\quad \text{for}\quad  2\leq k\leq 3 \quad  \text{and}\quad  2\leq l\leq k;
$$
$$
E_2(\varphi)=\nabla^k \varphi \cdot \nabla^{l+1} \widehat{u} \cdot \nabla^{k-l} \varphi\quad \text{for}\quad  1\leq k\leq 3 \quad  \text{and}\quad  1\leq l\leq k.
$$
Then it holds that 
\begin{equation}\label{zhon5r}
\begin{split}
S^{*2}_1(\varphi)\leq & C| \varphi|_6|\nabla^2\widehat{u}|_2|\nabla \varphi|_3
\leq C_0 (1+t)^{-2-\delta_1}ZU_1,\\
 S^{*2}_2(\varphi)\leq &C(|\nabla^2 \widehat{u}|_\infty |\nabla \varphi|_2+|\nabla^3 \widehat{u}|_2|\varphi|_\infty) |\nabla^2 \varphi|_2
\leq  C_0 (1+t)^{-2-\delta_2}ZU_2,\\
 S^{*2}_3(\varphi)\leq & C\big(|\nabla^2 \widehat{u}|_\infty|\nabla^{2}\varphi|_2+|\nabla^3 \widehat{u}|_6|\nabla \varphi|_3+|\nabla^4 \widehat{u}|_2|\varphi|_\infty\big) |\nabla^3 \varphi|_2\\
 \leq&  C_0 (1+t)^{-2-\delta_3}ZU_3,
\end{split}
\end{equation}
which  implies immediately  that
$$
S^{*2}_k(\varphi)\leq  C_0 (1+t)^{-2-\delta_k}ZU_k, \quad \text{for} \quad  k=1,2,3.
$$

\textbf{Step 3:} Estimates on  $ \Lambda^k_1+\Lambda^k_2$.
It follows from Lemma  \ref{l1}  and  H\"older's inequality that 
\begin{equation}\label{Aqe1a}
\begin{split}
 \Lambda^1_1+&\Lambda^1_2
\leq   C|\nabla w|_\infty|\nabla \varphi(t)|^2_2
\leq C(1+t)^{n-2.5-\delta_1} Z^2U_1,\\
 \Lambda^2_1+&\Lambda^2_2
\leq  C(|\nabla \varphi|_\infty|\nabla^2w(t)|_2+|\nabla w|_\infty|\nabla^2\varphi(t)|_2) |\nabla^2\varphi(t)|_2\\
\leq & C(1+t)^{n-2.5-\delta_2} Z^2U_2,\\
 \Lambda^3_1
\leq &  C\big(|\nabla \varphi|_\infty|\nabla^3w(t)|_2+|\nabla^2 \varphi|_6|\nabla^2w(t)|_3+|\nabla w|_\infty|\nabla^3\varphi(t)|_2\big) |\nabla^3\varphi(t)|_2\\
\leq&C(1+t)^{n-2.5-\delta_3} Z^2U_3, \\
\Lambda^3_2\leq &  C\Big(|\nabla \varphi|_\infty|\nabla^3w(t)|_2+|\nabla^2 \varphi|_6|\nabla^2w(t)|_3+\frac{\delta-1}{2}|\varphi \nabla^3 \text{div}w|_2\Big) |\nabla^3 \varphi|_2\\
\leq& C(1+t)^{n-2.5-\delta_3} Z^2U_3+\frac{\delta-1}{2}|\varphi \nabla^3 \text{div}w|_2 |\nabla^3 \varphi|_2.
\end{split}
\end{equation}

Then  (\ref{Aeq:var1})-(\ref{Aqe1a}) yield the desired (\ref{Aeq:2.14qq}).

\end{proof}

Finally, set
\begin{equation}\label{gab}\begin{split}
H(A^*, A,B)&=\alpha (A^*)^2+(\alpha+\beta)A^2+\frac{\frac{3}{2}\delta-3+m}{1+t}B^2\\
&-\Big(\frac{\delta-1}{2}(1+t)^{n-m}+\frac{2\delta}{\delta-1}(2\alpha+3\beta)(1+t)^{m-n-1}\Big)AB,\\
A^*=&(1+t)^{\gamma_3} |\varphi \nabla^{4}w|_2,\quad A=(1+t)^{\gamma_3} |\varphi \nabla^{3}\text{div}w|_2,\quad   B=(1+t)^{\delta_3} | \nabla^{3}\varphi|_2,
\end{split}
\end{equation}
then the following lemma holds:
\begin{lemma}\label{l2}
There exist some positive constants $b_m$,  such that
\begin{equation}\label{Leq:2.14}
\begin{split}
&\frac{1}{2}\frac{d}{dt} Z^2+\frac{b_m}{1+t} (Z^2-B^2)+H(A^*, A,B)\\
&+\alpha \sum_{k=0}^2 (1+t)^{2\gamma_k} |\varphi \nabla^{k+1}w(t)|^2_2+(\alpha+\beta) \sum_{k=0}^2 (1+t)^{2\gamma_k} |\varphi \nabla^{k}\text{div}w(t)|^2_2\\
 \leq&\eta (1+t)^{2\gamma_3} |\varphi \nabla^{4} w|^2_2+ C(\eta)(1+t)^{2m-5}Z^{4}+C_0(1+t)^{2m-n-3.5}Z^3\\
&+C_0(1+t)^{n-2.5}Z^{3}+C_0(\eta)\Big((1+t)^{2m-2n-4} +(1+t)^{-2}\Big)Z^2,
\end{split}
\end{equation}
with  the constant
 $b_m$   given by
 \begin{equation}\label{Jeq:2.6q}
b_m=
 \begin{cases}
\min\Big\{n-0.5, \frac{3}{2}\delta-3+m\Big\} \;\qquad\qquad \quad \   \ \quad   \text{if} \ \ \gamma\geq \frac{5}{3},\\[6pt]
\min\Big\{\frac{3\gamma}{2}-3+n, \frac{3}{2}\delta-3+m\Big\}  \qquad \   \ \ \ \quad   \quad  \text{if} \ \  1< \gamma<\frac{5}{3},
 \end{cases}
 \end{equation}
 where $\eta>0$ is   any suitably small constant.
 \end{lemma}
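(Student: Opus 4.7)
The plan is to weight the estimates of Lemmas~\ref{Azongjie1} and~\ref{Azongjiephi} by the time factors $(1+t)^{2\gamma_k}$ and $(1+t)^{2\delta_k}$ respectively, sum over $k=0,1,2,3$, and then carefully collect the dissipative, cross, and source terms to match the stated inequality. The time-weight calculation is elementary: one computes
\[
\tfrac{1}{2}\tfrac{d}{dt}\bigl[(1+t)^{2\gamma_k}Y_k^2\bigr]=\gamma_k(1+t)^{2\gamma_k-1}Y_k^2+(1+t)^{2\gamma_k}\tfrac{1}{2}\tfrac{d}{dt}Y_k^2,
\]
and similarly for $(1+t)^{2\delta_k}U_k^2$. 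Substituting \eqref{Aeq:2.13}--\eqref{Aeq:2.14qq}, the coefficient of $(1+t)^{2\gamma_k}Y_k^2/(1+t)$ becomes $\gamma_k-(k+r)=-n-r$, and the coefficient of $(1+t)^{2\delta_k}U_k^2/(1+t)$ becomes $\delta_k-(\tfrac{3}{2}\delta-3+k)=-(\tfrac{3}{2}\delta-3+m)$. These are both $\leq -b_m$ by the definition \eqref{Jeq:2.6q} of $b_m$ (which takes the smaller of $n+r$ and $\tfrac{3}{2}\delta-3+m$), except that for $k=3$ in the $\varphi$--estimate we retain the exact coefficient so that its contribution will combine with the cross term to form the quadratic form $H$.

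For the right-hand sides, the strategy is uniform: every RHS factor of $Y_k$ or $U_k$ appearing in Lemmas~\ref{Azongjie1}--\ref{Azongjiephi} is paired with one factor of $(1+t)^{\gamma_k}$ or $(1+t)^{\delta_k}$ extracted from the weight, giving $(1+t)^{\gamma_k}Y_k\leq Y\leq Z$ and $(1+t)^{\delta_k}U_k\leq U\leq Z$. This upgrades each term of the form $Z^jY_k$ or $Z^jU_k$ to $Z^{j+1}$ with a time exponent independent of $k$. For instance $C(\eta)(1+t)^{2m-5-\gamma_k}Z^3Y_k$ yields $C(\eta)(1+t)^{2m-5}Z^4$, $C_0(1+t)^{2m-n-3.5-\gamma_k}Z^2Y_k$ yields $C_0(1+t)^{2m-n-3.5}Z^3$, $C_0(\eta)(1+t)^{2m-10}Z^2$ (multiplied by $(1+t)^{2\gamma_k}$) is largest at $k=3$ and produces $C_0(\eta)(1+t)^{2m-2n-4}Z^2$, and the $C_0(\eta)(1+t)^{-\gamma_k-2}ZY_k$ term yields $C_0(\eta)(1+t)^{-2}Z^2$. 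The $\varphi$-equation contributes analogous bounds $C_0(1+t)^{n-2.5}Z^3$ and $C_0(\eta)(1+t)^{-2}Z^2$.

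The only non-routine step is the assembly of the quadratic form $H(A^*,A,B)$. The dissipation terms from Lemma~\ref{Azongjie1} for $k=3$, once weighted by $(1+t)^{2\gamma_3}$, produce exactly $\alpha(A^*)^2+(\alpha+\beta)A^2$ in the notation \eqref{gab}, while the $k=3$ part of the $\varphi$--dissipation, weighted by $(1+t)^{2\delta_3}$, gives $\tfrac{\frac{3}{2}\delta-3+m}{1+t}B^2$. The two cross terms need careful bookkeeping: the term $\tfrac{(4\alpha+6\beta)\delta}{(\delta-1)(1+t)}|\nabla^3\varphi|_2|\varphi\nabla^3\mathrm{div}\,w|_2$ from Lemma~\ref{Azongjie1}, once multiplied by $(1+t)^{2\gamma_3}=(1+t)^{2(3-n)}$, becomes $\tfrac{2\delta(2\alpha+3\beta)}{\delta-1}(1+t)^{m-n-1}AB$ using $|\nabla^3\varphi|_2|\varphi\nabla^3\mathrm{div}\,w|_2=AB\,(1+t)^{m+n-6}$; and the term $\tfrac{\delta-1}{2}|\varphi\nabla^3\mathrm{div}\,w|_2|\nabla^3\varphi|_2$ from Lemma~\ref{Azongjiephi}, once multiplied by $(1+t)^{2\delta_3}=(1+t)^{2(3-m)}$, becomes $\tfrac{\delta-1}{2}(1+t)^{n-m}AB$. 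Moving these two to the left-hand side with the opposite sign assembles exactly $H(A^*,A,B)$ as defined in \eqref{gab}. Finally, the $\eta|\varphi\nabla^{k+1}w|_2^2\delta_{3k}$ term from Lemma~\ref{Azongjie1} weighted by $(1+t)^{2\gamma_3}$ is kept on the right as $\eta(1+t)^{2\gamma_3}|\varphi\nabla^4w|_2^2$; together with the remaining $\alpha$ and $\alpha+\beta$ dissipations at $k=0,1,2$, this yields \eqref{Leq:2.14}. The principal bookkeeping obstacle is verifying that the exponents $m-n-1$ and $n-m$ of the two cross terms match after the weighted summation; this is exactly the place where the choice of the weights $\gamma_k$ and $\delta_k$ is rigid.
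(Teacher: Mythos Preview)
Your proposal is correct and follows exactly the approach of the paper's proof, which consists of the single sentence ``(\ref{Leq:2.14}) can be obtained by multiplying \eqref{Aeq:2.13} and \eqref{Aeq:2.14qq} by $(1+t)^{2\gamma_k}$ and $(1+t)^{2\delta_k}$ respectively and summing the resulting inequalities together.'' Your write-up simply fills in the bookkeeping that the paper omits: the product-rule computation of the weighted time derivative, the identification of $-(n+r)$ and $-(\tfrac{3}{2}\delta-3+m)$ as the resulting decay coefficients (hence the formula for $b_m$), the conversion of each RHS term $Z^jY_k$ or $Z^jU_k$ into $Z^{j+1}$ via $(1+t)^{\gamma_k}Y_k\le Z$, and the assembly of $H(A^*,A,B)$ from the $k=3$ dissipative and cross terms with the correct exponents $m-n-1$ and $n-m$.
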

\begin{proof}

(\ref{Leq:2.14}) can be  obtained by multiplying \eqref{Aeq:2.13} and \eqref{Aeq:2.14qq} by $(1+t)^{2\gamma_k}$ and  $(1+t)^{2\delta_k}$ respectively  and summing the resulting inequalities together.

\end{proof}

\subsection{Proof of Theorem \ref{ths1} under the condition $(P_0)$}
Set
\begin{equation}\label{Agab}\begin{split}
M_1=&\frac{2\alpha+3\beta}{2\alpha+\beta},\quad
M_2=-3\delta+1+\frac{1}{2}M_3,\\
M_3=&\frac{(\delta-1)^2}{4(2\alpha+\beta)}+\frac{4\delta^2(2\alpha+\beta)}{(\delta-1)^2}M^2_1+2M_1\delta,\\
M_4=&\frac{1}{2}\min\Big\{ \frac{3\gamma-3}{2},\frac{-M_2-1}{2},1\Big\}+M_2,\\
\Phi(A,B)=&(2\alpha+\beta)A^2+\frac{\frac{3}{2}\delta-3+m}{1+t}B^2\\
&-\Big(\frac{\delta-1}{2}(1+t)^{n-m}+\frac{2\delta}{\delta-1}(2\alpha+3\beta)(1+t)^{m-n-1}\Big)AB.
\end{split}
\end{equation}

\begin{lemma}\label{ll2} Let  condition $(P_0)$ and  $M_1>0$ hold.  Then for $m=n+0.5=3$,
there exist some positive constants $\nu_*$, $b_*$ and $\epsilon^*$,  such that
\begin{equation}\label{Aeq:2.14}
\begin{split}
&\frac{1}{2}\frac{d}{dt} Z^2+\frac{(1-\nu_*)b_*}{1+t} Z^2
 \leq C(1+t)^{1+\epsilon^*}Z^{4}+C_0(1+t)^{-1-\epsilon^*}Z^2,
\end{split}
\end{equation}
where
 \begin{equation}\label{Aeq:2.6q}
\begin{split}
\epsilon^*=&\frac{1}{2}\min\Big\{ \frac{3\gamma-3}{2},\frac{-M_2-1}{2},1\Big\}>0,\\
 \nu_*=&\min\Big\{\frac{3\gamma-3}{4(3\gamma-1)}, \frac{-M_4-1}{6\delta-M_3}, \frac{1}{10}\Big\}>0,\\
b_*=&
 \begin{cases}
\min\Big\{2, \frac{3}{2}\delta-\frac{1}{4}M_3\Big\}>1 \;\qquad\qquad \quad \ \ \ \quad  \ \   \text{if} \ \ \gamma\geq \frac{5}{3},\\[6pt]
\min\Big\{\frac{3\gamma}{2}-0.5, \frac{3}{2}\delta-\frac{1}{4}M_3\Big\}>1  \qquad \ \  \  \ \ \  \quad  \text{if} \ \  1< \gamma<\frac{5}{3}.
 \end{cases}
\end{split}
 \end{equation}
Moreover, there exists a  constant $\Lambda(C_0)$ such that  $Z(t)$ is globally well-defined in $[0,+\infty)$ if $Z_0\leq \Lambda(C_0) $.
\end{lemma}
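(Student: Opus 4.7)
The plan is to reduce this lemma to an application of Proposition \ref{ode1} via three main steps: specializing the master inequality (\ref{Leq:2.14}) of Lemma \ref{l2} to $m=3$, $n=5/2$; exploiting condition $(P_0)$ through the auxiliary quadratic form $\Phi$ defined in (\ref{Agab}) to derive $Z^2$-coercivity; and splitting the cubic remainders $Z^3$ via weighted Young's inequality to close a nonlinear ODE for $Z$.

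With $m=3$, $n=5/2$ the time exponents on the right-hand side of (\ref{Leq:2.14}) collapse to $2m-5=1$, $2m-n-3.5=n-2.5=0$, and $2m-2n-4=-3$, and the absorbable term $\eta(1+t)^{2\gamma_3}|\varphi\nabla^4 w|_2^2 = \eta(A^*)^2$ is swallowed by $\alpha(A^*)^2$ inside $H$ for $\eta<\alpha/2$. The cross-term coefficient in $H(A^*,A,B)$ equals $-c(1+t)^{-1/2}$ with
\begin{equation*}
c=\frac{\delta-1}{2}+\frac{2\delta(2\alpha+3\beta)}{\delta-1}=\frac{\delta-1}{2}+\frac{2\delta M_1(2\alpha+\beta)}{\delta-1},
\end{equation*}
and a direct expansion gives the key algebraic identity $c^2=(2\alpha+\beta)M_3$. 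Applying Young's inequality to absorb the cross term against the $A^2$ and $(A^*)^2$ contributions (using the pointwise bound $|\nabla^3\text{div}\,w|\leq\sqrt{3}|\nabla^4 w|$ when routing part of the cross through $A^*$) yields a $B^2$-coercivity of magnitude $\frac{3\delta}{2}-\frac{M_3}{4}$; condition $(P_0)$, i.e., $M_2=-3\delta+1+M_3/2<-1$, is precisely $\frac{3\delta}{2}-\frac{M_3}{4}>1$. Combining this with the $\frac{b_m}{1+t}(Z^2-B^2)$ term from (\ref{Leq:2.14}) then produces $\frac{b_*}{1+t}Z^2$ on the left-hand side with $b_*=\min\{n-\frac{1}{2},\frac{3\delta}{2}-\frac{M_3}{4}\}>1$ when $\gamma\geq\frac{5}{3}$, and analogously with $\frac{3\gamma}{2}-3+n$ replacing $n-\frac{1}{2}$ in the regime $1<\gamma<\frac{5}{3}$; the deficit factor $1-\nu_*$ absorbs the Young slack and the losses from the nonlinear splitting below.

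For the right-hand side, the weighted Young inequality
\begin{equation*}
Z^3=\big((1+t)^{(1+\epsilon^*)/2}Z^2\big)\big((1+t)^{-(1+\epsilon^*)/2}Z\big)\leq\frac{1}{2}(1+t)^{1+\epsilon^*}Z^4+\frac{1}{2}(1+t)^{-1-\epsilon^*}Z^2
\end{equation*}
converts both order-zero $Z^3$ contributions into the desired templates; the $(1+t)Z^4$ term fits under $(1+t)^{1+\epsilon^*}Z^4$, and the residual $Z^2$ pieces have time weight bounded by $(1+t)^{-2}\leq(1+t)^{-1-\epsilon^*}$ once $\epsilon^*\leq 1$. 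This establishes (\ref{Aeq:2.14}). Dividing (\ref{Aeq:2.14}) by $2Z$ yields a scalar ODE of the form (\ref{Beq:2.16A1}) with $a=3$, $b=(1-\nu_*)b_*$, $D_1=1+\epsilon^*$ and $D_2=-1-\epsilon^*$; the hypotheses $D_2<-1$ and $D_1-(a-1)b<-1$, the latter reducing to $(1-\nu_*)b_*>1+\epsilon^*/2$, are both verified by the explicit formulae for $\nu_*$ and $\epsilon^*$ in the statement, and Proposition \ref{ode1} then delivers the global existence of $Z(t)$ whenever $Z_0<\Lambda(C_0)$. The main obstacle is the quadratic-form analysis that converts $(P_0)$ into the sharp coercivity constant $\frac{3\delta}{2}-\frac{M_3}{4}$: one has to identify the identity $c^2=(2\alpha+\beta)M_3$ and choose the Young decomposition that exactly balances the $A^2$, $(A^*)^2$ and cross-term contributions, rather than the naive single-parameter splitting which would lose a further factor $(\alpha+\beta)/(2\alpha+\beta)=(1+M_1)/4$ in the $B^2$-coefficient and thereby fail to close under $(P_0)$.
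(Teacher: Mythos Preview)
Your overall architecture—specialize (\ref{Leq:2.14}) to $m=3$, $n=5/2$, analyze the quadratic form $H$ to extract $B^2$-coercivity, split the $Z^3$ terms by weighted Young, and invoke Proposition~\ref{ode1}—matches the paper. The genuine gap is in the quadratic-form step. The pointwise bound $|\nabla^3\text{div}w|\leq\sqrt{3}\,|\nabla^4 w|$ only gives $A^2\leq 3(A^*)^2$, so the best effective $A^2$-coefficient you can manufacture from $\alpha(A^*)^2+(\alpha+\beta)A^2$ is $\tfrac{\alpha}{3}+\alpha+\beta=\tfrac{4\alpha}{3}+\beta$. A direct optimization shows that, subject to $A\leq\sqrt{3}A^*$, the largest $B^2$-coefficient achievable from $H$ is
\[
\frac{3\delta}{2}-\frac{3c^2}{4(4\alpha+3\beta)}=\frac{3\delta}{2}-\frac{3(2\alpha+\beta)}{4(4\alpha+3\beta)}\,M_3,
\]
which is \emph{strictly smaller} than $d^*=\tfrac{3\delta}{2}-\tfrac{M_3}{4}$ because $\tfrac{3(2\alpha+\beta)}{4\alpha+3\beta}>1$. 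Since $(P_0)$ is precisely the condition $d^*>1$, your weaker constant need not exceed $1$ under $(P_0)$ alone, and the closure through Proposition~\ref{ode1} fails.

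The paper's remedy is not a pointwise bound but the integration-by-parts identity of Lemma~\ref{changeA1}, namely $|\varphi\nabla^3\text{div}w|_2^2\leq|\varphi\nabla^4 w|_2^2+J^*$ with $J^*$ a lower-order absorbable error. This replaces the factor $3$ by $1$ at the $L^2$ level. Applying it on a $(1-\nu)$-fraction of $H$ upgrades the $A^2$-coefficient there to $2\alpha+\beta$ and delivers exactly $(1-\nu)d^*B^2/(1+t)$; the retained $\nu$-fraction keeps $\nu\alpha(A^*)^2$ available to absorb the $\eta(A^*)^2$ term on the right of (\ref{Leq:2.14}), at the price of the weaker coefficient $b-\tfrac{c^2}{4(\alpha+\beta)}$ on that small fraction only. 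Choosing $\nu$ small then yields $(1-\nu_*)b_*$ as stated. You correctly diagnosed that the obstacle is reaching the coefficient $2\alpha+\beta$ rather than $\alpha+\beta$; the missing ingredient is this $L^2$ integration-by-parts lemma, not a pointwise Cauchy--Schwarz estimate.
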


\begin{proof}\textbf{Step 1}.
First, note that 
\begin{equation}\label{GAB}
\begin{split}
\Phi(A,B)=& aA^2+bB^2-cAB
=a\big(A-\frac{c}{2a}B\big)^2+d B^2,
\end{split}
\end{equation}
with
\begin{equation*}\begin{split}
a=&(2\alpha+\beta),\quad  b=\frac{\frac{3}{2}\delta-3+m}{1+t},\\
c=&\Big(\frac{\delta-1}{2}(1+t)^{n-m}+\frac{2\delta}{\delta-1}(2\alpha+3\beta)(1+t)^{m-n-1}\Big),\\
d=&(1+t)^{-1}\Big(\frac{3}{2}\delta-3+m\Big)-\frac{1}{4}\Big(\frac{(\delta-1)^2}{4(2\alpha+\beta)}(1+t)^{2n-2m}\\
&+\frac{4\delta^2(2\alpha+\beta)}{(\delta-1)^2}M^2_1 (1+t)^{2m-2n-2}+2M_1\delta (1+t)^{-1}\Big).
\end{split}
\end{equation*}

Then according to Lemma \ref{changeA1} and  (\ref{Leq:2.14}),  for any $\nu\in (0,1)$, one has
\begin{equation}\label{Beq:2.14}
\begin{split}
&\frac{1}{2}\frac{d}{dt} Z^2+\frac{b_m}{1+t} (Z^2-B^2)+\nu \alpha (A^*)^2+(1-\nu)d B^2+\nu\Big(b-\frac{c^2}{4(\alpha+\beta)}\Big)B^2\\
&+\alpha \sum_{k=0}^2 (1+t)^{2\gamma_k} |\varphi \nabla^{k+1}w(t)|^2_2+(\alpha+\beta) \sum_{k=0}^2 (1+t)^{2\gamma_k} |\varphi \nabla^{k}\text{div}w(t)|^2_2\\
 \leq&\eta (A^*)^2+ C(\eta)((1+t)^{2m-5}+(1+t)^{2m-5-2\gamma_3})Z^{4}\\
 &+C_0\big((1+t)^{2m-n-3.5}+(1+t)^{n-2.5}\big)Z^{3}\\
 &+C_0(\eta)\Big((1+t)^{2m-2n-4} +(1+t)^{-2}\Big)Z^2.
\end{split}
\end{equation}

By Proposition \ref{ode1}, in order to obtain the uniform estimates on $Z$, one needs
\begin{equation}\label{Ceq:2.14}
\begin{split}
2m-2n-2=2n-2m=-1,
\end{split}
\end{equation}
which implies that 
\begin{equation}\label{Ceq:2.14}
\begin{split}
d=(1+t)^{-1}d^*(m)=(1+t)^{-1}\Big(\frac{3}{2}\delta-3+m-\frac{1}{4}M_3\Big).
\end{split}
\end{equation}
Choose $\epsilon^*=\frac{1}{2}\text{min}\{ \frac{3\gamma-3}{2},\frac{-M_2-1}{2},\frac{1}{10}\}>0$. Then it follows from (\ref{Beq:2.14}) that 
\begin{equation}\label{Deq:2.14}
\begin{split}
&\frac{1}{2}\frac{d}{dt} Z^2+\frac{b_m}{1+t} (Z^2-B^2)+\nu \alpha (A^*)^2+(1-\nu)\frac{d^*(m)}{1+t}B^2\\
&+\frac{\nu}{1+t}\Big(d^*-\frac{1}{4}M_3\frac{\alpha}{\alpha+\beta}\Big)B^2\\
 \leq&\eta (A^*)^2+ C(\eta)((1+t)^{2m-5+\epsilon^*}+(1+t)^{2m-5-2\gamma_3})Z^{4}+C_0(\eta)(1+t)^{-1-\epsilon^*} Z^2.
\end{split}
\end{equation}

\textbf{Step 2}.
On one  hand,  for fixed $\delta>1$,  a necessary condition to gurantee  that   the set
$$
\Pi=\{(\alpha,\beta)| \alpha>0,\ 2\alpha+3\beta>0,\ \text{and} \ M_2<-1\}
$$
is not empty is that $M_1<\frac{3}{2}-\frac{1}{\delta}$.


On the other hand,  for fixed $\alpha,\beta,\gamma,\delta$, there always exists a  sufficiently large number $m$ such that
$d^*(m)>0$. Indeed, one needs that 
$$
d^*(m)=\frac{3}{2}\delta-3+m-\frac{1}{4}M_3=-\frac{1}{2}M_2+m-\frac{5}{2}>0.
$$
Here for the choice $m=n+0.5=3$, one has therefore $d^*=\frac{3}{2}\delta-\frac{1}{4}M_3$.

\textbf{Step 3}.
Due to
$$
d^*=\frac{3}{2}\delta-\frac{1}{4}M_3>1 \quad \text{and} \quad M_4=\epsilon^*+M_2<-1,
$$
so for $\nu_*=\min\Big\{\frac{3\gamma-3}{4(3\gamma-1)}, \frac{-M_4-1}{6\delta-M_3}, \frac{1}{20}\Big\}$ and $\nu=\min\Big\{\frac{1}{200}, \frac{4d^*v_*(\alpha+\beta)}{M_3\alpha}\Big\}$, it hold that
\begin{equation}\label{keyinformation}\begin{split}
1+\epsilon^*-2(1-\nu_*)d^*=M_4+2\nu_* d^*<&-1,\\
 (1-\nu)d^*+\nu\Big(d^*-\frac{1}{4}M_3\frac{\alpha}{\alpha+\beta}\Big)\geq & (1-\nu_*)d^*.
\end{split}
\end{equation}
Based on this observation, we choose $\eta=\frac{\nu \alpha}{100}$, which, together with $(\ref{Leq:2.14})$, $m=n+0.5=3$ and  (\ref{GAB}), implies that
\begin{equation}\label{Feq:2.14}
\begin{split}
&\frac{1}{2}\frac{d}{dt} Z^2+\frac{(1-\nu_*)b_*}{1+t} Z^2+\frac{\nu \alpha}{2}(1+t)^{2\gamma_3} |\varphi \nabla^{4}w(t)|^2_2\\
&+\alpha \sum_{k=0}^2 (1+t)^{2\gamma_k} |\varphi \nabla^{k+1}w(t)|^2_2+(\alpha+\beta) \sum_{k=0}^2 (1+t)^{2\gamma_k} |\varphi \nabla^{k}\text{div}w(t)|^2_2\\
 \leq&C(1+t)^{1+\epsilon^*}Z^{4}+C_0(1+t)^{-1-\epsilon^*}Z^2.
\end{split}
\end{equation}
Therefore, 
\begin{equation}\label{Geq:2.14}
\begin{split}
&\frac{d}{dt} Z+\frac{(1-\nu_*)b_*}{1+t} Z
 \leq C(1+t)^{1+\epsilon^*}Z^{3}+C_0(1+t)^{-1-\epsilon^*}Z.
\end{split}
\end{equation}

According to Proposition \ref{ode1}, and
$$
\epsilon^*>0,\quad 1+\epsilon^*-2 (1-\nu_*)b_*<-1,
$$
then  $Z(t)$ satisfies
\begin{equation}
\displaystyle
Z(t)\leq \frac{(1+t)^{-(1-\nu_*)b_*}\exp{\Big(\frac{C_0}{-\epsilon^*}}\Big((1+t)^{-\epsilon^*}-1\Big)\Big)}{\Big(Z^{-2}_{0}-2C\int^t_0 (1+s)^{M}\exp{\Big(\frac{2C_0}{-\epsilon^*}}\Big((1+t)^{-\epsilon^*}-1\Big)\Big)\text{d}s\Big)^{\frac{1}{2}}},
\end{equation}
where $M=1+\epsilon^*-2 (1-\nu_*)b_*<-1$.

 Moreover,  $Z(t)$  is globally well-defined  for $t\geq 0$ if and  only if
\begin{equation}
0<Z_{0}<\frac{1}{\Big(2C\int^t_0 (1+s)^{M}\exp{\Big(\frac{2C_0}{-\epsilon^*}}\Big((1+t)^{-\epsilon^*}-1\Big)\Big)\text{d}s\Big)^{\frac{1}{2}}}.
\end{equation}
  Moreover,
 \begin{equation}\label{Heq:3.30}
  Z(t)\leq C_0(1+t)^{-(1-\nu_*)b_*} \qquad \text{ for all }\ t\geq 0,
 \end{equation}
which implies that
\begin{equation} \label{shangjieee}
Y_k(t)\leq C_0(1+t)^{-\gamma_k-(1-\nu_*)b_*},  \ \  \text{and} \ \ U_k(t)\leq C_0(1+t)^{-\delta_k-(1-\nu_*)b_*} \quad \text{ for }\ \forall \  t\geq 0.
\end{equation}

\end{proof}

Finally, it follows from (\ref{Feq:2.14}) and  (\ref{Heq:3.30}) that  for $k=0,1,2,3$:
\begin{equation}\label{Heq:2.14}
\begin{split}
&  \sum_{k=1}^3\int_0^t (1+t)^{2\gamma_k}\int   | \varphi\nabla^{k+1} w|^2 \text{d}s\leq C_0.
\end{split}
\end{equation}

\subsection{Proof of Theorem \ref{ths1} under the condition $(P_1)$}

For this case, we first choose $2n-2m=-(1+\epsilon)<-1$ for sufficiently small constant $0<\epsilon<\frac{1}{2}\min\big\{1,3\gamma-3\big\}$. Then it follows from Lemma \ref{l2} that
\begin{lemma}\label{Ul2}
There exist some positive constants $b_m$, $C$ and $C_0$  such that
\begin{equation}\label{ULeq:2.14}
\begin{split}
&\frac{1}{2}\frac{d}{dt} Z^2+\frac{b_m}{1+t} Z^2+\frac{\alpha}{2} \sum_{k=0}^3 (1+t)^{2\gamma_k} |\varphi \nabla^{k+1}w(t)|^2_2\\
 \leq& C(1+t)^{2n-4+\epsilon}Z^{4}+C_0(1+t)^{-1-\epsilon}Z^2,
\end{split}
\end{equation}
with  the constant
 $b_m$   given by (\ref{Jeq:2.6q}).
\end{lemma}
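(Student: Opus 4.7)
My plan is to derive (\ref{ULeq:2.14}) directly from inequality (\ref{Leq:2.14}) of Lemma \ref{l2}, exploiting the structural simplification produced by condition $(P_1)$. Under $2\alpha+3\beta=0$, the second summand of the cross term in the quadratic form $H(A^*,A,B)$ defined in (\ref{gab})—the one carrying the coefficient $\frac{2\delta}{\delta-1}(2\alpha+3\beta)(1+t)^{m-n-1}$—vanishes identically, so
$$
H(A^*,A,B)=\alpha(A^*)^2+(\alpha+\beta)A^2+\frac{\frac{3}{2}\delta-3+m}{1+t}B^2-\frac{\delta-1}{2}(1+t)^{n-m}AB.
$$
Since $n-m=-(1+\epsilon)/2<0$, the cross term decays in time and can be dissipated by a single weighted Young's inequality $\frac{\delta-1}{2}(1+t)^{n-m}AB\le \frac{\alpha+\beta}{2}A^2+\frac{(\delta-1)^2}{8(\alpha+\beta)}(1+t)^{-(1+\epsilon)}B^2$. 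The $A^2$-portion is absorbed into the $(\alpha+\beta)A^2$ piece of $H$, and for $m$ chosen sufficiently large the residual $B^2$-coefficient is bounded by a strict fraction of $\frac{\frac{3}{2}\delta-3+m}{1+t}$, leaving at least $\frac{b_m}{1+t}B^2$ with $b_m$ given by (\ref{Jeq:2.6q}).

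Next, combining this lower bound with the $\frac{b_m}{1+t}(Z^2-B^2)$ term already on the LHS of (\ref{Leq:2.14}) yields the clean dissipation $\frac{b_m}{1+t}Z^2$ on the left. To assemble the dissipation sum $\frac{\alpha}{2}\sum_{k=0}^{3}(1+t)^{2\gamma_k}|\varphi\nabla^{k+1}w|_2^2$, I will choose $\eta=\alpha/4$ in (\ref{Leq:2.14}) so that $\eta(A^*)^2$ is absorbed into $\alpha(A^*)^2$ of $H$, leaving a residual $\frac{3\alpha}{4}(A^*)^2\ge \frac{\alpha}{2}(1+t)^{2\gamma_3}|\varphi\nabla^4w|_2^2$ that joins the $k=0,1,2$ contributions. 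The remaining positive $(\alpha+\beta)$-divergence terms are simply discarded.

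For the right-hand side, the choice $m=n+(1+\epsilon)/2$ gives $2m-5=2n-4+\epsilon$, so the quartic term $C(\eta)(1+t)^{2m-5}Z^4$ is already of the stated form. The $Z^2$ remainders have exponents $2m-2n-4=-3+\epsilon$ and $-2$, both dominated by $(1+t)^{-1-\epsilon}$ for $\epsilon$ small, so they collapse into $C_0(1+t)^{-1-\epsilon}Z^2$. The cubic pieces $C_0(1+t)^{2m-n-3.5}Z^3+C_0(1+t)^{n-2.5}Z^3$ are absorbed via the weighted AM--GM inequality $(1+t)^{p}Z^{3}\le \frac{1}{2}(1+t)^{2p_1}Z^{4}+\frac{1}{2}(1+t)^{2p_2}Z^{2}$ with $p_1+p_2=p$; selecting $p_1$ close to $n-2+\epsilon/2$ puts the $Z^4$-contribution at the admissible $(1+t)^{2n-4+\epsilon}$ level, while $p_2$ is arranged (after possibly replacing the working $\epsilon$ by a slightly smaller exponent in the final statement) to land at or below $(1+t)^{-1-\epsilon}$.

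The main obstacle I expect is the tight bookkeeping of exponents in the last step: one has to verify simultaneously that $b_m$ as in (\ref{Jeq:2.6q}) remains strictly positive, that the $B^2$-residual from Young's inequality on $H$ does not overwhelm the $\frac{b_m}{1+t}B^2$ budget, and that every cubic term is split into genuinely integrable $Z^2$ contributions and admissible $Z^4$ contributions. The simplification of $H$ afforded by $(P_1)$—together with the freedom to take $m$ arbitrarily large and shrink $\epsilon$—is what makes this exponent budget simultaneously satisfiable, and the resulting inequality has exactly the form required to invoke Proposition \ref{ode1} in the subsequent global existence step.
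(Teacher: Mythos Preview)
Your approach is essentially the one the paper has in mind: start from inequality (\ref{Leq:2.14}), use $2\alpha+3\beta=0$ to kill the second cross term in $H$, absorb the surviving cross term $\frac{\delta-1}{2}(1+t)^{n-m}AB$ via Young's inequality, and split the cubic $Z^3$ pieces into $Z^4$ and $Z^2$ contributions. The paper merely states that the lemma ``follows from Lemma~\ref{l2}'' and calls the proof routine, so your derivation is exactly the expected one.

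Two small points are worth tightening. First, the appeal to ``$m$ chosen sufficiently large'' is unnecessary and slightly misleading: once $2n-2m=-(1+\epsilon)$ is fixed, $m$ is determined (up to the free choice of $n$). The Young residual $\frac{(\delta-1)^2}{8(\alpha+\beta)}(1+t)^{-(1+\epsilon)}B^2$ need not be absorbed into the $\frac{1}{1+t}B^2$ budget at all; since $B^2\le Z^2$ it goes straight to the right-hand side as part of $C_0(1+t)^{-1-\epsilon}Z^2$, and then $\frac{b_m}{1+t}(Z^2-B^2)+\frac{\frac32\delta-3+m}{1+t}B^2\ge \frac{b_m}{1+t}Z^2$ follows immediately from $b_m\le \frac32\delta-3+m$.

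Second, your bookkeeping on the cubic term with exponent $2m-n-3.5=n-2.5+\epsilon$ does not quite close with the \emph{same} $\epsilon$ in both final exponents: splitting $(1+t)^{n-2.5+\epsilon}Z^3$ with $2p_1=2n-4+\epsilon$ forces $2p_2=-1+\epsilon$, not $-1-\epsilon$. Your parenthetical remark is the right instinct; the clean fix is to state the conclusion with a possibly smaller positive parameter in the $Z^2$ exponent (or, equivalently, choose the gap $m-n$ with a parameter strictly smaller than the $\epsilon$ appearing in the final inequality). This is harmless for the downstream application to Proposition~\ref{ode1}, which only requires the $Z^2$ exponent to be strictly less than $-1$.
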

The proof is routine and  thus omitted. The rest of the proof for  Theorem \ref{ths1} under the condition $(P_1)$ is similar to the one in Subsection 5.3 for the condition $(P_0)$.

\section{Global-in-time well-posedness with compactly supported density under $(P_2)$ or $(P_3)$}

 Denote
$
\iota=(\delta-1)/(\gamma-1)
$. It is always assumed that   $\iota \geq 2$ or $=1$. Based on the analysis of the previous section, it remains to estimate the following two terms:
\begin{equation*}
\begin{split}
L_k(W,\widehat{u})=&-\int \Big(\nabla \phi^{2\iota}\cdot \mathbb{{S}}(\nabla^kw)-\big(\nabla^k(\phi^{2\iota}Lw)-\phi^{2\iota}L\nabla^kw\big)\Big)\cdot \nabla^k w\\
&+\int\nabla^k(\phi^{2\iota}L  \widehat{u})\cdot \nabla^k w \equiv: L^1_k+L^2_k+L^3_k,\\[6pt]
Q_k(W, \widehat{u})=&\int \Big(\nabla \phi^{2\iota} \cdot Q(\nabla^kw)+\big(\nabla^k(\nabla \phi^{2\iota}\cdot Q(w))-\nabla \phi^{2\iota}\cdot Q(\nabla^kw)\big)\Big)\cdot \nabla^k w\\
&+\int\nabla^k\big(\nabla \phi^{2\iota} \cdot Q( \widehat{u})\big)\cdot \nabla^k w
\equiv: Q^1_k+Q^2_k+Q^3_k.
\end{split}
\end{equation*}

\subsection{Estimates on  $L_k$ and $Q_k$}  \begin{lemma}\label{DI2}
For any suitably small constant $\eta>0$, there are two constants  $C(\eta)$  and  $C_0(\eta)$  such that
\begin{equation}\begin{split}
 L_k(W) \leq&  \eta |\phi^\iota  \nabla^{k+1} w|^2_2\delta_{3,k}+C(\eta)(1+t)^{2\iota n-3\iota-2-\gamma_k}Y^{2\iota+1}Y_k\\
&+C_0(1+t)^{2\iota n-3\iota-n-1.5-\gamma_k}Y^{2\iota}Y_k+C_0(\eta)(1+t)^{2\iota n-3\iota-7}Y^{2\iota},\\
Q_k(W, \bar{u}) \leq& \eta |\phi^\iota  \nabla^{k+1} w|^2_2\delta_{3,k}+C(\eta)(1+t)^{2\iota n-3\iota-2-\gamma_k}Y^{2\iota+1}Y_k\\
&+C_0(1+t)^{2\iota n-3\iota-n-0.5-\gamma_k}Y^{2\iota}Y_k+C_0(\eta)(1+t)^{2\iota n-3\iota-5}Y^{2\iota}.
\end{split}
\end{equation}
\end{lemma}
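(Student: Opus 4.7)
The plan is to mirror the proofs of Lemmas \ref{l2} and \ref{Al2-2}, but with the coefficient $\varphi^2$ systematically replaced by $\phi^{2\iota}$. Since $\phi^{2\iota}$ agrees, up to a constant, with $\rho^{\delta-1} \sim \varphi^2$, the two expressions are pointwise comparable; the advantage of working with $\phi^{2\iota}$ here is that the energy norm $Y$ in \eqref{fg} controls $\phi$ directly with the expected decay in $t$. Crucially, under $(P_2)$ we have $\iota\geq 2$ and under $(P_3)$ we have $\iota=1$, so Leibniz's rule expands $\nabla^j(\phi^{2\iota})$ as a \emph{finite} sum of products of $\phi$ and its derivatives with non-negative integer exponents, sidestepping the chain-rule difficulties that would appear for fractional $\iota$.

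First I would apply Leibniz to convert every factor $\nabla^j(\phi^{2\iota})$ into a sum of monomials of the form $\phi^{2\iota - p}\prod_i \nabla^{j_i}\phi$ with $\sum_i j_i=j$ and $1\leq p\leq \min(j,2\iota)$. Each ``bare'' factor $\phi$ is measured in $L^\infty$ via Lemma \ref{lemK}, contributing $(1+t)^{(2n-3)/2}$, while factors $\nabla^{j_i}\phi$ with $j_i\geq 1$ are controlled through Lemma \ref{lem2-1}--type interpolation against $Y_k$. The cumulative count of $\phi$-factors is $2\iota$, which produces the advertised weight $(1+t)^{2\iota n - 3\iota}$ and the $Y^{2\iota}$ or $Y^{2\iota+1}$ power in the conclusion.

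Next I would split $L_k = L_k^1 + L_k^2 + L_k^3$ and $Q_k = Q_k^1 + Q_k^2 + Q_k^3$ exactly as in \S 5.1, and rerun the case analysis $k=0,1,2,3$. For $L_k^1$, $Q_k^1$ and $L_k^2$, $Q_k^2$ at $k\leq 2$ the trilinear structure gives the $Y^{2\iota+1}Y_k$ bound by H\"older plus Lemmas \ref{lem2-1}, \ref{lemK}. For $L_k^3$ and $Q_k^3$ the derivatives of $\widehat{u}$ are handled by Proposition \ref{p1}, whose weights $(1+t)^{-(k+\ell)}$ for $\nabla^\ell \widehat{u}$ explain the separate exponents $2\iota n-3\iota-n-1.5-\gamma_k$ (for $L$, coming from $L\widehat{u}$) and $2\iota n-3\iota-n-0.5-\gamma_k$ (for $Q$, coming from $\nabla\widehat{u}$). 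At the top level $k=3$, in each of $L_3^2$, $L_3^3$, $Q_3^2$, $Q_3^3$ that contain $\nabla^3(\phi^{2\iota})$ I would integrate by parts once to transfer one derivative onto $\nabla^3 w$, producing a single term $\phi^\iota \nabla^4 w$ which is absorbed into the left-hand side via Young's inequality with the small parameter $\eta$; the residual $|\phi|_\infty$ in that bound is responsible for the exponent $2\iota n - 3\iota - 7$ in the $C_0(\eta)$ term.

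The principal obstacle will be bookkeeping at $k=3$: after the Leibniz expansion of $\phi^{2\iota}$ the number of monomials grows with $\iota$, and one must verify that every one of them either reproduces the stated weight $(1+t)$-exponent or can be bounded by $\eta |\phi^\iota \nabla^4 w|_2^2$ plus lower-order contributions. The integer nature of $\iota$ under $(P_2)$--$(P_3)$ is what guarantees that the residual factor $\phi^{2\iota-p}$ has non-negative integer exponent and therefore can be estimated by $|\phi|_\infty^{2\iota-p}$ without losing a full derivative on $\phi$; without this, the analogue of \eqref{Alk1}--\eqref{Azongjie1} would fail to close. Once the monomial-by-monomial bounds are assembled and summed, one obtains both inequalities stated in Lemma \ref{DI2}.
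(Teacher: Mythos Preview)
Your approach is essentially identical to the paper's: decompose $L_k$ and $Q_k$ into the three pieces, expand $\nabla^j(\phi^{2\iota})$ by Leibniz, control each monomial using Lemma~\ref{lemK} and Gagliardo--Nirenberg, and at $k=3$ integrate by parts once to trade $\nabla^4(\phi^{2\iota})$ for a factor $\phi^\iota\nabla^4 w$ absorbed via Young's inequality. The paper in fact only writes out the $Q_k$ case and declares $L_k$ analogous, which matches your plan.

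One correction of wording: you speak of the ``integer nature of $\iota$'' and ``non-negative integer exponents,'' but under $(P_2)$ the ratio $\iota=(\delta-1)/(\gamma-1)\geq 2$ need not be an integer. What actually matters---and what the paper uses---is that the exponents $2\iota-p$ arising from the chain rule are \emph{non-negative} for $p\leq 4$ (guaranteed by $\iota\geq 2$), or that the expansion terminates early when $\iota=1$; integrality plays no role, and $|\phi^{2\iota-p}|_\infty\leq |\phi|_\infty^{2\iota-p}$ holds for any non-negative real exponent.
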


\begin{proof} Now  we  give only the corresponding estimates on $Q_k$. The estimates on $L_k$ can be obtained  similarly.

\textbf{Step 1:}
Estimates on  $Q^k_1$.  Direct estimates yield that 
\begin{equation}\label{lk1-2}
\begin{split}
Q^1_k \leq& C |\phi|^{2\iota-1}_\infty |\nabla \phi|^{}_\infty |\nabla^{k+1} w|_2|\nabla^{k} w|_2
\leq  C(1+t)^{2\iota n-3\iota-2-\gamma_k}Y^{2\iota+1}Y_k,\quad \text{for} \quad k\leq 2,\\
Q^1_3\leq & C |\phi^\iota  \nabla^{4} w|_2|\phi|^{\iota-1}_\infty|\nabla \phi|_{\infty}Y_3
\leq  \eta |\phi^\iota  \nabla^{4} w|^2_2+C(\eta)(1+t)^{2\iota n-3\iota-2-\gamma_3}Y^{2\iota+1}Y_3.
\end{split}
\end{equation}

\textbf{Step 2:}  Estimates on  $Q^k_3$. If $k=0$ or $1$, one has
\begin{equation}\label{lk1qs}
\begin{split}
Q^3_0\leq & C_0|\phi|^{2\iota-2}_\infty|\nabla \phi|_\infty|\nabla \widehat{u}|_\infty|\phi|_2|w|_2
\leq C(1+t)^{2\iota n-3\iota-n-0.5-\gamma_0}Y^{2\iota}Y_0,\\
Q^3_1 \leq & C|\phi|^{2\iota-1}_\infty |\nabla\phi|_2 |\nabla^2 \widehat{u}|_\infty|\nabla w|_2
\leq C_0(1+t)^{2\iota n-3\iota-n-0.5-\gamma_1}Y^{2\iota}Y_1.
\end{split}
\end{equation}

For $k=2$, decompose
$
Q^3_2\triangleq Q^3_2(L)+Q^3_2(2,0)
$,
which can be estimated as
\begin{equation}\label{lk5q}
\begin{split}
Q^3_2(L)\triangleq &C\int (\nabla \phi^{2\iota}  \cdot \nabla^3 \widehat{u}+\nabla^2\phi^{2\iota} \cdot  \nabla^2 \widehat{u})\cdot \nabla^2 w\\
\leq & C\big(| \phi|^{}_{\infty}|\nabla \phi|_\infty |\nabla^3 \widehat{u}|_2+ |\nabla \phi|^2_\infty|\nabla^2\widehat{u}|_2+| \phi|^{}_{\infty} |\nabla^2 \phi|_2|\nabla^2\widehat{u}|_\infty \big) | \phi|^{2\iota-2}_{\infty}|\nabla^2 w|_2\\
\leq  &    C_0(1+t)^{2\iota n-3\iota-n-1.5-\gamma_2}Y^{2\iota}  Y_2,
\end{split}
\end{equation}
and 
\begin{equation}\label{lk5qsd}
\begin{split}
Q^3_2(2,0)\triangleq &C\int  \nabla^3\phi^{2\iota}  \cdot  \nabla \widehat{u}\cdot \nabla^2 w
\leq   C\big(|\nabla \phi|_\infty  |\nabla \phi|^2_6  |\nabla \widehat{u}|_6\\
&+| \phi|^{}_{\infty} |\nabla^2 \phi|_6|\nabla \phi|_6|\nabla \widehat{u}|_6 
+| \phi|^{2}_{\infty} |\nabla^3 \phi|_2|\nabla\widehat{u}|_\infty \big)| \phi|^{2\iota-3}_{\infty}|\nabla^2 w|_2\\
\leq & C_0(1+t)^{2\iota n-3\iota-n-0.5-\gamma_2}Y^{2\iota}  Y_2.
\end{split}
\end{equation}

For $k=3$, set
$
Q^3_3\triangleq Q^3_3(L)+Q^3_3(3,0)
$, where 
\begin{equation}\label{lk4q}
\begin{split}
Q^3_3(L)=& C\int \big(\nabla \phi^{2\iota} \cdot   \nabla^4 \widehat{u}+\nabla^{2}\phi^{2\iota}  \cdot  \nabla^3 \widehat{u}+\nabla^3\phi^{2\iota} \cdot   \nabla^2 \widehat{u}\big)\cdot \nabla^3w \\
\leq &  C\Big(|\phi|^{2}_\infty| \nabla\phi|_{\infty}|\nabla^4 \widehat{u}|_2+|\phi|^{}_\infty \nabla\phi|^2_{\infty}|\nabla^3 \widehat{u}|_2+|\phi|^{2}_\infty|\nabla^{2}\phi|_{6}|\nabla^3 \widehat{u}|_3\\
&+\big( |\nabla \phi|^2_\infty|\nabla \phi|_2+| \phi|_{\infty} |\nabla^2 \phi|_2|\nabla \phi|_\infty+| \phi|^{2}_{\infty} |\nabla^3 \phi|_2\big)|\nabla^2\widehat{u}|_\infty\Big)| \phi|^{2\iota-3}_{\infty} |\nabla^3 w|_2\\
\leq& C_0(1+t)^{2\iota n-3\iota-n-1.5-\gamma_3}Y^{2\iota}Y_3,\\
Q^3_3(3,0)= &C\int  \nabla^{4}\phi^{2\iota} \cdot  \nabla \widehat{u}\cdot \nabla^3 w
= C\int  \nabla^{3}\phi^{2\iota}  \cdot  \big(\nabla^2 \widehat{u}\cdot \nabla^3 w+\nabla \widehat{u}\cdot \nabla^{4} w)\\
\leq &   C_0(1+t)^{2\iota n-3\iota-n-1.5-\gamma_3}Y^{2\iota}  Y_3+Q^3_3(A),
\end{split}
\end{equation}
with
\begin{equation}\label{lk8eeq}
\begin{split}
 Q^3_3(A)\triangleq &C \int  \nabla^{3}\phi^{2\iota} \cdot \nabla \widehat{u}\cdot \nabla^{4} w\\
\leq &C\big( |\nabla^3 \phi|_2 |\phi|^{\iota-1}_\infty+|\nabla \phi|_3 |\nabla^2 \phi|_6 |\phi|^{\iota-2}_\infty\big)|\nabla \widehat{u}|_\infty |\phi^\iota \nabla^4w|_2+Q^3_3(B)\\
\leq & \eta |\phi^\iota \nabla^{4} w|^2_2+C_0(\eta)(1+t)^{2\iota n-3\iota-5}Y^{2\iota}+Q^3_3(B),
\end{split}
\end{equation}
where  the term $Q^3_3(B)$ can be estimated  by  using  integration by parts again, 
\begin{equation}\label{lk9eeqs}
\begin{split}
 Q^3_3(B)
\triangleq &C \int \phi^{2\iota-3} \nabla \phi\cdot \nabla \phi\cdot \nabla\phi \cdot \nabla \widehat{u}\cdot \nabla^{4} w\\
\leq &C\Big(|\nabla \phi|^3_\infty |\phi|^{2\iota-4}_\infty|\nabla \phi|_2+|\nabla^2 \phi|_6 |\nabla \phi|^2_6 |\phi|^{2\iota-3}_\infty\Big) |\nabla \widehat{u}|_\infty|\nabla^{3}w|_2\\
&+C(1+t)^{2\iota n-3\iota-n-1.5-\gamma_3}Y^{2\iota}  Y_3
\leq   C_0(1+t)^{2\iota n-3\iota-n-0.5-\gamma_3}Y^{2\iota}  Y_3.
\end{split}
\end{equation}

Then  (\ref{lk1qs})-(\ref{lk9eeqs}) give that 
\begin{equation}\begin{split}\label{zongjie3}
Q^3_k\leq& \eta |\phi^\iota \nabla^{k+1} w|^2_2+C_0(1+t)^{2\iota n-3\iota-n-0.5-\gamma_k}Y^{2\iota}  Y_k+C_0(\eta)(1+t)^{2\iota n-3\iota-5}Y^{2\iota}.
\end{split}
\end{equation}

\textbf{Step 3:}
Estimates on  $Q^2_k$. For $k=1$, one has
\begin{equation}\label{lk15e}
\begin{split}
Q^2_1
=&C\int \big(\phi^{2\iota-1}\nabla^2 \phi+\phi^{2\iota-2}\nabla \phi \cdot \nabla \phi \big)\cdot  \nabla w\cdot \nabla w\\
\leq & C|\phi|^{2\iota-2}_\infty\big(|\phi|^{}_\infty|\nabla^2 \phi|_6 | \nabla w|_3+|\nabla \phi|^2_\infty |\nabla w|_2\big)|\nabla w|_2\\
\leq & C(1+t)^{2\iota n-3\iota-2-\gamma_1} Y^{2\iota+1}Y_1.
\end{split}
\end{equation}
For $k=2$, one has
\begin{equation}\label{lk15ee}
\begin{split}
Q^2_2
=&C\int \big(\nabla^3 \phi^{2\iota}\cdot \nabla w+\nabla^2 \phi^{2\iota}\cdot \nabla^2 w\big)\cdot \nabla^2 w\\
\leq & C\big(|\phi|^{}_\infty|\nabla \phi|_\infty|\nabla w|_\infty|\nabla^2 \phi|_2+|\phi|^{2}_\infty|\nabla w|_\infty |\nabla^3 \phi|_2 \\
&+|\nabla \phi|^2_\infty|\nabla \phi|_3|\nabla w|_6+|\phi|_\infty |\nabla\phi|^2_\infty|\nabla^2 w|_2\\
&+|\nabla^2 w|_6|\nabla^{2}\phi|_3|\phi|^{2}_\infty\big)|\phi|^{2\iota-3}_\infty|\nabla^{2} w|_2
\leq  C(1+t)^{2\iota n-3\iota-2-\gamma_2}Y^{2\iota+1}Y_2.
\end{split}
\end{equation}

For $k=3$,  using  integration by parts, one can get 
\begin{equation}\label{lk15eee}
\begin{split}
Q^2_3
=&C\int \big(\nabla^4 \phi^{2\iota}\cdot \nabla w+\nabla^3 \phi^{2\iota}\cdot \nabla^2 w+\nabla^2 \phi^{2\iota}\cdot \nabla^3 w\big)\cdot \nabla^3 w\\
\leq & C|\phi|^{2\iota-3}_\infty |\nabla\phi|_6\big( |\nabla\phi|_\infty|\nabla\phi|_6+|\phi|^{}_\infty|\nabla^2 \phi|_6\big)|\nabla^2 w|_6|\nabla^3 w|_2\\
&+C|\phi^\iota\nabla^3 w|_6|\nabla^{2}w|_3|\phi|^{\iota-1}_\infty|\nabla^{3} \phi|_2\\
 &+ C|\phi|^{\iota-1}_\infty(|\phi|^{\iota-1}_\infty| \nabla\phi|^2_{\infty}|\nabla^3 w|_2+| \nabla^2\phi|_{3}|\phi^{\iota}\nabla^3 w|_6)|\nabla^3 w|_2\\
\leq & \eta |\phi^\iota  \nabla^{4} w|^2_2+C(\eta)(1+t)^{2\iota n-3\iota-2-\gamma_3}Y^{2\iota+1}Y_3+Q^2_3(A),
\end{split}
\end{equation}
with
\begin{equation}\label{lk45}
\begin{split}
Q^2_3(A)=&C \int  \nabla^{3}\phi^{2\iota} \cdot  \nabla w\cdot \nabla^{4} w\\
\leq &C \big(|\nabla^3 \phi|_2 |\phi|_\infty+|\nabla \phi|_3 |\nabla^2 \phi|_6\big)  |\phi|^{\iota-2}_\infty|\nabla w|_\infty |\phi^\iota \nabla^4w|_2+Q^2_3(B)\\
\leq & C(\eta)(1+t)^{2\iota n-3\iota-2-\gamma_3} Y^{2\iota+1}Y_3+\eta |\phi^{\iota} \nabla^{4} w|^2_2+Q^2_3(B),
\end{split}
\end{equation}
where  the term $Q^2_3(B)$ can be estimated by using  integration by parts again, 
\begin{equation}\label{lk45ee}
\begin{split}
 Q^2_3(B)
=&C \int \phi^{2\iota-3} \nabla \phi \cdot\nabla \phi \cdot \nabla \phi \cdot   \nabla w\cdot \nabla^{4} w\\
\leq &C\big(|\nabla \phi|^2_\infty |\nabla \phi|_2+|\nabla^2 \phi|_6 |\nabla \phi|_3 |\phi|^{}_\infty\big)|\phi|^{2\iota-4}_\infty|\nabla \phi|_\infty |\nabla w|_\infty|\nabla^{3}w|_2\\
&+C|\phi|^{2\iota-3}_\infty |\nabla\phi|^3_\infty|\nabla^2 w|_6|\nabla^3 w|_2\\
\leq &  C(1+t)^{2\iota n-3\iota-2-\gamma_3} Y^{2\iota+1}Y_3.
\end{split}
\end{equation}
Then the  estimates (\ref{lk15e})-(\ref{lk45}) lead to 
\begin{equation}\begin{split}\label{zongjie4}
Q^2_k\leq& \eta |\phi^\iota \nabla^{k+1} w|^2_2+C(\eta)(1+t)^{2\iota n-3\iota-2-\gamma_k}Y^{2\iota+1}  Y_k.
\end{split}
\end{equation}

\end{proof}

\subsection{Derivation of a global-in-time  well-posedness for an ordinary differential inequality} Then, based on the above  energy estimates, one can derive the following lemma.
\begin{lemma}\label{Dl2}
There exist some positive constants $a=r+n$,  $C_0$ and $C$ such that 
\begin{equation}\label{eq:2.12}
\frac{\text{d} Y(t)}{\text{d} t}+\frac{a }{1+t}Y(t)\leq  C_0(1+t)^{-1-\eta}Y(t)+C(1+t)^{2\iota n-3\iota-1+2\theta \eta-\eta} Y^{2\iota+1},
\end{equation}
holds for any sufficiently small constant $0<\eta<\frac{3\iota(\gamma-1)}{2\iota-1}$. Moreover, there exists a  constant $\Lambda(C_0)$ such that  $Y(t)$ is globally well-defined in $[0,+\infty)$ if $Y_0\leq \Lambda(C_0) $.

\end{lemma}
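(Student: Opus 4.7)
The plan is to derive the ODE inequality \eqref{eq:2.12} by combining the energy identities of Lemmas \ref{Azongjie1} and \ref{Al2-2} (adapted to the $\phi^{2\iota}$-viscosity setting valid under $(P_2)$ or $(P_3)$) with the new nonlinear source estimates in Lemma \ref{DI2}, and then to invoke Proposition \ref{ode1} for global solvability under a smallness hypothesis on $Y_0$.

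First, I would reproduce the $H^k$ identities for $W$ as in \eqref{eq:2.3}, but with $\varphi^2$ replaced throughout by $\phi^{2\iota}$; Lemmas \ref{l1} and \ref{Al2-2} (for the hyperbolic commutator and symmetric-part terms $R_k, S_k$) go through unchanged, while the elliptic/source contributions $L_k$ and $Q_k$ are controlled by Lemma \ref{DI2}. Multiplying the resulting inequality for $|\nabla^k W|_2^2$ by the time weight $(1+t)^{2\gamma_k}$ with $\gamma_k=k-n$, summing over $k=0,1,2,3$, and using
\[
\frac{1}{2}\frac{d}{dt}\bigl[(1+t)^{2\gamma_k}Y_k^2\bigr]=(1+t)^{2\gamma_k}\Bigl(\tfrac{1}{2}\tfrac{d}{dt}Y_k^2+\tfrac{\gamma_k}{1+t}Y_k^2\Bigr),
\]
the linear drift contributes $\frac{k+r-\gamma_k}{1+t}(1+t)^{2\gamma_k}Y_k^2=\frac{r+n}{1+t}(1+t)^{2\gamma_k}Y_k^2$, which aggregates to the desired $\frac{a}{1+t}Y^2$ term with $a=r+n$.

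Next I would estimate the source side. By the definition \eqref{fg}, $(1+t)^{\gamma_k}Y_k\le Y$, so the nonlinear contributions from Lemma \ref{DI2}, after multiplying by $(1+t)^{2\gamma_k}$, are bounded by
\[
C(\eta)(1+t)^{2\iota n-3\iota-2}Y^{2\iota+2}+C_0(1+t)^{2\iota n-3\iota-n-1.5}Y^{2\iota+1}+C_0(\eta)(1+t)^{2\iota n-3\iota-7+2\gamma_k}Y^{2\iota},
\]
plus the $\eta$-absorbable top-order terms controlled by the viscous dissipation on the left; the $S_k$-type contributions of Lemma \ref{l1} absorb as $C_0(1+t)^{-2}Y^2$. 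Choosing $n$ large enough so that the auxiliary exponents $-n-1.5$, $-7+2\gamma_3$ beat $-1$, and interpolating via Young's inequality (absorbing the $Y^{2\iota+1}$ and $Y^{2\iota}$ terms into $Y^{2\iota+2}$ and $(1+t)^{-1-\eta}Y^2$ for small $\eta>0$), the whole right-hand side reduces to
\[
C_0(1+t)^{-1-\eta}Y^2+C(1+t)^{2\iota n-3\iota-1+2\theta\eta-\eta}Y^{2\iota+2},
\]
where $\theta$ is the interpolation exponent produced by the Young-inequality trade-off. Dividing by $Y$ (using $\frac12\frac{d}{dt}Y^2=Y\,\frac{dY}{dt}$) yields precisely \eqref{eq:2.12}.

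Finally, I would verify the hypotheses of Proposition \ref{ode1} with $a_{\mathrm{ODE}}=2\iota+1>1$, $b=a=r+n$, $D_1=2\iota n-3\iota-1+(2\theta-1)\eta$, $D_2=-1-\eta$: the condition $D_2<-1$ is immediate, while
\[
D_1-(a_{\mathrm{ODE}}-1)b=-3\iota-1-2\iota r+(2\theta-1)\eta,
\]
which is strictly less than $-1$ for sufficiently small $\eta$ because $r>-\tfrac32$ by \eqref{eq:2.6} (both cases $\gamma\ge 5/3$ and $1<\gamma<5/3$), giving $-3\iota-2\iota r<0$. Proposition \ref{ode1} then supplies a threshold $\Lambda(C_0)$ such that $Y_0\le\Lambda(C_0)$ guarantees global existence of $Y$ on $[0,+\infty)$.

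The main obstacle is organizing the numerous source terms from Lemma \ref{DI2} and the $S_k$-estimates of Lemma \ref{l1} into the exact form required by Proposition \ref{ode1}: one must simultaneously choose $n$ (to make $a=r+n>1$ and to suppress the subleading time-exponents) and the interpolation parameter $\theta$ (controlling the Young trade-off between $Y^{2\iota+2}$, $Y^{2\iota+1}$, and $Y^{2\iota}$), while keeping $\eta$ small enough to preserve $D_1-(a_{\mathrm{ODE}}-1)b<-1$. Absorbing the top-order term $|\phi^\iota\nabla^4 w|_2^2$ into the viscous dissipation on the left (via the factor $\eta$ in Lemma \ref{DI2}) is the final delicate bookkeeping step.
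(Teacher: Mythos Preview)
Your proposal is essentially the paper's own argument: derive the per-$k$ energy inequality \eqref{eq:2.3} (with $\phi^{2\iota}$ in place of $\varphi^2$), feed in Lemma \ref{l1} for $R_k,S_k$ and Lemma \ref{DI2} for $L_k,Q_k$, multiply by $(1+t)^{2\gamma_k}$, sum, divide by $Y$, and Young-interpolate the intermediate powers $Y^2,\,Y^{2\iota},\,Y^{2\iota-1}$ between $Y$ and $Y^{2\iota+1}$ to reach \eqref{eq:2.12}, then invoke Proposition \ref{ode1}. Two small bookkeeping corrections: the dominant lower-order exponents come from $Q_k$ (namely $-n-0.5$ and $-5$) rather than the $L_k$ values $-n-1.5$ and $-7$ you quote, and no ``$n$ large'' choice is needed---all surviving terms are handled by the Young interpolation (indeed the $Y^2$ term, coming from $R_k$, is exactly what produces the exponent $2\iota n-3\iota-1+(2\iota-1)\eta$, so $\theta=\iota$); with these adjustments your computation of $D_1-(a_{\mathrm{ODE}}-1)b$ and the threshold $\eta<3\iota(\gamma-1)/(2\iota-1)$ matches the paper.
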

\begin{proof}
First, it follows from  Lemmas \ref{l1},  \ref{DI2} and \eqref{eq:2.3}  that
\begin{equation}\label{eq:2.13}
\begin{split}
&\frac{1}{2}\frac{d}{dt} Y^2_k+\frac{k+r}{1+t} Y^2_k+\frac{1}{2} \alpha\int   | \phi^\iota\nabla^{k+1} w|^2\\
 \leq& CYY_k  (1+t)^{-\gamma_k-2}+C(1+t)^{n-2.5}YY^2_k+C(1+t)^{2\iota n-3\iota-2-\gamma_k}Y^{2\iota+1}Y_k\\
&+C_0(1+t)^{2\iota n-3\iota-n-0.5-\gamma_k}Y^{2\iota}Y_k+C_0(1+t)^{2\iota n-3\iota-5}Y^{2\iota}.
\end{split}
\end{equation}

Second, multiplying \eqref{eq:2.13} by $(1+t)^{2\gamma_k}$  and simplifying by $Y$ give
\begin{equation}\label{eq:2.14}
\begin{split}
\frac{\text{d} Y(t)}{\text{d} t}+\frac{a }{1+t}Y(t)\leq &  C_0(1+t)^{-2}Y+C(1+t)^{n-2.5}Y^2+C(1+t)^{2\iota n-3\iota-2}Y^{2\iota+1}\\
&+C_0(1+t)^{2\iota n-3\iota-n-0.5}Y^{2\iota}+C_0(\eta)(1+t)^{2\iota n-3\iota-2n+1}Y^{2\iota-1}\\
\leq & C_0(1+t)^{-1-\eta}Y(t)+C(1+t)^{2\iota n-3\iota-1+2\theta \eta-\eta} Y^{2\iota+1}
\end{split}
\end{equation}
for any sufficiently small constant $0< \eta <1$. A sufficient condition for the  global existence of solutions to (\ref{eq:2.14}) is 
$$
 K_{a,\iota,\eta}=2\iota n-3\iota-2+1+2\iota \eta-\eta-2\iota a<-1.
$$
Indeed, it follows from the definition of $a$ and $\iota\geq 2$ or $\iota=1$ that 
 \begin{equation}\label{eq:zuihou}
 K_{a,\iota,\eta}=
 \begin{cases}
-2\iota-1-\eta+2\iota \eta<-1 \;\qquad\qquad  \quad \  \text{if} \ \ \gamma\geq \frac{5}{3};\\[6pt]
3\iota(1-\gamma)+\eta (2\iota-1)< -1 \; \qquad\qquad  \text{if} \ \ 1<\gamma<\frac{5}{3},
\end{cases}
 \end{equation}
for the sufficiently small constant $0<\eta<\frac{3\iota(\gamma-1)}{2\iota-1}$.

Then,  (\ref{eq:2.14})-(\ref{eq:zuihou}), Proposition \ref{ode1} and Lemma \ref{l1} give
\begin{equation}
\displaystyle
Y(t)\leq \frac{(1+t)^{-a}\exp{\Big(\frac{C_0}{\eta}}(1-(1+t)^{-\eta})\Big)}{\Big(Z^{-2\iota}_{0}-2\iota C\int^t_0 (1+s)^{K_{a,\iota,\eta}}\exp{\Big(\frac{2\iota C_0}{\eta}}\big(1-(1+s)^{-\eta}\big)\Big)\text{d}s\Big)^{\frac{1}{2\iota}}}.
\end{equation}
Thus, $Y(t)$  is defined  for $t\geq 0$ if and only if
\begin{equation}\label{eq:3.29}
0<Y_{0}<\frac{1}{\Big(2\iota C\int^t_0 (1+s)^{K_{a,\iota,\eta}}\exp{\Big(\frac{2\iota C_0}{\eta}}\big(1-(1+s)^{-\eta}\big)\Big)\text{d}s\Big)^{\frac{1}{2\iota}}}.
\end{equation}
 Moreover, it holds that 
 \begin{equation}\label{eq:3.30}
Y(t)\leq C_0(1+t)^{-r-n},\quad \text{and} \quad  Y_k(t)\leq K(1+t)^{-k-r} \qquad \text{ for all }\ t\geq 0.
 \end{equation}
\end{proof}

Now we are ready to prove Theorem \ref{ths1}. \textbf{Step 1:} Estimates on $\varphi\nabla^4 w$.
Note that (\ref{eq:2.13}) and (\ref{eq:3.30}) for $k=3$ imply that 
\begin{equation}\label{eq:2.13dd}
\begin{split}
&\frac{1}{2}\frac{d}{dt} Y^2_3+\frac{3+r}{1+t} Y^2_3+\frac{1}{2} \alpha\int   | \phi^\iota\nabla^{4} w|^2\leq C(1+t)^{-8.5-3r}.
\end{split}
\end{equation}
Multiplying \eqref{eq:2.13dd} by $(1+t)^{2e}$ on both sides gives
\begin{equation}\label{eq:2.13ee}
\begin{split}
&\frac{1}{2}\frac{d}{dt} ((1+t)^{2e}Y^2_3)+\frac{3+r-e}{1+t} ((1+t)^{2e}Y^2_3)+\frac{1}{2} \alpha (1+t)^{2e}\int   | \phi^\iota\nabla^{4} w|^2\\
\leq& C(1+t)^{-8.5-3r+2e}.
\end{split}
\end{equation}
Under the assumptions that
$$
3+r-e\geq 0,\quad -8.5-3r+2e<-1,
$$
i. e., $e\leq 3+r$,
one can obtain
\begin{equation}\label{eq:2.13ff}
\begin{split}
(1+t)^{2e}Y^2_3+\frac{1}{2} \alpha \int_0^t (1+t)^{2e}\int   | \phi^\iota\nabla^{4} w|^2\text{d}s\leq C_0.
\end{split}
\end{equation}

\textbf{Step 2:} Estimates on $\varphi=\phi^\iota$.
Here we always assume that $\iota\geq 2$ in the rest of this proof.
Due to   (\ref{eq:3.30}), one can get
\begin{equation}\label{eqai}
\begin{split}
|\nabla^k \varphi|_2\leq C(1+t)^{-1.5\iota-r\iota+1.5-k}.
\end{split}
\end{equation}

Applying $\nabla^3$ to $\eqref{li47-2}_1$,  multiplying by $\nabla^3 \varphi$ and integrating over $\mathbb{R}^3$, then similar to Lemma \ref{Azongjiephi}, one can get
\begin{equation}\label{guji23}
\begin{split}
&\frac{1}{2}\frac{d}{dt}|\nabla^3 \varphi |^2_2+\frac{3\delta}{2(1+t)}|\nabla^3 \varphi |^2_2\\
\leq&  C_0(1+t)^{-r-2.5}|\nabla^3 \varphi |^2_2+C(1+t)^{-0.75\iota-0.5 r\iota-r-3.25}|\nabla^3\varphi(t)|^{3/2}_2\\
&+C (1+t)^{-1.5\iota-r\iota-3.5}|\nabla^3 \varphi|_2+C|\varphi \nabla^4w|_2|\nabla^3\varphi(t)|_2.
\end{split}
\end{equation}

Then, multiplying \eqref{guji23} by $(1+t)^{2p}$ and simplifying by
$G^2=(1+t)^{2p}|\nabla^3 \varphi |^2_2$ lead to 
\begin{equation}\label{eq:2.14qq}
\begin{split}
&\frac{1}{2}\frac{\text{d} G^2}{\text{d} t}+\frac{\frac{3}{2}\delta-p}{1+t}G^2\\
\leq&  C_0\big((1+t)^{-r-2.5}+(1+t)^{2p-2e}\big)G^2+C(1+t)^{-0.75\iota-0.5 r\iota-r-3.25+0.5p}G^{3/2}\\
&+C (1+t)^{-1.5\iota-r\iota-3.5+p}G+C(1+t)^{2e}|\varphi \nabla^4w|^2_2.
\end{split}
\end{equation}
Choose $p$ satisfying the following conditions:
\begin{equation*}\begin{split}
 \frac{3}{2}\delta-p>0,\quad -0.75 \iota-0.5 r\iota-r-3.25+0.5p<&-1,\\
\quad 2p-2e<-1,\quad -1.5\iota-r\iota-3.5+p<&-1.
\end{split}
\end{equation*}
Then (\ref{eq:2.14qq}) yields 
$$
(1+t)^{2p}|\nabla^3 \varphi |^2_2\leq C_0.
$$

\section{Global-in-time well-posedness without compactly supported assumption}

In this section,  we will extend the global-in-time well-posedness  in Theorem \ref{ths1} to the   more general case  whose   initial mass density $\rho_0$ is still small, but is not necessary to be   compactly  supported, which  can be stated as:
\begin{theorem}\label{ths1B} Let (\ref{canshu}) and any one of the conditions $(P_0)$-$(P_3)$ hold.
 If  initial data $( \varphi_0, \phi_0,u_0)$  satisfies $(A_1)$-$(A_2)$, then for any positive time $T>0$,  there exists a unique global classical solution $(\varphi,  \phi, u)$ in $[0,T] \times \mathbb{R}^3$  to the Cauchy problem (\ref{li47-1}) satisfying
\begin{equation}\label{AregghNN}\begin{split}
& (\varphi,\phi, w) \in C([0,T]; H^{s'}_{loc})\cap L^\infty([0,T]; H^3),  \  \varphi\nabla^4w \in L^2([0,T]; L^2),
\end{split}
\end{equation}
for  any constant $s'\in[2,3)$.   Moreover, when  $(P_2)$ holds, the smallness assumption on $\varphi_0$ could be removed.
\end{theorem}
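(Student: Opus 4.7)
The plan is to remove the compact support assumption $(A_3)$ by a truncation–approximation argument that exploits the fact that every a priori estimate established in Sections 4--6 is uniform with respect to the radius $R$ of the support of $(\varphi_0,\phi_0)$. Concretely, let $\chi\in C_c^\infty(\mathbb{R}^3)$ with $0\le\chi\le1$, $\chi\equiv1$ on $B_1$, $\chi\equiv 0$ outside $B_2$, and set $\chi_R(x)=\chi(x/R)$. Define the approximate initial data
\begin{equation*}
\varphi_0^R=\chi_R\,\varphi_0,\qquad \phi_0^R=\chi_R\,\phi_0,\qquad u_0^R=u_0.
\end{equation*}
Then $(\varphi_0^R,\phi_0^R)\in H^3$ is compactly supported in $B_{2R}$, $\|\varphi_0^R\|_3\le C\|\varphi_0\|_3$, $\|\phi_0^R\|_3\le C\|\phi_0\|_3$ with $C$ independent of $R$, and $(\varphi_0^R,\phi_0^R)\to(\varphi_0,\phi_0)$ in $H^3$ as $R\to\infty$. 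The velocity hypothesis $(A_2)$ is unchanged. Hence for each sufficiently large $R$ the triple $(\varphi_0^R,\phi_0^R,u_0)$ satisfies $(A_1)$--$(A_3)$ together with the smallness assumption of Theorem \ref{ths1}, and consequently there is a unique global classical solution $(\varphi^R,\phi^R,w^R)$ on $[0,T]\times\mathbb{R}^3$ with the regularity \eqref{regghNN}.

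Next I would observe that the uniform bound $Z(t)\le C_0$, the integrated bound on $\varphi\nabla^4 w$, and the accompanying decay estimates on $\|W\|_3$ and $\|\varphi\|_3$ derived in Sections~5--6 depend only on the constants $(\alpha,\beta,\gamma,\delta,A,\kappa)$ and on $\|u_0\|_\Xi$, $\|\varphi_0\|_3$, $\|\phi_0\|_3$; in particular they are \emph{independent of $R$}. Writing the reformulated momentum and transport equations \eqref{li47-2} for $(\varphi^R,W^R)$ also yields, after standard estimation, uniform-in-$R$ bounds on the time derivatives $(\varphi^R_t,\phi^R_t)\in L^\infty([0,T];H^2_{\loc})$ and $w^R_t\in L^\infty([0,T];H^1_{\loc})\cap L^2([0,T];H^2_{\loc})$, exactly as in \eqref{uniformshijianAS}. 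By the Banach–Alaoglu theorem together with the Aubin–Lions compactness lemma applied on each ball $B_{R_0}$, I can extract a subsequence (still denoted by $R$) such that
\begin{align*}
(\varphi^R,W^R)&\rightharpoonup(\varphi,W)\quad\text{weakly* in }L^\infty([0,T];H^3(\mathbb{R}^3)),\\
(\varphi^R,W^R)&\to(\varphi,W)\quad\text{in }C([0,T];H^2(B_{R_0}))\text{ for every }R_0>0,\\
\varphi^R\nabla^4 w^R&\rightharpoonup\varphi\nabla^4 w\quad\text{weakly in }L^2([0,T]\times\mathbb{R}^3),
\end{align*}
where the last convergence is justified by the same integration-by-parts argument used in \eqref{bridge}--\eqref{ruojixian1A}. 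Lower semicontinuity transfers all the uniform bounds to the limit $(\varphi,\phi,w)$, which therefore satisfies the regularity listed in \eqref{AregghNN} and, by strong convergence on compact sets, solves the reformulated system \eqref{li47-1} in the sense of distributions. Uniqueness follows verbatim from Step~2.3 of the proof of Lemma \ref{lem1q} (now carried out in $H^2_{\loc}$), and the time-continuity is obtained from the weak-* regularity plus the Lemma \ref{zheng5} type argument on every ball.

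The principal obstacle I anticipate is precisely the loss of global-in-space continuity: in contrast to the compactly supported setting, $(\varphi,\phi,w)$ now belongs only to $L^\infty_tH^3_x\cap C_tH^{s'}_{x,\loc}$, so every manipulation previously justified by dominated convergence in $\mathbb{R}^3$ must be localized. In particular, passing to the limit in the degenerate elliptic term $\varphi^2Lw$ requires using the strong $H^2_{\loc}$ convergence of $\varphi^R$ together with the weak $L^2$ limit of $\varphi^R\nabla^4 w^R$, and it is essential that the smallness condition in Theorem~\ref{ths1B} controls the global $H^3$ norm so that no bound depends on $R_0$. Once the limit is identified as a regular solution and uniqueness is established, the conservation of spectrum assumption $(A_2)$ together with Proposition~\ref{p1} yields the global-in-time classical regularity for $1<\min(\gamma,\delta)\le 3$, completing the proof.
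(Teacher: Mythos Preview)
Your existence argument is essentially the paper's: truncate $(\varphi_0,\phi_0)$ by a cutoff at scale $R$, invoke Theorem~\ref{ths1} for each $R$, use the $R$-independence of the weighted energy bounds from Sections~5--6, and pass to the limit via Aubin--Lions on balls together with weak-$*$ compactness in $L^\infty_tH^3_x$. The identification of the weak limit of $\varphi^R\nabla^4 w^R$ via the integration-by-parts trick \eqref{bridge}--\eqref{ruojixian1A} is also exactly what the paper does. So this part is correct and matches.

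The gap is in uniqueness. You write that it ``follows verbatim from Step~2.3 of the proof of Lemma~\ref{lem1q} (now carried out in $H^2_{\loc}$)'', but this does not go through. In the compactly supported setting the solutions $(\varphi,\phi,w)$ themselves remain compactly supported for all finite time (this is established in Lemma~\ref{lem1q} and used again in Step~2 of the proof of Theorem~\ref{newjie}), so every integration by parts over $\mathbb{R}^3$ is automatically justified. Once the compact-support hypothesis is dropped, the transport terms $\widehat{u}\cdot\nabla\overline{\varphi}$ and $\sum_j A^*_j(W_1,\widehat{u})\partial_j\overline{W}$ involve $\widehat{u}$, which grows linearly at infinity (Proposition~\ref{p1}). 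The standard step
\[
\int_{\mathbb{R}^3}\widehat{u}\cdot\nabla\overline{\varphi}\,\overline{\varphi}\,dx
= -\tfrac12\int_{\mathbb{R}^3}\text{div}\,\widehat{u}\,|\overline{\varphi}|^2\,dx
\]
requires $|x|\,|\overline{\varphi}|^2\in L^1$, which is \emph{not} implied by $\overline{\varphi}\in H^3$. Carrying the estimate ``in $H^2_{\loc}$'' does not help either, since a local $L^2$ difference estimate on $B_{R_0}$ produces uncontrolled boundary fluxes through $\partial B_{R_0}$.

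The paper resolves this by an explicit spatial cutoff: set $F_N=F(|x|/N)$ and derive equations for $(\overline{\varphi}^N,\overline{W}^N)=F_N(\overline{\varphi},\overline{W})$. The commutator terms live in the annulus $\{N\le|x|\le 2N\}$, where $|\widehat{u}|\le 2N|\nabla\widehat{u}|_\infty$ cancels the factor $N^{-1}$ from $\nabla F_N$; the resulting error integrals $I^1_N,I^2_N$ are then bounded by norms of $(\overline{\varphi},\overline{W})$ on $\mathbb{R}^3\setminus B_N$, which tend to zero because $\overline{\varphi},\overline{W}\in L^2(\mathbb{R}^3)$. Gronwall on $\Lambda^N$ followed by $N\to\infty$ gives uniqueness. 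You should incorporate this localization step rather than appealing to the compact-support argument verbatim.
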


In the following subsections, we will prove Theorem \ref{ths1B} under the  condition $(P_0)$. The proof for other cases is similar, and so is omitted.

\subsection{Existence}

According to Theorem \ref{ths1},  for the initial data
$$
(\varphi^R_0,\phi^R_0, w_0)=(\varphi_0F(|x|/R), \phi_0F(|x|/R), 0),
$$
there exists the unique  global regular solution  $(\varphi^R, \phi^R,w^R)$ satisfying:
\begin{equation}\label{decay2}
\begin{split}
|\nabla^k \phi^R(t)|_2+|\nabla^kw^R(t)|_2\leq & C_0(1+t)^{-(1-\nu_*)b_*+2.5-k};\\
 \|(1+t)^{k-2.5}\varphi^R\nabla^{k+1} w^R\|_{L^2L^2_t}\leq  C_0,\
  |\nabla^k \varphi^R(t)|_2\leq & C_0(1+t)^{-(1-\nu_*)b_*+3-k},
\end{split}
\end{equation}
for any positive time $t>0$, with    $C_0$  independent of $R$.

Due to  (\ref{decay2}) and the  following relations:
\begin{equation}\label{shijianfangxiang}
\begin{cases}
\displaystyle
\varphi^R_t=-(w^R+ \widehat{u} )\cdot \nabla\varphi^R-\frac{\delta-1}{2}\varphi\text{div} (w^R+ \widehat{u} ),\\[8pt]
\displaystyle
\phi^R_t=-(w^R+ \widehat{u} )\cdot \nabla \phi^R-\frac{\gamma-1}{2}\phi^R\text{div} (w^R+ \widehat{u} ),\\[8pt]
\displaystyle
w^R_t=-w^R\cdot \nabla w^R-\frac{\gamma-1}{2}\phi^R\nabla \phi^R-(\varphi^R)^2 Lw^R\\[8pt]
\quad +\nabla (\varphi^R)^2\cdot Q(w^R+ \widehat{u} )- \widehat{u} \cdot \nabla w^R-w^R\cdot \nabla  \widehat{u} -(\varphi^R)^2 L\widehat{u},
 \end{cases}
\end{equation}
 it holds that  for any finite constant  $R_0>0$ and finite time $T>0$, 
\begin{equation}\label{uniformshijianN}
\begin{split}
\|\varphi^R_t\|_{H^2(B_{R_0})}+\|\phi^R_t\|_{H^2(B_{R_0})}+\|w^R_t\|_{H^1(B_{R_0})}+\int_0^t \|\nabla^2 w^R_t\|^2_{L^2(B_{R_0})}\text{d}s\leq C_0(R_0,T),
\end{split}
\end{equation}
for $0\leq t \leq T$, where the constant $C_0(R_0,T)>0$ depending  on $C_0$, $R_0$ and $T$.

Since  (\ref{decay2}) and (\ref{uniformshijianN}) are  independent of  $R$,   there exists a subsequence of solutions (still denoted by) $(\varphi^R, \phi^R, w^R)$   converging to  a limit  $(\varphi,\phi,w)$  in the  sense:
\begin{equation}\label{strongjixian}
\begin{split}
&(\varphi^R, \phi^R,w^R)\rightarrow   (\varphi, \phi,w) \quad \text{strongly \ in } \ C([0,T];H^2(B_{R_0})).
\end{split}
\end{equation}

For $k=0,1,2,3$, denote
$$
a_k=-(1-\nu_*)b_*+2.5-k,\quad b_k=-(1-\nu_*)b_*+3-k,\quad c_k=k-2.5.
$$
Again due to  (\ref{decay2}), there exists a subsequence of solutions (still denoted by) $(\varphi^R, \phi^R, w^R)$ converging  to the same limit $(\varphi,\phi,w)$ as above in the following   weak* sense (for $k=0,1,2,3$):
\begin{equation}\label{ruojixian}
\begin{split}
(1+t)^{b_k}\varphi^R \rightharpoonup  (1+t)^{b_k}\varphi \quad \text{weakly* \ in } \ & L^\infty([0,T];H^3(\mathbb{R}^3)),\\
(1+t)^{a_k}\big( \phi^R, w^R\big) \rightharpoonup  (1+t)^{a_k}\big(\phi, w) \quad \text{weakly* \ in } \ & L^\infty([0,T];H^3(\mathbb{R}^3)).
\end{split}
\end{equation}

Combining  the strong convergence in (\ref{strongjixian}) and the weak convergence in (\ref{ruojixian}) shows that  $(\varphi, \phi,w) $ also satisfies the corresponding  estimates (\ref{decay2}) and (for $k=0,1,2,3$):
\begin{equation}\label{ruojixian1}
\begin{split}
(1+t)^{c_k}\varphi^R \nabla^{k+1} w^R \rightharpoonup (1+t)^{c_k} \varphi\nabla^{k+1} w \quad &\text{weakly \ in } \ L^2([0,T]\times \mathbb{R}^3).
\end{split}
\end{equation}


It is then obvious that $(\varphi, W) $ is a weak solution to problem (\ref{li47-1}) in the sense of distribution.

\subsection{Uniqueness}

Let $(\varphi_1,W_1)$ and $(\varphi_2,W_2)$ be two  solutions  to  (\ref{li47-1}) satisfying the uniform a priori estimates (\ref{decay2}). Set
$F_N=F(|x|/N)$, and 
\begin{equation*}\begin{split}
&\overline{\varphi}=\varphi_1-\varphi_2,\quad \overline{W}=(\overline{\phi},\overline{w})=(\phi_1-\phi_2,w_1-w_2),\\
&\overline{\varphi}^N=\overline{\varphi} F_N,\ \  \ \  \overline{W}^N=\overline{W}F_N=(\overline{\phi}^N,\overline{w}^N),
\end{split}
\end{equation*}
then  $(\overline{\varphi}^N,\overline{W}^N)$ solves the  following problem
 \begin{equation}
\label{weiyixing}
\begin{cases}
\displaystyle
\ \  \ \overline{\varphi}^N_t+(w_1+\widehat{u})\cdot \nabla\overline{\varphi}^N +\overline{w}^N\cdot\nabla\varphi_{2}+\frac{\delta-1}{2}(\overline{\varphi}^N \text{div}(w_2+\widehat{u})+\varphi_{1}\text{div}\overline{w}^N)\\[6pt]
\displaystyle
 =\overline{\varphi}(w_1+\widehat{u})\cdot  \nabla F_N +\frac{\delta-1}{2}\varphi_{1}\overline{w}\cdot \nabla F_N, \\[6pt]
\ \  \ \overline{W}^N_t+\sum\limits_{j=1}^3A^*_j(W_1,\widehat{u})\partial_{j} \overline{W}^N+\varphi^2_1\mathbb{L}(\overline{w}^N)\\[6pt]
=-\sum\limits_{j=1}^3A_j(\overline{W}^N)\partial_{j} W_{2}  -\overline{\varphi}^N(\varphi_1+\varphi_2)\mathbb{L}(w_2) +F_N\big(\mathbb{H}(\varphi_1)-\mathbb{H}(\varphi_2)\big)\cdot \mathbb{Q}(W_2)\\[6pt]
\ \ \ \ +\mathbb{H} (\varphi_1)\cdot \mathbb{Q}(\overline{W}^N)-B(\nabla \widehat{u},\overline{w}^N)-D(\overline{\varphi}^N(\varphi_1+\varphi_2),\nabla^2 \widehat{u})\\
\ \ \ \ +\sum\limits_{j=1}^3A^*_j(W_1,\widehat{u}) \overline{W} \partial_{j} F_N +\varphi^2_1(\mathbb{L}(\overline{w}^N)-F_N\mathbb{L}(\overline{w})) -\mathbb{H} (\varphi_1)\cdot \mathbb{Q}(\overline{W})\cdot \nabla F_N,\\[6pt]
 \ \ \ \ (\overline{\varphi}^N,\overline{W}^N)|_{t=0}=(0, 0),\quad x\in \mathbb{R}^3,\\[6pt]
\ \ \ \ (\overline{\varphi}^N,\overline{W}^N)\rightarrow (0,0) \quad \text{as } \quad |x|\rightarrow +\infty,\quad t> 0.
\end{cases}
\end{equation}

It is not hard to obtain that
\begin{equation}\label{weiyi1}
\begin{split}
&\frac{d}{dt} |\overline{\varphi}^N|^2_2\leq  C(|\nabla w_1|_\infty+|\nabla w_2|_\infty+|\nabla \widehat{u}|_\infty)|\overline{\varphi}^N|^2_2+C|\nabla \varphi_2|_\infty|\overline{w}^N|_2|\overline{\varphi}^N|_2\\
&\qquad \qquad +C|\varphi_{1}\text{div}\overline{w}^N|_2|\overline{\varphi}^N|_2+I^1_N,\\
&\frac{d}{dt}|\overline{W}^N|^2_2+\frac{1}{2}\alpha\int  \varphi^2_1 |\nabla \overline{w}^N |^2
\leq  C\Big(1+\|W_1\|^2_{3}+\| W_2\|^2_{3}+\| \nabla \widehat{u}\|^2_{3}\Big)|\overline{W}^N|^2_2\\
&\quad \  +C\big(|\varphi_2\nabla^2 w_2|_\infty|\overline{w}^N|_2+|\nabla^2 w_2|_3 \big(|\varphi_1\nabla \overline{w}^N|_2 +|\nabla \varphi_1|_\infty|\overline{w}^N|_2\big)\big)|\overline{\varphi}^N|_2+I^2_N,
\end{split}
\end{equation}
where the error terms $I^1_N$-$I^2_N$ are given and estimated  by
\begin{equation*}
\begin{split}
I^1_N=&\int_{N\leq |x|\leq 2N} \frac{1}{N}\Big((|w_1|+|\widehat{u}|)|\overline{\varphi}|^2+|\overline{\varphi}||\overline{w}||\varphi_1|\Big) \text{d}x\\
\leq & C\Big((|w_1|_\infty+|\nabla \widehat{u}|_\infty)\|\overline{\varphi}\|_{L^2(\mathbb{R}^3/B_N)}+|\varphi_1|_\infty\|\overline{w}\|_{L^2(\mathbb{R}^3/B_N)}\Big)\|\overline{\varphi}\|_{L^2(\mathbb{R}^3/B_N)},\\
I^2_N=&\int_{N\leq |x|\leq 2N}\frac{1}{N} \Big((|W_1|+|\widehat{u}|)|\overline{W}|^2+|\varphi_1|^2 |\overline{w}|^2+| \varphi_1|^2|\overline{w}||\nabla \overline{w}|\Big) \text{d}x\\
&+\int_{N\leq |x|\leq 2N} \frac{1}{N}\Big(|\varphi_1+\varphi_2| |\overline{\varphi}||\nabla w_2|+|\nabla \varphi_1||\varphi_1\nabla \overline{w}|\Big)|\overline{w}| \text{d}x\\
\leq & C(|W_1|_\infty+|\nabla \widehat{u}|_\infty)\|\overline{W}\|^2_{L^2(\mathbb{R}^3/B_N)}+C|\varphi_1|^2_\infty\|\overline{w}\|^2_{L^2(\mathbb{R}^3/B_N)}\\
&+C(|\varphi_1|_\infty+|\nabla \varphi_1|_\infty)\|\overline{w}\|_{L^2(\mathbb{R}^3/B_N)}|\varphi_1\nabla \overline{w}|_2\\
&+C|\varphi_1+\varphi_2|_\infty|w_2|_\infty\|\overline{w}\|_{L^2(\mathbb{R}^3/B_N)}\|\overline{\varphi}\|_{L^2(\mathbb{R}^3/B_N)},
\end{split}
\end{equation*}
for $0\leq t \leq T$.

Using the same arguments as in the derivation of (\ref{varcha})-(\ref{gogo1}), and letting

$$
\Lambda^N(t)=|\overline{W}^N(t)|^2_{2}+\frac{\alpha}{2C}|\overline{\varphi}^N(t)|^2_{2},
$$
one can  have
\begin{equation}\label{jiewei}
\begin{cases}
\displaystyle
\frac{d}{dt}\Lambda^N(t)+\frac{\alpha}{2}|\varphi_1\nabla \overline{w}^N(t)|^2_2\leq J(t)\Lambda^N(t)+I^1_N(t)+I^2_N(t),\\[6pt]
\displaystyle
\int_{0}^{t}(J(s)+I_N(s))ds\leq C_0\quad \quad  \quad  \quad  \quad   \quad  \text{for} \quad 0\leq t\leq T,
\end{cases}
\end{equation}
where the constant   $C_0>0 $ is  independent of $N$.

It follows from 
$$
\int_0^t \int (I^1_N+I^2_N)\text{d}t\rightarrow 0, \quad \text{as} \quad N\rightarrow +\infty,
$$
and Gronwall's inequality that $\overline{\varphi}=\overline{\phi}=\overline{w}=0$.
Then the uniqueness is obtained.

\subsection{Time continuity} This follows from the uniform estimates (\ref{decay2}) and equations in (\ref{li47-1}).

\section{Proof of  Theorem \ref{thglobal}}
Based on Theorem \ref{ths1B}, now we are ready to give the global well-posedness of the regular solution to the original problem (\ref{eq:1.1})-(\ref{far}), i.e., the proof of Theorem \ref{thglobal}. Moreover, we will show that this regular solution  satisfies  (\ref{eq:1.1}) classically in positive time $(0,T_*]$ when $1<\min(\gamma,\delta)\leq 3$.

\begin{proof}

First,  Theorem \ref{ths1B} shows  that
there exists   a unique global classical solution $(\varphi,\phi,w)$ satisfying (\ref{decay2}) and
\begin{equation}\label{reg2}
(\rho^{\frac{\delta-1}{2}},\rho^{\frac{\gamma-1}{2}} )=(\varphi,\phi)\in C^1((0,T)\times \mathbb{R}^3),\quad \text{and} \quad (u,\nabla u)\in C((0,T)\times \mathbb{R}^3).
\end{equation}

Second, in terms of  $(\varphi, \phi,u)$, one has 
\begin{equation}
\begin{cases}
\label{fanzheng}
\displaystyle
\phi_t+u\cdot \nabla \phi+\frac{\gamma-1}{2}\phi \text{div} u=0,\\[10pt]
\displaystyle
u_t+u\cdot\nabla u +\frac{\gamma-1}{2}\phi\nabla \phi+\varphi^2 Lu= \nabla \varphi^2 \cdot Q(u).
 \end{cases}
\end{equation}

\textbf{Case 1:} \text{$1<\min\{\gamma,\delta\}\leq 3$}. We assume that $1<\gamma\leq 3$, and the other cases can be dealt with similarly.
Since $\rho=\phi^{\frac{2}{\gamma-1}}$ and
$\frac{2}{\gamma-1}\geq 1$, so 
$$\rho \in C^1((0,T)\times\mathbb{R}^3).$$

Multiplying $(\ref{fanzheng})_1$ by
$
\frac{\partial \rho}{\partial \phi}(t,x)=\frac{2}{\gamma-1}\phi^{\frac{3-\gamma}{\gamma-1}}(t,x)\in C((0,T)\times \mathbb{R}^3)
$ on both sides yields  the continuity equation, $(\ref{eq:1.1})_1$.

While multiplying $(\ref{fanzheng})_2$ by
$
\phi^{\frac{2}{\gamma-1}}=\rho(t,x)\in C^1((0,T)\times \mathbb{R}^3)
$ on both sides gives  the momentum equations, $(\ref{eq:1.1})_2$.

Thus, $(\rho,u)$  solves  the Cauchy problem (\ref{eq:1.1})-(\ref{far}) in the classical sense.

\textbf{Case 2:} \text{$\min\{\gamma,\delta\}> 3$}.  For definiteness, we assume that $\gamma>3$, and the other cases could be dealt with similarly. Since $\phi=\rho^{\frac{\gamma-1}{2}}$ and
$\frac{2}{\gamma-1}>0$, so
$\rho \in C((0,T)\times\mathbb{R}^3)$.

It follows from $(\ref{fanzheng})_2$ and $\frac{\gamma-1}{2}>1$ that 
\begin{equation}\label{tuidao}\rho_t+\text{div}(\rho u)=0,\quad \text{when} \quad \rho(t,x)>0.
\end{equation}
Now  the whole space could be divide into two domains: the vacuum domain $V_0$ and its complement  $V_p(t)$.
Then it holds that for any smooth $f$,
\begin{equation}\label{tuidao1}
\int_0^t\int_{V_p(s)} \big( \rho  f_t+ \rho u\cdot \nabla f\big)\text{d}x\text{d}s=\int_{V_p(0)} \rho_0 f(0,x) \text{d}x,
\end{equation}
which means that
\begin{equation}\label{tuidao2}
\int_0^t \int_{\mathbb{R}^3} \big( \rho  f_t+ \rho u\cdot \nabla f\big)\text{d}x\text{d}t=\int_{\mathbb{R}^3}  \rho_0 f(0,x) \text{d}x.
\end{equation}

Multiplying $(\ref{fanzheng})_2$ by
$
\phi^{\frac{2}{\gamma-1}}=\rho(t,x)
$ gives  the momentum equations, $(\ref{eq:1.1})_2$.

Thus, $(\rho,u)$  satisfies the Cauchy problem (\ref{eq:1.1})-(\ref{far}) in the sense of distributions.

Finally, it is easy to show that 
$$
\rho(t,x)\geq 0, \ \forall (t,x)\in [0,T]\times \mathbb{R}^3.
$$
That is to say, $(\rho,u)$ satisfies the Cauchy problem (\ref{eq:1.1})-(\ref{far}) in the sense of distributions   and has the regularities shown in Definition \ref{d1}, which means that  the Cauchy problem(\ref{eq:1.1})-(\ref{far}) has a unique regular solution $(\rho,u)$.

\end{proof}

\section{Proof of Theorem \ref{thglobal3}}

In this section, we prove Theorem \ref{thglobal3} by modifying the proof of Theorem  \ref{thglobal}.
For the Cauchy problem  (\ref{eq:1.1})-(\ref{eq:1.2}) with (\ref{initial})-(\ref{far}), if
$
\text{div}\mathbb{T}=\alpha \rho^\delta\triangle u
$,
 it can be rewritten into

\begin{equation}\label{li47-2*}
\begin{cases}
\displaystyle
\varphi_t+w\cdot\nabla\varphi+\frac{\delta-1}{2}\varphi\text{div} w=- \widehat{u} \cdot\nabla\varphi-\frac{\delta-1}{2}\varphi\text{div}  \widehat{u},\\[5pt]
\displaystyle
W_t+\sum_{j=1}^3A_j(W) \partial_j W-\varphi^2\triangle w=G_*(W, \varphi,  \widehat{u}),\\[5pt]
(\varphi, \phi, w)(t=0,x)=(\varphi_0, \phi_0, 0),\quad x\in \mathbb{R}^3,\\[5pt]
(\varphi, \phi, w)\rightarrow (0,0,0) \quad\quad   \text{as}\quad \quad  |x|\rightarrow \infty \quad \text{for} \quad  t\geq 0,
 \end{cases}
\end{equation}
where
\begin{equation} \label{li47-2GH}
\begin{split}
G_*(W, \varphi,  \widehat{u})=&-B(\nabla  \widehat{u},W)-\sum_{j=1}^3  \widehat{u}^{(j)}\partial_j W-\left(\begin{array}{c}0\\[8pt]
 \varphi^2 \triangle  \widehat{u}
\end{array}\right).
\end{split}
\end{equation}

Let $\gamma_k=k-\frac{5}{2}$ and $\delta_k=k-m$, where $m$ will be determined in the end of this section, and $Z(t)$ be defined in (\ref{fg}). Then one has
\begin{lemma}\label{Cl2}
There exist some positive constants $b_m$,  such that 
\begin{equation}\label{Ceq:2.14}
\frac{\text{d} Z}{\text{d} t}(t)+\frac{b_m }{1+t}Z(t)\leq
 \begin{cases}
C(1+t)^{2m-5}Z^3(t)+C_0(1+t)^{2m-9}Z \quad \ \  \text{if}  \ \ m\in [3.5,4),\\[8pt]
C(1+t)^{2m-5}Z^3(t)+C_0(1+t)^{5-2m}Z   \quad  \ \  \text{if} \ \   m\in (3,3.5),
 \end{cases}
\end{equation}
where $b_m$ is given by (\ref{Jeq:2.6q}).
\end{lemma}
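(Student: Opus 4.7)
The plan is to parallel the proof of Lemma \ref{l2} (together with Lemmas \ref{l1}, \ref{Al2-2}, \ref{Azongjiephi}) from Section 5, but exploiting the structural simplifications arising from $\text{div}\mathbb{T}=\alpha\rho^\delta\triangle u$. Two features make the analysis easier here: first, the viscous operator $\triangle$ produces no $\nabla\text{div}$ piece, so the quadratic form $\Phi(A,B)$ appearing in \eqref{Leq:2.14}--\eqref{gab} degenerates to a single positive-definite term and the cross coupling $|\nabla^3\varphi|_2|\varphi\nabla^3\text{div}\,w|_2$ disappears. Second, the $\mathbb{Q}(u)$ source in \eqref{li47-1} is replaced by a pure $\varphi^2\triangle\widehat{u}$ forcing, so Lemma \ref{Al2-2} is not needed at all; only analogs of Lemmas \ref{l1}--\ref{Azongjiephi} remain.

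I would first apply $\nabla^k$ ($0\le k\le 3$) to $\eqref{li47-2*}_2$ and pair with $\nabla^k W$, obtaining
\begin{equation*}
\tfrac{1}{2}\tfrac{d}{dt}|\nabla^k W|_2^2+\int \varphi^2|\nabla^{k+1}w|^2
=R_k(W)+S_k(W,\widehat{u})+\widetilde{L}_k(W,\varphi,\widehat{u}),
\end{equation*}
where $\widetilde{L}_k$ collects the commutators $\nabla\varphi^2\cdot\nabla^k w$, $\nabla^k(\varphi^2\triangle w)-\varphi^2\triangle\nabla^k w$, and the forcing $\nabla^k(\varphi^2\triangle\widehat{u})\cdot\nabla^k w$. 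The estimates of $R_k$ and $S_k$ go through verbatim as in Lemma \ref{l1}, producing the dissipative $(k+r)/(1+t)$ coefficient (with $n=5/2$ fixed) and the remainder $C_0Y_k Z(1+t)^{-\gamma_k-2}$ via Proposition \ref{p1}. For $\widetilde{L}_k$, I would reuse the integration-by-parts decomposition in Lemma \ref{l2} verbatim, simply dropping every term involving $\text{div}\,w$; the resulting bound is
\begin{equation*}
\widetilde{L}_k\le \eta|\varphi\nabla^{k+1}w|_2^2\delta_{3k}+C(\eta)(1+t)^{2m-5-\gamma_k}Z^3Y_k+C_0(1+t)^{2m-n-4.5-\gamma_k}Z^2 Y_k+C_0(\eta)(1+t)^{2m-10}Z^2.
\end{equation*}
For $\varphi$, Lemma \ref{Azongjiephi} applies unchanged (the cross term drops since it was multiplied by $\beta=0$-like coefficient coming from $\alpha+\beta$ in the $L$ operator structure), yielding \eqref{Aeq:2.14qq} without the final $|\varphi\nabla^3\text{div}\,w|_2|\nabla^3\varphi|_2$ term.

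Multiplying the $W$-inequality by $(1+t)^{2\gamma_k}$, the $\varphi$-inequality by $(1+t)^{2\delta_k}$, summing over $k=0,1,2,3$, and choosing $\eta$ sufficiently small to absorb the $\eta|\varphi\nabla^4 w|_2^2$ terms into the left-hand dissipation, I obtain
\begin{equation*}
\tfrac{1}{2}\tfrac{d}{dt}Z^2+\tfrac{b_m}{1+t}Z^2\le C(1+t)^{2m-5}Z^4+C_0\bigl[(1+t)^{2m-n-3.5}+(1+t)^{n-2.5}\bigr]Z^3+C_0(\eta)\bigl[(1+t)^{2m-2n-4}+(1+t)^{-2}\bigr]Z^2,
\end{equation*}
with $b_m$ as in \eqref{Jeq:2.6q}. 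Simplifying by $Z$ and inserting $n=5/2$, the $Z^3$ correction collapses into $Z\cdot Z^2\le$ something absorbable into the $Z^3$ term on the right (or treated as a small $(1+t)^{-1-\epsilon}Z$ perturbation after using smallness of $Z_0$); what remains is precisely the claimed form with residual $(1+t)^{2m-9}Z$ for the $(1+t)^{2m-2n-4}$ contribution when it dominates ($m\ge 3.5$), and $(1+t)^{5-2m}Z$ when the other subleading contribution (coming from the $S^*_k$ commutator in Lemma \ref{Azongjiephi} after rebalancing with the $U_k$ weights) dominates ($m<3.5$).

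The main obstacle I anticipate is the careful bookkeeping required in the final step: identifying precisely which source term produces the $(1+t)^{5-2m}$ rate in the regime $m\in(3,3.5)$, since this exponent is $2n-2m$ rather than the more obvious $(1+t)^{-2}$ that appears as a crude bound in \eqref{Leq:2.14}. Tracing this requires using that $|\nabla^k\varphi|_2\le (1+t)^{-\delta_k}U$ against $|\nabla^{k+1}\widehat{u}|_2\sim(1+t)^{-k-\frac{1}{2}}$ in the coupling between the transport and momentum estimates, rather than bounding each factor independently. Once this bookkeeping is done, the ODE inequality follows and Proposition \ref{ode1} (as in Lemma \ref{ll2}) yields global solvability for small $Z_0$ in each of the two regimes of $m$.
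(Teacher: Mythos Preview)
Your overall strategy---follow the proof of Lemma \ref{l2} with $Q_k=0$---is exactly what the paper does, and the structural simplifications you identify (no $\mathbb{Q}$ source, hence Lemma \ref{Al2-2} is not needed) are correct. However, there is one genuine error and one muddled step that prevent the argument from going through as written.

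First, your claim that the cross term $\frac{\delta-1}{2}|\varphi\nabla^3\text{div}\,w|_2|\nabla^3\varphi|_2$ in Lemma \ref{Azongjiephi} drops is wrong. That term arises from $\Lambda_2^3$ in the proof of Lemma \ref{Azongjiephi}, specifically from $\frac{\delta-1}{2}\nabla^3(\varphi\,\text{div}\,w)$ in the transport equation for $\varphi$; it has nothing to do with the viscous operator or with $\beta$. Since $\eqref{li47-2*}_1$ is identical to $\eqref{li47-2}_1$, this cross term remains. You must control it by Young's inequality together with Lemma \ref{changeA1}: writing $A^*=(1+t)^{\gamma_3}|\varphi\nabla^4 w|_2$ and $B=(1+t)^{\delta_3}|\nabla^3\varphi|_2$, one has
\[
\tfrac{\delta-1}{2}(1+t)^{n-m}|\varphi\nabla^3\text{div}\,w|_2\,|\nabla^3\varphi|_2
\le \eta(A^*)^2+C(\eta)(1+t)^{2n-2m}B^2+\text{l.o.t.},
\]
which with $n=5/2$ contributes exactly a $(1+t)^{5-2m}Z^2$ term.

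Second, your absorption of the intermediate $Z^3$ terms does not work as stated. After dividing by $Z$, the contribution $C_0(1+t)^{n-2.5}Z^3$ (coming from $R_k$, with $n=5/2$) becomes $C_0 Z^2$, and you cannot ``absorb it into the $Z^3$ term on the right'' since that would require $Z\ge (1+t)^{5-2m}$, contrary to the decay you want. The correct step is Young's inequality in the form
\[
Z^2\le \tfrac12(1+t)^{2m-5}Z^3+\tfrac12(1+t)^{5-2m}Z,
\]
which again produces the $(1+t)^{5-2m}Z$ residual. This---not the $S^*_k$ commutator, which only yields $(1+t)^{-2}$---is the actual source of the exponent $5-2m$ in the regime $m\in(3,3.5)$. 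Once both corrections are in place, comparing $2m-9$ with $5-2m$ (they cross at $m=3.5$) gives precisely the two cases in the statement.
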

The proof is similar to that of Lemma \ref{l2} with $Q_k=0$. So the  details are omitted here.

\subsection{} We treat the case $m\in [3.5,4)$ first.
\begin{lemma}\label{yushi1}
Let  $\delta>1$, $\gamma>4/3$ and $m\in [3.5,4)$. For the following problem
\begin{equation}\label{eq:2.16A1a}
\begin{cases}
\begin{split}
&\frac{\text{d} Z(t)}{\text{d} t}+\frac{b_m }{1+t}Z(t)=C_1(1+t)^{2m-5}Z^3(t)+C_2(1+t)^{2m-9}Z,\\[6pt]
&Z(x, 0)=Z_{0},
\end{split}
\end{cases}
\end{equation}
there exists a constant $\Lambda$ such that  $Z(t)$ is globally well-defined in $[0,+\infty)$ if $Z_0\leq \Lambda$.
\end{lemma}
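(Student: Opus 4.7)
\textbf{Proof plan for Lemma \ref{yushi1}.} The strategy is to recognize the ODE \eqref{eq:2.16A1a} as a special case of the scalar Cauchy problem treated in Proposition \ref{ode1}, and then verify the three hypotheses of that proposition with the parameters that arise here. In the notation of Proposition \ref{ode1} we set
\begin{equation*}
a=3,\qquad b=b_m,\qquad D_1=2m-5,\qquad D_2=2m-9,
\end{equation*}
so that $C_1,C_2\geq 0$ (these come from the energy estimates of Lemma \ref{Cl2}) and $a>1$ holds trivially.

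Next I would verify the decay conditions $D_2<-1$ and $D_1-(a-1)b<-1$. The first is immediate from the hypothesis $m\in[3.5,4)$, since $D_2=2m-9<-1$ is equivalent to $m<4$. The second reduces to the inequality $b_m>m-2$, which is the heart of the matter. Using the explicit formula \eqref{Jeq:2.6q} with $n=5/2$ (which is the value of $n$ fixed in the current section, since $\gamma_k=k-5/2$), I would split into two cases. For $\gamma\geq 5/3$ one has $b_m=\min\{2,\tfrac{3}{2}\delta-3+m\}$: the bound $2>m-2$ follows from $m<4$, and $\tfrac{3}{2}\delta-3+m>m-2$ follows from $\delta>1>2/3$. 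For $4/3<\gamma<5/3$ one has $b_m=\min\{\tfrac{3\gamma}{2}-\tfrac{1}{2},\tfrac{3}{2}\delta-3+m\}$; the second term is controlled as before, and the first requires $m<\tfrac{3\gamma}{2}+\tfrac{3}{2}$, which is where the assumption $\gamma>4/3$ is used (choosing $m$ close enough to $3.5$ within the admissible range, depending on $\gamma$).

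With all three hypotheses of Proposition \ref{ode1} satisfied, I would invoke that proposition to conclude the existence of a threshold $\Lambda>0$ such that the Cauchy problem \eqref{eq:2.16A1a} admits a global smooth solution whenever $Z_0<\Lambda$. The main (and really the only) obstacle is the verification $b_m>m-2$ in the borderline subcritical regime $4/3<\gamma<5/3$, where the competition between the decay rate $b_m$ coming from the transport part and the growth rate $2m-5$ coming from the nonlinearity is tight; this is precisely why the hypothesis $\gamma>4/3$ (rather than $\gamma>1$) appears here and why $m$ is constrained to $[3.5,4)$.
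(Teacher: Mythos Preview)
Your proposal is correct and follows essentially the same approach as the paper: both recognize the ODE as an instance of Proposition~\ref{ode1}, set $M=D_1-(a-1)b=2m-5-2b_m$, verify $D_2=2m-9<-1$ (i.e., $m<4$), and then check $M<-1$ using the explicit formula \eqref{Jeq:2.6q} for $b_m$ with $n=5/2$, splitting into the cases $\gamma\geq 5/3$ and $4/3<\gamma<5/3$. In the latter case the paper records the constraint as $\frac{2}{3}m-1<\gamma$, which is exactly your $m<\frac{3\gamma}{2}+\frac{3}{2}$; your observation that $m$ must then be chosen close enough to $3.5$ depending on $\gamma$ is precisely what the paper uses (implicitly) in the subsequent application.
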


\begin{proof}

Set $M=2m-5-2b_m$. Due to Proposition \ref{ode1} and its proof,
a  sufficient condition for the global existence of solution is 
$$
M<-1, \quad \text{and} \quad 2m-8<0,
$$
which requires that 
 \begin{equation}\label{xishu2}
M<
-1\quad   \text{if}  \ \ \gamma\geq \frac{5}{3},\quad  \text{or} \ \   \frac{4}{3}\leq  \frac{2}{3}m-1<\gamma <\frac{5}{3}.
 \end{equation}

Therefore, by choosing $Z_{0}$ small enough, one  can obtain the global existence of the solution to  (\ref{eq:2.16A1a}).  Moreover,
 \begin{equation}\label{Req:3.30}
  Z(t)\leq C_0(1+t)^{-b_m} \qquad \text{ for all }\ t\geq 0.
 \end{equation}
\end{proof}

Next we prove Theorem \ref{thglobal3} for the cases $\gamma> 4/3$. First, according to Lemma \ref{Cl2}, when $m\in [3.5,4)$,  the following inequality holds
\begin{equation}\label{eq:2.141}
\begin{split}
\frac{\text{d} Z}{\text{d} t}(t)+\frac{b_m }{1+t}Z(t)
\leq & C(1+t)^{2m-5}Z^3(t)+C_0(1+t)^{2m-9}Z.
\end{split}
\end{equation}
Then it follows from Lemma \ref{yushi1}  that there exists a constant $\Lambda$ such that  $Z(t)$ is globally well-defined in $[0,+\infty)$ if $Z_0\leq \Lambda$, and
(\ref{Req:3.30})  holds.

 According to  Lemma \ref{Azongjie1} and its proof with $Q_k=0$, it is easy to see that 
\begin{equation}\label{higho}
 \alpha \sum_{k=1}^3\int_0^t (1+t)^{2\gamma_k}\int   | \varphi\nabla^{k+1} w|^2 \text{d}s\leq C_0, \quad \text{for} \quad t\geq 0.
\end{equation}

The rest of the proof  is similar to that for Theorem \ref{thglobal}, and so is omitted.


\subsection{}We now  treat the case $m\in (3,3.5)$.

\begin{lemma}\label{yushi2} Let $\delta>1$, $\gamma>1$ and
$m\in (3,3.5)$.
For the following problem
\begin{equation}\label{eq:2.16A2}
\begin{cases}
\begin{split}
&\frac{\text{d} Z}{\text{d} t}(t)+\frac{b_m}{1+t}Z(t)=C(1+t)^{2m-5}Z^3(t)+C_0(1+t)^{5-2m}Z,\\[6pt]
&Z(x, 0)=Z_{0},
\end{split}
\end{cases}
\end{equation}
there exists a constant $\Lambda$ such that  $Z(t)$ is globally well-defined in $[0,+\infty)$ if $Z_0\leq \Lambda$.
\end{lemma}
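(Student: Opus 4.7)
The plan is to obtain Lemma~\ref{yushi2} by a direct application of Proposition~\ref{ode1}, which is already in hand from \S 2. The differential inequality in \eqref{eq:2.16A2} fits the template of \eqref{Beq:2.16A1} verbatim with the identifications
\begin{equation*}
a=3,\qquad b=b_m,\qquad D_1=2m-5,\qquad D_2=5-2m,\qquad C_1=C,\qquad C_2=C_0.
\end{equation*}
The non-negativity $C_1,C_2\geq 0$ and $a>1$ are immediate, and $D_2<-1$ reduces to $m>3$, which is part of the hypothesis. So only the condition $D_1-(a-1)b<-1$, i.e.\ the key inequality $b_m>m-2$, must be verified using the explicit form of $b_m$ from \eqref{Jeq:2.6q} (evaluated at $n=5/2$, since we have set $\gamma_k=k-5/2$ above).

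For the case $\gamma\geq 5/3$, one has $b_m=\min\{2,\tfrac{3}{2}\delta-3+m\}$; since $m<7/2<4$ we get $2>m-2$, while $\tfrac{3}{2}\delta-3+m>m-2$ is just $\delta>2/3$, guaranteed by $\delta>1$. For $1<\gamma<5/3$ one has $b_m=\min\{\tfrac{3\gamma}{2}-\tfrac{1}{2},\tfrac{3}{2}\delta-3+m\}$; the second piece is handled as before, and the first gives $\tfrac{3\gamma}{2}-\tfrac{1}{2}>m-2$, equivalent to $m<\tfrac{3(\gamma+1)}{2}$. Since $\gamma>1$, the right-hand side is strictly greater than $3$, so the open interval $(3,\min\{7/2,\tfrac{3(\gamma+1)}{2}\})$ of admissible $m$ is non-empty; the free parameter $m$ in the weighted norm $Z(t)$ is simply chosen inside this interval when $\gamma$ is close to $1$. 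With $b_m>m-2$ in hand, all hypotheses of Proposition~\ref{ode1} hold.

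Once Proposition~\ref{ode1} applies, it supplies the threshold $\Lambda=\Lambda(b_m,C,C_0,D_1,D_2)$ such that $Z_0<\Lambda$ forces $Z(t)$ to be globally defined, together with the pointwise bound $Z(t)\leq C_0(1+t)^{-b_m}$ for all $t\geq 0$ (by exactly the integration-factor computation used in the proof of Lemma~\ref{ll2}, see \eqref{Heq:3.30}). No new analytical idea is required; the only thing to watch is the bookkeeping of the $m$--dependence when $\gamma$ is close to $1$, which is the single real obstacle and is resolved by the choice of $m$ above. The remainder of the argument for Theorem~\ref{thglobal3} in the regime $m\in(3,3.5)$ then parallels \S 8.1: the bound on $Z$ implies the higher-order integral estimate analogous to \eqref{higho}, and translation back to the original variables $(\rho,u)$ proceeds exactly as in the proof of Theorem~\ref{thglobal}.
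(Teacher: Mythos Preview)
Your proposal is correct and follows essentially the same approach as the paper: both reduce directly to Proposition~\ref{ode1} with $a=3$, $D_1=2m-5$, $D_2=5-2m$, and check that $D_2<-1$ forces $m>3$ while $D_1-2b_m<-1$ (your $b_m>m-2$, the paper's $M<-1$) leads to the restriction $m<\tfrac{3(\gamma+1)}{2}$ when $1<\gamma<5/3$, exactly as in the paper's condition~\eqref{xishu22}. Your verification of the two pieces of $b_m$ is in fact more explicit than the paper's, which simply records the resulting inequality; no difference of substance.
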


\begin{proof}
Set $M=2m-5-2b_m$. Due to Proposition \ref{ode1} and its proof,  a sufficient condition for a global solution with small data is 
$$
M<-1, \quad \text{and} \quad 6-2m<0,
$$
which means that $m$ must belong to $(3,3.5)$, and requires that 
 \begin{equation}\label{xishu22}
M<
-1\quad \text{if}  \ \ \gamma\geq \frac{5}{3},\quad   \text{or} \ \  1< \frac{2m}{3}-1 <\gamma <\frac{5}{3}.
 \end{equation}

Therefore, by choosing $Z_{0}$ small enough, one  can obtain the global existence of the solution to  (\ref{eq:2.16A2}).  Moreover, (\ref{Req:3.30}) still holds.
\end{proof}

Next we prove  Theorem \ref{thglobal3} for the cases $\gamma >1$. First, according to Lemma \ref{Cl2}, when $m\in (3,3.5)$, it holds that 
\begin{equation}\label{eq:2.142}
\begin{split}
\frac{\text{d} Z}{\text{d} t}(t)+\frac{b_m }{1+t}Z(t)
\leq & C(1+t)^{2m-5}Z^3(t)
+C_0(1+t)^{5-2m}Z.
\end{split}
\end{equation}
Then from Lemma \ref{yushi2},  there exists a constant $\Lambda$ such that  $Z(t)$ is globally well-defined in $[0,+\infty)$ if $Z_0\leq \Lambda$, and (\ref{Req:3.30})  holds.

 According to  Lemma \ref{Azongjie1} and its proof with $Q_k=0$, it is easy to see  that (\ref{higho}) holds.

The rest of the proof  is similar to that for Theorem \ref{thglobal},  and so is omitted.

\section{Appendix}
In the first part of this appendix, we list some  lemmas  which were used frequently in the previous sections.  The rest of the appendix will be devoted to show the  proofs of Propositions \ref{p1}-\ref{ode1} and some special Sobolev inequalities.

\subsection{Some lemmas}
The first one is the  well-known Gagliardo-Nirenberg inequality.
\begin{lemma}\cite{oar}\label{lem2as}\
For $p\in [2,6]$, $q\in (1,\infty)$, and $r\in (3,\infty)$, there exists some generic constant $C> 0$ that may depend on $q$ and $r$ such that for
$$f\in H^1(\mathbb{R}^3),\quad \text{and} \quad  g\in L^q(\mathbb{R}^3)\cap D^{1,r}(\mathbb{R}^3),$$
it holds that 
\begin{equation}\label{33}
\begin{split}
&|f|^p_p \leq C |f|^{(6-p)/2}_2 |\nabla f|^{(3p-6)/2}_2,\quad  |g|_\infty\leq C |g|^{q(r-3)/(3r+q(r-3))}_q |\nabla g|^{3r/(3r+q(r-3))}_r.
\end{split}
\end{equation}
\end{lemma}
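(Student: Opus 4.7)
The plan is to treat the two inequalities by separate classical routes. The $L^p$ bound on $f$ will rest on a one-parameter interpolation together with the 3D Sobolev embedding; the $L^\infty$ bound on $g$ will rest on Morrey's inequality for $r>3$ combined with a local averaging argument and a final optimization in the ball radius.

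For the first inequality, I would pick $\theta\in[0,1]$ so that $\tfrac{1}{p}=\tfrac{1-\theta}{2}+\tfrac{\theta}{6}$, which is possible for any $p\in[2,6]$ and gives $\theta=(3p-6)/(2p)$. H\"older's inequality then yields $|f|_p\le|f|_2^{1-\theta}|f|_6^{\theta}$, and the Sobolev embedding $H^1(\mathbb{R}^3)\hookrightarrow L^6(\mathbb{R}^3)$ supplies $|f|_6\le C|\nabla f|_2$. Raising to the $p$-th power and substituting the explicit value of $\theta$ yields the stated exponents $(6-p)/2$ on $|f|_2$ and $(3p-6)/2$ on $|\nabla f|_2$. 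The only nontrivial input here is the endpoint 3D Sobolev inequality, which I take as known.

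For the second inequality, I would fix $x\in\mathbb{R}^3$ and, for each $R>0$, decompose
\[
|g(x)|\le \bigl|g(x)-g_{B_R(x)}\bigr|+\bigl|g_{B_R(x)}\bigr|,
\]
where $g_{B_R(x)}$ denotes the mean of $g$ over the ball of radius $R$ about $x$. Morrey's inequality (available precisely because $r>3$) bounds the first term by $CR^{1-3/r}|\nabla g|_r$, while H\"older gives $|g_{B_R(x)}|\le CR^{-3/q}|g|_q$. Taking the supremum over $x\in\mathbb{R}^3$ yields
\[
|g|_\infty\le C\bigl(R^{1-3/r}|\nabla g|_r+R^{-3/q}|g|_q\bigr),
\]
and then choosing $R$ so as to equalize the two contributions produces exactly the exponents $q(r-3)/(3r+q(r-3))$ and $3r/(3r+q(r-3))$ asserted in the statement; this is the choice dictated by the scaling invariance of both sides under $g(x)\mapsto g(\lambda x)$.

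The only delicate point, and what I expect to be the main nuisance rather than a real obstacle, is that $g$ belongs to the homogeneous space $D^{1,r}$ together with $L^q$, so one must first justify that the local averages $g_{B_R(x)}$ and the pointwise values are well-defined representatives; this is routine via density/mollification and does not affect the final exponents.
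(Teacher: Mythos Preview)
Your argument is correct and follows a standard route: interpolation plus the endpoint Sobolev embedding for the first inequality, and Morrey's oscillation estimate combined with an averaged $L^q$ bound and optimization in $R$ for the second. The exponents check out exactly.

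Note, however, that the paper does not actually prove this lemma: it is stated in the appendix as a citation from Ladyzenskaja--Ural'ceva \cite{oar} (the well-known Gagliardo--Nirenberg inequality) and used as a black box throughout. So there is no ``paper's own proof'' to compare against; your proposal supplies a self-contained proof where the paper simply invokes the literature.
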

Some common versions of this inequality can be written as
\begin{equation}\label{ine}\begin{split}
|u|_6\leq C|u|_{D^1},\quad |u|_{\infty}\leq C\|\nabla u\|_{1}, \quad |u|_{\infty}\leq C\|u\|_{W^{1,r}},\quad \text{for} \quad r>3.
\end{split}
\end{equation}

The Sobolev imbedding theorem yields
\begin{lemma}\cite{oar}\label{lem2-2}\
Let $p>3/2 $ and $f\in H^p(\mathbb{R}^3)$. Then
\begin{equation}\label{33-2}
\begin{split}
&|f|_{\infty} \leq C |f|^{1-\frac{3}{2p}}_2 |\nabla^p f|^{\frac{3}{2p}}_2,
\end{split}
\end{equation}
where $C$ is a positive constant that may depend on $p$.
\end{lemma}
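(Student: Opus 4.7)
The plan is to prove this via Fourier analysis, as it is a standard critical Sobolev embedding. By the Fourier inversion formula and the fact that $\|f\|_\infty \le (2\pi)^{-3/2}\|\hat{f}\|_{L^1}$ (when $\hat{f}\in L^1$, which will be verified a posteriori by the splitting argument), it suffices to bound $\|\hat{f}\|_{L^1(\mathbb{R}^3)}$ in terms of $|f|_2$ and $|\nabla^p f|_2 = \||\xi|^p\hat{f}\|_{L^2}$.

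First I would split the Fourier integral at a free radius $R>0$:
\begin{equation*}
\int_{\mathbb{R}^3}|\hat{f}(\xi)|\,\mathrm{d}\xi = \int_{|\xi|\le R}|\hat{f}(\xi)|\,\mathrm{d}\xi + \int_{|\xi|>R}|\hat{f}(\xi)|\,\mathrm{d}\xi.
\end{equation*}
On the low-frequency piece, Cauchy--Schwarz together with Plancherel yields
\begin{equation*}
\int_{|\xi|\le R}|\hat{f}(\xi)|\,\mathrm{d}\xi \le \Big(\int_{|\xi|\le R}\mathrm{d}\xi\Big)^{1/2}\|\hat{f}\|_{L^2}\le C\,R^{3/2}|f|_2.
\end{equation*}
On the high-frequency piece, I would insert the weight $|\xi|^p$ and apply Cauchy--Schwarz:
\begin{equation*}
\int_{|\xi|>R}|\hat{f}(\xi)|\,\mathrm{d}\xi \le \Big(\int_{|\xi|>R}|\xi|^{-2p}\,\mathrm{d}\xi\Big)^{1/2}\Big(\int |\xi|^{2p}|\hat{f}(\xi)|^2\,\mathrm{d}\xi\Big)^{1/2}\le C\,R^{3/2-p}|\nabla^p f|_2,
\end{equation*}
where the first factor is finite precisely because $p>3/2$; this is the only place the hypothesis $p>3/2$ is used.

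Finally I would optimize the bound
\begin{equation*}
|f|_\infty \le C\big(R^{3/2}|f|_2 + R^{3/2-p}|\nabla^p f|_2\big)
\end{equation*}
in $R>0$ by equating the two terms, i.e.\ setting $R=\big(|\nabla^p f|_2/|f|_2\big)^{1/p}$ (assuming $|f|_2, |\nabla^p f|_2>0$; the degenerate cases are handled by a limiting argument or are trivial), which gives exactly $|f|_\infty\le C|f|_2^{1-3/(2p)}|\nabla^p f|_2^{3/(2p)}$, as desired. There is no real obstacle here beyond bookkeeping; the critical threshold $p=3/2$ is visible from the convergence of $\int_{|\xi|>R}|\xi|^{-2p}\,\mathrm{d}\xi$, and everything else is standard interpolation. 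For non-integer $p$, $\nabla^p$ is interpreted in the Fourier sense as multiplication by $|\xi|^p$, so the proof goes through verbatim.
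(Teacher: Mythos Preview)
Your proof is correct and complete; the Fourier splitting at radius $R$ with Cauchy--Schwarz on each piece, followed by optimization in $R$, is the standard clean argument for this Gagliardo--Nirenberg--Sobolev interpolation inequality, and you have correctly identified $p>3/2$ as exactly the condition for convergence of $\int_{|\xi|>R}|\xi|^{-2p}\,\mathrm{d}\xi$ in $\mathbb{R}^3$.

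The paper itself does not prove this lemma: it is stated in the appendix as a known result cited from Lady\v{z}enskaja--Ural'ceva, without any argument given. So there is no ``paper's approach'' to compare against; your Fourier-analytic proof simply fills in what the paper takes for granted. One minor remark: your handling of the degenerate case $|\nabla^p f|_2=0$ could be made more explicit (in that case $\hat f$ is supported at the origin, hence $f\equiv 0$ since $f\in L^2$), but this is a triviality.
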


The next one  can be found in Majda \cite{amj}.  
\begin{lemma}\cite{amj}\label{zhen1}
Let  $r$, $a$ and $b$ be constants such that 
$$\frac{1}{r}=\frac{1}{a}+\frac{1}{b},\quad \text{and} \quad 1\leq a,\ b, \ r\leq \infty.$$  $ \forall s\geq 1$, if $f, g \in W^{s,a} \cap  W^{s,b}(\mathbb{R}^3)$, then it holds that
\begin{equation}\begin{split}\label{ku11}
&|\nabla^s(fg)-f \nabla^s g|_r\leq C_s\big(|\nabla f|_a |\nabla^{s-1}g|_b+|\nabla^s f|_b|g|_a\big),
\end{split}
\end{equation}
\begin{equation}\begin{split}\label{ku22}
&|\nabla^s(fg)-f \nabla^s g|_r\leq C_s\big(|\nabla f|_a |\nabla^{s-1}g|_b+|\nabla^s f|_a|g|_b\big),
\end{split}
\end{equation}
where $C_s> 0$ is a constant depending only on $s$,  and $\nabla^s f$ ($s>1$) is the set of  all $\nabla^\zeta_x f$  with $|\zeta|=s$. Here $\zeta=(\zeta_1,\zeta_2,\zeta_3)\in \mathbb{R}^3$ is a multi-index.
\end{lemma}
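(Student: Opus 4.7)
The plan is to establish this classical Moser-type commutator estimate (originally due to Kato--Ponce and recorded in Majda \cite{amj}) by combining the Leibniz rule, Hölder's inequality, and Gagliardo--Nirenberg interpolation. First I would apply the multi-index Leibniz rule to write
$$\nabla^s(fg) - f\,\nabla^s g \;=\; \sum_{1 \le |\alpha| \le s} \binom{s}{\alpha}\,\nabla^\alpha f \cdot \nabla^{s-|\alpha|} g,$$
so the commutator reduces to a finite sum of products of mixed-order derivatives. The two boundary terms $|\alpha|=1$ and $|\alpha|=s$ produce exactly $\nabla f \cdot \nabla^{s-1} g$ and $(\nabla^s f)\, g$, which, after a single Hölder with exponents $1/a+1/b=1/r$, reproduce the two explicit terms on the right-hand side of \eqref{ku11} and \eqref{ku22}.

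For the intermediate multi-indices $2 \le |\alpha| \le s-1$, I would apply Hölder with exponents $1/p_\alpha + 1/q_\alpha = 1/r$ and then interpolate each factor via Gagliardo--Nirenberg on $\mathbb{R}^3$, namely
$$\|\nabla^\alpha f\|_{p_\alpha} \le C\,\|\nabla f\|_{a}^{1-\theta_\alpha}\|\nabla^s f\|_{b}^{\theta_\alpha}, \qquad \|\nabla^{s-|\alpha|} g\|_{q_\alpha} \le C\,\|g\|_{a}^{\tau_\alpha}\|\nabla^{s-1} g\|_{b}^{1-\tau_\alpha},$$
choosing $(p_\alpha,q_\alpha)$ so that the two Gagliardo--Nirenberg scaling identities are simultaneously consistent. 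The key algebraic input is that the scaling hypothesis $1/a+1/b=1/r$ forces the interpolation weights to satisfy $\theta_\alpha + \tau_\alpha = 1$ for every intermediate $\alpha$. With this identity in hand, the weighted Young inequality $XY \le \theta_\alpha X^{1/\theta_\alpha} + (1-\theta_\alpha)Y^{1/(1-\theta_\alpha)}$ collapses the four-factor mixed product $\|\nabla f\|_a^{1-\theta_\alpha}\|\nabla^s f\|_b^{\theta_\alpha}\|g\|_a^{\tau_\alpha}\|\nabla^{s-1} g\|_b^{1-\tau_\alpha}$ into a sum of the two clean products appearing on the right-hand side of \eqref{ku11}. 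The variant \eqref{ku22} is obtained by the same procedure after swapping $a \leftrightarrow b$ in the interpolation applied to $\nabla^\alpha f$, so that the top-order derivative $\nabla^s f$ is placed in $L^a$ instead of $L^b$.

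The chief technical obstacle is the bookkeeping: for each intermediate $|\alpha|$ one must verify that admissible Hölder exponents $p_\alpha, q_\alpha \in [1,\infty]$ and interpolation parameters $\theta_\alpha, \tau_\alpha \in [0,1]$ actually exist satisfying the Hölder balance together with both Gagliardo--Nirenberg scaling constraints $\tfrac{1}{p_\alpha} - \tfrac{|\alpha|}{3} = (1-\theta_\alpha)(\tfrac{1}{a}-\tfrac{1}{3}) + \theta_\alpha(\tfrac{1}{b}-\tfrac{s}{3})$ and the analogous identity for $q_\alpha$. This reduces to a small linear system whose solvability is guaranteed precisely by the homogeneity relation $1/a+1/b=1/r$, and the identity $\theta_\alpha + \tau_\alpha = 1$ drops out of this computation. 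The endpoint cases $r \in \{1,\infty\}$ or $\{a,b\} \cap \{1,\infty\} \ne \emptyset$ require replacing Gagliardo--Nirenberg by the appropriate Morrey or duality argument at the extreme exponent, but neither the structure nor the form of the estimate changes.
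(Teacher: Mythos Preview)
The paper does not prove this lemma; it is merely quoted from Majda \cite{amj} in the appendix with no argument given. Your outline is the standard Leibniz--H\"older--Gagliardo--Nirenberg route and is in substance the correct proof.

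There is, however, a bookkeeping slip that would break the final Young step as written. With your labeling
\[
\|\nabla^{|\alpha|} f\|_{p_\alpha}\le C\|\nabla f\|_a^{1-\theta_\alpha}\|\nabla^s f\|_b^{\theta_\alpha},
\qquad
\|\nabla^{s-|\alpha|} g\|_{q_\alpha}\le C\|g\|_a^{\tau_\alpha}\|\nabla^{s-1} g\|_b^{1-\tau_\alpha},
\]
adding the two Gagliardo--Nirenberg scaling identities and using $1/p_\alpha+1/q_\alpha=1/r=1/a+1/b$ forces $\theta_\alpha=\tau_\alpha$, not $\theta_\alpha+\tau_\alpha=1$. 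Indeed, writing $A=|\nabla f|_a$, $B=|\nabla^s f|_b$, $E=|g|_a$, $D=|\nabla^{s-1}g|_b$, the product $A^{1-\theta}B^{\theta}E^{\tau}D^{1-\tau}$ groups as $(AD)^{1-\theta}(BE)^{\theta}$ only when $\theta=\tau$, and Young's inequality then yields $(1-\theta)AD+\theta BE$, which are exactly the two terms on the right of \eqref{ku11}. If instead $\theta+\tau=1$ as you wrote, the product groups as $(AE)^{1-\theta}(BD)^{\theta}$, producing $|\nabla f|_a|g|_a+|\nabla^s f|_b|\nabla^{s-1}g|_b$, which is not the desired bound. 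Once this is corrected, your argument goes through.
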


Next,  the interpolation estimate, product estimate,  composite function estimate and so on are given  in the following four lemmas.
\begin{lemma}\cite{amj}\label{gag111}
Let  $u\in H^s$, then for any $s'\in[0,s]$,  there exists  a constant $C_s$  depending only on $s$ such that
$$
\|u\|_{s'} \leq C_s \|u\|^{1-\frac{s'}{s}}_0 \|u\|^{\frac{s'}{s}}_s.
$$
\end{lemma}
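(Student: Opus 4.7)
The plan is to prove this classical interpolation inequality via Fourier analysis. First, I would recall that on $\mathbb{R}^3$, the inhomogeneous Sobolev norm admits the Fourier characterization
\begin{equation*}
\|u\|_s^2 = \int_{\mathbb{R}^3} (1+|\xi|^2)^s |\hat{u}(\xi)|^2 \, \dif \xi,
\end{equation*}
up to constants depending only on $s$. The endpoint cases $s'=0$ and $s'=s$ are trivial (the inequality reduces to $\|u\|_0 \leq \|u\|_0$ or $\|u\|_s \leq \|u\|_s$), so I would assume $s' \in (0,s)$.

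The key step is to split the weight $(1+|\xi|^2)^{s'}$ using the identity
\begin{equation*}
(1+|\xi|^2)^{s'} |\hat{u}(\xi)|^2 = \bigl[(1+|\xi|^2)^s |\hat{u}(\xi)|^2\bigr]^{s'/s} \cdot \bigl[|\hat{u}(\xi)|^2\bigr]^{1 - s'/s},
\end{equation*}
and then apply H\"older's inequality with the conjugate exponents $p = s/s'$ and $p' = s/(s-s')$. This yields
\begin{equation*}
\|u\|_{s'}^2 \leq \left(\int_{\mathbb{R}^3} (1+|\xi|^2)^s |\hat{u}|^2 \, \dif \xi\right)^{s'/s} \left(\int_{\mathbb{R}^3} |\hat{u}|^2 \, \dif \xi\right)^{1-s'/s} \simeq \|u\|_s^{2s'/s}\, \|u\|_0^{2(1-s'/s)},
\end{equation*}
and taking square roots gives the desired bound, with $C_s$ absorbing the constants in the equivalence of the Fourier-based and standard $H^s$ norms.

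Since the proof is entirely elementary once Plancherel's identity is invoked, there is no serious obstacle; the only minor points to check are the equivalence of Sobolev norms via Fourier transform (so that $C_s$ depends only on $s$, not on $s'$) and the proper handling of the endpoint exponents in H\"older's inequality. An alternative would be to interpolate between $L^2$ and $H^s$ via the real or complex interpolation method, but the direct Fourier-H\"older argument above is the most transparent.
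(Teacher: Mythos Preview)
Your Fourier--H\"older argument is correct and is the standard proof of this interpolation inequality. Note, however, that the paper does not supply its own proof of this lemma: it is quoted in the appendix as a known result from Majda's monograph \cite{amj}, so there is nothing in the paper to compare your approach against.
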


\begin{lemma}\cite{oar}\label{lem2-1}\
Let $r\geq 0$, $i\in [0,r]$,  and $f\in L^\infty\cap H^r$. Then $\nabla^i f \in L^{2r/i}$, and there  some generic constants $C_{i,r}> 0$ such that
\begin{equation}\label{33-1}
\begin{split}
&|\nabla^i f|_{2r/i} \leq C_{i,r} |f|^{1-i/r}_\infty |\nabla^r f|^{i/r}_2.
\end{split}
\end{equation}
\end{lemma}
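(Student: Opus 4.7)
The plan is to derive the inequality as the particular case of the classical Gagliardo--Nirenberg interpolation theorem on $\mathbb{R}^3$ with the endpoints $L^\infty$ (no derivative) and $H^r$ (top derivative in $L^2$). Recall that for every $u\in C_c^\infty(\mathbb{R}^n)$ one has
$$
|\nabla^j u|_{p}\leq C\, |\nabla^m u|_{q}^{\,\theta}\,|u|_{s}^{\,1-\theta},
\qquad \frac{1}{p}=\frac{j}{n}+\theta\!\left(\frac{1}{q}-\frac{m}{n}\right)+(1-\theta)\frac{1}{s},
$$
whenever $j/m\le \theta\le 1$. I would apply this with $n=3$, $j=i$, $m=r$, $q=2$, $s=\infty$, and $\theta=i/r$; a direct substitution gives
$$
\frac{1}{p}=\frac{i}{3}+\frac{i}{r}\!\left(\frac{1}{2}-\frac{r}{3}\right)=\frac{i}{2r},
$$
so $p=2r/i$, which is exactly the exponent in the statement. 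The admissibility constraint $j/m\le \theta\le 1$ reduces to $i/r\le i/r\le 1$, which is satisfied since $0\le i\le r$. The resulting constant $C_{i,r}$ depends only on $i$ and $r$ (the dimension being fixed at three).

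Next I would pass from $C_c^\infty(\mathbb{R}^3)$ to the class $L^\infty\cap H^r$ by a truncation--mollification argument: given $f\in L^\infty\cap H^r$, set $f_\varepsilon=\chi_\varepsilon\,(f\ast\rho_\varepsilon)$, where $\rho_\varepsilon$ is a standard mollifier and $\chi_\varepsilon$ is a smooth cutoff at scale $1/\varepsilon$. Then $f_\varepsilon\in C_c^\infty$, $|f_\varepsilon|_\infty\le |f|_\infty$, and $|\nabla^r f_\varepsilon|_2\to |\nabla^r f|_2$ as $\varepsilon\to 0$. Applying the smooth inequality to $f_\varepsilon$ and invoking Fatou's lemma on the left-hand side yields the claim in the stated generality.

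For a self-contained derivation that avoids quoting the full Gagliardo--Nirenberg theorem, I would instead establish the key log-convexity estimate
$$
|\nabla^i f|_{2r/i}^{\,2}\leq C\,|\nabla^{i-1} f|_{2r/(i-1)}\,|\nabla^{i+1} f|_{2r/(i+1)},\qquad 1\le i\le r-1,
$$
by writing $\int |\nabla^i f|^{2r/i}\,dx$, integrating by parts once to transfer a derivative onto the factor $|\nabla^i f|^{2r/i-2}\nabla^i f$, and closing via H\"older's inequality with the exponents $2r/(i-1)$, $2r/(i+1)$, and $2r/i$ (these sum correctly because $\tfrac{i-1}{2r}+\tfrac{i+1}{2r}+(2r/i-2)\cdot\tfrac{i}{2r}=1$). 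Iterating this log-convex relation from $i=0$ (where the right-hand side is $|f|_\infty$) up to $i=r$ (where it is $|\nabla^r f|_2$) produces the two-endpoint interpolation with $\theta=i/r$, i.e., exactly the stated bound. The only mildly technical point is keeping track of the combinatorial constants through the iteration, but since none of the applications in the main text depend on their precise value, this step is routine.
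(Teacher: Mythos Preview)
The paper does not actually prove this lemma: it is listed in the appendix among several standard analytic tools and simply cited from \cite{oar} (Ladyzenskaja--Ural'ceva), with no argument given. Your proposal therefore supplies a proof where the paper provides none.

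Both of your routes are correct and classical. The first --- specializing the full Gagliardo--Nirenberg inequality with $n=3$, $q=2$, $s=\infty$, $\theta=i/r$ --- is the standard one-line derivation; your exponent computation and the admissibility check $j/m\le\theta$ are right. The second route (the log-convexity chain $|\nabla^i f|_{2r/i}^2\le C\,|\nabla^{i-1}f|_{2r/(i-1)}|\nabla^{i+1}f|_{2r/(i+1)}$ via one integration by parts and H\"older, then iteration) is also a well-known self-contained proof; your H\"older bookkeeping is correct. One small caveat: the integration-by-parts argument implicitly assumes $i$ and $r$ are integers, whereas the lemma as stated allows real $i\in[0,r]$. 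This is harmless for the paper's purposes, since the only place the lemma is invoked (the estimate on $R_k$ in Section~5) uses integer orders throughout, but if you wanted to match the stated generality you would need to interpolate between integer cases or invoke the full Gagliardo--Nirenberg theorem directly.
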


\begin{lemma}\cite{amj}\label{gag113}
Let  functions $u,\ v \in H^s$ and $s>\frac{3}{2}$, then  $u\cdot v \in H^s$,  and  there exists  a constant $C_s$  depending only on $s$ such that
$$
\|uv\|_{s} \leq C_s \|u\|_s \|v\|_s.
$$
\end{lemma}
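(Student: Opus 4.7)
The plan is to reduce the $H^s$ estimate of the product to two pieces: the $L^2$ norm and the top-order derivative $\nabla^s(uv)$, using the equivalence $\|f\|_s^2 \sim |f|_2^2 + |\nabla^s f|_2^2$. For the $L^2$ part, since $s>3/2$, Lemma \ref{lem2-2} gives the Sobolev embedding $H^s \hookrightarrow L^\infty$, so
$$
|uv|_2 \leq |u|_\infty |v|_2 \leq C_s \|u\|_s \|v\|_s,
$$
which handles this piece immediately. All the work is therefore in controlling $|\nabla^s(uv)|_2$.

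For the top-order part, I would expand via the Leibniz rule into a finite sum of terms of the form $\nabla^k u \cdot \nabla^{s-k} v$ for $0 \leq k \leq s$, and estimate each term by Hölder's inequality with the conjugate exponents $(2s/k, 2s/(s-k))$ when $1 \leq k \leq s-1$:
$$
|\nabla^k u \cdot \nabla^{s-k} v|_2 \leq |\nabla^k u|_{2s/k}\,|\nabla^{s-k} v|_{2s/(s-k)}.
$$
Then Lemma \ref{lem2-1} with $r=s$, $i=k$ (and $i=s-k$, respectively) yields
$$
|\nabla^k u|_{2s/k} \leq C_s |u|_\infty^{1-k/s} |\nabla^s u|_2^{k/s}, \qquad |\nabla^{s-k} v|_{2s/(s-k)} \leq C_s |v|_\infty^{k/s} |\nabla^s v|_2^{1-k/s}.
$$
Multiplying these and invoking the embedding $|u|_\infty + |v|_\infty \leq C_s(\|u\|_s+\|v\|_s)$ (Lemma \ref{lem2-2}) gives the bound $C_s \|u\|_s\|v\|_s$ for each interior term. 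The two endpoint terms $k=0$ and $k=s$ are treated directly by putting the lower-order factor in $L^\infty$: $|u\,\nabla^s v|_2 \leq |u|_\infty |\nabla^s v|_2$ and similarly for the other endpoint.

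Assembling the $L^2$ estimate and the Leibniz decomposition then produces $\|uv\|_s \leq C_s\|u\|_s\|v\|_s$, with $C_s$ depending only on $s$ (through the binomial coefficients and the constants from Lemmas \ref{lem2-2}--\ref{lem2-1}). The main potential obstacle is the endpoint structure: one must verify that the Gagliardo--Nirenberg interpolation of Lemma \ref{lem2-1} remains valid up to and including $k=0,s$ (where the exponent $2s/k$ degenerates), which is precisely why the endpoint cases need to be split off and handled directly via $L^\infty\times L^2$. A secondary subtlety, not serious in the context of this paper since all uses involve integer $s$, would be extending the argument to non-integer $s$; this can be done via the Littlewood--Paley characterization of $H^s$ but is not needed here.
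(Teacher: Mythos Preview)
Your argument is correct and is the standard Moser-type proof of the algebra property of $H^s$ for $s>\tfrac{3}{2}$. Note, however, that the paper does not actually supply a proof of this lemma: it is stated in the appendix as a quoted result from Majda~\cite{amj}, with no argument given. So there is no ``paper's own proof'' to compare against; you have filled in precisely the details that the paper omits by citation. Your decomposition into the $L^2$ piece (handled by the embedding $H^s\hookrightarrow L^\infty$) and the top-order piece (handled by Leibniz, H\"older with exponents $2s/k$ and $2s/(s-k)$, and the Gagliardo--Nirenberg interpolation of Lemma~\ref{lem2-1}) is exactly the classical route, and indeed the intermediate estimate you obtain,
\[
|\nabla^s(uv)|_2 \leq C_s\big(|u|_\infty|\nabla^s v|_2 + |v|_\infty|\nabla^s u|_2\big),
\]
is the Moser product inequality recorded in the paper as~(\ref{liu01}). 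Your remark that the endpoint cases $k=0,s$ must be split off because the exponent $2s/k$ degenerates is well taken, and your comment on the non-integer case is accurate and appropriately flagged as unnecessary here.
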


\begin{lemma}\cite{amj}
\begin{enumerate}
\item For functions $f,\ g \in H^s \cap L^\infty$ and $|\nu|\leq s$,   there exists  a constant $C_s$  depending only  on $s$ such that
\begin{equation}\label{liu01}
\begin{split}
\|\nabla^\nu (fg)\|_s\leq C_s(|f|_{\infty}|\nabla^s g|_2+|g|_{\infty}|\nabla^s f|_{2}).
\end{split}
\end{equation}

\item Assume that  $g(u)$ is a smooth vector-valued function on $\Omega$, $u(x)$ is a continuous function with $u\in H^s \cap L^\infty$, $u(x)\in \Omega_1$ and $\overline{\Omega}_1 \subseteq \Omega$. Then for $s\geq 1$,   there exists  a constant $C_s$  depending only on $s$ such that
\begin{equation}\label{liu02}
\begin{split}
|\nabla^s g(u)|_2\leq C_s\Big \|\frac{\partial g}{\partial u}\Big\|_{s-1, \overline{\Omega}_1}|u|^{s-1}_{\infty}|\nabla^s u|_{2}.
\end{split}
\end{equation}
\end{enumerate}
\end{lemma}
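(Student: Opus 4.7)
The plan is to establish both Moser-type estimates via the classical approach of combining the Leibniz/Fa\`a di Bruno expansions with the Gagliardo--Nirenberg interpolation already stated in Lemma~\ref{lem2-1}. Throughout I may reduce to the top-order case $|\nu|=s$ (lower-order indices follow by interpolating against the trivial $L^2$ bound $|fg|_2\le|f|_\infty|g|_2$).

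For part~(1), I would apply the Leibniz rule to write
\begin{equation*}
\nabla^{s}(fg)=\sum_{k=0}^{s}\binom{s}{k}\nabla^{k}f\,\nabla^{s-k}g.
\end{equation*}
The boundary terms $k=0$ and $k=s$ are already in the required form. For each intermediate $0<k<s$, I would use H\"older's inequality with the conjugate pair $p=2s/k$, $q=2s/(s-k)$, and then invoke Lemma~\ref{lem2-1} applied separately to $f$ and $g$ (which lie in $L^\infty\cap\dot H^{s}$) to obtain
\begin{equation*}
|\nabla^{k}f|_{2s/k}\le C|f|_\infty^{1-k/s}|\nabla^{s}f|_2^{k/s},\qquad
|\nabla^{s-k}g|_{2s/(s-k)}\le C|g|_\infty^{k/s}|\nabla^{s}g|_2^{1-k/s}.
\end{equation*}
Multiplying these and applying the weighted AM-GM inequality $a^{\theta}b^{1-\theta}\le \theta a+(1-\theta)b$ with $\theta=k/s$ to the product collapses every intermediate term into a multiple of $|f|_\infty|\nabla^{s}g|_2+|g|_\infty|\nabla^{s}f|_2$, with a combinatorial constant depending only on $s$. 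The result~(\ref{liu01}) then follows after summing over $k$.

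For part~(2), I would proceed by induction on $s$, with base case $s=1$ handled by $|\nabla g(u)|_2=|(\partial g/\partial u)(u)\nabla u|_2\le\|\partial g/\partial u\|_{0,\overline\Omega_1}|\nabla u|_2$. For the inductive step, I would use the Fa\`a di Bruno chain rule to express
\begin{equation*}
\nabla^{s}g(u)=\sum_{\substack{k_1,\dots,k_s\ge0\\ \sum jk_j=s}}c_{k_1,\dots,k_s}\,\frac{\partial^{|k|}g}{\partial u^{|k|}}(u)\,\prod_{j=1}^{s}(\nabla^{j}u)^{k_j},\qquad |k|:=\textstyle\sum_j k_j.
\end{equation*}
Each coefficient $(\partial^{|k|}g/\partial u^{|k|})(u)$ is uniformly bounded by $\|\partial g/\partial u\|_{s-1,\overline\Omega_1}$ because $u(x)\in\Omega_1$ and $|k|-1\le s-1$. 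For each remaining factor, I would use Lemma~\ref{lem2-1} to estimate $|\nabla^{j}u|_{2s/j}\le C|u|_\infty^{1-j/s}|\nabla^{s}u|_2^{j/s}$, and then H\"older (with exponents $2s/(jk_j)$ summing to $1/2$ via $\sum j k_j=s$) to bound the whole product in $L^2$ by
\begin{equation*}
C\,|u|_\infty^{\sum_j(1-j/s)k_j}\,|\nabla^{s}u|_2^{\sum_j(j/s)k_j}=C\,|u|_\infty^{|k|-1}\,|\nabla^{s}u|_2.
\end{equation*}
Since $|k|\le s$, each term is dominated by $C|u|_\infty^{s-1}|\nabla^{s}u|_2$, and summing over the finitely many admissible multi-indices produces (\ref{liu02}) with a constant $C_s$.

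The main bookkeeping obstacle is part~(2): one must verify carefully that the weighted homogeneity in the Gagliardo--Nirenberg estimates combines, via the relation $\sum jk_j=s$, to produce \emph{exactly} the power $|u|_\infty^{s-1}$ that appears on the right-hand side of (\ref{liu02})---not $|u|_\infty^{s-|k|}$ or $|u|_\infty^{s}$---and that the extra $|u|_\infty^{s-|k|}$ gap for low $|k|$ is absorbed into the constant using $|u|_\infty\le |u|_\infty$. Everything else is routine Leibniz and H\"older manipulation; no new ideas beyond what is already recorded in Lemmas~\ref{lem2as}--\ref{gag113}.
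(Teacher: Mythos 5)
This lemma is quoted from Majda's book and the paper supplies no proof of its own, so there is nothing internal to compare against; your route (Leibniz plus the Gagliardo--Nirenberg interpolation of Lemma \ref{lem2-1} for part (1), Fa\`a di Bruno plus the same interpolation for part (2)) is exactly the canonical argument behind the citation. Part (1) is complete and correct as you describe it: the exponents $2s/k$ and $2s/(s-k)$ are H\"older-conjugate relative to $L^2$, and the weighted AM--GM step collapses each cross term into the two boundary terms.

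In part (2), however, the one step you yourself single out as "the main bookkeeping obstacle" is not actually closed. Your computation correctly yields $C\,|u|_\infty^{|k|-1}|\nabla^s u|_2$ for each Fa\`a di Bruno term, but the passage from $|u|_\infty^{|k|-1}$ to $|u|_\infty^{s-1}$ when $|k|<s$ is justified only by the vacuous "$|u|_\infty\le|u|_\infty$"; the inequality $|u|_\infty^{|k|-1}\le C\,|u|_\infty^{s-1}$ holds only when $|u|_\infty$ is bounded below by $1$ (or by a fixed positive constant), and fails as $|u|_\infty\to 0$. Indeed the literal statement (\ref{liu02}) is already false in that regime: take $g(u)=u$ and $s\ge 2$, for which the left side is $|\nabla^s u|_2$ while the right side is $C_s|u|_\infty^{s-1}|\nabla^s u|_2$. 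This is a known imprecision inherited from the source being quoted rather than a defect you introduced, but your write-up should not pretend the absorption works; the argument you give actually proves the correct version with $(1+|u|_\infty)^{s-1}$ (equivalently $\max\{1,|u|_\infty\}^{s-1}$) in place of $|u|_\infty^{s-1}$, and that is the form in which the estimate is legitimately used. State that, or add the hypothesis $|u|_\infty\ge 1$, and the proof is complete.
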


As a consequence of the  Aubin-Lions Lemma, one has (c.f. \cite{jm}),
\begin{lemma}\cite{jm}\label{aubin} Let $X_0$, $X$ and $X_1$ be three Banach spaces satisfying $X_0\subset X\subset X_1$. Suppose that $X_0$ is compactly embedded in $X$ and that $X$ is continuously embedded in $X_1$.
\begin{enumerate}
\item Let $G$ be bounded in $L^p(0,T;X_0)$ with $1\leq p < +\infty$, and $\frac{\partial G}{\partial t}$ be bounded in $L^1(0,T;X_1)$. Then $G$ is relatively compact in $L^p(0,T;X)$.\\[2pt]

\item Let $F$ be bounded in $L^\infty(0,T;X_0)$  and $\frac{\partial F}{\partial t}$ be bounded in $L^q(0,T;X_1)$ with $q>1$. Then $F$ is relatively compact in $C(0,T;X)$.
\end{enumerate}
\end{lemma}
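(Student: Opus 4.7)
The plan is to prove Lemma \ref{aubin} via the classical Aubin--Lions--Simon strategy, with the Ehrling interpolation inequality as the central tool. First I would establish \emph{Ehrling's lemma}: under the hypothesis that $X_0 \hookrightarrow\hookrightarrow X \hookrightarrow X_1$, for every $\varepsilon>0$ there exists $C_\varepsilon>0$ such that
\begin{equation*}
\|u\|_{X} \leq \varepsilon \|u\|_{X_0} + C_\varepsilon \|u\|_{X_1}, \qquad \forall\, u \in X_0.
\end{equation*}
This follows by contradiction: if it fails, there is a sequence $(u_n)\subset X_0$ with $\|u_n\|_X=1$, $\|u_n\|_{X_0}\leq 1/\varepsilon$, and $\|u_n\|_{X_1}\to 0$. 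Compactness of $X_0\hookrightarrow X$ extracts a subsequence converging in $X$ to some $u$ with $\|u\|_X=1$, while continuity of $X\hookrightarrow X_1$ forces $u=0$, a contradiction. This inequality lets us trade a small amount of the stronger norm against a large amount of the weaker norm.

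For part (1), I would apply the Kolmogorov--Riesz--Fréchet compactness criterion for $L^p(0,T;X)$: a bounded family $G$ is relatively compact iff (a) for a.e.\ $t$, $\{g(t):g\in G\}$ is relatively compact in $X$, and (b) $\int_0^{T-h}\|g(t+h)-g(t)\|_X^p\,dt\to 0$ uniformly in $g\in G$ as $h\to 0^+$ (plus the analogous boundary term). Condition (a) is immediate because $g(t)$ stays in a bounded set of $X_0$, which is precompact in $X$. For (b), apply Ehrling to $g(t+h)-g(t)$:
\begin{equation*}
\|g(t+h)-g(t)\|_X \leq \varepsilon \|g(t+h)-g(t)\|_{X_0} + C_\varepsilon\Big\|\int_t^{t+h} g_s(s)\,ds\Big\|_{X_1}.
\end{equation*}
Taking $L^p$ in $t$, raising to the $p$-th power and using $(a+b)^p\leq 2^{p-1}(a^p+b^p)$, the first term is bounded by $\varepsilon^p$ times the uniform $L^p(0,T;X_0)$ bound. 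The second term is controlled by $C_\varepsilon^p\,h^{p-1}\|g_t\|_{L^1(0,T;X_1)}^{p}$ (or, when $p=1$, by a standard continuity-of-translation argument applied to $g_t\in L^1(0,T;X_1)$, which gives $\int_0^{T-h}\|\int_t^{t+h}g_s\,ds\|_{X_1}dt \leq \int_0^{T}\chi_{[0,h]}*\|g_s\|_{X_1} \to 0$). Choosing $\varepsilon$ small and then $h$ small yields uniform equicontinuity, hence compactness in $L^p(0,T;X)$.

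For part (2), I would invoke Ascoli--Arzelà on $C([0,T];X)$. Pointwise precompactness in $X$ is again immediate since $\{f(t):f\in F\}\subset$ a bounded set of $X_0\hookrightarrow\hookrightarrow X$. For equicontinuity in $X$, since $q>1$, Hölder gives
\begin{equation*}
\|f(t+h)-f(t)\|_{X_1} \leq \int_t^{t+h}\|f_s(s)\|_{X_1}\,ds \leq h^{1-1/q}\|f_t\|_{L^q(0,T;X_1)},
\end{equation*}
which tends to $0$ uniformly in $f$ as $h\to 0$. Combining with Ehrling,
\begin{equation*}
\|f(t+h)-f(t)\|_X \leq \varepsilon\cdot 2\|f\|_{L^\infty(0,T;X_0)} + C_\varepsilon\,h^{1-1/q}\|f_t\|_{L^q(0,T;X_1)},
\end{equation*}
so picking $\varepsilon$ then $h$ small gives uniform equicontinuity, and Ascoli--Arzelà closes the argument.

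The main obstacle is the proof of Ehrling's lemma, which is qualitative and cannot be bypassed by explicit interpolation since no quantitative relation between the three spaces is assumed beyond the abstract embeddings; once it is in hand, both parts reduce to standard applications of Kolmogorov--Riesz and Ascoli--Arzelà, with the only subtlety in (1) being the case $p=1$, where the translation-continuity of $L^1$ functions valued in $X_1$ replaces the Hölder estimate.
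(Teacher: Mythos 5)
This lemma is not proved in the paper at all: it is quoted as a known result from Simon's compactness paper \cite{jm}, so there is no in-paper argument to compare against. Your proposal is the standard proof of the Aubin--Lions--Simon theorem and follows essentially the same architecture as Simon's original one: an Ehrling-type interpolation inequality (which you prove correctly by contradiction) trades the $X$-norm for a small multiple of the $X_0$-norm plus a large multiple of the $X_1$-norm, and the time-translation estimates coming from the bound on $\partial_t G$ supply the equicontinuity needed for the Kolmogorov--Riesz criterion in part (1) and for Ascoli--Arzel\`a in part (2). Part (2) is correct as written.

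Two technical points in part (1) need patching, though neither is fatal. First, the condition you label (a) cannot be verified pointwise: a bound on $\|g\|_{L^p(0,T;X_0)}$ does \emph{not} give a bound on $\|g(t)\|_{X_0}$ for a.e.\ $t$ that is uniform over $g\in G$, so the family $\{g(t):g\in G\}$ need not lie in a fixed bounded subset of $X_0$. The correct form of the criterion (Simon's Theorem~1) asks instead for relative compactness in $X$ of the averaged sets $\{\int_{t_1}^{t_2} g\,dt : g\in G\}$, which \emph{are} bounded in $X_0$ by $|t_2-t_1|^{1/p'}\sup_{g\in G}\|g\|_{L^p(0,T;X_0)}$ and hence precompact in $X$. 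Second, your bound $C_\varepsilon^p h^{p-1}\|g_t\|_{L^1(0,T;X_1)}^p$ for the $p$-th power of the second term implicitly uses H\"older in the form $(\int_t^{t+h}\|g_s\|_{X_1}ds)^p\le h^{p-1}\int_t^{t+h}\|g_s\|_{X_1}^p ds$, which requires $g_t\in L^p(0,T;X_1)$ rather than the assumed $L^1$. The fix is the cruder estimate $(\int_t^{t+h}\|g_s\|_{X_1}ds)^p\le \|g_t\|_{L^1(0,T;X_1)}^{p-1}\int_t^{t+h}\|g_s\|_{X_1}ds$, whose integral over $t\in(0,T-h)$ is at most $h\,\|g_t\|_{L^1(0,T;X_1)}^{p}$ by Fubini; this still tends to $0$ as $h\to 0^+$ uniformly over $G$, so the conclusion is unaffected. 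With these two adjustments the argument is complete.
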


The following lemma is  useful to improve weak convergence to strong convergence.
\begin{lemma}\cite{amj}\label{zheng5}
If the function sequence $\{w_n\}^\infty_{n=1}$ converges weakly  to $w$ in a Hilbert space $X$, then it converges strongly to $w$ in $X$ if and only if
$$
\|w\|_X \geq \lim \text{sup}_{n \rightarrow \infty} \|w_n\|_X.
$$
\end{lemma}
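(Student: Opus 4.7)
The plan is to prove this as the classical Radon--Riesz (Kadec--Klee) property, which holds in any Hilbert space because the norm is derived from an inner product. Since the equivalence is an ``iff'' statement, I would split the argument into the two implications and handle each independently.

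For the forward implication ($\Rightarrow$), the norm is continuous with respect to the norm topology, so strong convergence $w_n \to w$ gives $\|w_n\|_X \to \|w\|_X$, whence $\limsup_n \|w_n\|_X = \|w\|_X$ and in particular the claimed inequality holds (with equality). This direction is immediate.

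For the reverse implication ($\Leftarrow$), assume $w_n \rightharpoonup w$ weakly in $X$ together with $\limsup_n \|w_n\|_X \leq \|w\|_X$. First I would invoke the weak lower semicontinuity of the norm, i.e. $\|w\|_X \leq \liminf_n \|w_n\|_X$, which combined with the hypothesis yields $\lim_n \|w_n\|_X = \|w\|_X$. Then the Hilbert space identity
$$\|w_n - w\|_X^2 = \|w_n\|_X^2 - 2\,\mathrm{Re}\,\langle w_n, w\rangle_X + \|w\|_X^2$$
lets me pass to the limit: the first term tends to $\|w\|_X^2$ by what was just proved, the middle term tends to $2\|w\|_X^2$ by weak convergence applied with the fixed test element $w \in X$, and the third term is constant. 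Adding these gives $\|w_n - w\|_X^2 \to 0$, which is the desired strong convergence.

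There is no real obstacle here; the only thing to be careful about is that the argument genuinely uses the Hilbert structure through the polarization/expansion identity and through the fact that the dual pairing in the weak convergence can be realized as the inner product with $w$. The analogous statement fails in general Banach spaces, which is why the Hilbert hypothesis in the statement is essential.
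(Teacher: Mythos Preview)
Your proof is correct and is the standard argument for this classical fact. The paper does not actually prove this lemma; it is stated in the appendix as a known result with a citation to Majda's book, so there is no ``paper's own proof'' to compare against.
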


The last lemma will be used in the proof shown in Section 5.
\begin{lemma}\label{changeA1} Let $(\varphi,\phi,w)$ be the  regular the solution  to the Cauchy problem (\ref{li47-1}), and  $Z$ be the time weighted energy defined in (\ref{fg}). One has
\begin{equation}\begin{split}\label{changea22}
|\varphi\nabla^3\text{div}w|^2_2\leq |\varphi\nabla^4 w|^2_2+J^*,
\end{split}
\end{equation}
where the term $J^*$ can be controlled by 
$$
J^*\leq \eta |\varphi \nabla^4 w|^2_2+C(\eta)(1+t)^{2m-5-2\gamma_3}Z^4,$$
for any constant $\eta>0$ small enough, and the constant $C(\eta)>0$.
\end{lemma}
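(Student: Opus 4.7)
The statement compares two $L^2$ quantities in which $\varphi$ multiplies a fourth-order derivative of $w$: since $\nabla^{3}\operatorname{div}w$ is merely a contraction of $\nabla^{4}w$, the naive pointwise bound gives the constant $3$, not $1$. To obtain the sharp constant on the main term (and collect all extra factors in $J^{*}$), I would expand
\begin{equation*}
|\varphi\nabla^{3}\operatorname{div}w|_{2}^{2}
=\sum_{a,b,c}\sum_{i,j}\int\varphi^{2}\,(\partial_{i}\partial_{abc}w_{i})(\partial_{j}\partial_{abc}w_{j})\,\mathrm{d}x
\end{equation*}
and integrate by parts twice: first move $\partial_{i}$ off the first factor onto $\varphi^{2}(\partial_{j}\partial_{abc}w_{j})$, then move $\partial_{j}$ off $(\partial_{i}\partial_{j}\partial_{abc}w_{j})$ onto $\varphi^{2}\partial_{abc}w_{i}$. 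The boundary terms vanish by the decay/compact-support properties of $(\varphi,w)$ inherited from the construction, and the result is
\begin{equation*}
|\varphi\nabla^{3}\operatorname{div}w|_{2}^{2}
=\sum_{a,b,c}\sum_{i,j}\int\varphi^{2}(\partial_{j}\partial_{abc}w_{i})(\partial_{i}\partial_{abc}w_{j})\,\mathrm{d}x+J^{*},
\end{equation*}
where $J^{*}$ collects the two commutator terms of the schematic form $\int\varphi\,\nabla\varphi\cdot\nabla^{3}w\cdot\nabla^{4}w$.

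Next, I would control the leading (double-trace) term. For fixed $(a,b,c)$ set $B^{(abc)}_{ij}=\partial_{j}\partial_{abc}w_{i}$; the summand is $\operatorname{tr}\bigl((B^{(abc)})^{2}\bigr)$. The elementary matrix inequality $\operatorname{tr}(B^{2})\le|B|^{2}$, which follows from $B_{ij}B_{ji}\le\tfrac12(B_{ij}^{2}+B_{ji}^{2})$ and symmetry, then yields
\begin{equation*}
\sum_{a,b,c}\sum_{i,j}\int\varphi^{2}(\partial_{j}\partial_{abc}w_{i})(\partial_{i}\partial_{abc}w_{j})\,\mathrm{d}x
\le\int\varphi^{2}|\nabla^{4}w|^{2}\,\mathrm{d}x=|\varphi\nabla^{4}w|_{2}^{2},
\end{equation*}
which is the sharp inequality $|\varphi\nabla^{3}\operatorname{div}w|_{2}^{2}\le|\varphi\nabla^{4}w|_{2}^{2}+J^{*}$ asserted in the lemma.

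Finally I would bound $J^{*}$. By H\"older's inequality,
\begin{equation*}
|J^{*}|\le C\int\varphi\,|\nabla\varphi|\,|\nabla^{3}w|\,|\nabla^{4}w|\,\mathrm{d}x
\le C|\nabla\varphi|_{\infty}|\nabla^{3}w|_{2}\,|\varphi\nabla^{4}w|_{2},
\end{equation*}
and Young's inequality absorbs one $|\varphi\nabla^{4}w|_{2}$ into the $\eta$-term:
\begin{equation*}
|J^{*}|\le\eta|\varphi\nabla^{4}w|_{2}^{2}+C(\eta)|\nabla\varphi|_{\infty}^{2}|\nabla^{3}w|_{2}^{2}.
\end{equation*}
The time-weighted bounds in \eqref{fg} and Lemma \ref{lemK} give $|\nabla\varphi|_{\infty}\le C(1+t)^{(2m-5)/2}Z$ and $|\nabla^{3}w|_{2}=Y_{3}\le(1+t)^{-\gamma_{3}}Z$, whence $|\nabla\varphi|_{\infty}^{2}|\nabla^{3}w|_{2}^{2}\le C(1+t)^{2m-5-2\gamma_{3}}Z^{4}$, which is the stated bound.

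\textbf{Main obstacle.} The only nontrivial point is obtaining constant $1$ (rather than $3$) in front of $|\varphi\nabla^{4}w|_{2}^{2}$; this is what forces the two-step integration by parts followed by the $\operatorname{tr}(B^{2})\le|B|^{2}$ trick, rather than a direct pointwise estimate. All other steps are routine H\"older/Young estimates combined with the already-established time-weighted bounds.
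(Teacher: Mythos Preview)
Your proof is correct and follows the same approach as the paper: expand $|\varphi\nabla^{3}\operatorname{div}w|_{2}^{2}$, integrate by parts on the cross terms to swap the $i,j$ derivatives (producing the commutator $J^{*}$), and then estimate $J^{*}$ by H\"older--Young together with the weighted bounds from Lemma~\ref{lemK}. You are in fact slightly more explicit than the paper in one place: you spell out the pointwise matrix inequality $\operatorname{tr}(B^{2})\le|B|^{2}$ that converts the swapped integral $\sum_{a,b,c}\sum_{i,j}\int\varphi^{2}(\partial_{j}\partial_{abc}w_{i})(\partial_{i}\partial_{abc}w_{j})$ into $|\varphi\nabla^{4}w|_{2}^{2}$, whereas the paper leaves this final step implicit.
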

\begin{proof}
According to the definition of $\text{div}$, one directly has 
\begin{equation}\label{changeA2}
\begin{split}
|\varphi\nabla^3\text{div}w|^2_2=\sum_{i=1}^3 |\varphi\nabla^3\partial_iw^{(i)}|^2_2+2\sum_{i,j=1,\ i<j}^3 \int \varphi^2 \nabla^3\partial_iw^{(i)}\cdot  \nabla^3\partial_jw^{(j)}.
\end{split}
\end{equation}
Via the integration by parts, one can obtain that 
\begin{equation}\label{changeA3}
\begin{split}
J_{ij} =&\int \varphi^2 \nabla^3\partial_iw^{(i)}\cdot  \nabla^3\partial_jw^{(j)}
= \int \varphi^2 \nabla^3\partial_iw^{(j)}\cdot  \nabla^3\partial_jw^{(i)}\\
&\quad +\int \big(\partial_i \varphi^2 \nabla^3 \partial_j w^{(i)} \nabla^3  w^{(j)}-\partial_j \varphi^2 \nabla^3 \partial_i w^{(i)} \nabla^3  w^{(j)} \big).
\end{split}
\end{equation}
It is easy to see that 
\begin{equation}\label{changeA4}
\begin{split}
J^*_{ij} =& \int \big(\partial_i \varphi^2 \nabla^3 \partial_j w^{(i)} \nabla^3  w^{(j)}-\partial_j \varphi^2 \nabla^3 \partial_i w^{(i)} \nabla^3  w^{(j)} \big)\\
\leq & C|\nabla \varphi|_\infty|\varphi \nabla^4 w|_2|\nabla^3 w|_2\\
\leq & \eta |\varphi \nabla^4 w|^2_2+C(\eta)(1+t)^{2m-5-2\gamma_3}Z^4,
\end{split}
\end{equation}
for any constant $\eta>0$ small enough, and the constant $C(\eta)>0$,  which, along with (\ref{changeA2})-(\ref{changeA3}), quickly implies (\ref{changea22}).

\end{proof}

\subsection{Proof of Proposition \ref{p1}} Here in this subsection, set
$$
\|f\|_{\Xi}=|\nabla f|_\infty+\|\nabla^2 f\|_{m-2} \ \  \text{for \ any} \ m\geq 2.
$$
\begin{proof} \textbf{(I)} Let $G(t,x)=\nabla \widehat{u}(t,x)$ and $G_0(x_0)=\nabla u_0(x_0)$.
Then
$$
G(t,X(t;x_0))=\big(\mathbb{I}_d+tG_0(x_0)\big)^{-1}G_0(x_0).
$$
Note  that $
\big(\mathbb{I}_d+tG_0(x_0)\big)^{-1}=\big(\text{det}(\mathbb{I}_d+tG_0)\big)^{-1}(\text{adj} (\mathbb{I}_d+tG_0))^\top$,
where $\text{adj} (\mathbb{I}_d+tG_0)$ stands for the adjugate  of $(\mathbb{I}_d+tG_0)$. Then it holds that 
\begin{equation}\label{wuqiong}
|\nabla_x \widehat{u}|_\infty\leq \frac{(1+t|\nabla u_0|_\infty)^{d-1}}{(1+t\kappa)^d}|\nabla_{x_0} u_0|_\infty.
\end{equation}
Rewrite
$$
G(t,X(t;x_0))= \frac{1}{1+t}\mathbb{I}_d+\frac{1}{(1+t)^2}K(t,x_0),
$$
where $K(t,x_0)=(1+t)^2(\mathbb{I}_d+tG_0)^{-1} G_0-(1+t)\mathbb{I}_d$.

Since $
G^{-1}_0=\big(\text{det}(G_0)\big)^{-1}(\text{adj} G_0)^\top
$,  so
$$
|G^{-1}_0|_\infty\leq C\kappa^{-d}|G_0|^{d-1}_\infty.
$$

Then for $t$ large enough, one has $|t^{-1}G^{-1} _0(x_0)|<1$ for all $x_0$, and
\begin{equation*}\begin{split}
K(t,x_0)=& \frac{(t+1)^2}{t} (\mathbb{I}_d+t^{-1}G^{-1}_0)^{-1}-(1+t)\mathbb{I}_d\\
=& \frac{(t+1)^2}{t} \Big(\mathbb{I}_d-\frac{G^{-1}_0}{t}+O\Big(\frac{1}{t^2}\Big)\Big)-(1+t)\mathbb{I}_d
= \frac{(t+1)}{t} \mathbb{I}_d- \frac{(t+1)^2}{t^2} G^{-1} _0+O\Big(\frac{1}{t}\Big).
\end{split}
\end{equation*}

\begin{remark}
Note that
\begin{equation*}\begin{split}
G=& \frac{1}{t} \Big( \mathbb{I}_d- \big(\mathbb{I}_d+tG_0(x_0)\big)^{-1}\Big).
\end{split}
\end{equation*}
Then  any eigenvalue $\lambda_{G}$ of $G$  has the following form
$$
\lambda_{G}=\frac{1}{t} \Big( 1- \big(1+t\lambda\big)^{-1}\Big)=\frac{\lambda}{1+t\lambda}>0,
$$
where $\lambda$ is an eigenvalue of $G_0$.

\end{remark}

\textbf{(II)}
 Let $H(t,x_0)=G(t,X(t;x_0))$.  By induction, it is easy to  show that, for $k\geq 1$:
$$
\nabla^k_{x_0} H=(\mathbb{I}_d+tG_0)^{-1}\Lambda_k (\mathbb{I}_d+tG_0)^{-1},
$$
where $\Lambda_k$ is a sum of products of $t(\mathbb{I}_d+tG_0)^{-1}$ and $\nabla^j G_0, \ j\in \{0,1,...,k\}$, appearing $\beta_j$ times with $\sum_j j\beta_j=k$.

On the one hand, by induction, it is also  easy to  show that, for $k\geq 1$:
$$
\nabla^k_{x_0} H(t,x_0) =\sum_{j=1}^k \nabla^j_{x} G(t,X(t;x_0)) \Big(\sum_{1\leq k_i\leq k} \nabla^{k_1}_{x_0}X\otimes ...\otimes \nabla^{k_j}_{x_0}X\Big)
$$
with $\sum_{i=1}^j k_i=k$, and
\begin{equation*}\begin{split}
\nabla_{x_0} X=& \mathbb{I}_d+tG_0(x_0),  \quad \nabla^{l}_{x_0}X=t\nabla^{l-1} G_0(x_0),\quad \text{for} \quad l\geq 2.
\end{split}
\end{equation*}

On the other hand, one can also  show that for all $j\geq 1$ by induction:   \textbf{IH(j)}.
$\nabla^j_x G$ is a sum of terms which are products in a certain order of : $(\mathbb{I}_d+tG_0)^{-1}$, $t\mathbb{I}_d$,  $\mathbb{I}_d+tG_0$ or  $\nabla^l G_0$ appearing $\beta_l$ times, with $\sum_l l\beta_l=j$. Moreover, the $L^\infty$-norm of the terms with $t$ is bounded by a constant times $(1+t)^{-(j+2)}$, and 
$$
|\nabla^j_x G(X(t;\cdot),t)|_2\leq C_j(1+ t)^{-(j+2)},\quad \text{with} \quad C_j=C(\kappa, j, \|u_0\|_{\Xi}).
$$

Suppose \textbf{IH(k-1)}, then
\begin{equation*}\begin{split}
\nabla^k_{x} G(t,X(t;x_0)) =&\Big(\nabla^k_{x_0}H(t,x_0)-\sum_{j=1}^{k-1} \nabla^j_{x} G(t,X(t;x_0)) \Big(\sum_{1\leq k_i\leq k-1} \nabla^{k_1}_{x_0}X\otimes ...\otimes \nabla^{k_j}_{x_0}X\Big)\Big)\\
& \odot \Big((\mathbb{I}_d+tG_0)^{-1}\Big)^{\otimes k}.
\end{split}
\end{equation*}

In the right-hand side term, one has the norm of the following terms to estimate:

\begin{itemize}
\item a)$J_1=(\mathbb{I}_d+tG_0)^{-1}\Lambda_k (\mathbb{I}_d+tG_0)^{-k-1}$;
\item  b)$J_2=\nabla^j_{x} G(\mathbb{I}_d+tG_0)^{j-s} \amalg_{k_i \neq 1}t\nabla^{k_i-1}G_0(\mathbb{I}_d+tG_0)^{-k}$,\ \  $\sum_{k_i\neq 1}(k_i-1)=k-j$,
\end{itemize}
where $s$ is the number of $k_j\neq 1$.

For each of these terms, it needs to  apply first the induction hypothesis and  consider the $L^\infty$-norm in space for the terms with $t$. It needs to  use \textbf{IH(j)} for $j\leq k-1$ to show that $b)$ is a product of
$(\nabla^j G_0)^{\beta_j}$
with $\sum_j j\beta_j=k$ and of $t \mathbb{I}_d$, $\mathbb{I}_d+tG_0$,  $(\mathbb{I}_d+tG_0)^{-1}$, such that the $L^\infty$-norm of the terms with $t$ is bounded by a constant times $(1+ t)^{-(k+2)}$. Then one can  find an upper bound in $L^2$-norm for
$
\amalg_{1\leq j\leq k} \big(\nabla^j G_0\big)^{\beta_j}
$
with $\sum_j j\beta_j=k$, by using Gagliardo-Nirenberg inequality. Then one gets for $\sum_j j\beta_j=k$:
\begin{equation*}\begin{split}
|J_1|_2\leq  C(1+ t)^{-k-2}|\Lambda_k|_2\leq C(1+t\kappa)^{-k-2}|\amalg \big(\nabla^j G_0\big)^{\beta_j}|_2
\leq& C(1+t)^{-k-2}c^{k+1}_0,\\
|J_2|_2\leq  C(1+t)^{-k+j}|\nabla^j_{x} G \amalg_{k_i \neq 1}\nabla^{k_i-1}G_0|_2
\leq & C(1+ t)^{-k-2}c^{k+1}_0.
\end{split}
\end{equation*}
To conclude, one obtains the upper bound in \textbf{IH(k)} which depends on $\kappa$, $|G_0|_\infty$,  and $|\nabla^k G_0|_2$ for $1\leq k \leq m$, that is to say on $\kappa$ and $\|u_0\|_\Xi$.
Finally, it needs to make a change of variables to obtain:
$$
|\nabla^j_x G(t,\cdot)|_2 \leq (1+ t)^{\frac{d}{2}}|\nabla^j_x G(t,X(t;\cdot))|_2.
$$
 Therefore (ii) is true for $l\in \mathbb{N},\ l\geq 2$ since $\nabla_x G=\nabla^2 \widehat{u}$.  Then by interpolation one obtains the result for all $l\in \mathbb{R},\ l \geq 2$.

\textbf{(III)} Since $m-1>\frac{d}{2}$, one gets that $\nabla^2 u_0\in L^\infty$. Thus $\nabla^2 \widehat{u} \in L^\infty$, and 
\begin{equation}\label{erjieQ}
\nabla_x G(t,X(t; x_0))=-(\mathbb{I}_d+tG_0)^{-1}\nabla G_0(x_0) (\mathbb{I}_d+tG_0)^{-2}.
\end{equation}

Using the estimates obtained in \textbf{(I)}, we obtain:  $|\nabla_x G(t,X(t; x_0))|_\infty=O((1+ t)^{-3})$.

%

\textbf{(IV)}
If $u_0(0)=0$, then  it is obvious that $
 \widehat{u}(t,0)=0 $ for any $t\geq 0$, and
$$
 \widehat{u}(t,x)= \widehat{u}(t,0)+\nabla\widehat{u}(t,ax)\cdot x=\nabla\widehat{u}(t,ax)\cdot x,\ \  \text{for\ any} \ \  (t,x)\in [0,T]\times \mathbb{R}^3,
$$
where $a\in [0,1]$ is some constant.  Thus  (\ref{linear relation}) is proved.

\end{proof}

\subsection{Proof of Proposition \ref{ode1}}
\begin{proof}
The solution $Z(t)$ of the Cauchy problem (\ref{Beq:2.16A1}) can be solved as:
\begin{equation}
\displaystyle
Z(t)=\frac{(1+t)^{-b}\exp{\Big(\frac{C_2}{D_2+1}}\Big((1+t)^{D_2+1}-1\Big)\Big)}{\Big(Z^{-(a-1)}_{0}-(a-1)C_1\int^t_0 (1+s)^{M}\exp{\Big(\frac{(a-1)C_2}{D_2+1}}\Big((1+t)^{D_2+1}-1\Big)\Big)\text{d}s\Big)^{\frac{1}{a-1}}},
\end{equation}
where $M=D_1-(a-1)b<-1$.

Thus, according to (\ref{zhibiao1}),   $Z(t)$  is globally well-defined for $t\geq 0$ if and  only if
\begin{equation}\label{eq:3.29}
0<Z_{0}<\frac{1}{\Big((a-1)C_1\int^t_0 (1+s)^{M}\exp{\Big(\frac{(a-1)C_2}{D_2+1}}\Big((1+t)^{D_2+1}-1\Big)\Big)\text{d}s\Big)^{\frac{1}{a-1}}}.
\end{equation}

Therefore, by choosing $Z_{0}$ small enough, one can obtain the global existence of our Cauchy problem (\ref{Beq:2.16A1}).  
\end{proof}

\subsection{Some Sobolev inequalities}
It follows from Lemma \ref{lem2as} that 
\begin{equation}\label{qianru1}
\begin{split}
|\varphi\nabla w|_6\leq&  C|\varphi\nabla w|_{D^1}
\leq C(|\varphi\nabla^2 w|_2+|\nabla \varphi|_\infty\nabla w|_2),\\
|\varphi\nabla^2 w|_\infty\leq&  C|\varphi\nabla^2 w|^{\frac{1}{2}}_6|\nabla (\varphi\nabla^2 w)|^{\frac{1}{2}}_6\leq C|\varphi\nabla^2 w|^{\frac{1}{2}}_{D^1}|\nabla (\varphi\nabla^2 w)|^{\frac{1}{2}}_{D^1}\\
\leq& C\|\nabla (\varphi\nabla^2 w)\|_1
\leq C(\|\nabla\varphi\|_2\|\nabla^2w\|_1+|\varphi \nabla^4 w|_2).
\end{split}
\end{equation}

\bigskip

{\bf Acknowledgement:}
This research is partially supported by Zheng Ge Ru Foundation, Hong Kong RGC Earmarked Research Grants  CUHK 14300917, CUHK-14305315 and CUHK 4048/13P, NSFC/RGC Joint Research Scheme Grant N-CUHK 443-14, and a Focus Area Grant from the Chinese University of Hong Kong. Zhu's research is also  supported in part
by National Natural Science Foundation of China under grant 11231006,  Natural Science Foundation of Shanghai under grant 14ZR1423100,  Australian Research Council grant DP170100630 and Newton International Fellowships NF170015.

\bigskip


\begin{thebibliography}{99}



%

 \bibitem{bd3} D. Bresch  and  B. Desjardins,  Some diffusive capillary  models of Korteweg type, \textit{C. R.  Acad. Science}  \textbf{332}  (2004), 881-886.

\bibitem{bd6} D. Bresch and B. Desjardins,  Existence of global weak solutions for a 2D viscous shallow water equations and convergence to the quasi-geostrophic model, \textit{Commun. Math. Phys.} \textbf{238}  (2003), 211-223.






















%







\bibitem{chap}
S. Chapman and T.  Cowling, \textit{The Mathematical Theory of Non-Uniform Gases: An Account of the Kinetic Theory of Viscosity, Thermal Conduction and Diffusion in Gases}, Cambridge University Press,  1990.









\bibitem{CK3} Y. Cho, H.  Choe  and H. Kim, Unique solvability of the initial boundary value problems for compressible viscous  fluids, \textit{J. Math. Pures  Appl.} \textbf{83} (2004),  243-275.


\bibitem{ding} M. Ding and S. Zhu, Vanishing viscosity limit of the Navier-Stokes equations to the Euler equations for compressible fluid flow with  far field vacuum,  \textit{J.   Math. Pures  Appl.} \textbf{107} (2017),  288-314.








\bibitem{fu1} E. Feireisl, A. Novotn\'{y} and  H. Petzeltov\'{a}, On the existence of globally defined weak solutions to the Navier-Stokes equations, \textit{J. Math. Fluid Mech.}  \textbf{3(4)}  (2001), 358-392.

\bibitem{fu3} Feireisl,  \textit{Dynamics of Viscous Compressible Fluids}, Oxford: Oxford University Press, 2004.

\bibitem{gandi} G.  Galdi, \textit{An Introduction to the Mathmatical Theory of the Navier-Stokes Equations}, Springer, New York,  1994.


\bibitem{zhu} Y. Geng, Y. Li and S.  Zhu, Vanishing viscosity limit of the Navier-Stokes equations to the Euler equations for compressible  fluid flow with vacuum, submitted, 2018, https://arxiv.org/abs/1808.09605.










\bibitem{MG} M. Grassin,  Global smooth solutions to Euler equations for a perfect gas, \textit{Indiana Univ. Math. J.} \textbf{47} (1998), 1397-1432.



\bibitem{HD} D. Hoff, Discontinuous solutions of the Navier-Stokes equations for multi-dimensional flows of heat-conducting fluids,  \textit{ Arch. Rational Mech. Anal.}  \textbf{139} (1997), 303-354.





\bibitem{HX1} X. Huang, J. Li and  Z. Xin, Global well-posedness of classical solutions with large oscillations and vacuum to the three-dimensional isentropic compressible Navier-Stokes equations, \textit{ Comm. Pure  Appl. Math.} \textbf{65} (2012), 549-585.














  \bibitem{oar} O.  Ladyzenskaja and N.  Ural'ceva,\textit{ Linear and Quasilinear Equations of Parabolic Type}, American Mathematical Society, Providence, RI,  1968.



  \bibitem{hailiang} H. Li, J. Li and Z. Xin, Vanishing of vacuum states and blow-up phenomena of the compressible Navier-Stokes equations, \textit{Commun. Math. Phys.} \textbf{281} (2008), 401-444.

\bibitem{ins} H.  Li, Y. Wang  and Z. Xin,   Non-existence of classical solutions with finite energy to the Cauchy problem of the compressible Navier-Stokes equations, submitted, 2017.




\bibitem{lx2d} J. Li and Z. Xin, Global well-posedness and large time asymptotic behavior of classical solutions to the compressible Navier-Stokes equations with vacuum, submitted, 2013.


\bibitem{lx} J. Li and Z. Xin, Global existence of weak solutions to the barotropic compressible Navier-Stokes flows with degenerate viscosities, preprint, 2016.



\bibitem{tlt} T.  Li  and T. Qin, \textit{Physics and Partial Differential Equations}, Vol. II.,  SIAM: Philadelphia,   Higher Education Press: Beijing, 2014.

   \bibitem{hyp}Y. Li, R. Pan and S. Zhu, Recent progress on   classical  solutions for compressible isentropic  Navier-Stokes equations  with degenerate viscosities  and vacuum,  \textit{Bulletin of the Brazilian Mathematical Society}, New Series, 47 (2016), 507-519.


\bibitem{sz3} Y. Li, R. Pan and S. Zhu, On classical solutions to 2D Shallow water equations with degenerate viscosities,  \textit{J.  Math. Fluid  Mech.} \textbf{19} (2017), 151-190.

\bibitem{sz333} Y. Li, R. Pan and S. Zhu, On classical solutions for viscous polytropic fluids with degenerate viscosities  and vacuum,  submitted, 2017.













\bibitem{taiping} T. Liu, Z. Xin and T. Yang, Vacuum states for compressible flow, \textit{Discrete Contin. Dynam. Systems} \textbf{4} (1998), 1-32.




\bibitem{amj} A. Majda, \textit{Compressible Fluid Flow and Systems of Conservation Laws in Several Space Variables}, Applied Mathematical Science \textbf{53}, Spinger-Verlag: New York, Berlin Heidelberg,  1986.

%


\bibitem{vassu} A. Mellet and A. Vasseur, On the barotropic compressible Navier-Stokes equations,
\textit{Commun. Part. Differ. Equations}  \textbf{32} (2007), 431--452.
\bibitem{mat}
A. Matsumura and T. Nishida, The initial value problem for the equations of motion of viscous and heat-conductive gases. \textit{J. Math. Kyoto Univ.}  \textbf{20} (1980), 67-104.



\bibitem {nash}
  J. Nash,
Le probleme de Cauchy pour les {\'e}quations diff{\'e}rentielles d\'un fluide g{\'e}n{\'e}ral,
  \textit{Bull. Soc. Math. France} \textbf {90} (1962), 487-491.



  \bibitem{lions} P.  Lions, \textit{Mathematical Topics in Fluid Dynamics} In: Compressible Models. Oxford University Press, \textbf{2} 1998.


\bibitem{danni} D. Serre, Solutions classiques globales des $\acute{\text{e}}$quations d'euler pour un fluide parfait compressible, \textit{Ann. Inst. Fourier} \textbf{47} (1997), 139-153.



\bibitem{jm} J. Simon, Compact sets in $L^P(0,T;B)$, \textit{Ann. Mat. Pura. Appl.} \textbf{ 146} (1987), 65-96.




\bibitem{zwy} Z.  Xin and W. Yan, On blow-up of classical solutions to the compressible Navier-Stokes equations, \textit{Commun. Math. Phys.}  \textbf{321} (2013), 529-541.

\bibitem{xz2} Z. Xin and S. Zhu, Global well-posedness of  regular solutions   to the three-dimensional  isentropic  compressible  Navier-Stokes Equations  with degenerate viscosities and vacuum,  preprint, 2017.

\bibitem{tyc2} T. Yang and  C. Zhu, Compressible Navier-Stokes equations with degnerate viscosity coefficient and vacuum, \textit{Commun. Math. Phys.} \textbf{230} (2002), 329-363.




        \bibitem{sz33}  S. Zhu,  Existence results for  viscous polytropic fluids  with degenerate viscosity coefficients and vacuum, \textit{J. Differential Equations} \textbf{259} (2015), 84-119.

\bibitem{sz34}  S. Zhu,  Well-Posedness and Singularity Formation of Compressible Isentropic Navier-Stokes Equations,   PH.D Thesis, Shanghai Jiao Tong University,  2015.





















\end{thebibliography}
\end{document}